\documentclass[12pt]{article}

\usepackage[utf8]{inputenc}
\usepackage[T1]{fontenc}
\usepackage{geometry}
\geometry{a4paper}
\usepackage[english]{babel}
\usepackage{amsmath,accents}
\usepackage{amssymb}
\usepackage{amsthm}
\usepackage[shortlabels]{enumitem}
\usepackage{mathtools}
\usepackage{xcolor}
\usepackage{booktabs}
\usepackage{array}
\usepackage{tabulary}
\usepackage{graphicx}
\graphicspath{ {images/} }

\usepackage[colorlinks,
pdfpagelabels,
pdfstartview = FitH,
bookmarksopen = true,
bookmarksnumbered = true,
linkcolor = black,
plainpages = false,
hypertexnames = false,
citecolor = black] {hyperref}

\newtheorem{satz}{Proposition}[subsection]
\newtheorem{lem}[satz]{Lemma}
\newtheorem{kor}[satz]{Corollary}
\newtheorem{theorem}[satz]{Theorem}

\theoremstyle	{definition}
\newtheorem{mydef}[satz]{Definition}
\newtheorem{bem}[satz]{Remark}
\newtheorem{bei}[satz]{Example}
\newtheorem{notation}[satz]{Notation}
\newtheorem{terminology}[satz]{Terminology}
\newtheorem{teano}[satz]{Terminology and Notation} \usepackage{tikz}
\usetikzlibrary{cd,babel,positioning,calc,backgrounds,fit}
\tikzset{
	math mode/.style = {execute at begin node=$, execute at end node=$},
	short/.style = {shorten >=#1, shorten <=#1},
	obj/.style = {minimum size=4pt, inner sep=0pt, outer sep=4pt, circle, fill},
	string/.style 2 args={
		r/.style={right=#1 ##1},
		rr/.style={right=2*#1 ##1},
		rrr/.style={right=3*#1 ##1},
		hr/.style={right=.5*#1 ##1},
		l/.style={left=#1 ##1},
		hl/.style={left=#1 ##1},
		a/.style={above=#2 ##1},
		ha/.style={above=.5*#2 ##1},
		ar/.style={above right=#2 and #1 ##1},
		ahr/.style={above right=#2 and .5*#1 ##1},
		al/.style={above left=#2 and #1 ##1},
		ahl/.style={above left=#2 and .5*#1 ##1},
		b/.style={below=#2 ##1},
		bb/.style={below=2*#2 ##1},
		bbb/.style={below=3*#2 ##1},
		hb/.style={below=.5*#2 ##1},
		br/.style={below right=#2 and #1 ##1},
		bhr/.style={below right=#2 and .5*#1 ##1},
		bl/.style={below left=#2 and #1 ##1},
		bbl/.style={below left=2*#2 and #1 ##1},
		bbbl/.style={below left=3*#2 and #1 ##1},
		bhl/.style={below left=#2 and .5*#1 ##1},
		hbhl/.style={below left=.5*#2 and .5*#1 ##1},
		bm/.style 2 args={at={($($(##1)!($.5*(##1)+.5*(##2)$)!($(##1)+(1,0)$)$)-(0,#2)$)}},
		bbm/.style 2 args={at={($($(##1)!($.5*(##1)+.5*(##2)$)!($(##1)+(1,0)$)$)-(0,2*#2)$)}},
		hbm/.style 2 args={at={($($(##1)!($.5*(##1)+.5*(##2)$)!($(##1)+(1,0)$)$)-(0,.5*#2)$)}},
		hbr/.style={below right=.5*#2 and #1 ##1},
		hbhr/.style={below right=.5*#2 and .5*#1 ##1},
		morphism/.style={circle, inner sep=0pt, minimum size=2ex, draw},
		mult/.style={circle, inner sep=0, outer sep=0, minimum size={.3*#1}, fill},
		unit/.style={circle, inner sep=0, minimum size={.3*#1}, draw},
		action/.style={circle, inner sep=0, minimum size=0, opacity=0},
		vertex/.style={inner sep=0, opacity=0, outer sep=0},
		rec/.style={rectangle, inner sep=0, minimum width=1.3*#1, minimum height=.5*#2, rounded corners=0, draw},
		in l/.style={out=-90, in=135},
		in r/.style={out=-90, in=45},
		out l/.style={out=-135, in=90},
		out r/.style={out=-45, in=90},
		d d/.style={out=-90, in=90},
		d l/.style={out=-90, in=0},
		d hl/.style={out=-90, in=45},
		d r/.style={out=-90, in=180},
		d hr/.style={out=-90, in=135},
		l d/.style={out=180, in=90},
		l u/.style={out=0, in=-90},
		l l/.style={out=180, in=0},
		l hl/.style={out=180, in=45},
		l r/.style={out=180, in=180},
		l hr/.style={out=180, in=135},
		r d/.style={out=0, in=90},
		r u/.style={out=0, in=-90},
		r l/.style={out=0, in=0},
		r hl/.style={out=0, in=45},
		r r/.style={out=0, in=180},
		r hr/.style={out=0, in=135},
		hl d/.style={out=-135, in=90},
		hl l/.style={out=-135, in=0},
		hl hl/.style={out=-135, in=45},
		hl r/.style={out=-135, in=180},
		hl hr/.style={out=-135, in=135},
		hr d/.style={out=-45, in=90},
		hr l/.style={out=-45, in=0},
		hr hl/.style={out=-45, in=45},
		hr r/.style={out=-45, in=180},
		hr hr/.style={out=-45, in=135},
		u d/.style={out=90, in=90},
		u l/.style={out=90, in=0},
		u hl/.style={out=90, in=45},
		u r/.style={out=90, in=180},
		u hr/.style={out=90, in=135},
		u u/.style={out=90, in=-90},
		on grid
	}
}

\newcommand{\cat}[1] {\mathtt{#1}}

\newcommand{\ob}[1]{\operatorname{Ob}(#1)}
\newcommand{\mor}[1]{\operatorname{1-Mor}(#1)}
\newcommand{\mmor}[1]{\operatorname{2-Mor}(#1)}
\newcommand{\mmmor}[1]{\operatorname{3-Mor}(#1)}
\newcommand{\nmor}[2]{\operatorname{#1-Mor}(#2)}
\newcommand{\set}[1]{\{#1\}}
\newcommand{\oper}[1]{\operatorname{#1}}

\newcommand{\hot}{\mathbin{\hat \otimes}}
\newcommand{\hs}{\mathbin{\hat *}}
\newcommand{\hc}{\circ}
\newcommand{\botimes}{\mathbin{\bar \otimes}}
\newcommand{\bstar}{\hs}
\newcommand{\bcirc}{\hc}
\newcommand{\fotimes}{\mathbin{\otimes ^{\scriptscriptstyle{F}}}}
\newcommand{\fstar}{\mathbin{* ^{\scriptscriptstyle{F}}}}
\newcommand{\fcirc}{\mathbin{\circ ^{\scriptscriptstyle{F}}}}

\newcommand{\adsq}{\scriptscriptstyle{\blacksquare}}
\newcommand{\globe}[1]{\operatorname{Glb}(#1)}
\newcommand{\barf}{\operatorname{bar}}
\newcommand{\ev}{\operatorname{ev}}
\newcommand{\ftilde}{\operatorname{tdl}}

\newcommand{\comseq}[2]{\prescript{#1}{}{#2}}

\newcommand{\bhat}{\cat{\widehat B}}
\newcommand{\bst}{\cat{B ^{st}}}

\newcommand{\that}{\cat{\widehat T}}
\newcommand{\ttil}{\cat{\widetilde T}}
\newcommand{\tbar}{\cat{\bar T}}
\newcommand{\tcu}{\cat{T ^{cu}}}
\newcommand{\tprime}{\cat{T'}}
\newcommand{\tst}{\cat{T ^{st}}}

\newcommand{\gset}[1]{\cat{gSet^{#1}}}
\newcommand{\magm}[1]{\cat{Magm^{#1}}}

\newcommand{\sms}[1]{\scriptscriptstyle{#1}}
\newcommand{\sfl}[1]{\dot{#1}}
\newcommand{\fc}[1]{#1^{\sms{F}}}

\newcommand{\bimod}[3]{\prescript{#1}{}{#2}^{#3}}

\makeatletter
\newcommand{\xRrightarrow}[2][]{\ext@arrow 0359\Rrightarrowfill@{#1}{#2}}
\newcommand{\Rrightarrowfill@}{\arrowfill@\equiv\equiv\Rrightarrow}
\newcommand{\xLleftarrow}[2][]{\ext@arrow 3095\Lleftarrowfill@{#1}{#2}}
\newcommand{\Lleftarrowfill@}{\arrowfill@\Lleftarrow\equiv\equiv}
\makeatother

\pagestyle{plain}

\title{Tricategories and their diagrams}
\author{Peter Guthmann}
\date{}

\begin{document}

\nocite{Gurski2006}

\clearpage{}
\begin{titlepage}
      \textsf{
    \begin{center}
      \vspace*{1cm}
    \huge \textbf{The tricategory of formal composites and its strictification} \\
    \vspace{2cm}
    \LARGE\textbf{Master-Arbeit}\\[5mm]
    \Large zur Erlangung des Grades\\[5mm]
    \textbf{Master of Science (M.Sc.) \\[5mm]
    im Studiengang
    Mathematik\\[5mm]
    }
    am Department Mathematik der\\ Friedrich-Alexander-Universität
    Erlangen-Nürnberg\\[1cm]
    vorgelegt am 24.8.2018 \\[3mm]
    von \textbf{Peter Guthmann}
    \vfill
    \normalsize
    Betreuerin: Prof. Dr. Catherine Meusburger
    \\[2em]
    \end{center}
}
\end{titlepage}
\clearpage{}

\clearpage{}

\begin{abstract}
The results of this thesis allow one to replace calculations in tricategories with equivalent calculations in Gray categories (aka semistrict tricategories).
In particular the rewriting calculus for Gray categories as used for example by the online proof assistant globular \cite{Bar2016}, or equivalently the Gray-diagrams of \cite{Barrett2012} can then be used also in the case of a fully weak tricategory.

In order to achieve this we consider for an arbitrary tricategory \( \cat{T} \) a tricategory \( \that \) of "formal composites",
which distinguishes (formal) composites of 1- or 2-morphisms, even if they evaluate to the same 1-resp. 2-morphism in \( \cat{T}. \)
Then we show that \( \cat{ \that} \) is strictly triequivalent -that means triequivalent via a strict functor of tricategories- to a tricategory \( \cat{ \tst}. \)
This result, together with the observation that \( \cat{ \that} \) is strictly triequivalent to \( \cat{T}, \) is crucial for reducing calculations in tricategories to calculations in Gray categories.
\end{abstract}\clearpage{}

\tableofcontents

\clearpage{}

\section{Introduction}

Calculations in a 1-category can be performed by writing down equations between morphisms in 1-dimensional notation.
However, this kind of computation has the disadvantage that the types of the morphisms are not included in the notation.
Therefore computations are often performed with commuting diagrams, since those contain type-information which are often a valuable guidance in the calculation.
In higher category theory, there are even more types to keep track of and therefore 1-dimensional notation gets -with increasing dimension of the category- more and more inconvenient.
Therefore computations in higher categories are almost exclusively carried out in diagrammatic style.

In tricategories those diagrammatic computations are often cumbersome, since the occurring diagrams contain many constraint cells and thus become quite large.
Besides, the mass of constraint cells often obscures the core idea of the computation.
Thus the main aim of this thesis is to reduce calculations in general tricategories to calculations in Gray categories, which are tricategories in which all constraint cells except the interchangers are identities.
In order to achieve this one needs a method to replace equations of pasting diagrams in a local bicategory of a tricategory \( \cat{T} \) with equivalent equations of pasting diagrams in a suitable Gray-category.

At first sight one might think the fact that any tricategory \( \cat{T} \) is triequivalent to a Gray category \( \oper{Gr} \cat{T} \) (see \cite{Gurski2013} section 10.4) suffices for that purpose,
but this is not the case since the triequivalence \( F: \cat{T} \to \oper{Gr} \cat{T} \) (see \cite{Gurski2013} theorem 10.11) is not a \emph{strict} functor of tricategories.
Therefore composition is not preserved by \( F, \) and thus pasting diagrams in \( \cat{T} \) are not mapped to pasting diagrams in \( \oper{Gr} \cat{T}. \)
For instance the existence of the pasting diagram
\[
\begin{tikzcd} [column sep=huge]
f_1 \otimes f_2
\ar[r, bend left=40, "\alpha_1 \otimes \alpha_2" {name=s1}]
\ar[r, bend right=40, "\beta_1 \otimes \beta_2" {swap, name=t1}]
	& g_1 \otimes g_2
	\ar[r, bend left=40, "\gamma_1" {name=s2}]
	\ar[r, bend right=40, "\gamma_2" {swap, name=t2}]
		& h_1
\ar[r, from=s1, to=t1, Rightarrow, short=3.5mm, "\Lambda_1 \otimes \Lambda_2", xshift=-3mm]
\ar[r, from=s2, to=t2, Rightarrow, short=3mm, "\Lambda_3"]
\end{tikzcd}
\]
does not imply the existence of the pasting diagram
\[
\begin{tikzcd} [column sep=huge]
F f_1 \otimes F f_2
\ar[r, bend left=40, "F \alpha_1 \otimes F \alpha_2" {name=s1}]
\ar[r, bend right=40, "F \beta_1 \otimes F \beta_2" {swap, name=t1}]
	& F g_1 \otimes F g_2
	\ar[r, bend left=40, "F \gamma_1" {name=s2}]
	\ar[r, bend right=40, "F \gamma_2" {swap, name=t2}]
		& F h_1
\ar[r, from=s1, to=t1, Rightarrow, short=3.5mm, "F \Lambda_1 \otimes F \Lambda_2" {description, xshift=3mm}, xshift=-3mm]
\ar[r, from=s2, to=t2, Rightarrow, short=3mm, "F \Lambda_3"]
\end{tikzcd},
\]
since for example the source of \( F \gamma_1 \) is equal to \( F(g_1 \otimes g_2) \) but in general \( F(g_1 \otimes g_2) \neq F g_1 \otimes F g_2. \)

By the previous observations, it becomes clear that we are looking for a way to relate any tricategory \( \cat{T} \) to a suitable Gray category via a \emph{strict} triequivalence.
This is not directly possible, since in general a tricategory \( \cat{T} \) is not strictly triequivalent to a Gray category.
Therefore we will define in section \ref{sec_strictification_for_tricats} of this thesis a tricategory \( \that, \) which will be triequivalent to \( \cat{T} \) and to a Gray category \( \tst \) and will support strict triequivalences
\begin{align} \label{equ_that_equivalences}
\begin{tikzcd} [column sep=large, row sep=tiny, ampersand replacement=\&,]
 	\& \that
 	\ar[ld, "\ev" swap]
 	\ar[rd, "{[-]}"] \& \\
\cat{T}
		\& \& \tst.
\end{tikzcd}
\end{align}
Roughly speaking the "tricategory of formal composites" \( \that \) is quite similar to \( \cat{T}, \) only that the 1- and 2-morphisms of \( \that \) are defined as formal composites of 1- and 2-morphisms of \( \cat{T}. \)
The 3-morphisms of \( \that \) are just the 3-morphisms of \( \cat{T} \) plus a formal source and a formal target and the strict triequivalence \( \ev: \that \to \cat{T} \) simply evaluates formal composites.
Now it is clear that any composite 1- or 2-morphism in \( \cat{T} \) lifts along \( \ev \) to the 1- or 2-morphisms in \( \that \) which represents precisely the composite itself.
Similarly any pasting diagram in a local bicategory of \( \cat{T} \) lifts to the respective pasting diagram in \( \that \) (see the end of section \ref{sec_strictification_for_tricats} for an example of that principle).
Furthermore, since \( \ev \) is a strict triequivalence, two pasting diagrams in a local bicategory of \( \cat{T} \) are equal, if and only if the respective pasting diagrams in \( \that \) are equal.
And since \( [-]: \that \to \cat{T} \) is a strict triequivalence, two pasting diagrams in \( \that \) are equal, if and only if their images under \( [-] \) are equal.
Thus we have achieved our goal of "reducing calculations in tricategories to calculations in Gray categories",
since we are now able to check equations of pasting diagrams in \( \cat{T} \) by checking equivalent equations of pasting diagrams in \( \that. \)
Since most constraint cells in the latter are identities this is a tremendous simplification and has the additional advantage that pasting diagrams in Gray categories can be equivalently replaced by rewriting
diagrams as they are used for example by the online proof assistant globular \cite{Bar2016}.

The thesis is structured as follows:

Before we come to the just discussed tricategory case, chapter 2 treats the analog results for bicategories.

The first five sections of chapter \ref{sec_bicats} summarize some elementary results of 2-dimensional category theory that are required for this thesis.
In section \ref{sec_def_bicat} and \ref{sec_string_diagrams_bicats} we give the definition of a bicategory and introduce string diagrams for 2-categories.
Section \ref{sec_internalization} introduces adjunctions and equivalences internal to a bicategory \( \cat{B} \) and summarizes some standard results for the case \( \cat{B}=\cat{Cat}. \)
In section \ref{sec_bicat_transfors} we give the definitions of the bicategorical transfors, which are the morphisms and higher morphisms (i.e. morphisms between morphisms etc.) between bicategories.
Those will be needed later on in order to give the definition of a tricategory.
Section \ref{sec_coherence_for_bicats} summarizes -without a proof- the coherence theorem for bicategories from section 2.2 of \cite{Gurski2013}, which states that parallel coherence 2-morphisms in a free bicategory are equal.

In section \ref{sec_strictification_for_bicats} we discuss the well known fact that calculations in bicategories can be reduced to calculations in 2-categories.
This is usually implicitly assumed in literature, but it is worth taking a closer look at the details, since the overall strategy can be mimicked later on in the tricategory case.
Analogous to the discussion above it suffices to construct a pair \( \cat{B} \xleftarrow{\ev} \bhat \xrightarrow{[-]} \bst \) of strict biequivalences, where \( \bhat \) is "the bicategory of formal composites in \( \cat{B} \)" and \( \bst \) is a 2-category (see also diagram \eqref{equ_that_equivalences}).

Chapter \ref{sec_tricats} treats the tricategory case.

In section \ref{sec_def_tricat} we summarize the necessary background on tricategories from \cite{Gurski2013}.
This includes the definition of a tricategory, and some stricter versions of tricategories which are needed later on for our strictification result.
One of these stricter versions are Gray-categories, for which section \ref{sec_rewriting_diagrams} explains the graphical calculus of rewriting diagrams.

Sections \ref{sec_globular_sets_and_magmoids} and \ref{sec_tricats_as_magmoids} develop an alternative description of tricategories in terms of so called 3-magmoids.
The basic idea of a 3-magmoid is the following:
Algebraic structures like groups or rings have an underlying magma, which is obtained by forgetting everything about the algebra axioms or distinguished elements of the algebra, but remembering the operations and the underlying set of the algebra.
Analogously every tricategory has an underlying 3-magmoid, which is obtained by forgetting everything about the axioms and the constraint cells of the tricategory, but remembering composition operations and the types of the morphisms.
Section \ref{sec_tricats_as_magmoids} unpacks the definition of a tricategory in terms of transfors towards a definition in terms of a 3-magmoid with additional structure.
Although the definition of a tricategory in terms of transfors is more concise and conceptual, the magmoidal definition turns out to be useful to construct many of the tricategories appearing in section \ref{sec_strictification_for_tricats}.

Another advantage of the magmoidal definition of a tricategory is that it allows to give in section \ref{sec_virtually_strict_functor} a simple definition of a virtually strict functor as a morphism of 3-magmoids, which preserves constraint cells strictly.
We should mention that virtually strict functors and virtually strict triequivalences are in one-to-one correspondence with strict (tricategory) functors and strict triequivalences.
A different name is only chosen, since virtually strict functors compose in a different way than strict functors do.
In section \ref{sec_coherence_for_tricats} we state the coherence theorem of tricategories from \cite{Gurski2013}, which says that parallel coherence 3-morphisms in a free tricategory are equal.

Sections \ref{sec_quotiening_out_coherence} and \ref{sec_strictification_for_tricats} contain the main results of the thesis.
Section \ref{sec_quotiening_out_coherence} studies how one can "quotient out coherence" from tricategories.
The results from this section will be applied in \ref{sec_strictification_for_tricats},
where we construct the tricategory \( \that \) together with the strict triequivalences \( \ev: \that \to \cat{T} \) and \( [-]: \that \to \tst \) from \eqref{equ_that_equivalences}.
Towards the end of section \ref{sec_strictification_for_tricats} we discuss the implications of this construction for the diagrammatic calculus in tricategories.
As an example we demonstrate how the results of the thesis can be used to replace the axioms in the definition of a biadjunction by equivalent -but much simpler- axioms in a triequivalent Gray-category.
\clearpage{}

\clearpage{}

\section{Bicategories and their diagrams} \label{sec_bicats}

\subsection{The definition of a bicategory} \label{sec_def_bicat}

In this chapter we give the definition of a bicategory and of the bicategorical transfors (i.e. weak functors, transformations, modifications).
A neat collection of these definitions can be found for example in \cite{Leinster1998}.

The definition of a bicategory is obtained from the definition of a category by an instance of so called \emph{categorification}.
In order to understand the process of categorification we first observe that the definition of a category \( \cat{C} \) can be given purely in terms of sets and functions.
For example the existence of a distinguished (unit-)element \( 1_a \in \cat{C}(a,a) \) is equivalent to the existence of a (unit-)function \( I_a: * \to \cat{C}(a,a), \) where \( * \) denotes the terminal set.
Also the axioms of a category can be easily described in terms of equations of functions (which run between products of hom-sets and are built from composition- and unit-functions).

Categorifying the definition of a category then means to replace sets with categories (the terminal set gets replaced with the terminal category), functions with functors and equations of functions (i.e. axioms) with natural isomorphisms.
Note that during this process the axioms in the definition of a category became data (given in the form of natural isomorphisms) in the definition of a bicategory.
Therefore one imposes "new" axioms, which now come in form of equations between natural isomorphisms.

\begin{mydef} \label{def_bicat}
A bicategory \( (\cat{\cat{B}};{*},I;a,l,r) \) consists of the following data:

\begin{itemize}
\item
A collection of objects \( \ob{\cat{\cat{B}}}. \)

\item
For each pair \( (a,b) \) of objects of \( \cat{\cat{B}} \) a category \( \cat{\cat{B}}(a,b), \) called the local bicategory at \( (a,b). \)
The objects of a local category are called 1-morphisms or 1-cells of the bicategory \( \cat{\cat{B}}, \) and the morphism of a local bicategory are called 2-morphisms of the bicategory \( \cat{\cat{B}}. \)

\item
For each triple \( (a,b,c) \) of objects of \( \cat{\cat{B}} \) a functor
\[
{*} _{abc}: \cat{\cat{B}}(b,c) \times \cat{\cat{B}}(a,b) \to \cat{\cat{B}}(a,c).
\]

\item
For each object \( a \) of \( \cat{\cat{B}} \) a functor
\[
I_a: \cat{1} \to \cat{\cat{B}}(a,a),
\]
where \( \cat{1} \) denotes the terminal category.

\item
For each 4-tuple \( (a,b,c,d) \) of objects of \( \cat{\cat{B}} \) a natural isomorphism
\[
\begin{tikzcd}
\cat{\cat{B}}(c,d) \times \cat{\cat{B}}(b,c) \times \cat{\cat{B}}(a,b)
\ar[r, "{*} \times 1" {name=s1}]
\ar[d, "1 \times {*}"]
	& \cat{\cat{B}}(b,d) \times \cat{\cat{B}}(a,b)
	\ar[d, "{*}"] \\
\cat{\cat{B}}(c,d) \times \cat{\cat{B}}(a,c)
\ar[r, "{*}" {name=t1}]
	& \cat{\cat{B}}(a,d), \\
\ar[from=s1, to=t1, "a", Rightarrow, short=7pt]
\end{tikzcd}
\]
where we omit indices.

\item
For each pair \( (a,b) \) of objects of \( \cat{\cat{B}} \) natural isomorphism \( l \) and \( r \)
\[
\begin{tikzcd}[column sep=tiny]
	& \cat{\cat{B}}(b,b) \times \cat{\cat{B}}(a,b)
	\ar[rd,"{*}"]
	\ar[ld, leftarrow, "I_b \times 1" swap]
	& \\
\cat{\cat{B}}(a,b)
\ar[rr,"1" {name=t1, swap}]
		& & \cat{\cat{B}}(a,b)
		\ar[from=ul, to=t1, "l", Rightarrow, short=5pt]
\end{tikzcd}
\quad
\begin{tikzcd}[column sep=tiny]
	& \cat{\cat{B}}(a,b) \times \cat{\cat{B}}(a,a)
	\ar[rd,"{*}"]
	\ar[ld, leftarrow, "1 \times I_a" swap]
	& \\
\cat{\cat{B}}(a,b)
\ar[rr,"1" {name=t1, swap}]
		& & \cat{\cat{B}}(a,b)
		\ar[from=ul, to=t1, "r", Rightarrow, short=5pt].
\end{tikzcd}
\]
\end{itemize}

The data of \( \cat{\cat{B}} \) are subject to the following axioms:
\begin{enumerate}[(i)]
\item \label{list_pentagon_axiom}
For each 5-tuple \( (a,b,c,d,e) \) of objects of \( \cat{\cat{B}} \) the following equation of natural transformations holds:
(Here we write \( \cat{\cat{B}^4} = \cat{\cat{B}^4}(a,b,c,d,e) \) as an abbreviation for \( \cat{\cat{B}}(d,e) \times \cat{\cat{B}}(d,c) \times \cat{\cat{B}}(c,b) \times \cat{\cat{B}}(a,b), \) and similar for different powers of \( \cat{\cat{B}}^i. \))

\[
\begin{tikzcd}[column sep=small]
	& \cat{\cat{\cat{B}}^4}
	\ar[rr,"{*} \times 1 \times 1"]
	\ar[dr, "1 \times {*} \times 1" description]
	\ar[dl, "1 \times 1 \times {*}" swap]
			& & \cat{\cat{\cat{B}}^3}
			\ar[dr,"{*} \times 1"]
			\ar[dl, Rightarrow, short=8pt, "a \times 1" swap] & \\
\cat{\cat{\cat{B}}^3}
\ar[dr,"1 \times {*}" swap]
		& & \cat{\cat{\cat{B}}^3}
		\ar[rr,"{*} \times 1"]
		\ar[dl,"1 \times {*}" description]
		\ar[ll, Rightarrow, short=20pt, "1 \times a" swap]
				& & \cat{\cat{\cat{B}}^2}
				\ar[dl,"{*}"]
				\ar[dlll, Rightarrow, short=40pt, pos=.45, "a" swap] \\
	& \cat{\cat{\cat{B}}^2}
	\ar[rr, "{*}" swap]
			& & \cat{\cat{B}}
\end{tikzcd}
\phantom{xx} = \phantom{xx}
\begin{tikzcd}[column sep=small]
	& \cat{\cat{\cat{B}}^4}
	\ar[rr,"{*} \times 1 \times 1"]
	\ar[dl, "1 \times 1 \times {*}" swap]
			& & \cat{\cat{\cat{B}}^3}
			\ar[dr,"{*} \times 1"]
			\ar[dl, "1 \times {*}" description]
			\ar[dlll, phantom, "="] & \\
\cat{\cat{\cat{B}}^3}
\ar[dr,"1 \times {*}" swap]
\ar[rr,"{*} \times 1"]
		& & \cat{\cat{\cat{B}}^2}
		\ar[dr,"{*}" description]
		\ar[ld, Rightarrow, short=8pt, "a" swap]
				& & \cat{\cat{\cat{B}}^2}
				\ar[dl,"{*}"]
				\ar[ll, Rightarrow, short=20pt, "a" swap] \\
	& \cat{\cat{\cat{B}}^2}
	\ar[rr, "{*}" swap]
			& & \cat{\cat{B}}.
\end{tikzcd}
\]

\item \label{list_triangle_axiom}
For each triple of objects of \( \cat{\cat{B}} \) the following equation of natural transformations holds:
(For notation see previous item.)

\[
\begin{tikzcd} [sep=large]
\cat{B}^2
\ar[r, "1 \times I \times 1"]
\ar[rd, "1 \times 1" {swap, name=t1}]
	& \cat{B}^3
	\ar[r, "{*} \times 1"]
	\ar[d, "1 \times {*}"]
	\ar[to=t1, Rightarrow, short=1.5mm, "1 \times l" {xshift=-1mm, yshift=0mm}]
		& \cat{B}^2
		\ar[d, "{*}"]
		\ar[ld, Rightarrow, short=5mm, "a"] \\
	& \cat{B}^2
	\ar[r, "{*}"]
		& \cat{B}
\end{tikzcd}
\quad = \quad
\begin{tikzcd} [sep=large]
\cat{B}^2
\ar[r, "1 \times I \times 1"]
\ar[rd, "1 \times 1" {swap, name=t1}]
	& \cat{B}^3
	\ar[r, "{*} \times 1"]
	\ar[d, "{*} \times 1"]
	\ar[to=t1, Rightarrow, short=1.5mm, "r \times 1" {xshift=-1mm, yshift=0mm}]
		& \cat{B}^2
		\ar[d, "{*}"]
		\ar[ld, phantom, "="] \\
	& \cat{B}^2
	\ar[r, "{*}"]
		& \cat{B}
\end{tikzcd}
\]
\end{enumerate}
\end{mydef}

\begin{teano}~ \label{ter_bicat_stuff}
\begin{enumerate}[(i)]
\item
The local sets (usually called hom-sets) \( \cat{B}(a,b)(f,g) \) of the local categories are called 2-local sets of \( \cat{B}. \)

\item
If \( P \) is a property of a category, a bicategory \( \cat{B} \) is called locally \( P \) if all local categories of \( \cat{B} \) have the property \( P. \)

\item
We write respectively \( \ob{\cat{B}}, \mor{\cat{B}} \) and \( \mmor{\cat{B}} \) for the collection of objects (=0-morphisms), 1-morphisms and 2-morphisms of \( \cat{B}. \)

\item
We call \( a \) the source and \( b \) the target of a 1-morphism \( f \in \ob{\cat{B}(a,b)}. \)
Similarly we call \( f \) the source (1-morphism), \( g \) the target (1-morphism), \( a \) the source object (=source 0-morphism) and \( b \) the target object of a 2-morphism \( \alpha \in \cat{B}(a,b)(f,g). \)
We use \emph{type} as a collective term for source and target.
The types of an n-morphism \( x \) \( (n \in \set{1,2}) \) are well defined and we write \( s x \) resp. \( t x \) for the source- resp. target-(n-1)-morphism of \( x. \)
If for example \( \alpha \) is a 2-morphism in \( \cat{B}, \)  \( s^2 \alpha = s (s \alpha) \) denotes the source 0-morphism of \( \alpha. \)

When we want to specify all types of a 2-morphism \( \alpha \in \mmor{\cat{B}}, \) we often write instead of \( \alpha \in \cat{B}(a,b)(f,g) \) more suggestively \( \alpha: f \to g: a \to b. \)
Similarly we can specify the types of a 1-morphism \( f \in \mor{\cat{B}} \) via \( f: a \to b. \)

\item \label{list_parallel_morphisms}
Two n-morphisms \( x,y \) are called parallel if \( sx = sy \) and \( tx = ty. \)

\item
Let \( \alpha \in \cat{B}(a,b)(g,h) \) and \( \beta \in \cat{B}(a,b)(f,g), \) then we say \( \alpha \) and \( \beta \) are composable along 1-morphisms
and write \( \alpha \circ \beta \) for the composite of \( \alpha \) and \( \beta \) as morphisms of the category \( \cat{B}(a,b). \)

\item
Let \( f \in \ob{\cat{B}(b,c)} \) and \( g \in \ob{\cat{B}(a,b)}, \) then we say \( f \) and \( g \) are composable along objects and write \( f * g := {*} _{abc} (f,g). \)
Similarly if \( \alpha \in \mor{\cat{B}(b,c)} \) and \( \beta \in \mor{\cat{B}(a,b)}, \)
we call \( \alpha \) and \( \beta \) composable along objects and write \( \alpha * \beta := {*} _{abc} (\alpha,\beta). \)

\item
We denote the single object in the source category of \( I_a \) with \( \bullet \) and call
\[
1_a:= I_a (\bullet): a \to a
\]
the unit (1-morphism) at \( a. \)

The units of the local categories of \( \cat{B} \) are called unit 2-morphisms or 2-units of \( \cat{B}. \)

\item
Components of the natural isomorphisms \( (a,l,r) \) or inverses of such components are called constraint 2-cells or simply constraint cells.
I.e. any constraint cell is -for an appropriate choice of \( (f,g,h) \)- a 2-morphism from the following list:
\[
a _{fgh}, l_f, r_f, a ^{-1}_{fgh}, l ^{-1}_{f}, r ^{-1}_{f}.
\]

\item
The functorality of \( * \) implies
\[
(\alpha_1 * \beta_1) \circ (\alpha_2 * \beta_2) = (\alpha_1 \circ \alpha_2) * (\beta_1 \circ \beta_2),
\]
if the right-hand side is defined (this implies the left hand side is defined).
This fact is called the interchange law for bicategories.
\end{enumerate}
\end{teano}

\begin{bem} \label{rem_bicat_axioms_component_wise}
Above we gave the bicategory-axioms as two equations of natural transformations.
These equations hold if and only if they hold component wise, i.e. if the diagrams

\[
\begin{tikzcd} [column sep=huge]
((f*g)*h)*k
\ar[r, "a _{fgh} * 1_k"]
\ar[d, "a _{(f*g)hk}"]
	& (f*(g*h))*k
	\ar[r, "a _{f(g*h)k}"]
		& f*((g*h)*k)
		\ar[d, "1_f * a _{ghk}"] \\
(f*g)*(h*k)
\ar[rr, "a _{fg(h*k)}"]
		& & f*(g*(h*k))
\end{tikzcd}
\]

and

\[
\begin{tikzcd}
(f*1 _{sf})*g
\ar[rr, "a _{f1g}"]
\ar[dr, "r_f * 1_g" swap]
		& & f * (1 _{sf} *g)
		\ar[dl, "1_f * l_g"] \\
	& f*g.
\end{tikzcd}
\]

commute for all composable \( f,g,h,k. \)
\end{bem}

Later, we will see that any bicategory is equivalent to a so called strict bicategory.

\begin{mydef} \label{def_2_category}
A bicategory \( (\cat{B};*,I;a,l,r) \) is called a strict bicategory or a 2-category, if \( a,l,r \) are identity natural transformations.
\end{mydef}

\begin{bei}~ \label{ex_bicats}
\begin{enumerate}
\item \label{list_the_bicat_cat}
There is a 2-category \( \cat{Cat}, \) whose objects are categories, whose 1-morphisms are functors and whose 2-morphisms are natural transformations.

\item
A bicategory with one object is called a monoidal category.
Any category \( \cat{C} \) with finite products is a monoidal category:

In order to define horizontal composition, we choose for any pair \( (a,b) \) of objects of \( \cat{C} \) a product \( (a \times b), p_a: a \times b \to a, p_b: a \times b \to b. \)
This gives rise to a functor \( \times: \cat{C} \times \cat{C} \to \cat{C}, \) which sends a morphism \( (f,g):(a,b) \to (a',b') \) to the unique morphism \( f \times g: a \times b \to a' \times b', \)
which satisfies \( p _{a'} \circ (f \times g) = f \circ p_a \) and \( p _{b'} \circ (f \times g) = g \circ p_b. \)

The unit (at the only object of \( \cat{C} \)) is the terminal object \( 1 \in \ob{\cat{C}}, \) which exists since it is the empty product.

\( l_a: 1 \times a \to a \) and \( r_a: a \times 1 \to a \) are given by the projections \( p_a ,\) and \( a _{b_1 b_2 b_3} \) is defined in the obvious way through a universal property.

\item
Dually to the previous example any category with finite coproducts is a monoidal category.

\item
The category \( \cat{Ab} \) of abelian groups can be made into a monoidal category not only with the product or the coproduct,
but also with the usual tensor product \( \otimes \) of abelian groups, which "linearises bilinear maps".
Lots of categories which have a similar notion of bilinear maps can be given the structure of a monoidal category in an analogous way.
This can be made precise by means of so called multicategories.

\item
For any monoidal category \( \cat{V} \) there is a 2-category of \( \cat{V} \)-enriched categories (see \cite{Kelly1982} chapter 1.2.).
By setting \( \cat{V} = \cat{Set} \) we obtain example \ref{ex_bicats} (\ref{list_the_bicat_cat}) that way.

\item
There is a bicategory \( \cat{Bim} \) whose objects are rings, whose morphisms are bimodules \( \bimod{R}{M}{S}: S \to R, \) and whose 2-morphisms are homomorphisms of bimodules.

Horizontal composition is given as the tensor product over rings: \( \bimod{T}{N}{S} * \bimod{S}{M}{R} := \bimod{T}{(N \otimes_S M)}{R}. \)

The unit at \( S \) is the ring \( S \) considered as an \( (S,S) \)-bimodule.
\end{enumerate}
\end{bei}
\subsection{String diagrams for 2-categories} \label{sec_string_diagrams_bicats}

In a 1-category one can incorporate associativity in the notation of composition, by allowing unbracketed composites of morphisms, since any choice of a bracketing defines the same morphism.
This is sometimes called generalized associativity law.

Now let us consider notation for composites in a 2-category.
Since composition of 1-morphisms in a 2-category is strictly associative (in fact the 1-morphisms constitute a 1-category), one can adopt the convention from above and omit brackets in \( * \)-composites of 1-morphisms.
But the interchange law, which can be seen as a 2-dimensional associativity law for 2-morphisms, cannot be incorporated in a 1-dimensional notation nicely:
Although \( (\alpha * \beta) \circ (\alpha' * \beta') \) is equal to \( (\alpha \circ \alpha') * (\beta \circ \beta') \) one has to decide for one variant when writing down this morphism.
For large composites this makes it difficult to detect equality of 2-morphisms.
Writing composites in a 2-category in form of (2-dimensional) string diagrams (a short introduction can be found in \cite{Marsden2014}) addresses this problem of 1-dimensional notation.

In string diagram notation, objects are represented as regions, 1-morphisms as vertical strings between regions and 2-morphisms as points between strings.
More precisely an object \( a, \) a 1-morphism \( f: a \to b \) and a 2-morphism \( \alpha: f \to g: a \to b \) are denoted
\[
\begin{tikzpicture}[baseline = {([yshift=-.5ex]current bounding box.center)}, math mode]
\draw[densely dotted] node {\scriptstyle{a}} +(-.5,-.5) rectangle +(.5,.5);
\end{tikzpicture}
\quad
\begin{tikzpicture}[baseline = {([yshift=-.5ex]current bounding box.center)}, math mode]
\draw (0,0) -- node [pos=.7, auto, swap, inner sep = .1mm] {\scriptstyle{f}} (0,1);
\path (-.5,.5) node [yshift=-1mm] {\scriptstyle{b}} +(1,0) node [yshift=-1mm] {\scriptstyle{a}};
\draw[densely dotted] (-1,0) rectangle (1,1);
\end{tikzpicture}
\quad \text{and} \quad
\begin{tikzpicture}[string={1}{1}, baseline = {([yshift=-.5ex]current bounding box.center)}, math mode]
\draw (0,0) -- node [pos=1, above, swap, inner sep = .1mm] {\scriptstyle{f}}
	node [pos=0, below, swap, inner sep = .1mm] {\scriptstyle{f}}
	node [morphism, opacity=1, fill=white] {\scriptstyle{\alpha}} (0,1);
\path (-.5,.5) node [yshift=-1mm] {\scriptstyle{b}} +(1,0) node [yshift=-1mm] {\scriptstyle{a}};
\draw[densely dotted] (-1,0) rectangle (1,1);
\end{tikzpicture}.
\]

\( * \)-composites of 1-morphisms are denoted by gluing diagrams horizontally together, i.e.
\(
\begin{tikzpicture}[string={.4cm}{.5cm}, rounded corners=7, baseline={([yshift=-.5ex]current bounding box.center)}, math mode]
\node (s1) {}; \node (s2) [r=of s1] {};
\node (t1) [b=of s1] {}; \node (t2) [b=of s2] {};
\draw (s1.90) -- node [inner sep = .1mm, auto] {\scriptstyle{f}} (t1.270);
\draw (s2.90) -- node [inner sep = .1mm, auto] {\scriptstyle{g}} (t2.270);
\draw[densely dotted, rounded corners=0] ([xshift=2mm]current bounding box.south east) rectangle ([xshift=-2mm]current bounding box.north west);
\end{tikzpicture}
\)
(here and in the following we omit the labels of the areas) denotes the 1-morphism \( f * g. \)
Since composition of 1-morphisms in a 2-category is associative a diagram with more than two vertical strings like
\(
\begin{tikzpicture}[string={.4cm}{.5cm}, rounded corners=7, baseline={([yshift=-.5ex]current bounding box.center)}, math mode]
\node (s1) {}; \node (s2) [r=of s1] {}; \node (s3) [r=of s2] {};
\node (t1) [b=of s1] {}; \node (t2) [b=of s2] {}; \node (t3) [b=of s3] {};
\draw (s1.90) -- node [inner sep = .1mm, auto] {\scriptstyle{f}} (t1.270);
\draw (s2.90) -- node [inner sep = .1mm, auto] {\scriptstyle{g}} (t2.270);
\draw (s3.90) -- node [inner sep = .1mm, auto] {\scriptstyle{h}} (t3.270);
\draw[densely dotted, rounded corners=0] ([xshift=2mm]current bounding box.south east) rectangle ([xshift=-2mm]current bounding box.north west);
\end{tikzpicture}
\)
determines a unique 1-morphism.
The fact that in a 2-category \( 1 _{tf} * f = f = f * 1 _{sf}  \) holds can be incorporated in string diagram notation by not drawing unit 1-morphisms, i.e.
\(
\begin{tikzpicture}[string={.4cm}{.5cm}, rounded corners=7, baseline={([yshift=-.5ex]current bounding box.center)}, math mode]
\node (s1) {};
\node (t1) [b=of s1] {};
\path (s1.90) -- node [inner sep = .1mm] {\scriptstyle{a}} (t1.270);
\draw[densely dotted, rounded corners=0] ([xshift=2.8mm]current bounding box.south east) rectangle ([xshift=-2.8mm]current bounding box.north west);
\end{tikzpicture}
=
\begin{tikzpicture}[string={.4cm}{.5cm}, rounded corners=7, baseline={([yshift=-.5ex]current bounding box.center)}, math mode]
\node (s1) {};
\node (t1) [b=of s1] {};
\draw (s1.90) -- node [inner sep = .1mm, auto] {\scriptstyle{1_a}} (t1.270);
\draw[densely dotted, rounded corners=0] ([xshift=2mm]current bounding box.south east) rectangle ([xshift=-2mm]current bounding box.north west);
\end{tikzpicture}.
\)

Now one can allow the dots in the string diagrams to have more than one input and output string.
A 2-Morphism whose source or target morphism is a unit 1-morphism can be represented by a dot with no input resp. output wires.
For example, the diagrams
\[
\begin{tikzpicture}[string={.7cm}{.7cm}, rounded corners=7, baseline={([yshift=-.5ex]current bounding box.center)}, math mode]
\node (s1) {}; \node (s2) [r=of s1] {}; \node (s3) [r=of s2] {};
\node (mo1) [morphism, b=of s2] {\scriptstyle{\alpha}};
\node (t1) [bbm={s1}{s2}] {}; \node (t2) [bbm={s2}{s3}] {};
\draw (s1.90) to [d hr] node [inner sep = .3mm, left, pos=.4, xshift=-.7mm] {\scriptstyle{f}} (mo1) (mo1) to [hl d] node [inner sep = .3mm, left] {\scriptstyle{k}} (t1.270);
\draw (s2.90) -- node [inner sep=.3mm, auto] {\scriptstyle{g}} (mo1);
\draw (s3.90) to [d hl] node [inner sep = .3mm, auto, pos=.4] {\scriptstyle{h}} (mo1) (mo1) to [hr d] node [inner sep = .3mm, right] {\scriptstyle{l}} (t2.270);
\draw[densely dotted, rounded corners=0] ([xshift=2mm]current bounding box.south east) rectangle ([xshift=-2mm]current bounding box.north west);
\end{tikzpicture}
\quad \text{and} \quad
\begin{tikzpicture}[string={.7cm}{.7cm}, rounded corners=7, baseline={([yshift=-.5ex]current bounding box.center)}, math mode]
\node (s1) {}; \node (s2) [r=of s1] {}; \node (s3) [r=of s2] {};
\node (mo1) [morphism, b=of s2] {\scriptstyle{\beta}};
\node (t1) [bbm={s1}{s2}] {}; \node (t2) [bbm={s2}{s3}] {};
\draw(mo1) to [hl d] node [inner sep = .3mm, left] {\scriptstyle{f}} (t1.270);
\draw (mo1) to [hr d] node [inner sep = .3mm, right] {\scriptstyle{g}} (t2.270);
\draw[densely dotted, rounded corners=0] ([xshift=2mm]current bounding box.south east) rectangle ([xshift=-2mm]current bounding box.north west);
\end{tikzpicture}
=
\begin{tikzpicture}[string={.7cm}{.7cm}, rounded corners=7, baseline={([yshift=-.5ex]current bounding box.center)}, math mode, label distance=-1.5mm]
\node (s1) {}; \node (s2) [r=of s1] {}; \node (s3) [r=of s2] {};
\node (mo1) [vertex, b=of s2, label=90:\scriptstyle{\beta}] {};
\node (t1) [bbm={s1}{s2}] {}; \node (t2) [bbm={s2}{s3}] {};
\draw(mo1.center) to [l d] node [inner sep = .3mm, left, pos=.7] {\scriptstyle{f}} (t1.270);
\draw (mo1.center) to [r d] node [inner sep = .3mm, right, pos=.7] {\scriptstyle{g}} (t2.270);
\draw[densely dotted, rounded corners=0] ([xshift=2mm]current bounding box.south east) rectangle ([xshift=-2mm]current bounding box.north west);
\end{tikzpicture}
\]
represent the morphism \( \alpha: f * g * h \to k * l \) and \( \beta: 1 _{s^2 \beta} \to f * g. \)

\( * \)- resp. \( \circ \)-composites of 2-morphisms are denoted by gluing diagrams horizontally resp. vertically together, i.e.
\[
\begin{tikzpicture}[string={.7cm}{.7cm}, rounded corners=7, baseline={([yshift=-.5ex]current bounding box.center)}, math mode]
\node (s1) {}; \node (s2) [r=of s1] {};
\node (mo1) [morphism, b=of s1] {\scriptstyle{\alpha}}; \node (mo2) [morphism, b=of s2] {\scriptstyle{\beta}};
\node (t1) [bb=of s1] {}; \node (t2) [bb=of s2] {};
\draw (s1.90) -- node [inner sep = .1mm, auto] {\scriptstyle{f}} (mo1) -- node [inner sep = .1mm, auto] {\scriptstyle{f'}} (t1.270);
\draw (s2.90) -- node [inner sep = .1mm, auto] {\scriptstyle{g}} (mo2) -- node [inner sep = .1mm, auto] {\scriptstyle{g'}} (t2.270);
\draw[densely dotted, rounded corners=0] ([xshift=2mm]current bounding box.south east) rectangle ([xshift=-2mm]current bounding box.north west);
\end{tikzpicture}
\quad \text{resp.} \quad
\begin{tikzpicture}[string={.7cm}{.7cm}, rounded corners=7, baseline={([yshift=-.5ex]current bounding box.center)}, math mode]
\node (s1) {};
\node (mo1) [morphism, b=of s1] {\scriptstyle{\sigma}};
\node (mo2) [morphism, b=of mo1] {\scriptstyle{\sigma'}};
\node (t1) [b=of mo2] {};
\draw (s1.90) -- node [inner sep = .1mm, auto] {\scriptstyle{h}} (mo1) -- node [inner sep = .1mm, auto] {\scriptstyle{h'}} (mo2) -- node [inner sep = .1mm, auto] {\scriptstyle{h'' }} (t1.270);
\draw[densely dotted, rounded corners=0] ([xshift=2mm]current bounding box.south east) rectangle ([xshift=-2mm]current bounding box.north west);
\end{tikzpicture}
\]
denotes the horizontal composite \( \alpha * \beta \) of morphisms \( \alpha: f \to f' \) and \( \beta:g \to g', \)
resp. the vertical composite \( \sigma' \circ \sigma \) of morphisms \( \sigma: h \to h' \) and \( \sigma': h' \to h''. \)
The fact that \( 1 _{t \sigma} \circ \sigma = \sigma = \sigma \circ 1 _{s \sigma} \) can be incorporated in string diagram notation by not drawing unit 2-morphisms.

We emphasize once more that we think of string diagrams as a \emph{notation} for composites in a 2-category, just as strings of 1-morphisms are a notation for composites in a 1-category.
Analogous to the "generalized associativity law" for 1-categories, one now can show that all possible ways of interpreting a string diagram as a composite of 2-morphisms are equal.
(This statement is made rigorous and gets proven in \cite{Power1990}.
There this happen in the context of pasting diagrams, which are equivalent to string diagrams.)
For example the string diagram
\[
\begin{tikzpicture}[string={.7cm}{.7cm}, rounded corners=7, baseline={([yshift=-.5ex]current bounding box.center)}, math mode]
\node (s1) {}; \node (s2) [r=of s1] {};
\node (mo1) [morphism, b=of s1] {\scriptstyle{\alpha}}; \node (mo2) [morphism, b=of s2] {\scriptstyle{\beta}};
\node (mo3) [morphism, bb=of s1] {\scriptstyle{\alpha'}}; \node (mo4) [morphism, bb=of s2] {\scriptstyle{\beta'}};
\node (t1) [b=of mo3] {}; \node (t2) [b=of mo4] {};
\draw (s1) -- (mo1) -- (mo3) -- (t1);
\draw (s2) -- (mo2) -- (mo4) -- (t2);
\end{tikzpicture}
\]
can be interpreted as the composites
\[ (\alpha' * \beta') \circ (\alpha * \beta) \quad \text{or} \quad (\alpha' \circ \alpha) * (\beta' \circ \beta) \quad \text{or} \quad
\big( (\alpha' \circ \alpha) * \beta' \big) \circ (1 _{s \alpha} * \beta) \quad \text{or} \; \dots \]
which are all equal by the interchange law.

Note also that the string diagrams
\[
\begin{tikzpicture}[string={.7cm}{.7cm}, rounded corners=5, baseline={([yshift=-.5ex]current bounding box.center)}, math mode, label distance=-1.5mm]
\node (s1) {}; \node (s2) [r=of s1] {};
\node (mo1) [morphism, b=of s1] {\scriptstyle{\alpha}}; \node (mo2) [morphism, bb=of s2] {\scriptstyle{\beta}};
\node (t1) [bb=of mo1] {}; \node (t2) [b=of mo2] {};
\draw (s1) -- (mo1) -- (t1);
\draw (s2) -- (mo2) -- (t2);
\end{tikzpicture}
\quad \text{and} \quad
\begin{tikzpicture}[string={.7cm}{.7cm}, rounded corners=5, baseline={([yshift=-.5ex]current bounding box.center)}, math mode, label distance=-1.5mm]
\node (s1) {}; \node (s2) [r=of s1] {};
\node (mo1) [morphism, bb=of s1] {\scriptstyle{\alpha}}; \node (mo2) [morphism, b=of s2] {\scriptstyle{\beta}};
\node (t1) [b=of mo1] {}; \node (t2) [bb=of mo2] {};
\draw (s1) -- (mo1) -- (t1);
\draw (s2) -- (mo2) -- (t2);
\end{tikzpicture}
\quad \text{and} \quad
\begin{tikzpicture}[string={.7cm}{.7cm}, rounded corners=5, baseline={([yshift=-.5ex]current bounding box.center)}, math mode, label distance=-1.5mm]
\node (s1) {}; \node (s2) [r=of s1] {};
\node (mo1) [morphism, b=of s1] {\scriptstyle{\alpha}}; \node (mo2) [morphism, b=of s2] {\scriptstyle{\beta}};
\node (t1) [b=of mo1] {}; \node (t2) [b=of mo2] {};
\draw (s1) -- (mo1) -- (t1);
\draw (s2) -- (mo2) -- (t2);
\end{tikzpicture}
\]
all represent the same 2-morphism and are therefore considered as equal.
More general the height at which the dots in parallel strings are drawn does not matter, as long as we keep the order of the dots within a string.

String diagrams can be used offhandedly only for 2-categories, but many bicategories that appear in nature are non-strict bicategories.
Therefore section \ref{sec_strictification_for_bicats} will explain how string diagrams can also be employed in the non-strict case.
\subsection{Equivalences and adjunctions in bicategories} \label{sec_internalization}

Since categories, functors and natural transformations form a 2-category, one can generalize many definitions from 1-dimensional category theory to general bicategories.
We will do this now for a couple of concepts, which are needed throughout the thesis.

\begin{mydef} \label{def_equivalence_adjunction}
Let \( \cat{B} \) be a bicategory, \( f: a \to b \) and \( g: b \to a \) 1-morphisms in \( \cat{B} \) and \( \eta: 1_a \to g * f \) and \( \epsilon: f * g \to 1_b \) 2-morphisms in \( \cat{B}. \)
Then the 4-tuple \( (f,g,\eta,\epsilon) \)
\begin{itemize}
\item
is called an equivalence, if \( \eta \) and \( \epsilon \) are invertible.

\item \label{list_adjunction_axioms}
is called an adjunction, if the morphisms
\[
f \xrightarrow{r ^{-1}}
f*1 \xrightarrow{1 * \eta}
f*(g*f) \xrightarrow{a ^{-1}}
(f*g)*f \xrightarrow{\epsilon * 1}
1 * f \xrightarrow{l}
f
\]
and
\[
g \xrightarrow{l ^{-1}}
1 * g \xrightarrow{\eta * 1}
(g*f)*g \xrightarrow{a}
g*(f*g) \xrightarrow{1 * \epsilon}
g*1 \xrightarrow{r}
g
\]
are identities.

In this case we call \( \eta \) the unit and \( \epsilon \) the counit of the adjunction \( (f,g,\eta,\epsilon). \)

\item
is called an adjoint equivalence, if it is a equivalence and an adjunction.
\end{itemize}
\end{mydef}

A 1-morphism already gets a special name if it can be regarded as part of an equivalence or an adjunction, whereby the equivalence resp. adjunction need not be specified concretely.

\begin{mydef}~
\begin{enumerate}[(i)]
\item
A 1-morphism \( f \) in \( \cat{B} \) is called an \emph{equivalence}, if there exists an equivalence \( (f,g,\eta,\epsilon) \) in the sense of definition \ref{def_equivalence_adjunction}.
\item
Now let \( h,k \) be 1-morphisms in a bicategory \( \cat{B}. \)
\begin{itemize}[-]
\item
\( h \) is called a \emph{left adjoint}, if there exists an adjunction \( (h,g,\eta,\epsilon). \)
\item
\( h \) is called a \emph{right adjoint}, if there exists an adjunction \( (f,h,\eta,\epsilon). \)
\item
\( h \) is called \emph{left adjoint to \( k \)} (notation \( h \dashv k \)), if there exists an adjunction \( (h,k,\eta,\epsilon). \)
\end{itemize}
\end{enumerate}
\end{mydef}

An equivalence in \( \cat{Cat} \) has the  following alternative characterization.

\begin{lem} \label{lem_characterization_adjunction}
A functor \( F: \cat{C} \to \cat{D} \) is an equivalence if and only if it is essentially surjective and locally a bijection.
This means that for any \( d \in \ob{\cat{D}} \) there exists a \( c \in \ob{\cat{C}}, \) such that \( Fc \) is isomorphic to \( d, \)
and that the assignments \( F _{ab}: \cat{C}(a,b) \to \cat{D}(Fa,Fb) \) are bijections.
\end{lem}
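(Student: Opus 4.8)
The plan is to unwind what it means for $F$ to be an \emph{equivalence} as a $1$-morphism in the bicategory $\cat{Cat}$: by Definition \ref{def_equivalence_adjunction} this says there exist a functor $G: \cat{D} \to \cat{C}$ together with natural isomorphisms $\eta: 1_{\cat{C}} \to G F$ and $\epsilon: F G \to 1_{\cat{D}}$ (here $*$ in $\cat{Cat}$ is functor composition). The condition ``essentially surjective and locally a bijection'' is precisely ``essentially surjective and fully faithful'', so the lemma is the standard characterisation of equivalences of categories, and I would prove the two implications separately. Throughout I write $\circ$ for composition of morphisms inside $\cat{C}$ or $\cat{D}$.

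For the forward direction I assume a quasi-inverse $(G,\eta,\epsilon)$ is given. Essential surjectivity is immediate: for $d \in \ob{\cat{D}}$ the object $c := Gd$ satisfies $F c \cong d$ via the isomorphism $\epsilon_d$. Faithfulness follows from a naturality chase: if $f,g: a \to b$ in $\cat{C}$ satisfy $F f = F g$, then naturality of $\eta$ gives $\eta_b \circ f = G(Ff) \circ \eta_a = G(Fg) \circ \eta_a = \eta_b \circ g$, and cancelling the isomorphism $\eta_b$ yields $f = g$; the symmetric argument using $\epsilon$ shows $G$ is faithful as well. For fullness I take $h: Fa \to Fb$ and set $f := \eta_b^{-1} \circ G(h) \circ \eta_a$; naturality of $\eta$ then gives $G(Ff) = \eta_b \circ f \circ \eta_a^{-1} = G(h)$, and since $G$ is faithful this forces $F f = h$. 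Hence $F$ is fully faithful, i.e.\ each $F_{ab}$ is a bijection.

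For the backward direction I would build a quasi-inverse by hand. Using essential surjectivity (and the axiom of choice) I pick for every $d \in \ob{\cat{D}}$ an object $Gd \in \ob{\cat{C}}$ together with an isomorphism $\epsilon_d: F(Gd) \to d$. On a morphism $k: d \to d'$ I use full faithfulness to define $Gk: Gd \to Gd'$ as the unique morphism with $F(Gk) = \epsilon_{d'}^{-1} \circ k \circ \epsilon_d$; uniqueness together with functoriality of $F$ makes $G$ a functor, and the defining equation $\epsilon_{d'} \circ F(Gk) = k \circ \epsilon_d$ is exactly the naturality of $\epsilon$, which is then a natural isomorphism since each $\epsilon_d$ is invertible. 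To produce $\eta$ I let $\eta_c: c \to GFc$ be the unique (iso)morphism with $F(\eta_c) = \epsilon_{Fc}^{-1}$, which exists and is an isomorphism because $F$ is fully faithful and therefore reflects isomorphisms; naturality of $\eta$ is checked by applying the faithful functor $F$ to both sides of $\eta_{c'} \circ f = G(Ff) \circ \eta_c$ and using the definitions. This exhibits $(F,G,\eta,\epsilon)$ as an equivalence.

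I expect the backward direction to be the main obstacle, not because any single step is deep but because it requires the careful bookkeeping of constructing $G$ on morphisms through full faithfulness, verifying its functoriality, and simultaneously arranging both $\eta$ and $\epsilon$ to be natural; the only genuinely nontrivial ingredient is the use of the axiom of choice to select the objects $Gd$ and isomorphisms $\epsilon_d$. In the forward direction the one slightly subtle point is that establishing fullness of $F$ first needs faithfulness of $G$, so the order of the sub-arguments matters.
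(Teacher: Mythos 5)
Your proof is correct. The paper states this lemma without proof (it is the standard characterisation of equivalences of categories, cf.\ the corresponding result in Mac Lane), so there is no argument in the text to compare against; your two-directional argument — extracting essential surjectivity and full faithfulness from a quasi-inverse via naturality of \( \eta \) and \( \epsilon \), and conversely constructing \( G \) on objects by choice and on morphisms through full faithfulness — is exactly the standard proof, and all the individual steps (including the observation that fullness of \( F \) is most cleanly deduced after faithfulness of \( G \)) check out against the paper's conventions, where for \( F: \cat{C} \to \cat{D} \) and \( G: \cat{D} \to \cat{C} \) the unit and counit have the types \( \eta: 1_{\cat{C}} \to G F \) and \( \epsilon: F G \to 1_{\cat{D}} \) as you assumed.
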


An Adjunction in \( \cat{Cat} \) is already uniquely determined by a small part of the data of definition \ref{def_equivalence_adjunction}.
In order to make this precise we need the notion of an universal arrow.

\begin{mydef}
Let \( \cat{C} \) and \( \cat{D} \) be categories, \( S: \cat{D} \to \cat{C} \) a functor and \( c \in \ob{\cat{C}}. \)
A universal arrow from \( c \) to \( S \) is a tuple \( (r,u) \) consisting of an object \( r \in \ob{D} \) and a morphism \( u: c \to Sr, \)
such that for any \( d \in \ob{D} \) and any morphism \( f: c \to Sd \) there exists a unique morphism \( f': r \to d \) with \( Sf' \circ u = f. \)
\end{mydef}

\begin{satz} \label{prop_characterization_adjunctions_in_cat}.
Let \( G: \cat{D} \to \cat{C} \) be a functor and let for any \( c \in \ob{\cat{C}} \) \( ({F} c, \eta_c: c \to GF c) \) be  a universal arrow from \( c \) to \( G. \)
Then there exists a unique adjunction \( (F, G, \eta, \epsilon), \)
such that \( c \mapsto {F} c \) is the object-function of \( F \) and \( \eta_c \) are the components of the counit \( \eta. \)
\end{satz}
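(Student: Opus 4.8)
The plan is to construct the two missing pieces of data --- the action of $F$ on morphisms and the counit $\epsilon$ --- by repeatedly invoking the universal arrows, and to verify every required equation using the \emph{uniqueness} clause in their definition. The single workhorse of the whole argument is the following reformulation of the universal property: for each $c \in \ob{\cat{C}}$ and $d \in \ob{\cat{D}}$ the assignment $u \mapsto Gu \circ \eta_c$ is a bijection $\cat{D}(Fc,d) \to \cat{C}(c,Gd)$, where surjectivity is the existence part and injectivity the uniqueness part of the definition of a universal arrow. Injectivity furnishes a \emph{cancellation principle}: to prove that two morphisms $u,v \colon Fc \to d$ agree it suffices to check $Gu \circ \eta_c = Gv \circ \eta_c$. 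Nearly every step below is an instance of this principle.

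First I would extend $F$ to a functor. For a morphism $h \colon c \to c'$ the composite $\eta_{c'} \circ h \colon c \to GFc'$ factors uniquely through $\eta_c$, so I define $Fh \colon Fc \to Fc'$ to be the unique morphism satisfying $G(Fh)\circ \eta_c = \eta_{c'}\circ h$. Preservation of identities and of composites is then immediate from the cancellation principle, since in each case both candidates become equal after applying $G(-)\circ\eta_c$. Moreover, the defining equation $G(Fh)\circ\eta_c = \eta_{c'}\circ h$ is precisely the naturality square of $\eta \colon 1_{\cat{C}} \to GF$, so $\eta$ is natural and has the prescribed components.

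Next I would construct the counit. Applying the universal property of $(FGd,\eta_{Gd})$ to the morphism $1_{Gd}\colon Gd \to Gd$ produces a unique $\epsilon_d \colon FGd \to d$ with $G\epsilon_d \circ \eta_{Gd} = 1_{Gd}$; naturality of $\epsilon$ follows again by cancellation (using naturality of $\eta$ on $Gk$ for $k\colon d\to d'$). Since $\cat{Cat}$ is a $2$-category (Example \ref{ex_bicats}), the constraint cells $a,l,r$ occurring in Definition \ref{def_equivalence_adjunction} are identities, so the two adjunction axioms there collapse to the ordinary triangle identities $G\epsilon_d \circ \eta_{Gd} = 1_{Gd}$ and $\epsilon_{Fc}\circ F\eta_c = 1_{Fc}$. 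The first holds by the very definition of $\epsilon$. For the second I would apply the cancellation principle at $c$: one computes $G(\epsilon_{Fc}\circ F\eta_c)\circ\eta_c = G\epsilon_{Fc}\circ\eta_{GFc}\circ\eta_c = \eta_c$, using naturality of $\eta$ to rewrite $GF\eta_c\circ\eta_c$ as $\eta_{GFc}\circ\eta_c$ and then the defining equation of $\epsilon$ at $d=Fc$, while $G(1_{Fc})\circ\eta_c = \eta_c$ as well. I expect this last identity to be the main obstacle, since it is the only point at which the two independently constructed pieces of data, $F$ on morphisms and $\epsilon$, must interact, and it requires chaining the defining equations of $Fh$, of $\epsilon_d$, and the naturality of $\eta$ in the correct order.

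Finally I would prove uniqueness. Suppose $(F',G,\eta,\epsilon')$ is any adjunction whose object-function agrees with $c \mapsto Fc$ and whose unit is $\eta$. Naturality of $\eta$ forces $G(F'h)\circ\eta_c = \eta_{c'}\circ h$, so by the cancellation principle $F'h = Fh$ for every morphism $h$; and the triangle identity $G\epsilon'_d \circ \eta_{Gd} = 1_{Gd}$ together with cancellation forces $\epsilon'_d = \epsilon_d$ for every $d$. Thus both the morphism-action of the functor and the counit are uniquely determined by the prescribed data, which establishes that the adjunction is unique.
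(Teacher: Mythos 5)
Your proposal is correct and follows exactly the route the paper takes: it defines $Fh$ by the same naturality square and $\epsilon_d$ by $G\epsilon_d\circ\eta_{Gd}=1_{Gd}$, precisely as in the paper's proof, which defers the remaining verifications to Mac\,Lane. You simply carry out those omitted verifications (functoriality, the triangle identities via the cancellation principle, and uniqueness), and they check out.
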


\begin{proof}
For a proof see \cite{MacLane2013} chapter IV theorem 2.
Here we just give the definition of the remaining data, namely the action of \( F \) on morphisms and the components of the natural isomorphism \( \epsilon. \)

For a morphism \( f:c \to c' \) a morphism in \( \cat{C}, \)
\( Ff: Fc \to Fc' \) is defined as the unique morphism (here we use the universality of \( \eta_c \)), which makes the diagram
\[
\begin{tikzcd}
c
\ar[r, "\eta_c"]
\ar[d, "f" swap]
	& G{F} c
	\ar[d,"G Ff "] \\
c'
\ar[r,"\eta _{c'}" swap]
	& G{F} c'
\end{tikzcd}
\]
commute.

For \( a \in \ob{D}, \) \( \epsilon_a: FG a \to a \) is the unique morphism, such that \( G \epsilon_a \circ \eta _{Ga} = 1_{Ga}. \)
\end{proof}
\subsection{Bicategorical transfors} \label{sec_bicat_transfors}

Similar as a bicategory is a categorification of a category (see the discussion at the beginning of this section), a weak functor is a categorification of a 1-categorical functor:
The local functions \( F _{ab} \) between local sets (=hom sets) get replaced by (local) functors between local categories.
The axioms of a 1-categorical functor get replaced by natural isomorphisms and thus become data in the definition of a weak functor.
Therefore new axioms are imposed in form of equations of natural isomorphisms.

\begin{mydef} \label{def_weak_functor}
A weak functor \( (F,\phi) \) between bicategories \( \cat{A} \) and \( \cat{B} \) consists of the following data:
\begin{itemize}
\item
An assignment \( F: \ob{\cat{A}} \to \ob{\cat{B}}. \)

\item
For each tuple \( (a,b) \) of objects of \( \cat{A} \) a functor \( F _{ab}: \cat{A}(a,b) \to \cat{B}(Fa,Fb). \)

\item
For each triple \( (a,b,c) \) of objects of \( \cat{A} \) a natural isomorphism
\[
\begin{tikzcd}[column sep = large]
\cat{A}(b,c) \times \cat{A}(a,b)
\ar[r, "* _{abc}"]
\ar[d, "F _{bc} \times F _{ab}" swap]
	& \cat{A}(a,c)
	\ar[d, "F _{ac}"] \\
\cat{B}(Fb,Fc) \times \cat{B}(Fa,Fb)
\ar[r, "* _{Fa Fb Fc}" swap]
\ar[ru, Rightarrow, "\phi _{abc}" swap, short = 5mm]
	& \cat{B}(Fa,Fc)
\end{tikzcd}
\]

\item
For each object \( a \) of \( \cat{A} \) a natural isomorphism

\[
\begin{tikzcd} [column sep=large]
\cat{1}
\ar[r, "I _{a}"]
\ar[dr, "I _{Fa}" {swap, name=s1}]
	& \cat{A}(a,a)
	\ar[d, "F _{aa}"]
	\ar[from=s1, Rightarrow, "\phi_a" {swap, pos=.8}, shorten <= 3mm] \\
	& \cat{B}(Fa,Fa)
\end{tikzcd}
\]
\end{itemize}

The components of these data have to make the following diagrams -in which we omit most indices- commute:
\begin{align} \label{diag_functor_axiom_asso}
\begin{tikzcd}[ampersand replacement=\&]
(Ff * Fg) * Fh
\ar[r, "\phi * 1"]
\ar[d, "a"]
	\& F(f*g)*Fh
	\ar[r, "\phi"]
		\& F((f*g)*h)
		\ar[d, "Fa"] \\
Ff*(Fg*Fh)
\ar[r, "1 * \phi"]
	\& Ff * F(g*h)
	\ar[r, "\phi"]
		\& F(f*(g*h))
\end{tikzcd}
\end{align}

\begin{align} \label{diag_functor_axiom_unit}
\begin{tikzcd}[ampersand replacement=\&]
1 _{F (tf)} * Ff
\ar[r, "l"]
\ar[d, "\phi * 1" swap]
	\& Ff \\
F 1 _{tf} * Ff
\ar[r, "\phi" swap]
	\& F(1 _{tf} * Ff)
	\ar[u, "Fl" swap]
\end{tikzcd}
\quad
\begin{tikzcd}[ampersand replacement=\&]
Ff * 1 _{F (sf)}
\ar[r, "r"]
\ar[d, "1 * \phi" swap]
	\& Ff \\
Ff * F 1 _{	sf}
\ar[r, "\phi" swap]
	\& F(f * 1 _{sf})
	\ar[u, "Fr" swap]
\end{tikzcd}
\end{align}

\end{mydef}

\begin{bem} \label{rem_bicat_1}
Bicategories and weak functors constitute -with the obvious composition - a 1-category \( \cat{Bicat_1}. \)
\end{bem}

\begin{terminology} \label{term_local_functor}
Let \( F \) be a weak functor:

If \( P \) is a property of a 1-categorical functor, \( F \) is called (1-)locally \( P \) if each (local) functor \( F _{ab} \) is \( P. \)

If \( P \) is a property of a function, \( F \) is called 2-locally \( P \) if each functor \( F_{ab} \) is locally \( P. \)
(A 1-categorical functor \( G: \cat{C} \to \cat{D} \) is called locally \( P, \) if the functions \( G _{ab}: \cat{C}(a,b) \to \cat{D}(Ga,Gb) \) are \( P \).)
\end{terminology}

\begin{mydef}
A strict functor of bicategories is a weak functor, for which the transformations \( \phi _{abc} \) and \( \phi _{a} \) are identity transformations.
\end{mydef}

\begin{mydef} \label{def_biequivalence}
A weak functor \( F: \cat{A} \to \cat{B} \) is called  biessentially surjective, if for every object \( b \in \cat{B} \) there exists an object \( a \in \cat{A} \) such that \( Fa \) is equivalent to \( b. \)

A weak functor is called a biequivalence if it is locally an equivalence and biessentially surjective.

A weak functor is called a strict biequivalence, if it is a strict functor and a biequivalence.
\end{mydef}

\begin{bem} \label{rem_biequivalence}
We can unpack the definition of a biequivalence by using the characterization of a equivalence in \( \cat{Cat} \) from lemma \ref{lem_characterization_adjunction}.
Proving that a weak functor \( F \) is a biequivalence then comes down to check two surjectivity and one bijectivity condition, namely
\begin{itemize}
\item
F is biessentially surjective
\item
F is locally essentially surjective
\item
F is 2-locally a bijection
\end{itemize}
\end{bem}

A pseudonatural transformation is the categorification of a (1-categorical) natural transformation.
To see this, one has to consider the components of a natural transformation as functions from the terminal set.
Then it is clear that in the definition of a pseudonatural transformation these functions get replaced by functors from the terminal category.
Similarly, naturality of 1-categorical transformations can be expressed -instead of the usual "naturality squares"- in terms of equations of functions.
Then these equations can be replaced by natural isomorphisms in the definition of a pseudonatural transformation.

\begin{mydef}
A (pseudonatural) transformation \( \sigma \) between weak functors \( F,G: \cat{A} \to \cat{B} \) consists of the following data:
\begin{itemize}
\item
For each object \( a \) in \( \cat{A} \) a functor \( \sigma_a: \cat{1} \to \cat{B}(Fa,Ga). \)
Notationally we make no difference between the functor \( \sigma_a \) and its only component \( \sigma_a (\bullet), \) which is also simply denoted \( \sigma_a. \)

\item
For each pair \( (a,b) \) of objects in \( \cat{A} \) a natural transformation
\[
\begin{tikzcd}
\cat{A}(a,b)
\ar[r, "\sigma_b \times F _{ab}"]
\ar[d, "G _{ab} \times \sigma_a" swap]
	& \cat{B}(Fb,Gb) \times \cat{B}(Fa,Fb)
	\ar[d, "*"]
	\ar[dl, Rightarrow, short=5mm, "\sigma _{ab}"] \\
\cat{B}(Ga,Gb) \times \cat{B}(Fa,Ga)
\ar[r,"*" swap]
	& \cat{B}(Fa,Gb)
\end{tikzcd}
\]

The components of these data have to make the following diagrams commute.

\[
\begin{tikzcd}
\sigma * (Ff * Fg)
\ar[r, "a ^{-1}"]
\ar[d, "1 * \phi ^{F}"]
	& (\sigma * Ff) * Fg
	\ar[r, "\sigma * 1"]
		& (Gf * \sigma) * Fg
		\ar[r, "a"]
			& Gf * (\sigma * Fg)
			\ar[d, "1 * \sigma"] \\
\sigma * F (f*g)
\ar[r, "\sigma"]
	& G (f*g) * \sigma
		& (Gf * Gg) * \sigma
		\ar[l, "\phi ^{G} * 1"]
			& Gf * (Gg * \sigma)
			\ar[l, "a ^{-1}"]
\end{tikzcd}
\]

\[
\begin{tikzcd}
\sigma * 1 _{Fa}
\ar[r, "r"]
\ar[d, "1 * \phi ^{F}" swap]
	& \sigma
	\ar[r, "l ^{-1}"]
		& 1 _{Ga} * \sigma
		\ar[d, "\phi ^{G} * 1"] \\
\sigma * F(1 _{a})
\ar[rr, "\sigma" swap]
		& & G(1 _{a}) * \sigma
\end{tikzcd}
\]
\end{itemize}
\end{mydef}

Roughly speaking (1-categorical) functors or weak functors carry n-cells of a category resp. bicategory to n-cells of another category resp. bicategory.
Similarly natural transformations or pseudonatural transformations carry n-cells to (n+1)-cells.
In a bicategory it makes also sense to carry n-cells to (n+2)-cells, i.e. objects to 2-morphisms.
A modification does just that, in a certain coherent way.

\begin{mydef} \label{def_modification}
A modification \( \Gamma \) between bicategorical transformations \( \sigma, \tau: F \to G: \cat{A} \to \cat{B} \) consists of an \( \ob{\cat{A}} \)-indexed collection of 2-morphisms
\( \Gamma_a: \sigma_a \to \tau_a: F_a \to G_a, \) subject to the the following axiom:
\[
\begin{tikzcd}
\sigma * Ff
\ar[r, "\sigma"]
\ar[d, "\Gamma * 1" swap]
	& Gf * \sigma
	\ar[d, "1 * \Gamma"] \\
\tau * Ff
\ar[r, "\tau" swap]
	& Gf * \tau
\end{tikzcd}
\]
\end{mydef}

\begin{bem}
To stay in line with previous definitions, one could have described a component \( \Gamma_a \) from definition \ref{def_modification} equivalently as
the only component of a natural transformation \( \Gamma_a: \sigma_a \to \tau_a: \cat{1} \to \cat{B}(Fa,Ga). \)
\end{bem}

The discussion before definition \ref{def_modification} motivates the introduction of the terminology of transfors (see \cite{nlab:transfor} for the rough idea and \cite{Crans2003} for a rigorous treatment).

\begin{terminology}~ \label{term_transfor}
\begin{itemize}[-]
\item
A 0-categorical 0-transfor is a function (between sets).
\item
A 1-categorical 0-transfor is a functor, a 1-categorical 1-transfor is a natural transformation.
\item
A 2-categorical 0-transfor is a functor, a 2-categorical 1-transfor is a pseudonatural transformation and a 2-categorical 2-transfor is a modification.
\end{itemize}
\end{terminology}

The 1-categorical transfors between categories \( \cat{C} \) and \( \cat{D} \) constitute a category.
Similarly the 2-categorical transfors between bicategories \( \cat{A} \) and \( \cat{B} \) constitute a bicategory.
Later, in the definition of a tricategory, adjoint equivalences in such "functor bicategories" will appear.

\begin{satz} \label{prop_functor_bicategory}
Let \( \cat{A} \) and \( \cat{B} \) be bicategories.
There is a bicategory \( \cat{Bicat}(\cat{A},\cat{B}) \) whose objects are weak functors from \( \cat{A} \) to \( \cat{B}, \)
whose 1-morphisms are pseudonatural transformations between such functors and whose 2-morphisms are modifications between such pseudonatural transformations.
\end{satz}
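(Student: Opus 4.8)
The plan is to produce all the data required by Definition \ref{def_bicat} and to verify its axioms by systematically pushing every question down to the already-known bicategory structure of $\cat{B}$, using that modifications are determined by their components and so equations of modifications are equations of $2$-cells in $\cat{B}$.

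First I would build the local categories. For weak functors $F,G$, let $\cat{Bicat}(\cat{A},\cat{B})(F,G)$ have pseudonatural transformations $F\to G$ as objects and modifications as morphisms. Vertical composition is defined componentwise by $(\Delta\circ\Gamma)_a:=\Delta_a\circ\Gamma_a$ in the local category $\cat{B}(Fa,Ga)$, with identity $1_\sigma$ given by the components $1_{\sigma_a}$. One checks that $\Delta\circ\Gamma$ again satisfies the modification axiom of Definition \ref{def_modification} by vertically pasting the squares for $\Gamma$ and $\Delta$; associativity and unitality of $\circ$ are then inherited directly from $\cat{B}(Fa,Ga)$, since both the composites and the axiom checks are performed componentwise.

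Next I would define the horizontal composition functor $*$. On objects, given $\sigma\colon F\to G$ and $\tau\colon G\to H$, set $(\tau*\sigma)_a:=\tau_a*\sigma_a$, and define the component of its naturality transformation at $f\colon a\to b$ as the pasting
\[
(\tau_b*\sigma_b)*Ff
\xrightarrow{a}
\tau_b*(\sigma_b*Ff)
\xrightarrow{1*(\sigma_{ab})_f}
\tau_b*(Gf*\sigma_a)
\xrightarrow{a^{-1}}
(\tau_b*Gf)*\sigma_a
\xrightarrow{(\tau_{ab})_f*1}
(Hf*\tau_a)*\sigma_a
\xrightarrow{a}
Hf*(\tau_a*\sigma_a),
\]
where $(\sigma_{ab})_f$ and $(\tau_{ab})_f$ are the components of the naturality transformations of $\sigma$ and $\tau$. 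On modifications, set $(\Gamma*\Delta)_a:=\Gamma_a*\Delta_a$ via horizontal composition in $\cat{B}$. I would then check that $*$ respects identities and vertical composition, which is precisely the interchange law for $\cat{B}$ (item (x) of \ref{ter_bicat_stuff}) applied componentwise. For the unit I would let $I_F$ name the identity transformation $1_F$ with components $1_{Fa}$ and naturality cells assembled from $l,r$ of $\cat{B}$.

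It remains to supply the constraints and verify the coherence axioms. I would define $a$, $l$, $r$ for $\cat{Bicat}(\cat{A},\cat{B})$ as the modifications whose components at $a$ are the constraint cells $a_{\tau_a\sigma_a\rho_a}$, $l_{\sigma_a}$, $r_{\sigma_a}$ of $\cat{B}$, checking the modification axiom for each (a pasting computation using naturality of $a,l,r$ in $\cat{B}$), naturality in their arguments, and invertibility, the latter being componentwise. Finally, by Remark \ref{rem_bicat_axioms_component_wise} the pentagon and triangle axioms amount to equations of modifications, which hold iff they hold on every component, and there they are exactly the pentagon and triangle axioms of $\cat{B}$ at the objects $Fa,\dots$. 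The main obstacle I anticipate is establishing that $\tau*\sigma$ is genuinely pseudonatural, i.e. verifying its two transformation axioms: this requires a substantial pasting-diagram calculation combining the axioms of $\sigma$ and $\tau$ with the naturality of the associator and with bicategorical coherence in $\cat{B}$. The modification-axiom checks for $a,l,r$ are of the same flavor but lighter.
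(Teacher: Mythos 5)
Your construction is the standard one and is correct; the paper itself states this proposition without proof, so there is nothing to compare against beyond noting that your data (componentwise vertical composition of modifications, the whiskered pasting for the naturality cell of \( \tau * \sigma, \) componentwise constraints, and reduction of the pentagon and triangle to \( \cat{B} \) via remark \ref{rem_bicat_axioms_component_wise}) is exactly what such a proof would supply. The only routine check you do not name explicitly is that the horizontal composite \( \Gamma * \Delta \) of two modifications again satisfies the modification axiom with respect to the naturality cells of the composite transformations; this is of the same pasting-diagram flavor as the checks you do list and poses no difficulty.
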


\subsection{Coherence for bicategories} \label{sec_coherence_for_bicats}

In this section we give the coherence theorem for bicategories.
Everything in this section is based on \cite{Gurski2013} section 2.2.

Stating the coherence theorem for bicategories requires the construction of free bicategories over category-enriched graphs.
Therefore we start by giving the definition of a graph, that is enriched over a category \( \cat{V}. \)

\begin{mydef} \label{def_V_graph}
Let \( \cat{V} \) be a category.

\begin{enumerate}
\item
A \( \cat{V} \)-enriched graph (short \( \cat{V} \)-graph) \( G \) consists of
\begin{itemize}
\item
a collection of objects \( \ob{G}. \)
\item
for each ordered pair \( (a,b) \) of objects of \( G \) an object \( G(a,b) \) of \( \cat{V}. \)
\end{itemize}
\item
A morphism \( f: G \to G' \) of \( \cat{V} \)-graphs \( G \) and \( G' \) consists of
\begin{itemize}
\item
an assignment \( f_0: \ob{G} \to \ob{G'}. \)
\item
for each ordered pair \( (a,b) \) of objects of \( G \) a morphism \( f _{ab}: G(a,b) \to G'(f_0 a, f_0 b) \) in \( \cat{V}. \)
\end{itemize}
\end{enumerate}

\( \cat{V} \)-graphs and morphisms of \( \cat{V} \)-graphs constitute a category \( \cat{V} \)-\( \cat{Gph}. \)
\end{mydef}

\begin{notation}
Let \( \cat{Cat}_1 \) denote the 1-category of categories and functors.
We call a \( \cat{(Cat_1)} \)-graph simply a \( \cat{Cat} \)-graph, and define \( \cat{Cat} \)-\( \cat{Gph} := \cat{(Cat_1)} \)-\( \cat{Gph}. \)
\end{notation}

\begin{bei} \label{ex_underlying_cat_graph}
Any bicategory has an underlying \( \cat{Cat} \)-graph \( U \cat{B}, \) defined via
\[
\ob{U \cat{B}}:=\ob{\cat{B}}
\quad \text{and} \quad
U \cat{B}(a,b):= \cat{B}(a,b).
\]

Every weak functor \( S:\cat{B} \to \cat{B'} \) has an underlying morphism of \( \cat{Cat} \)-graphs \( US: U \cat{B} \to U \cat{B'}, \) defined via
\[
(US)_0:=S_0
\quad \text{and} \quad
(US) _{ab}:= S _{ab}.
\]

This gives rise to a forgetful-functor \( U: \cat{Bicat_1} \to \cat{Cat \text{-} Gph}, \) where  \( \cat{Bicat_1} \) is the 1-category with objects bicategories and morphisms weak functors (see \ref{rem_bicat_1}).
\end{bei}

Now we want to define the free bicategory \( F G \) over a \( \cat{Cat} \)-graph \( G. \)
The morphisms of \( FG \) will be formal composites of adjunct units, adjunct constraint cells and cells of the "edge-categories" of \( G. \)
For example a generic 1-morphism of \( FG \) looks like
\[
f \fstar \big( (g \fstar \fc 1 _{c}) \fstar (h \fstar k) \big),
\]
where \( f \in G(d,e), g \in G(c,d), h \in G(b,c), k \in G(a,b) \) are objects in edge-categories and \( \fc 1 _{c} \) is an adjunct unit.
Finally one identifies the 2-morphisms of \( FG, \) that needs to be equal by the requirement that \( FG \) is a bicategory.

In order to define the n-morphisms of \( FG \) the concept of a typed set will be useful.

\begin{mydef}
Let \( s,t: M \to N \) be functions between sets \( M \) and \( N \).
Then we call the triple \( (M,s,t) \) an \emph{N-typed set}.
\end{mydef}

In universal algebra a congruence relation on an algebra is an equivalence relation compatible with the algebra-operations.
In this thesis there appear often "algebras", whose binary operations are only defined for certain ordered pairs of elements of the "algebra".
One can define a congruence relation on such "algebras" analogously to the congruence relation for algebras in the sense of universal algebra.

\begin{mydef}~ \label{def_congruence_rel}
\begin{enumerate}
\item
A \emph{partially defined n-ary operation} \( \square \) on a set \( M, \) is a function \( A \to M, \) where \( A \subseteq M ^{\times n} \) is a subset of the n-fold Cartesian product of \( M. \)
\item
Let \( \square \) be a partially defined n-ary operation on a set \( M. \)
An equivalence relation \( \sim \) on \( M \) is called \emph{compatible} with \( \square, \) if for \( x_i, y_i \in M, \, i \in \set{1 \dots n} \) with \( x_i \sim y_i \)
the existence of \( x_1 \square \dots \square x_n \) and \( y_1 \square \dots \square y_n \) implies \( x_1 \square \dots \square x_n \sim  y_1 \square \dots \square y_n. \)
\item
Let \( (\square_i) _{i \in \set{1 \dots n}} \) be partially defined operations on a set \( M. \)
An equivalence relation \( \sim \) on \( M \) is called a \emph{congruence relation} with respect to the partially defined operations \( (\square_i) _{i \in \set{1 \dots n}}, \)
or short an \( (\square_i) _{i \in \set{1 \dots n}} \)-congruence relation, if it is compatible with the operations \( (\square_i) _{i \in \set{1 \dots n}}. \)
\end{enumerate}
\end{mydef}

\begin{mydef}[free bicategory over a cat-graph] \label{def_free_bicat}
The \emph{free bicategory \( \oper{F} G \)} over a cat-graph \( G \) is defined as follows:

\begin{itemize}
\item
\( \ob{\oper{F}G} := \ob{G}. \)

\item
\( \mor{\oper{F}G} \) is an \( \ob{\oper{F}G} \)-typed collection and defined recursively as follows:

\( \mor{\oper{F}G} \) contains the so called \( F \)-basic 1-morphisms which are
\begin{itemize}[-]
\item
for each \( f \in \ob{G(a,b)} \) a 1-morphism \( f: a \to b, \)
\item
for each object \( a \in \ob{G} \) a 1-morphism \( \fc{1} _a: a \to a. \)
\end{itemize}

For \( v: b \to c \) and \( w: a \to b \) in \( \mor{\oper{F}G} \) we require the formal composite \( v \fstar w: a \to c \) to be a 1-morphism of \( FG. \)

\item
Composition of 1-morphisms is given by the \( \fstar \) operator and the unit at \( a \) is given as \( \fc 1_a. \)

\item
\( \mmor{\oper{F}G} \) is obtained by an auxiliary \( \mor{\oper{F}G} \)-typed collection \( \overline{\mmor{\oper{F}G}} \) by quotiening out a congruence relation.
\( \overline{\mmor{\oper{F}G}} \) is recursively defined as follows:

\( \overline{\mmor{\oper{F}G}} \) contains the following \( \bar F \)-basic 2-morphisms:

\begin{itemize}[-]
\item
For any \( \alpha \) in \( G(a,b)(f,g) \) a 2-morphism \( \alpha: f \to g. \)
\item
For each \( v \in \mor{\oper{F}G} \) 2-morphisms
\begin{gather*}
\fc 1_v: v \to v \\
\fc l_v: \fc 1 _{tv} \fstar v \to v \quad \text{and} \quad (\fc l_v) ^{-1}: v \to \fc 1 _{tv} \fstar v \\
\fc r_v: v \fstar \fc 1 _{sv} \to v \quad \text{and} \quad (\fc r_v) ^{-1}: v \to v \fstar \fc 1 _{sv}
\end{gather*}
\item
For each triple \( (v,w,x) \) of composable 1-morphisms, 2-morphisms
\begin{gather*}
\fc a _{vwx}: (v \fstar w) \fstar x \to v \fstar (w \fstar x) \\
\text{and} \quad (\fc a _{vwx})^{-1}: v \fstar (w \fstar x) \to (v \fstar w) \fstar x.
\end{gather*}
\end{itemize}

For \( \tau:w \to w' : b \to c, \; \sigma: v \to v' : a \to b \) and \( \sigma':v' \to v'': a \to b \) in \( \overline{\mmor{\oper{F}G}} \) we require the formal composites
\( \tau \fstar \sigma: w \fstar v \to w' \fstar v': a \to c \) and \( \sigma' \circ \sigma: v \to v'': a \to b \) to be contained in \( \overline{\mmor{\oper{F}G}}. \)

\item
Now we define the collection \( \mmor{\oper{F}G} := \overline{\mmor{\oper{F}G}} / \sim \) by taking equivalence classes with respect to the congruence relation \( \sim, \) defined in the following. The types of a \( [\sigma] \in \mmor{\oper{F}G} \) are then defined by \( s [\sigma] = [s \sigma] \) resp. \( t [\sigma] = [t \sigma], \) which gives \( \mmor{\oper{F}G} \) the structure of a \( \mor{\oper{F}G} \)-typed collection.

\( \sim \) is the \( (\fstar,\fcirc) \)-congruence relation (see definition \ref{def_congruence_rel}) generated by the following list of relations,
where \( v,w,x,y \in \mor{\oper{F}G}, \; \) \( \sigma,\sigma',\sigma'',\tau,\tau' \in \overline{\mmor{\oper{F}G}} \) and each relation holds whenever the composites in the relation are defined.
\begin{enumerate}
\item \label{list_associativity_local_cat}
\( (\sigma \fcirc \sigma') \fcirc \sigma'' \sim \sigma \fcirc (\sigma' \fcirc \sigma''). \)
\item \label{list_unitality_local_cat}
\( \fc 1 _{t \sigma} \fcirc \sigma \sim \sigma \) and \( \sigma \fcirc \fc 1 _{s \sigma} \sim \sigma. \)
\item \label{list_F_respects_local_comp}
\( \sigma \fcirc \tau \sim (\sigma \circ \tau) \) (if \( \sigma,\tau \) are composable morphisms in a category \( G(a,b). \))
\item \label{list_F_respects_local_units}
\( \fc 1_v \sim 1_v \) (if \( v \) is an object in a category \( G(a,b). \))
\item \label{list_functorality_fstar}
\( (\sigma \fstar \tau) \fcirc (\sigma' \fstar \tau') \sim (\sigma \fcirc \sigma') \fstar (\tau \fcirc \tau'). \)
\item \label{list_unitality_fstar}
\( \fc 1_v \fstar \fc 1_w \sim \fc 1 _{v \fstar w}. \)
\item \label{list_constraints_are_invertible}
\( \fc a _{vwx} \fcirc (\fc a _{vwx}) ^{-1} \sim 1 _{v \fstar (w \fstar x)}, \quad (\fc a _{vwx}) ^{-1} \fcirc \fc a _{vwx} \sim 1 _{(v \fstar w) \fstar x} \) and analogously for \( l \) and \( r. \)
\item \label{list_constraints_are_natural}
\( \fc a _{(t \sigma) (t \sigma') (t \sigma'')} \fcirc ((\sigma \fstar \sigma') \fstar \sigma'') \sim (\sigma \fstar (\sigma' \fstar \sigma'')) \fcirc \fc a _{(s \sigma) (s \sigma') (s \sigma'')} \)
and analogously for \( l \) and \( r. \)
\item \label{list_pentagon}
\( (\fc 1_v \fstar \fc a _{wxy}) \fcirc \fc a _{v(wx)y} \fcirc (\fc a _{vwx} \fstar \fc 1_y) = \fc a _{vw(xy)} \fcirc \fc a _{(vw)xy}. \)
\item \label{list_triangle}
\( (\fc 1_v \fstar \fc l_w) \fcirc \fc a _{v\fc 1_aw} = (\fc r_w \fstar \fc 1_v). \)
\end{enumerate}

\item
Composition along objects resp. 1-morphisms of 2-morphisms in \( \oper{F}G \) is defined by
\[
[\sigma] \fstar [\tau] := [\sigma \fstar \tau]
\quad \text{and} \quad
[\sigma] \fcirc [\sigma'] := [\sigma \fcirc \sigma'].
\]
2-units and constraint-2-cells of \( \oper{F}G \) are defined in the obvious way and carry an \( F \)-superscript.
For example the unit at \( v \in \mor{\oper{F}G} \) is given as \( \fc 1 _{v} :=  [\fc 1 _{v}], \)
or the associator of \( (v,w,x) \) is given as \( \fc a _{vwx} := [\fc a _{vwx}]. \)

We call a 2-morphism of \( FG \) \( F \)-basic, if it is the image of an \( \bar F \)-basic 2-morphism under the map \( [-]. \)
If no confusion is possible, we identify a 2-morphism of \( FG \) with its representative in \( \overline{\mmor{FG}}, \) i.e. we omit the \( [\,] \)-brackets.
\end{itemize}
\end{mydef}

\begin{satz}
\( \oper{F}G \) from definition \ref{def_free_bicat} constitutes a bicategory.
\end{satz}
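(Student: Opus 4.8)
The plan is to verify that the data assembled in Definition \ref{def_free_bicat} satisfy every requirement of the bicategory Definition \ref{def_bicat}. The objects, the composition operations $\fstar$ and $\fcirc$, the units $\fc 1_a$ and $\fc 1_v$, and the constraint cells $\fc a$, $\fc l$, $\fc r$ are already in place; what remains is to check that each local collection is a category, that $\fstar$ is functorial, that the constraint cells assemble into natural isomorphisms, and that the pentagon and triangle identities hold. The single fact that makes all of this go through is that $\sim$ is, by construction, a congruence relation with respect to $\fstar$ and $\fcirc$ in the sense of Definition \ref{def_congruence_rel}. This guarantees that $[\sigma] \fcirc [\sigma'] := [\sigma \fcirc \sigma']$ and $[\sigma] \fstar [\tau] := [\sigma \fstar \tau]$ are independent of the chosen representatives, so every operation below descends to the quotient $\mmor{\oper{F}G} = \overline{\mmor{\oper{F}G}}/\sim$. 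I would also note at the outset that each generating relation equates 2-morphisms with equal source and target, and that $\fstar$ and $\fcirc$ send parallel inputs to parallel outputs; hence the generated congruence only ever identifies parallel 2-morphisms, so that the types $s[\sigma]$, $t[\sigma]$ are well defined.

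First I would check that each local collection $\oper{F}G(a,b)$ is a category. Well-definedness of $\fcirc$ on classes is the congruence property; associativity is relation \ref{list_associativity_local_cat}, and the left and right unit laws for $\fc 1_v$ are relation \ref{list_unitality_local_cat}. (Relations \ref{list_F_respects_local_comp} and \ref{list_F_respects_local_units} additionally force the $F$-basic cells coming from an edge-category $G(a,b)$ to compose there as they originally did, which makes the evident inclusion $G(a,b) \to \oper{F}G(a,b)$ a functor, but is not itself needed for the bicategory axioms.) Next I would check that $\fstar_{abc}: \oper{F}G(b,c) \times \oper{F}G(a,b) \to \oper{F}G(a,c)$ is a functor. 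On objects it is the formal $\fstar$ of 1-morphisms; on morphisms it is $[\sigma] \fstar [\tau] = [\sigma \fstar \tau]$, well defined by the congruence property. Functoriality amounts to preservation of identities, which is relation \ref{list_unitality_fstar} (namely $\fc 1_v \fstar \fc 1_w \sim \fc 1_{v \fstar w}$), together with preservation of composition, which is exactly the interchange relation \ref{list_functorality_fstar}. The unit functor $I_a: \cat{1} \to \oper{F}G(a,a)$ picking out $\fc 1_a$ is trivially a functor.

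It then remains to treat the constraint data and the coherence axioms. The components $\fc a_{vwx}$, $\fc l_v$, $\fc r_v$ are isomorphisms by relation \ref{list_constraints_are_invertible}, and they are natural, i.e. they are genuinely the components of natural transformations between the relevant composite functors, by relation \ref{list_constraints_are_natural}. Finally, by Remark \ref{rem_bicat_axioms_component_wise} the pentagon and triangle axioms of Definition \ref{def_bicat} hold if and only if they hold component-wise, and these component-wise equations are precisely the generating relations \ref{list_pentagon} and \ref{list_triangle}.

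The main, and essentially the only, obstacle is the bookkeeping around well-definedness: one has to be sure that imposing $\sim$ as the congruence generated by relations \ref{list_associativity_local_cat}--\ref{list_triangle} does not silently identify non-parallel cells, and that each constraint-cell assignment respects $\sim$ so that $\fc a$, $\fc l$, $\fc r$ are well-defined on representatives. This is exactly what the congruence formalism of Definition \ref{def_congruence_rel} is designed to supply. Once it is granted, the verification is purely organizational: every clause of Definition \ref{def_bicat} is matched to exactly one item in the generating list, and no genuine computation remains.
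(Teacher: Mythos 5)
Your proposal is correct and follows essentially the same route as the paper: local categories from relations \eqref{list_associativity_local_cat}--\eqref{list_unitality_local_cat}, functoriality of \( \fstar \) from \eqref{list_functorality_fstar} and \eqref{list_unitality_fstar}, invertibility and naturality of the constraints from \eqref{list_constraints_are_invertible} and \eqref{list_constraints_are_natural}, and the pentagon and triangle axioms component-wise from \eqref{list_pentagon} and \eqref{list_triangle} via remark \ref{rem_bicat_axioms_component_wise}. Your additional explicit checks --- that the generated congruence only identifies parallel 2-morphisms so that types descend to the quotient, and that relations \eqref{list_F_respects_local_comp} and \eqref{list_F_respects_local_units} serve the inclusion of \( G \) rather than the bicategory axioms --- are points the paper leaves implicit (the latter is used only in proposition \ref{prop_bicat_catgraph_adjunction}), so they are welcome but do not constitute a different argument.
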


\begin{proof}
We sketch the proof roughly.
\begin{itemize}[-]
\item
The elements of \( \mor{\oper{F}G} \) with types \( a \to b \) and the elements of \( \mmor{\oper{F}G} \) between such elements of \( \mor{\oper{F}G} \) constitute a local category \( \oper{F}G(a,b), \)
because of relation \eqref{list_associativity_local_cat} and \eqref{list_unitality_local_cat} in definition \ref{def_free_bicat}.
Moreover any 2-morphism \( \sigma \) of \( \oper{F}G \) is contained in such a local category, since \( s^2 \sigma = st \sigma \) and \( t^2 \sigma = ts \sigma \) holds by construction.
\item
Functorality of \( \fstar \) follows from relation \eqref{list_functorality_fstar} and \eqref{list_unitality_fstar}.
\item
Let \( \bullet \) be the single object of the terminal category \( \cat{1}, \) then
\begin{gather*}
I_a: \cat{1} \to \oper{F}G(a,b) \\
\bullet \mapsto \fc 1_a, \quad 1 _{\bullet} \mapsto \fc 1 _{(\fc 1_a)}
\end{gather*}
is the required unit-functor at \( a. \)
\item
\eqref{list_constraints_are_invertible} and \eqref{list_constraints_are_natural} ensure that \( a,l \) and \( r \) are natural isomorphisms.
\item
Due to \eqref{list_pentagon} and \eqref{list_triangle} the bicategory axioms are satisfied (see also remark \ref{rem_bicat_axioms_component_wise}).
\end{itemize}
\end{proof}

Let \( \cat{Bicat_1^s} \) be the category with bicategories as objects and strict functors as morphisms
and let \( U: \cat{Bicat_1^s} \to \cat{Cat \text{-} Gph} \) be the forgetful functor defined analogously to example \ref{ex_underlying_cat_graph}.
The next proposition shows that \( FG \) satisfies the universal property, that any free construction should satisfy.

\begin{satz} \label{prop_bicat_catgraph_adjunction}
For any cat-graph \( G \) the inclusion \( \iota_G: G \rightarrow UFG \) is a universal arrow from \( G \) to \( U: \cat{Bicat_1^s} \to \cat{Cat \text{-} Gph}: \)

For every bicategory \( \cat{B} \) and every morphism of \( \cat{Cat} \)-graphs \( S: G \to U \cat{B}, \) there is a unique strict functor \( S': FG \to \cat{B} \) with \( US' \circ \iota = S. \)
\end{satz}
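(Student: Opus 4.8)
The plan is to use that $FG$ is a \emph{presented} structure: by Definition \ref{def_free_bicat} its morphisms are generated from the edge-categories of $G$, the adjunct units $\fc{1}$, and the adjunct constraints $\fc{a},\fc{l},\fc{r}$ by the formal operations $\fstar$ and $\fcirc$, subject to the congruence $\sim$. Since a \emph{strict} functor preserves $*$, $\circ$, units and constraint cells on the nose, its value on every morphism is \emph{forced} by its values on the generators; hence the whole content lies in (a) showing that the forced assignment descends to the quotient by $\sim$, and (b) checking it really is a strict functor. Uniqueness is then immediate from the same ``forced'' observation.

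First I would build $S'$. On objects put $S'_0 := S_0$, forced by $US'\circ\iota = S$. On $\mor{FG}$ define $S'$ by recursion along Definition \ref{def_free_bicat}: on an $F$-basic $f\in\ob{G(a,b)}$ set $S'(f):=S_{ab}(f)$ (forced by $US'\circ\iota=S$); on a formal unit set $S'(\fc{1}_a):=1_{S_0 a}$ (forced by strictness); and on a formal composite set $S'(v\fstar w):=S'(v)*S'(w)$ (forced by preservation of $*$). Likewise define a map on the auxiliary collection $\overline{\mmor{FG}}$: send a basic $\alpha\in G(a,b)(f,g)$ to $S_{ab}(\alpha)$, send the formal units and constraints $\fc{1}_v,\fc{a}_{vwx},\fc{l}_v,\fc{r}_v$ (and their inverses) to the genuine cells $1_{S'(v)},a,l,r$ of $\cat{B}$, and send $\tau\fstar\sigma$ and $\sigma'\fcirc\sigma$ to $S'(\tau)*S'(\sigma)$ and $S'(\sigma')\circ S'(\sigma)$. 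Each clause is again the only one compatible with $US'\circ\iota=S$ and with strictness.

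The crux is descending this map to $\mmor{FG}=\overline{\mmor{FG}}/\sim$. I would introduce the kernel relation $\sigma\approx\sigma'$ iff $S'(\sigma)=S'(\sigma')$; because the map just defined is a homomorphism for $\fstar$ and $\fcirc$, the relation $\approx$ is automatically an equivalence relation compatible with both operations, i.e.\ a $(\fstar,\fcirc)$-congruence. It therefore suffices to verify that each generating relation \eqref{list_associativity_local_cat}--\eqref{list_triangle} lies in $\approx$, for then the \emph{smallest} such congruence $\sim$ is contained in $\approx$ and $S'$ factors through the quotient. This is exactly where the structure of $\cat{B}$ enters: \eqref{list_associativity_local_cat} and \eqref{list_unitality_local_cat} hold since each local $\cat{B}(a',b')$ is a category; \eqref{list_F_respects_local_comp} and \eqref{list_F_respects_local_units} since each $S_{ab}$ is a functor; \eqref{list_functorality_fstar} and \eqref{list_unitality_fstar} since $*$ is a functor (interchange and unit preservation); \eqref{list_constraints_are_invertible} and \eqref{list_constraints_are_natural} since $a,l,r$ are natural isomorphisms; and \eqref{list_pentagon} and \eqref{list_triangle} since $\cat{B}$ satisfies the pentagon and triangle axioms (Remark \ref{rem_bicat_axioms_component_wise}). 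Having descended $S'$, its defining clauses say precisely that each $S'_{ab}$ is a functor and that $S'$ preserves $*$, $\circ$, units and constraints strictly; one may thus take the functor-constraints $\phi_{abc},\phi_a$ to be identity transformations, whereupon the weak-functor axioms \eqref{diag_functor_axiom_asso} and \eqref{diag_functor_axiom_unit} hold trivially, so $S'$ is a strict functor with $US'\circ\iota=S$.

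For uniqueness, let $S''$ be another strict functor with $US''\circ\iota=S$. On objects and on $F$-basic $1$- and $2$-morphisms $S''$ agrees with $S'$ by $US''\circ\iota=S$; on units and constraint cells it agrees by strictness; and on formal composites it agrees by preservation of $*$ and $\circ$. A structural induction along Definition \ref{def_free_bicat} then gives $S''=S'$ on all of $FG$. The only genuinely laborious step is the well-definedness check in the third paragraph---matching each of the ten generating relations to the corresponding datum or axiom of $\cat{B}$---but nothing there is conceptually hard once one notes that the kernel of a homomorphism is automatically a congruence, so that checking the generators of $\sim$ suffices.
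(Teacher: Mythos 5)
Your proposal is correct and follows essentially the same route as the paper: define $S'$ by recursion (its values being forced by strictness and by $US'\circ\iota=S$), descend to the quotient by checking that each generator of $\sim$ is respected, and deduce uniqueness from the same forcing argument; your kernel-congruence observation is just a more explicit version of the paper's one-line justification for well-definedness. The only item you omit is the preliminary check that $\iota_G$ is itself a morphism of $\cat{Cat}$-graphs (i.e.\ that each $\iota_{ab}$ is a functor, which uses relations \eqref{list_F_respects_local_comp} and \eqref{list_F_respects_local_units}), a one-line verification the paper includes since it is part of the statement.
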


\begin{proof}
First we show that \( \iota: G \to UFG \) is a morphism of \( \cat{Cat} \)-graphs.
To do this, we show that \( \iota _{ab}: G(a,b) \to UFG(a,b), \) which maps objects and 1-morphisms of the category \( G(a,b) \) to the objects and 1-morphisms in \( UFG(a,b) \) with the same name, is a functor.
This holds, because relation \eqref{list_F_respects_local_comp} in definition \ref{def_free_bicat} implies that \( \iota \) preserves composition
and relation \eqref{list_F_respects_local_units} implies that \( \iota \) preserves units.

Now we show that for any morphism of cat-graphs \( S: G \to U \cat{B}, \) there exists a unique strict functor \( S': FG \to \cat{B}, \) such that \( S=US' \circ \iota: \)

\begin{itemize}[-]
\item
We set \( S'(a) = S(a) \) on objects.

\item
On 1-morphism \( S' \) is defined inductively as follows:

For an \( F \)-basic 1-morphism \( f, \) we define
\[
S'(f) =
\begin{cases}
S(f)
	& \text{if } f \in \ob{G(a,b)} \text{ for some } a,b \in \ob{G}, \\
1 _{Sa}
	& \text{if } f= \fc 1 _{a}.
\end{cases}
\]

\( S' \) extends to general 2-morphisms via the induction rule
\[
S' (f \fstar g) = S'(f) * S'(g).
\]

\item
Next we inductively define an assignment \( \bar S': \overline{\mmor{FG}} \to \mmor{\cat{B}}: \)

For an \( \bar F \)-basic 2-morphism \( \sigma \) we define
\[
\bar S' (\sigma)=
\begin{cases}
S(\sigma)
	& \text{if } \sigma \in \mor{G(a,b)} \text{ for some } a,b \in \ob{G}, \\
1 _{S'(v)}
	& \text{if } \sigma = \fc 1 _{v}, \\
a _{S'v,S'w,S'x}
	& \text{if } \sigma = \fc a _{vwx}, \\
l _{S'v}
	& \text{if } \sigma = \fc l_v, \\
r _{S'v}
	& \text{if } \sigma = \fc r_v.
\end{cases}
\]

\( \bar S' \) extends to general 2-morphisms via the induction rules
\[
\bar S' (\sigma \fstar \tau) = \bar S'(\sigma) * \bar S'(\tau)
\quad \text{and} \quad
\bar S' (\sigma \fcirc \tau) = \bar S'(\sigma) \circ \bar S'(\tau).
\]

\item
\( \bar S' \) respects the congruence relation on \( \overline{\mmor{FG}} \) from definition \ref{def_free_bicat} (i.e. \( \sigma \sim \tau \implies S'(\sigma) = S'(\tau) \)),
since \( \bar S' \) respects all generators of this relation and preserves composition along objects and 1-morphisms.
Therefore we can define \( S \) on \( \mmor{FG} \) as \( S([\sigma]) = S'(\sigma). \)
\end{itemize}

\( S' \) is a strict functor, since by definition it strictly preserves composition \( (*, \circ) \) and constraint cells.
Furthermore by construction \( S' \) satisfies \( US' \circ \iota_G = S. \)

Now suppose \( P: FG \to \cat{B} \) is another strict functor, which satisfies \( UP \circ \iota_G = S. \)
As a strict functor, such a \( P \) is determined by its values on objects, \( F \)-basic 1-morphisms and \( F \)-basic 2-morphisms.
Due to \( S=UP \circ \iota, \) \( P=S' \) has to hold on objects and on those \( F \)-basic 1- and 2-morphisms of \( FG, \) which originate from \( G. \)
Because of the strictness of \( P \) we have
\begin{gather*}
P(\fc 1_a) = 1_{Pa} = 1_{S'a} = S'(\fc 1_a), \quad P(\fc a _{vwx}) = a _{(Pv)(Pw)(Px)} = a _{(S'v)(S'w)(S'x)} = S'(\fc a _{vwx}) \\
P(\fc l_v) = l _{Pv} = l _{S'v} = S'(\fc l_v) \quad \text{and} \quad  P(\fc r_v) = r _{Pv} = r _{S'v} = S'(\fc r_v).
\end{gather*}
Hence \( P=S' \) on the remaining \( F \)-basic 1- and 2-morphisms and therefore \( P=S'. \)
\end{proof}

The counit of the adjunction described in the following corollary is of great importance, when working with the coherence theorem for bicategories
(see propositions \ref{prop_coherence_bicats_free_version} and corollary \ref{cor_coherence_formally_defined_version}).

\begin{kor}
The functor \( U: \cat{Bicat_1^s} \to \cat{Cat \text{-} Gph} \) has a left adjoint \( F: \cat{Cat \text{-} Gph} \to \cat{Bicat_1^s}, \)
that sends a \( \cat{Cat} \)-graph \( G \) to the the free bicategory \( FG \) over that \( \cat{Cat} \)-graph.
The components of the unit of that adjunction are given by the inclusions \( \iota_G: G \to UFG \) from proposition \ref{prop_bicat_catgraph_adjunction}.
The components \( \epsilon _{\cat{B}}: FU \cat{B} \to \cat{B} \) of the counit \( \epsilon \) are given through evaluation.
\end{kor}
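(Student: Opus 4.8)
The plan is to deduce this corollary directly from Proposition \ref{prop_characterization_adjunctions_in_cat}, using Proposition \ref{prop_bicat_catgraph_adjunction} to supply the required family of universal arrows. In the notation of Proposition \ref{prop_characterization_adjunctions_in_cat}, I would take the functor there to be $U \colon \cat{Bicat_1^s} \to \cat{Cat \text{-} Gph}$, so that $\cat{Cat \text{-} Gph}$ plays the role of $\cat{C}$ and $\cat{Bicat_1^s}$ the role of $\cat{D}$. For each object $G$ of $\cat{Cat \text{-} Gph}$, Proposition \ref{prop_bicat_catgraph_adjunction} states that the inclusion $\iota_G \colon G \to UFG$ is a universal arrow from $G$ to $U$, which is exactly the hypothesis needed. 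Hence Proposition \ref{prop_characterization_adjunctions_in_cat} produces a unique adjunction whose left adjoint $F$ has object-function $G \mapsto FG$ and whose unit has the $\iota_G$ as its components, establishing both the existence of $F$ and the claimed description of the unit.

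Next I would read off the action of $F$ on morphisms from the same construction: for a morphism of cat-graphs $S \colon G \to G'$, the arrow $FS \colon FG \to FG'$ is forced to be the unique strict functor obtained by applying the universal property of $\iota_G$ to the composite $\iota_{G'} \circ S \colon G \to UFG'$. This is the expected \emph{free} functoriality and needs no separate verification, since it is already part of the output of Proposition \ref{prop_characterization_adjunctions_in_cat}.

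It remains to identify the counit. The construction underlying Proposition \ref{prop_characterization_adjunctions_in_cat} determines $\epsilon_{\cat{B}} \colon FU\cat{B} \to \cat{B}$ as the unique strict functor satisfying $U\epsilon_{\cat{B}} \circ \iota_{U\cat{B}} = 1_{U\cat{B}}$; that is, $\epsilon_{\cat{B}}$ is the strict functor assigned by Proposition \ref{prop_bicat_catgraph_adjunction} to the identity morphism $S = 1_{U\cat{B}}$ of cat-graphs. Unwinding the explicit formula for $S'$ from the proof of that proposition in the case $S = 1_{U\cat{B}}$, I would observe that $\epsilon_{\cat{B}}$ sends each $F$-basic $1$-morphism coming from $\cat{B}$ to itself, each formal unit $\fc{1}_a$ to the genuine unit $1_a$ of $\cat{B}$, each formal constraint cell $\fc{a}_{vwx}, \fc{l}_v, \fc{r}_v$ to the corresponding constraint cell of $\cat{B}$, and preserves $*$ and $\circ$. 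This is precisely the evaluation of formal composites, so the counit components are evaluation, as claimed.

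Since Proposition \ref{prop_bicat_catgraph_adjunction} already carries the weight of the argument, I do not expect a serious obstacle; the only points demanding care are organizational, namely confirming the variance so that $F$ emerges as the \emph{left} (and not the right) adjoint and that the $\iota_G$ are unit rather than counit components, and matching the abstractly produced counit with evaluation by comparing the general formula of Proposition \ref{prop_characterization_adjunctions_in_cat} against the concrete description of $S'$ in Proposition \ref{prop_bicat_catgraph_adjunction}.
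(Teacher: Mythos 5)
Your proposal is correct and follows essentially the same route as the paper: it invokes Proposition \ref{prop_characterization_adjunctions_in_cat} on the family of universal arrows $\iota_G$ from Proposition \ref{prop_bicat_catgraph_adjunction} to obtain the adjunction with unit $\iota$, and then identifies the counit component $\epsilon_{\cat{B}}$ as the strict functor $S'$ associated to $S = 1_{U\cat{B}}$, which unwinds to evaluation. No gaps.
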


\begin{proof}
By proposition \ref{prop_characterization_adjunctions_in_cat} the collection of universal arrows in proposition \ref{prop_bicat_catgraph_adjunction} gives rise to an adjunction \( (F \dashv U, \eta, \epsilon). \)
The components of the counit \( \epsilon \) are obtained by - in the notation of the proof of proposition \ref{prop_characterization_adjunctions_in_cat} -
setting \( S = 1 _{U \cat{B}} \) and calculating \( S' = \epsilon _{\cat{B}}. \)
That calculation shows that \( \epsilon _{\cat{B}} \) is given through evaluation.
\end{proof}

Before we can state the coherence theorem for bicategories, we need to define the notion of a coherence morphism in a bicategory \( \cat{B}. \)

\begin{mydef}
A morphism in a bicategory \( \cat{B} \) is called a coherence (2-)morphism if it is a \( (*,\circ) \)-composite of units and constraint cells.
\end{mydef}

\begin{satz} \label{prop_coherence_bicats_free_version}
Let \( FG \) be the free tricategory over a \( \cat{Cat} \)-graph \( G. \)
Parallel coherence morphisms in \( FG \) are equal.
\end{satz}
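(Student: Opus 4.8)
The plan is to prove the sharper statement that between any two $1$-morphisms of $FG$ there is \emph{at most one} coherence morphism; parallelity then forces the two given coherence morphisms to coincide. One cannot obtain this merely by pushing $FG$ into a $2$-category along the universal property of Proposition \ref{prop_bicat_catgraph_adjunction}, since a strict functor sends every constraint cell to an identity and is far from locally injective, so it cannot witness equalities of coherence cells inside $FG$. Instead I would run the classical normalization argument underlying Mac Lane's coherence theorem, adapted to the present free construction.

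First I would define a normal form. For $v \in \mor{FG}$, let $\nu(v)$ be obtained by deleting every formal unit factor $\fc 1$ and left-bracketing the remaining $F$-basic non-unit factors $g_1, \dots, g_n$ (in their left-to-right order) as $(\cdots(g_1 \fstar g_2) \fstar \cdots) \fstar g_n$, with $\nu(v) := \fc 1_a$ when $n = 0$. Simultaneously I would construct, by induction on the formation of $v$, a canonical coherence isomorphism $\theta_v \colon v \to \nu(v)$: on $F$-basic $v$ it is a formal identity, and on $v = v_1 \fstar v_2$ it is assembled from $\theta_{v_1} \fstar \theta_{v_2}$ followed by a fixed coherence isomorphism $\nu(v_1) \fstar \nu(v_2) \to \nu(v)$ built from associators and from the unitors $\fc l, \fc r$ (the latter absorbing the units produced when $v_1$ or $v_2$ normalizes to a formal unit). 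Invertibility of $\theta_v$ follows from relation \eqref{list_constraints_are_invertible}.

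The heart of the argument is the following. \textbf{Main Lemma.} For every coherence morphism $\phi \colon v \to w$ one has $\nu(v) = \nu(w)$ and $\theta_w \fcirc \phi = \theta_v$. Granting this, if $\phi, \psi \colon v \to w$ are parallel coherence morphisms then $\theta_w \fcirc \phi = \theta_v = \theta_w \fcirc \psi$, and cancelling the isomorphism $\theta_w$ yields $\phi = \psi$, which is the assertion. I would prove the Main Lemma by induction on the $(\fstar, \fcirc)$-word expressing $\phi$. The equality $\nu(v) = \nu(w)$ is immediate, since each generator (an associator, a unitor, a unit, or an inverse of these) leaves the ordered list of non-unit basic factors unchanged. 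The $\fcirc$-step uses the inductive hypothesis twice; the whiskering step $\phi = \alpha \fstar \beta$ uses the built-in compatibility of $\theta$ with $\fstar$ together with naturality of the constraints, relation \eqref{list_constraints_are_natural}, and functoriality \eqref{list_functorality_fstar}. The base cases, where $\phi$ is a single associator, a left or right unitor, a unit, or an inverse of one of these, are exactly where the axioms enter: the associator case reduces to the pentagon \eqref{list_pentagon} and the unitor cases to the triangle \eqref{list_triangle}, each combined with the recursive definition of $\theta$.

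I expect the main obstacle to be the bookkeeping in the base cases and the whiskering step, specifically the interaction between the unit-deleting unitors and the associators. Because $\nu$ both reassociates and removes units, verifying $\theta_w \fcirc \phi = \theta_v$ for an associator or unitor generator amounts to showing that two different normalizations of a small word agree, which is precisely the commutativity guaranteed by pentagon, triangle, and their formal consequences. Getting the induction to close uniformly, rather than drowning in the cases generated by units in arbitrary positions, is the real work; conceptually the statement is a repackaging of Mac Lane's theorem, and a complete treatment is given in \cite{Gurski2013}, section 2.2.
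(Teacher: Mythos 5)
Your proposal is correct in outline, but it takes a genuinely different route from the paper: the paper does not prove the statement itself, it simply derives it from Theorem 2.13 of \cite{Gurski2013}, whose content is that the canonical strict functor from the free bicategory on \( G \) to the free \emph{2-category} on \( G \) is a (strict) biequivalence, hence 2-locally injective; parallel coherence cells then collapse to identities in the free 2-category and must already be equal upstream. Your opening objection is therefore aimed at a straw man — it is true that the bare universal property of Proposition \ref{prop_bicat_catgraph_adjunction} gives no local faithfulness, but the cited argument supplies exactly that missing ingredient (via a Yoneda-style strictification), so "pushing into a 2-category" is in fact the paper's route, just with nontrivial extra input. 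What you propose instead is the classical Mac Lane normalization argument transported to \( FG \): a normal form \( \nu \), canonical coherence isomorphisms \( \theta_v \colon v \to \nu(v) \), and the Main Lemma \( \theta_w \fcirc \phi = \theta_v \) proved by induction on the \( (\fstar,\fcirc) \)-word for \( \phi \). The skeleton is sound — the \( \fcirc \)- and \( \fstar \)-steps close exactly as you describe using relations \eqref{list_functorality_fstar} and \eqref{list_constraints_are_natural}, and invertibility of \( \theta_w \) from \eqref{list_constraints_are_invertible} lets you cancel — but be aware that the "base case" for a single associator is not really a base case: after sliding \( \theta_v \fstar \theta_w \fstar \theta_x \) past \( \fc a \) by naturality, you are left with an equation between two coherence morphisms from \( \nu(v) \fstar \nu(w) \fstar \nu(x) \) (variously bracketed) to the normal form, and proving \emph{that} requires its own induction (Mac Lane's induction on the length of the last factor, fed by the pentagon \eqref{list_pentagon} and triangle \eqref{list_triangle}). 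You flag this honestly, so I count it as a compressed but not broken step. The trade-off: your argument is self-contained and elementary but detail-heavy, especially around units; the paper's route buys brevity and a stronger structural statement (a strict biequivalence to a 2-category) at the cost of importing the Yoneda machinery from \cite{Gurski2013}.
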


\begin{proof}
This can be derived from theorem 2.13 of \cite{Gurski2013}, which is done in section 2.2.3 of \cite{Gurski2013}.
\end{proof}

It is important to note that in a non-free bicategory two parallel coherence 2-morphisms are not necessarily equal.
For example even if \( (f * g) * h = f * (g * h), \) the associator \( a_{fgh} \) might still be nontrivial.
Still the previous corollary can be slightly rephrased to refer to general bicategories:

\begin{terminology}
Let \( S: \cat{B} \to \cat{B'} \) be a weak functor and \( n \in \set{0,1,2}. \)
We say an \( n \)-morphism \( x' \in \cat{B'} \) lifts (along \( S \)) to an \( n \)-morphism \( x \in \cat{B} \) if \( S x = x'. \)
When it is clear from the context we might omit the functor \( S \).
(For example if we lift a morphism \( x \in \cat{B} \) to a morphism \( y \in FU \cat{B} \) we usually do not mention explicitly that the lift takes place along \( \epsilon. \))
\end{terminology}

\begin{kor} \label{cor_coherence_formally_defined_version}
Two coherence morphisms in a bicategory \( \cat{B} \) are equal, if they lift to parallel coherence morphisms in the free bicategory \( FU \cat{B}. \)
\end{kor}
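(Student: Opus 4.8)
The plan is to transport the desired equation from the free bicategory down to \( \cat{B} \) along the counit \( \epsilon \), so that the entire content of the corollary is carried by the coherence theorem for free bicategories (Proposition \ref{prop_coherence_bicats_free_version}). To begin, I would fix notation: let \( \phi, \psi \) denote the two coherence morphisms in \( \cat{B} \), and let \( \widetilde{\phi}, \widetilde{\psi} \in FU\cat{B} \) be the parallel coherence morphisms to which they lift. By the meaning of ``lift along \( \epsilon \)'' recorded in the preceding terminology, this says precisely that \( \epsilon_{\cat{B}}(\widetilde{\phi}) = \phi \) and \( \epsilon_{\cat{B}}(\widetilde{\psi}) = \psi \).

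The key step is then immediate. Since \( FU\cat{B} \) is a free bicategory and \( \widetilde{\phi}, \widetilde{\psi} \) are, by hypothesis, parallel coherence morphisms in it, Proposition \ref{prop_coherence_bicats_free_version} applies directly and gives \( \widetilde{\phi} = \widetilde{\psi} \).

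Finally I would apply \( \epsilon_{\cat{B}} \) to this equation. As the component at \( \cat{B} \) of the counit of the adjunction \( F \dashv U \), the map \( \epsilon_{\cat{B}} \colon FU\cat{B} \to \cat{B} \) is a (strict) functor of bicategories; in particular it is a well-defined assignment on 2-morphisms, so the equality \( \widetilde{\phi} = \widetilde{\psi} \) forces \( \phi = \epsilon_{\cat{B}}(\widetilde{\phi}) = \epsilon_{\cat{B}}(\widetilde{\psi}) = \psi \), which is the claim. Note that only well-definedness of \( \epsilon_{\cat{B}} \) on 2-morphisms is used here, not its strictness.

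I do not expect any genuine obstacle: the substantive mathematics is entirely absorbed into Proposition \ref{prop_coherence_bicats_free_version}, and this corollary merely reinterprets it for an arbitrary bicategory via the counit. The only points requiring attention are bookkeeping ones, namely checking that the hypotheses of that proposition are literally met---that \( \widetilde{\phi}, \widetilde{\psi} \) are \( (*,\circ) \)-composites of units and constraint cells and are parallel---both of which are supplied verbatim by the statement.
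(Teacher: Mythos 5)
Your argument is correct and is exactly the intended one: the paper leaves this corollary without an explicit proof precisely because it follows by applying Proposition \ref{prop_coherence_bicats_free_version} to the parallel lifts and then pushing the resulting equality down along the counit \( \epsilon_{\cat{B}} \). Nothing is missing, and your remark that only well-definedness of \( \epsilon_{\cat{B}} \) on 2-morphisms is needed is a fair observation.
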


\subsection{Strictification for the bicategory of formal composites} \label{sec_strictification_for_bicats}

In this section we will consider for an arbitrary bicategory \( \cat{B} \) the bicategory of "formal composites" \( \bhat, \)
that distinguishes (formal) composites of 1- morphisms, even if they evaluate to the same 1-morphism in \( \cat{B}. \)
The bicategory \( \bhat \) is -along a strict biequivalence \( \ev ,\) given by evaluation- biequivalent to \( \cat{B}, \) but has the advantage that parallel coherence morphisms in \( \bhat \) are equal.
This fact can be used to construct a 2-category \( \bst, \) whose 1-morphisms are the 1-morphisms of \( \bhat \) modulo coherence, and whose 2-morphisms are the 2-morphisms of \( \bhat \) up to conjugation with coherence morphisms.
Furthermore the process of "quotiening out coherence" is given by a strict biequivalence \( [-]: \bhat \to \bst. \)

So far we have strict biequivalences \( \ev: \bhat \to \cat{B} \) and \( [-]: \bhat \to \bst \) as in the diagram
\begin{align} \label{equ_bhat_equivalences}
\begin{tikzcd} [column sep=large, row sep=tiny, ampersand replacement=\&,]
 	\& \bhat
 	\ar[ld, "\ev" swap]
 	\ar[rd, "{[-]}"] \& \\
\cat{B}
		\& \& \bst.
\end{tikzcd}
\end{align}
Towards the end of the section we will explain how the situation in \eqref{equ_bhat_equivalences} allows us to replace equations between composite-2-morphisms in \( \cat{B} \) by equivalent equations of composite-2-morphisms in \( \bst. \)
Since \( \bst \) is a 2-category, the latter equations can be handled with string diagrams, which simplifies calculations tremendously as we demonstrate in example \ref{ex_adjunction_strictified}.

We start by defining the bicategory of formal composites \( \bhat. \)

\begin{satz} \label{prop_bhat_is_bicat}
Let \( \cat{B} \) be a bicategory.
Then the following defines a bicategory \( \bhat \) together with a strict biequivalence \( \ev: \bhat \to \cat{B}: \)

\begin{itemize}
\item
\( \ob{\bhat} = \ob{\cat{B}} \) and \( \ev \) acts on objects as an identity.

\item
The 1-morphisms of \( \bhat \) are defined inductively as follows:

Any 1-morphism in \( \cat{B} \) is a 1-morphism in \( \bhat \) as well (with the same types).
These 1-morphism of \( \bhat \) are called \( \bhat \)-basic 1-morphisms.

For any two 1-morphisms in \( \bhat \) (not necessarily \( \bhat \)-basic ones) \( \hat f: b \to c \) and \( \hat g: a \to b, \)
we require the formal composite \( \hat f \hs \hat g: a \to c \) to be a 1-morphism of \( \bhat. \)

Composition of 1-morphisms along objects is given by the \( \hs \)-operator.

\item
The action of \( \ev \) on 1-morphisms is defined inductively:

For \( \bhat \)-basic 1-morphisms \( \ev \) is the identity and \( \ev(\hat f \hs \hat g) := \ev(\hat f) * \ev(\hat g). \)

\item
The 2-morphisms of \( \bhat \) are triples \( (\alpha, \hat f, \hat g): \hat f \to \hat g \) where \( \alpha: \ev \hat f \to \ev \hat g \) is a 2-morphism in \( \cat{B}. \)

Composition of 2-morphisms in \( \tbar \) is inherited from composition of 2-morphisms in \( \cat{B}: \)
\begin{align*}
(\alpha, \hat f_1, \hat f_2) \hs (\beta, \hat g_1, \hat g_2) & := (\alpha * \beta, \hat f_1 \hs \hat g_1, \hat f_2 \hs \hat g_2) \\
(\alpha, \hat g_2, \hat g_3) \hc (\beta, \hat g_1, \hat g_2) & := (\alpha \circ \beta, \hat g_1, \hat g_3)
\end{align*}

\item
\( \ev \) acts on 2-morphisms via \( \ev(\alpha, \hat f, \hat g) = \alpha. \)

\item
The units in \( \bhat \) are given by
\[
\hat 1_a := 1_a \quad \text{and} \quad \hat 1 _{\hat f} := (1 _{\ev (\hat f)}, \hat f, \hat f).
\]
The constraint-cells of \( \bhat \) are given by

\begin{gather*}
\hat a _{\hat f \hat g \hat h} = \big(a _{\ev(\hat f) \ev(\hat g) \ev(\hat h)}, (\hat f \hs \hat g) \hs \hat h, \hat f \hs (\hat g \hs \hat h) \big) \\
\hat l _{\hat f} = (l _{\ev(\hat f)}, \hat 1 _{t \hat f} \hs \hat f, \hat f) \quad \text{and} \quad \hat r _{\hat f} = (r _{\ev(\hat f)}, \hat f \hs \hat 1 _{s \hat f}, \hat f)
\end{gather*}
\end{itemize}
\end{satz}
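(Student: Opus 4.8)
The plan is to exploit that $\bhat$ is, up to formal relabelling, a copy of $\cat{B}$. The assignment $(\alpha, \hat f, \hat g) \mapsto \alpha$ sets up, for every pair of parallel $1$-morphisms, a bijection between the $2$-local set $\bhat(a,b)(\hat f, \hat g)$ and $\cat{B}(a,b)(\ev \hat f, \ev \hat g)$, and under this bijection the compositions, units and constraint cells of $\bhat$ correspond entrywise to those of $\cat{B}$. Consequently every equation of $2$-morphisms that has to be checked in $\bhat$ is the preimage of the corresponding equation in $\cat{B}$, which holds because $\cat{B}$ is a bicategory. The real content of the proof therefore reduces to (a) checking that the data are well-typed and (b) verifying that $\ev$ is a strict biequivalence.

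For well-typedness I would first record the identity $\ev(\hat f \hs \hat g) = \ev \hat f * \ev \hat g$ forced by the inductive definition of $\ev$ on $1$-morphisms. It then follows immediately that the vertical composite $(\alpha \hc \beta, \hat g_1, \hat g_3)$, the horizontal composite $(\alpha * \beta, \hat f_1 \hs \hat g_1, \hat f_2 \hs \hat g_2)$, and each constraint cell are legitimate $2$-morphisms of the claimed types; for instance $a_{\ev \hat f, \ev \hat g, \ev \hat h}$ has source $\ev((\hat f \hs \hat g) \hs \hat h)$ and target $\ev(\hat f \hs (\hat g \hs \hat h))$. To assemble the bicategory structure I would then argue: each $\bhat(a,b)$ is a category, since associativity and unitality of $\hc$ are inherited entrywise from the local categories of $\cat{B}$; the operation $\hs$ is a functor (and satisfies interchange) because $*$ is, applied to the $\alpha$-entries; the constraints $\hat a, \hat l, \hat r$ are natural isomorphisms, with $(\alpha, \hat f, \hat g)$ inverted by $(\alpha^{-1}, \hat g, \hat f)$ and naturality pulled back from that of $a, l, r$. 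Finally, by Remark \ref{rem_bicat_axioms_component_wise} the pentagon and triangle may be checked component-wise, and each such component is, under the bijection above, precisely the corresponding component axiom of $\cat{B}$.

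It remains to see that $\ev$ is a strict biequivalence. Strictness is immediate from the definitions: $\ev$ carries $\hs$ to $*$, the units $\hat 1$ to the units of $\cat{B}$, and $\hat a, \hat l, \hat r$ to $a, l, r$, so the comparison isomorphisms $\phi, \phi_a$ of the weak functor are identities. For the biequivalence I would invoke the criterion of Remark \ref{rem_biequivalence}. It is biessentially surjective because $\ev$ is the identity on objects; it is locally essentially surjective because it is in fact surjective on $1$-morphisms, every $1$-morphism of $\cat{B}$ being $\bhat$-basic; and it is $2$-locally a bijection because $\ev_{ab}(\alpha, \hat f, \hat g) = \alpha$ is exactly the bijection on $2$-local sets identified at the start.

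The computations here are all routine and there is no genuinely hard step; the only thing demanding care is the persistent bookkeeping of types, namely ensuring at every operation that the formal composites recorded in the second and third entries of a triple agree, after applying $\ev$, with the source and target of its first entry in $\cat{B}$. Isolating the identity $\ev(\hat f \hs \hat g) = \ev \hat f * \ev \hat g$ and the $2$-local bijection once and for all is what makes this bookkeeping transparent, after which every axiom is transported mechanically from $\cat{B}$.
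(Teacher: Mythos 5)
Your proposal is correct and follows essentially the same route as the paper: both rest on the two observations that $\ev$ strictly preserves all composition, units and constraint cells, and that parallel $2$-morphisms of $\bhat$ are equal iff their $\ev$-images are, so that every axiom is transported entrywise from $\cat{B}$ and the biequivalence is then read off from Remark \ref{rem_biequivalence}.
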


\begin{proof}
Showing that \( \bhat \) is a bicategory is based on two simple observations.
Firstly it follows directly from its definition, that \( \ev \) strictly preserves constraint cells and units as well as composition along 0- and 1-morphisms.
Secondly it follows from the definition of the 2-morphisms in \( \bhat, \)
that for parallel \( \hat \alpha, \hat \beta \in \mmor{ \bhat} \) we have \( \hat \alpha = \hat \beta \) if and only if \( \ev \hat \alpha = \ev \hat \beta. \)

Now the functoriality of \( \hs _{ab}: \bhat(b,c) \times \bhat(a,b) \to \bhat(a,c) \) follows from
\begin{align*}
\ev \big( (\hat \alpha_1 \hs \hat \beta_1) \hc (\hat \alpha_2 \hs \hat \beta_2) \big)
= \big( \ev (\hat \alpha_1) * \ev (\hat \beta_1) \big) \circ \big( \ev (\hat \alpha_2) * \ev (\hat \beta_2) \big) \\
= \big( \ev (\hat \alpha_1) \circ \ev (\hat \alpha_2) \big) * \big( \ev (\hat \beta_1) \circ \ev (\hat \beta_2) \big)
= \ev \big( (\hat \alpha_1 \hc \hat \alpha_2) \hs (\hat \beta_1 \hc \hat \beta_2) \big)
\end{align*}
and
\begin{align*}
\ev (\hat 1 _{\hat f} \hs \hat 1 _{\hat g})
= 1 _{\ev \hat f} \hs 1 _{ \ev \hat g}
= 1 _{ \ev (\hat f) * \ev (\hat g)}
= \ev (\hat 1 _{\hat f \hs \hat g}).
\end{align*}

Similarly one shows functorality of the unit-functors \( I_a, \) naturality of \( a,l \) and \( r \) and the bicategory axioms \ref{list_pentagon_axiom}, \ref{list_triangle_axiom} of definition \ref{def_bicat}.

This establishes that \( \ev \) is a strict functor, since \( \bhat \) is a bicategory and \( \ev \) strictly preserves composition and constraint cells.
Finally, according to remark \ref{rem_biequivalence}, \( \ev \) is also a biequivalence, since it is surjective, 1-locally surjective and 2-locally a bijection.
\end{proof}

\begin{lem} \label{lem_unique_coherence_in_bhat}
Parallel coherence 2-morphisms in \( \bhat \) are equal.
\end{lem}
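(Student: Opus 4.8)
The plan is to reduce the claim to coherence in the free bicategory $FU\cat{B}$, using the fact -- established in the proof of Proposition \ref{prop_bhat_is_bicat} -- that for parallel $\hat\alpha,\hat\beta\in\mmor{\bhat}$ one has $\hat\alpha=\hat\beta$ if and only if $\ev\hat\alpha=\ev\hat\beta$. Thus, given parallel coherence $2$-morphisms $\hat\alpha,\hat\beta\colon\hat f\to\hat g$ in $\bhat$, it suffices to prove $\ev\hat\alpha=\ev\hat\beta$ in $\cat{B}$. Since $\ev$ is a strict functor it carries coherence morphisms to coherence morphisms, so $\ev\hat\alpha$ and $\ev\hat\beta$ are parallel coherence morphisms in $\cat{B}$ (both run from $\ev\hat f$ to $\ev\hat g$). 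By Corollary \ref{cor_coherence_formally_defined_version} it then suffices to exhibit lifts of $\ev\hat\alpha$ and $\ev\hat\beta$ along the counit $\epsilon\colon FU\cat{B}\to\cat{B}$ that are parallel coherence morphisms in $FU\cat{B}$.

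To build these lifts I first define a map $\ell$ from the $1$-morphisms of $\bhat$ to those of $FU\cat{B}$ by sending a non-unit $\bhat$-basic $1$-morphism $f$ to the corresponding $F$-basic $1$-morphism, sending each unit $1_a$ to the formal unit $\fc 1_a$, and setting $\ell(\hat f\hs\hat g):=\ell(\hat f)\fstar\ell(\hat g)$. By construction $\ell$ preserves types and satisfies $\epsilon\circ\ell=\ev$ on $1$-morphisms. I then lift the coherence $2$-morphism $\hat\alpha$ by choosing an expression of it as an $(\hs,\hc)$-composite of units $\hat 1$ and constraint cells $\hat a,\hat l,\hat r$ (and their inverses), and replacing every occurring cell $\hat 1_{\hat v},\hat a_{\hat u\hat v\hat w},\hat l_{\hat v},\hat r_{\hat v}$ by the corresponding free cell $\fc 1_{\ell\hat v},\fc a_{\ell\hat u\,\ell\hat v\,\ell\hat w},\fc l_{\ell\hat v},\fc r_{\ell\hat v}$ in $FU\cat{B}$, keeping the composition pattern unchanged. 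The one place where the unit convention matters is checking that the resulting free cells are composable with matching types: for the unitors this is exactly why I lift $1_a$ to the formal unit $\fc 1_a$ and not to the $F$-basic $1$-morphism $1_a$, since the source of $\fc l_{\ell\hat f}$ is $\fc 1_{t(\ell\hat f)}\fstar\ell\hat f=\ell(\hat 1_{t\hat f}\hs\hat f)$, and similarly for $\fc r$. Writing $\tilde\alpha$ for the resulting composite, one checks cell by cell (using that $\epsilon$ is strict and that $\epsilon\ell=\ev$) that $s\tilde\alpha=\ell\hat f$, $t\tilde\alpha=\ell\hat g$ and $\epsilon\tilde\alpha=\ev\hat\alpha$; thus $\tilde\alpha$ is a coherence morphism in $FU\cat{B}$ lifting $\ev\hat\alpha$.

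Finally, since $\hat\alpha$ and $\hat\beta$ are parallel in $\bhat$ they share the source $\hat f$ and target $\hat g$, so their lifts $\tilde\alpha$ and $\tilde\beta$ have the common source $\ell\hat f$ and target $\ell\hat g$ and are therefore parallel coherence morphisms in the free bicategory $FU\cat{B}$. By Proposition \ref{prop_coherence_bicats_free_version} we get $\tilde\alpha=\tilde\beta$, whence $\ev\hat\alpha=\epsilon\tilde\alpha=\epsilon\tilde\beta=\ev\hat\beta$, and the key property above yields $\hat\alpha=\hat\beta$. The only real obstacle is the bookkeeping around units just described: one must lift $\bhat$-units to formal units of $FU\cat{B}$ so that the lifted unitors acquire the correct source and target, and one must note that the source and target of $\tilde\alpha$ are $\ell\hat f$ and $\ell\hat g$ regardless of the chosen expression, which is precisely what guarantees that $\tilde\alpha$ and $\tilde\beta$ come out parallel.
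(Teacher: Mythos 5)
Your proof is correct and follows essentially the same strategy as the paper's: both hinge on a unit-aware lift of formal composites into a free bicategory (sending units \( 1_a \) to formal units \( \fc 1_a \) so that the lifted unitors type-check), followed by an appeal to coherence for free bicategories. The only difference is cosmetic: the paper lifts \( \hat\alpha \) itself along \( \epsilon\colon FU\bhat\to\bhat \) via the map \( (-)^{\oper{f}} \), whereas you first pass to \( \ev\hat\alpha \) and lift into \( FU\cat{B} \); both routes reduce to the same cell-by-cell computation.
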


\begin{proof}
For any \( \hat f \in \mor{\bhat}, \) we inductively define a 2-morphism \( \hat f ^{\oper{f}} \in \mor{F \bhat} \) as follows:
For a \( \bhat \)-basic 2-morphism \( f \) we define
\[
f ^{\oper{f}} :=
\begin{cases}
1_a ^{\sms{F}}\text{ (the unit at \( a \) in \( \oper{F} \bhat \)),}
	& \text{if } f = 1_a \\
f \text{ (as a morphism in \( \oper{F} \bhat \)),}
	& \text{if } f \text{ is not a unit}.
\end{cases}
\]
Then we recursively define \( (\hat f \hs \hat g) ^{\oper{f}} := \hat f ^{\oper{f}} \fstar \hat g ^{\oper{f}}. \)

Now in order to show that parallel coherence 2-morphisms in \( \bhat \) are equal, it suffices to show the following claim:
\emph{Any coherence 2-morphism \( \gamma: \hat f \to \hat g \) in \( \bhat \) lifts to the coherence 2-morphism \( \hat f ^{\oper{f}} \to \hat g ^{\oper{f}} \) in \( F \bhat. \)}

Since \( \gamma \) is a coherence 2-morphism, there exists a natural number \( N, \)
such that \( \gamma \) is a \( (*, \circ) \)-composite of \( N \) 2-morphisms, each of which being either a 2-unit or a constraint 2-cell.
We show the claim via induction over \( N. \)

If \( N = 1 \) \( \gamma \) is -for an appropriate choice of \( h, h_1, h_2, h_3 \)- one of the following 2-morphism or its inverse
\[
\hat 1 _{\hat h}, \quad \hat a _{\hat h_1 \hat h_2 \hat h_3}, \quad \hat l _{\hat h}, \quad \hat r _{\hat h}.
\]
It is easy to check that in these cases \( \gamma \) lifts to the coherence \(( s \gamma) ^{\oper{f}} \to (t \gamma) ^{\oper{f}}. \)

If \( N > 1 \) either \( \gamma = \gamma_1 \hs \gamma_2 \) or \( \gamma = \gamma_1 \hc \gamma_2, \)
where \( \gamma_1 \) and \( \gamma_2 \) can be written as composites of less than \( N \) constraint cells or units.
Therefore we can assume by induction that \( \gamma_1 \) resp. \( \gamma_2 \) lift to the coherence 2-morphisms
\( \mu_1: (s \gamma_1) ^{\oper{f}} \to (t \gamma_1) ^{\oper{f}} \) resp. \( \mu_2: (s \gamma_2) ^{\oper{f}} \to (t \gamma_2) ^{\oper{f}}. \)
Now let us assume \( \gamma = \gamma_1 \hs \gamma_2. \)
Then \( \mu_1 \fstar \mu_2 \) is the required lift of \( \gamma, \) as the following equations show:
\begin{gather*}
\epsilon(\mu_1 \fstar \mu_2) = \epsilon(\mu_1) \hs \epsilon(\mu_2) = \gamma_1 \hs \gamma_2 = \gamma \\
s (\mu_1 \fstar \mu_2) = s \mu_1 \fstar s \mu_2 = (s \gamma_1) ^{\oper{f}} \fstar (s \gamma_2) ^{\oper{f}} = (s \gamma_1 \hs s \gamma_2) ^{\oper{f}} = (s \gamma) ^{\oper{f}} \\
t (\mu_1 \fstar \mu_2) = t \mu_1 \fstar t \mu_2 = (t \gamma_1) ^{\oper{f}} \fstar (t \gamma_2) ^{\oper{f}} = (t \gamma_1 \hs t \gamma_2) ^{\oper{f}} = (t \gamma) ^{\oper{f}}.
\end{gather*}

Now let us assume \( \gamma = \gamma_1 \hc \gamma_2. \)
Then it is shown similar, that \( \mu_1 \fcirc \mu_2 \) is the required lift of \( \gamma. \)
\end{proof}

\begin{lem} \label{lem_coherence_congruence_relation}
Let \( \cat{B} \) be a bicategory, and \( \bhat \) be the associated bicategory described in proposition \ref{prop_bhat_is_bicat}.
Then
\[
\hat f \sim_1 \hat g
\iff
\text{a coherence 2-morphism } \hat \gamma: \hat f \to \hat g \text{ exists.}
\]
defines a \( \hs \)-congruence relation (see definition \ref{def_congruence_rel}) on \( \mor{\bhat} \) and
\[
\hat \alpha \sim_2 \hat \alpha'
\iff
\text{coherence 2-morphisms } \hat \gamma, \hat \gamma' \text{ exist, such that } \hat \gamma \circ \hat \alpha = \hat \alpha' \circ \hat \gamma'
\]
defines a \( (\hs,\hc) \)-congruence relation on \( \mmor{\bhat}. \)
\end{lem}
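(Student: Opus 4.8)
The plan is to check, for each relation in turn, that it is an equivalence relation and then that it is compatible with the relevant partially defined operations in the sense of Definition \ref{def_congruence_rel}. Throughout I will exploit three closure properties of the class of coherence $2$-morphisms in \( \bhat \), all immediate from the definition and from the fact (Proposition \ref{prop_bhat_is_bicat}) that \( \bhat \) is a bicategory: it contains the units \( \hat 1 _{\hat f} \); it is closed under the two composites \( \hs \) and \( \hc \); and every coherence morphism is invertible with a coherence inverse, since each generating constraint cell \( \hat a, \hat l, \hat r \) has an inverse which is again a constraint cell, and \( (\hat \mu \hc \hat \nu) ^{-1} = \hat \nu ^{-1} \hc \hat \mu ^{-1} \), \( (\hat \mu \hs \hat \nu) ^{-1} = \hat \mu ^{-1} \hs \hat \nu ^{-1} \).

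For \( \sim_1 \), reflexivity is witnessed by \( \hat 1 _{\hat f} \), symmetry by passing to the coherence inverse \( \hat \gamma ^{-1} \), and transitivity by \( \hc \)-composing two witnessing coherence morphisms. For compatibility with \( \hs \), suppose \( \hat \gamma_i: \hat f_i \to \hat g_i \) are coherence morphisms and that both \( \hat f_1 \hs \hat f_2 \) and \( \hat g_1 \hs \hat g_2 \) are defined; then \( \hat \gamma_1 \hs \hat \gamma_2 : \hat f_1 \hs \hat f_2 \to \hat g_1 \hs \hat g_2 \) is again a coherence morphism, so \( \hat f_1 \hs \hat f_2 \sim_1 \hat g_1 \hs \hat g_2 \). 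The only point to note is that a witnessing coherence morphism forces \( \hat f_i \) and \( \hat g_i \) to be parallel, so the two horizontal composites automatically have matching source and target objects.

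For \( \sim_2 \), unravelling types shows that a witness of \( \hat \alpha \sim_2 \hat \alpha' \) is a square of coherence conjugation \( \hat \gamma \hc \hat \alpha = \hat \alpha' \hc \hat \gamma' \) with \( \hat \gamma': s \hat \alpha \to s \hat \alpha' \) and \( \hat \gamma: t \hat \alpha \to t \hat \alpha' \). Reflexivity uses the unit square, symmetry follows by conjugating with the coherence inverses \( \hat \gamma ^{-1}, (\hat \gamma') ^{-1} \), and transitivity follows by vertically stacking two such squares and composing the respective conjugators. Compatibility with \( \hs \) is the easy case: given witnesses \( \hat \gamma_i \hc \hat \alpha_i = \hat \beta_i \hc \hat \gamma_i' \), the interchange law (functoriality of \( \hs \) in \( \bhat \), Terminology \ref{ter_bicat_stuff}) yields
\[
(\hat \gamma_1 \hs \hat \gamma_2) \hc (\hat \alpha_1 \hs \hat \alpha_2) = (\hat \beta_1 \hs \hat \beta_2) \hc (\hat \gamma_1' \hs \hat \gamma_2'),
\]
and both outer factors are coherence morphisms, so \( \hat \alpha_1 \hs \hat \alpha_2 \sim_2 \hat \beta_1 \hs \hat \beta_2 \).

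The main obstacle is compatibility with vertical composition \( \hc \). Here the witnesses \( \hat \gamma_1 \hc \hat \alpha_1 = \hat \beta_1 \hc \hat \gamma_1' \) and \( \hat \gamma_2 \hc \hat \alpha_2 = \hat \beta_2 \hc \hat \gamma_2' \) cannot simply be pasted, because the inner coherence morphism \( \hat \gamma_1' \) on the source side of the first square and the inner coherence morphism \( \hat \gamma_2 \) on the target side of the second were chosen independently and need not coincide. However, under the standing hypothesis of Definition \ref{def_congruence_rel} that both \( \hat \alpha_1 \hc \hat \alpha_2 \) and \( \hat \beta_1 \hc \hat \beta_2 \) are defined, these two morphisms are parallel: both run from \( s \hat \alpha_1 = t \hat \alpha_2 \) to \( s \hat \beta_1 = t \hat \beta_2 \). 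By Lemma \ref{lem_unique_coherence_in_bhat} they are therefore equal, whereupon associativity of \( \hc \) lets the two squares paste into
\[
\hat \gamma_1 \hc (\hat \alpha_1 \hc \hat \alpha_2) = (\hat \beta_1 \hc \hat \beta_2) \hc \hat \gamma_2',
\]
exhibiting \( \hat \alpha_1 \hc \hat \alpha_2 \sim_2 \hat \beta_1 \hc \hat \beta_2 \). This appeal to the uniqueness of parallel coherence \( 2 \)-morphisms is precisely the step where the special structure of \( \bhat \), rather than that of an arbitrary bicategory, is essential.
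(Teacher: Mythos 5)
Your proof is correct and follows essentially the same route as the paper's: the equivalence-relation and \( \hs \)-compatibility checks rest on the closure of coherence morphisms under units, inverses and both composites, and the crucial \( \hc \)-compatibility step for \( \sim_2 \) is handled exactly as in the paper by observing that the two independently chosen inner conjugators are parallel coherence 2-morphisms and hence equal by Lemma \ref{lem_unique_coherence_in_bhat}. Your additional remarks (explicit coherence inverses, the type-matching for horizontal composites) only elaborate points the paper leaves implicit.
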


\begin{proof}
\( \sim_1 \) is reflexive, because identities are coherence morphisms;
\( \sim_1 \) is symmetric, because coherence morphisms are isomorphisms;
\( \sim_1 \) is transitive, because \( \hc \)-composites of coherence morphisms are by definition coherence morphisms;
\( \sim_1 \) is a congruence, because \( \hs \)-composites of coherence morphisms are by definition coherence morphisms.
Therefore we have shown \( \sim_1 \) is a congruence relation.

\( \sim_2 \) is a equivalence relation which is compatible with \( \hs \) for the same reasons as for \( \sim_1. \)
For showing compatibility with \( \hc, \) we consider 2-morphisms \( \hat \alpha_1 \sim_2 \hat \alpha_1' \) and \( \hat \alpha_2 \sim_2 \hat \alpha_2', \) where \( \hat \alpha_1 \) and \( \hat \alpha_2 \) as well as \( \hat \alpha_1' \) and \( \hat \alpha_2' \) are composable.
By definition of \( \sim_2 \) there exist coherence 2-morphisms \( \hat \gamma_1, \hat \gamma'_1, \hat \gamma_2, \hat \gamma'_2, \)
such that \( \hat \gamma_1 \circ \hat \alpha_1 = \hat \alpha_1' \circ \hat \gamma'_1 \) and \( \hat \gamma_2 \circ \hat \alpha_2 = \hat \alpha_2' \circ \hat \gamma'_2. \)
Since parallel coherence 2-morphisms in \( \bhat \) are equal by lemma \( \ref{lem_unique_coherence_in_bhat} \) and since \( s \hat \alpha_1 = t \hat \alpha_2 \) and \( s \hat \alpha_1' = t \hat \alpha_2', \)
the types of \( \hat \gamma'_1: s \hat \alpha_1 \to s \hat \alpha_1' \) and \( \hat \gamma_2: t \hat \alpha_2 \to t \hat \alpha_2' \) imply \( \hat \gamma'_1 = \hat \gamma_2. \)
Now
\[
\hat \gamma_1 \hc \hat \alpha_1 \hc \hat \alpha_2 =
\hat \alpha_1' \hc \hat \gamma'_1 \hc \hat \alpha_2 =
\hat \alpha_1' \hc \hat \gamma_2 \hc \hat \alpha_2 =
\hat \alpha_1' \hc \hat \alpha_2' \hc \hat \gamma'_2
\]
and hence \( \hat \alpha_1 \circ \hat \alpha_2 \sim_2 \hat \alpha_1' \circ \hat \alpha_2'. \)
\end{proof}

Now we can strictify \( \bhat, \) by quotiening out the congruence relations from lemma \ref{lem_unique_coherence_in_bhat}.

\begin{theorem} \label{prop_Bst_is_strict_bicat}~
\begin{enumerate}
\item
For any bicategory \( \cat{B} \) there is a 2-category \( \bst \) defined as follows:
\begin{itemize}
\item
The objects of \( \bst \) are the objects of \( \bhat. \)
\item
The 1-morphisms of \( \bst \) are the equivalence classes of \( \mor{\bhat} \) with respect to \( \sim_1. \)

The types of the 1-morphisms of \( \bst \) are defined by \( s[\hat f] = s \hat f \) and \( t[\hat f] = t \hat f. \)

\( * \)-composition in \( \mor{\bst} \) is given by \( [\hat f] * [\hat g] = [\hat f \hs \hat g]. \)

\item
The 2-morphisms of \( \bst \) are the equivalence classes of \( \mmor{\bhat} \) with respect to \( \sim_2. \)

The types of the 2-morphisms of \( \bst \) are defined by \( s[\hat \alpha] = [s \hat \alpha] \) and \( t[\hat \alpha] = [t \hat \alpha]. \)

\( * \)-composition in \( \mmor{\bst} \) is given by \( [\hat \alpha] * [\hat \beta] = [\hat \alpha \hs \hat \beta]. \)

If \( [\hat \alpha_1] \) and \( [\hat \alpha_2] \) are \( \circ \)-composable 2-morphisms in \( \bst, \)
there exist \( \hc \)-composable 2-morphisms \( \hat \alpha'_1 \) and \( \hat \alpha'_2 \) in \( \that, \)
such that \( \hat \alpha_1 \sim_2 \hat \alpha'_1 \) and \( \hat \alpha_2 \sim_2 \hat \alpha'_2. \)
Then we define \( [\hat \alpha_1] \circ [\hat \alpha_2] := [\hat \alpha'_1 \hc \hat \alpha'_2]. \)

\item
The units in \( \bst \) are given by \( 1_a = [1_a] \) and \( 1 _{[\hat f]} := [\hat 1 _{\hat f}]. \)
\end{itemize}

\item
There is a strict biequivalence \( \bhat \to \bst, \)
which is the identity on objects and sends each 1-morphism \( \hat f \) and 2-morphisms \( \hat \alpha \) to the equivalence classes \( [\hat f] \) and \( [\hat \alpha]. \)
\end{enumerate}
\end{theorem}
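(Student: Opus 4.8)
\section*{Proof proposal}

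The plan is to check, in order, that every operation defining $\bst$ descends to the quotients, that $\bst$ then satisfies the axioms of a 2-category, and finally that $[-]$ is a strict biequivalence. Throughout I would rely on two earlier facts: that $\sim_1$ and $\sim_2$ are congruence relations (Lemma \ref{lem_coherence_congruence_relation}), and that parallel coherence 2-morphisms in $\bhat$ coincide (Lemma \ref{lem_unique_coherence_in_bhat}).

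For well-definedness, the type assignments and the $\hs$-composites descend precisely because $\sim_1$ and $\sim_2$ are $\hs$-congruences; the rule $s[\hat\alpha]=[s\hat\alpha]$ is consistent because a witness $\hat\gamma\hc\hat\alpha=\hat\alpha'\hc\hat\gamma'$ of $\hat\alpha\sim_2\hat\alpha'$ supplies coherence morphisms $\hat\gamma':s\hat\alpha\to s\hat\alpha'$ and $\hat\gamma:t\hat\alpha\to t\hat\alpha'$, so that $s\hat\alpha\sim_1 s\hat\alpha'$ and $t\hat\alpha\sim_1 t\hat\alpha'$. The one operation needing care is $\circ$. Given $\circ$-composable classes $[\hat\alpha_1],[\hat\alpha_2]$, i.e. $s\hat\alpha_1\sim_1 t\hat\alpha_2$, I would produce genuinely $\hc$-composable representatives by picking a coherence morphism $\hat\gamma:t\hat\alpha_2\to s\hat\alpha_1$ and replacing $\hat\alpha_1$ by $\hat\alpha_1\hc\hat\gamma$; since precomposition with a coherence morphism does not change the $\sim_2$-class, this stays in $[\hat\alpha_1]$. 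Independence of the chosen representatives is then exactly the compatibility of $\sim_2$ with $\hc$ already established in Lemma \ref{lem_coherence_congruence_relation}.

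To see that $\bst$ is a 2-category I would transport the axioms of each local category $\bhat(a,b)$ and the interchange law through the surjection $[-]$, computing all $\circ$-composites on composable representatives so that associativity and unitality follow from the corresponding facts in $\bhat$. Strict associativity and unitality of $*$ on $1$-morphisms reduce to the observations that $\hat a_{\hat f\hat g\hat h}$, $\hat l_{\hat f}$, $\hat r_{\hat f}$ are coherence morphisms, whence $(\hat f\hs\hat g)\hs\hat h\sim_1\hat f\hs(\hat g\hs\hat h)$ and $\hat 1_{t\hat f}\hs\hat f\sim_1\hat f\sim_1\hat f\hs\hat 1_{s\hat f}$. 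The decisive point is that the constraint cells of $\bst$ are identities: any coherence 2-morphism $\hat\gamma:\hat f\to\hat g$ satisfies $\hat\gamma\sim_2\hat 1_{\hat f}$, witnessed by $\hat\gamma^{-1}\hc\hat\gamma=\hat 1_{\hat f}\hc\hat 1_{\hat f}$ using that coherence morphisms are invertible, so $[\hat\gamma]$ is an identity; applied to $\hat a,\hat l,\hat r$ this trivialises the associator and unitors of $\bst$.

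Finally, $[-]$ is strict by construction, as it preserves $*$, $\hc$ and units on the nose and sends the constraint cells of $\bhat$ to identities, which are the constraint cells of the 2-category $\bst$. By Remark \ref{rem_biequivalence} three conditions remain; biessential surjectivity and local essential surjectivity on $1$-morphisms are immediate, since $[-]$ is the identity on objects and surjective on each local collection of $1$-morphisms. The substantive step, where I expect the real work, is 2-local bijectivity of $[-]_{\hat f,\hat g}$. Surjectivity follows by conjugating an arbitrary representative $\hat\alpha:\hat f'\to\hat g'$ with coherence morphisms $\hat f\to\hat f'$ and $\hat g'\to\hat g$ to obtain a parallel lift. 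For injectivity, if $\hat\alpha,\hat\beta:\hat f\to\hat g$ satisfy $[\hat\alpha]=[\hat\beta]$, the witnessing coherence morphisms are forced to be endomorphisms $\hat g\to\hat g$ and $\hat f\to\hat f$; here Lemma \ref{lem_unique_coherence_in_bhat} identifies them with $\hat 1_{\hat g}$ and $\hat 1_{\hat f}$, collapsing $\hat\gamma\hc\hat\alpha=\hat\beta\hc\hat\gamma'$ to $\hat\alpha=\hat\beta$. This uniqueness-of-coherence input is the crux of the theorem.
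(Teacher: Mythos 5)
Your proposal is correct and follows essentially the same route as the paper: well-definedness from the congruence lemmas, triviality of the constraint cells from uniqueness of parallel coherence 2-morphisms, and the three surjectivity/bijectivity conditions for the strict biequivalence. The only (cosmetic) difference is that you trivialise the constraints by showing every coherence 2-morphism is \( \sim_2 \)-equivalent to an identity, where the paper instead computes strict associativity of \( * \) directly via naturality of \( \hat a \) — both rest on the same two lemmas, and your treatment of 2-local injectivity actually spells out a step the paper leaves implicit.
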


\begin{proof}
\begin{enumerate}
\item
The types of the 1-morphisms of \( \bhat \) are well defined, since \( \sim_1 \)-related 1-morphisms in \( \bhat \) are parallel.
The types of the 2-morphisms of \( \bhat \) are well defined, since for \( \sim_2 \)-related \( \alpha, \alpha' \in \mmor{\bhat} \)
\( s \alpha \sim_1 s \alpha' \) and \( t \alpha \sim_1 t \alpha' \) holds.
The compositions \( * \) and \( \circ \) are well defined, because \( \sim_1 \) and \( \sim_2 \) are congruence relations.

For showing functorality of \( *, \) let \( [\hat \alpha_1], [\hat \alpha_2], [\hat \beta_1], [\hat \beta_2] \in \mmor{\bst} \) such that \( ([\hat \alpha_1] \circ [\hat \alpha_2]) * ([\hat \beta_1] \circ [\hat \beta_2]) \) exists.
Without loss of generality we can assume that the representatives \( \hat \alpha_1 \) and \( \hat \alpha_2 \) resp. \( \hat \beta_1 \) and \( \hat \beta_2 \) are \( \circ \)-composable.
Then
\begin{align*}
([\hat \alpha_1] * [\hat \beta_1]) \circ ([\hat \alpha_2] * [\hat \beta_2])
= [(\hat \alpha_1 \hs \hat \beta_1) \hc (\hat \alpha_2 \hs \hat \beta_2)] \\
= [(\hat \alpha_1 \hc \hat \alpha_2) \hs (\hat \beta_1 \hc \hat \beta_2)]
=([\hat \alpha_1] \circ [\hat \alpha_2]) * ([\hat \beta_1] \circ [\hat \beta_2])
\end{align*}
\[
1 _{[\hat f]} * 1 _{[\hat g]} = [\hat 1 _{\hat f}] * [\hat 1 _{\hat g}] = [\hat 1 _{\hat f} \hs \hat 1 _{\hat g}] = [\hat 1 _{\hat f \hs \hat g}] = 1 _{[\hat f \hs \hat g]} = 1 _{[\hat f] * [\hat g]}
\]
which shows the functorality of \( *. \)

The associators \( a _{abcd} \) of \( \bst \) are identity natural transformations, since
\[
([\hat f] * [\hat g]) * [\hat h] = [(\hat f \hs \hat g) \hs \hat h] = [\hat f \hs (\hat g \hs \hat h)] = [\hat f] * ([\hat g] * [\hat h])
\]
\begin{gather*}
([\hat \alpha_1] * [\hat \alpha_2]) * [\hat \alpha_3] = [(\hat \alpha_1 \hs \hat \alpha_2) \hs \hat \alpha_3]
= [\hat a _{t \hat \alpha_1, t \hat \alpha_2, t \hat \alpha_3} \hc \big( (\hat \alpha_1 \hs \hat \alpha_2) \hs \hat \alpha_3 \big)] \\
= [\big( \hat \alpha_1 \hs (\hat \alpha_2 \hs \hat \alpha_3) \big) \hc \hat a _{s \hat \alpha_1, s \hat \alpha_2, s \hat \alpha_3}]
= [\hat \alpha_1 \hs (\hat \alpha_2 \hs \hat \alpha_3)] = [\hat \alpha_1] * ([\hat \alpha_2] * [\hat \alpha_3]).
\end{gather*}

Similarly one shows that left and right units \( l_a, r_a \) of \( \bst \) are identity natural transformations.

The bicategory axioms are trivially satisfied since all constraints of \( \bst \) are identities.

\item
By definition \( [-] \) strictly preserves composition and units.
That \( [\hat a _{\hat f \hat g \hat h}],[\hat l _{\hat f}],[\hat r _{\hat f}] \) are all identities, means that constraint cells are strictly preserved as well.
Furthermore \( [-] \) is surjective, locally surjective and 2-locally a bijection.
Therefore \( [-] \) is a strict biequivalence.
\end{enumerate}
\end{proof}

Since we have constructed strict biequivalences \( \ev: \bhat \to \cat{B} \) and \( [-]: \bhat \to \bst, \) we have in particular shown that \( \cat{B} \) is biequivalent to \( \bst. \)
(Note that this biequivalence is not given by a \emph{strict} functor, though).

\begin{kor} \label{cor_bicat_2_cat_equivalence}
Every bicategory is biequivalent to a 2-category.
\end{kor}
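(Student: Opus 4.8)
The plan is to read off the corollary from the span of strict biequivalences
\[
\cat{B} \xleftarrow{\ev} \bhat \xrightarrow{[-]} \bst
\]
already produced by Proposition \ref{prop_bhat_is_bicat} and Theorem \ref{prop_Bst_is_strict_bicat}, where \( \bst \) is a 2-category. Since \( \bst \) is a 2-category, the only thing left to prove is that \( \cat{B} \) is biequivalent to \( \bst \). Both \( \cat{B} \) and \( \bst \) are biequivalent to the common bicategory \( \bhat \), so it suffices to establish that \enquote{being biequivalent} is an equivalence relation on bicategories --- concretely, that biequivalences compose (transitivity) and that every biequivalence admits a biequivalence in the opposite direction (symmetry). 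Reflexivity is immediate, as the identity is a strict biequivalence.

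First I would verify transitivity. Given biequivalences \( F: \cat{A} \to \cat{B} \) and \( G: \cat{B} \to \cat{C} \), I would check the three conditions of Remark \ref{rem_biequivalence} for the composite weak functor \( GF \): biessential surjectivity, local essential surjectivity, and 2-local bijectivity. Each is inherited from the corresponding properties of \( F \) and \( G \) by a routine argument --- for 2-local bijectivity, because a composite of bijections is a bijection, and for the two surjectivity-up-to-equivalence conditions, because equivalences are preserved by the functors involved. This already shows that \( [-] \) precomposed with any biequivalence \( \cat{B} \to \bhat \) is again a biequivalence.

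The main work, and the main obstacle, is symmetry: producing from \( \ev: \bhat \to \cat{B} \) a pseudo-inverse biequivalence \( H: \cat{B} \to \bhat \). Here I would use the characterization of equivalences in \( \cat{Cat} \) from Lemma \ref{lem_characterization_adjunction} together with the axiom of choice. On objects, biessential surjectivity of \( \ev \) lets me choose for each \( b \in \ob{\cat{B}} \) an object \( Hb \in \ob{\bhat} \) with \( \ev(Hb) \) equivalent to \( b \). On hom-categories I would invert the local equivalences \( \ev_{ab} \) up to natural isomorphism, which exist because \( \ev \) is locally an equivalence. The delicate point is assembling these local pseudo-inverses into a genuine weak functor: one must produce the invertible compositor and unitor constraints \( \phi \) and verify the axioms of Definition \ref{def_weak_functor}, which is exactly the standard but bookkeeping-heavy construction of a pseudo-inverse to an equivalence of bicategories, obtained by transporting the coherence data across the chosen equivalences. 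I would either carry this out explicitly or cite the standard fact that a biequivalence admits a pseudo-inverse.

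Finally, composing \( H \) with the strict biequivalence \( [-]: \bhat \to \bst \) and invoking transitivity yields a biequivalence \( \cat{B} \to \bst \); since \( \bst \) is a 2-category, this proves that every bicategory is biequivalent to a 2-category.
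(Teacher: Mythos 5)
Your proposal is correct and follows essentially the same route as the paper, which simply reads the corollary off the span \( \cat{B} \xleftarrow{\ev} \bhat \xrightarrow{[-]} \bst \) of strict biequivalences and leaves implicit the (standard) facts that biequivalences compose and admit pseudo-inverses. You have merely made explicit the symmetry and transitivity steps the paper takes for granted, including the correct observation that the resulting biequivalence \( \cat{B} \to \bst \) is not strict.
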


Now we will explain how the results of this section can be used to reduce calculations in a bicategory \( \cat{B} \) to calculations in the 2-category \( \bst. \)
More precisely we describe how a commutative diagram in a local category of \( \cat{B} \) can be replaced with an equivalent commutative diagram in a local category of \( \bst. \)

First we note that any composite 1-morphism in \( \cat{B} \) lifts along \( \ev \) to the 1-morphism in \( \bhat, \) that constitutes precisely that composite.
For example the composite \( (f_1 * f_2) * f_3 \) in \( \cat{B} \) lifts to the 1-morphism \( (f_1 \hs f_2) \hs f_3 \) in \( \bhat. \)
Similarly a diagram in a local category of \( \cat{B} \) lifts along \( \ev \) to a corresponding diagram in a local category of \( \bhat. \)
Since \( \ev \) is a strict biequivalence, the lifted diagram in \( \bhat \) commutes if and only if the original diagram in \( \cat{B} \) commutes.
Finally the diagram in \( \bhat \) gets mapped under the strict biequivalence \( [-]: \bhat \to \bst \) to a diagram in \( \bst, \)
which commutes if and only if the diagram in \( \bhat \) does so.
Therefore, in total we have achieved our goal of replacing a diagram in \( \cat{B} \) with an equivalent diagram in \( \bst. \)
(Here equivalent means that the diagram in \( \cat{B} \) commutes if and only if the diagram in \( \bst \) commutes.)

The following example will clarify the whole procedure and its benefits.

\begin{bei} \label{ex_adjunction_strictified}
Recall from definition \ref{def_equivalence_adjunction} that an adjunction in \( \cat{B} \) consists of 1-morphisms \( f: a \to b, \, g: b \to a \) and 2-morphisms \( \eta:1_a \to g*f, \, \epsilon: f*g \to 1_b, \)
satisfying the equations
\begin{align} \label{equ_snake_1}
f \xrightarrow{r ^{-1}}
f*1 \xrightarrow{1 * \eta}
f*(g*f) \xrightarrow{a ^{-1}}
(f*g)*f \xrightarrow{\epsilon * 1}
1 * f \xrightarrow{l}
f = f \xrightarrow{1} f
\end{align}
\begin{align} \label{equ_snake_2}
g \xrightarrow{l ^{-1}}
1 * g \xrightarrow{\eta * 1}
(g*f)*g \xrightarrow{a}
g*(f*g) \xrightarrow{1 * \epsilon}
g*1 \xrightarrow{r}
g = g \xrightarrow{1} g.
\end{align}

Now we define the 2-morphisms \( \hat \eta:=(\eta, 1_a, g \hs f) \) and  \( \hat \epsilon: (\epsilon: f \hs g, 1_b) \) in \( \bhat \) and observe that,
since \( ev:\bhat \to \cat{B} \) is a strict biequivalence, the equations
\begin{align} \label{equ_snake_11}
f \xrightarrow{\hat r ^{-1}}
f \hs 1 \xrightarrow{\hat 1 \hs \hat \eta}
f \hs (g \hs f) \xrightarrow{\hat a ^{-1}}
(f \hs g)\hs f \xrightarrow{\hat \epsilon \hs \hat 1}
1 \hs f \xrightarrow{\hat l}
f = f \xrightarrow{\hat 1} f
\end{align}
\begin{align} \label{equ_snake_22}
g \xrightarrow{\hat l ^{-1}}
1 \hs g \xrightarrow{\hat \eta \hs \hat 1}
(g \hs f)\hs g \xrightarrow{\hat a}
g \hs (f \hs g) \xrightarrow{\hat 1 \hs \hat \epsilon}
g \hs 1 \xrightarrow{\hat r}
g = g \xrightarrow{\hat 1} g.
\end{align}
in \( \bhat \) hold if and only if equations \ref{equ_snake_1} and \ref{equ_snake_2} hold.

Since \( [-]: \bhat \to \bst \) is a strict biequivalence, equations \ref{equ_snake_11} and \ref{equ_snake_22} hold, if and only if the following equations, which are written as string diagrams, hold in \( \bst: \)
\begin{align} \label{equ_snake_identities}
\begin{tikzpicture}[string={.7cm}{1.5cm}, rounded corners=7, baseline={([yshift=-.5ex]current bounding box.center)}, math mode, label distance=-1.5mm]
\node (s1) {}; \node (s2) [rr=of s1, label=90:{[\hat \eta]}] {};
\node (t1) [br=of s1, label=-90:{[\hat \epsilon]}] {}; \node (t2) [br=of s2] {};
\draw (s1.90) to [d r] node [inner sep=.5mm, left, pos=.4]  {\scriptstyle{[f]}} (t1.center)
	(t1.center) to [r r] node [inner sep=.5mm, right]  {\scriptstyle{[g]}} (s2.center)
	(s2.center) to [r d] node [inner sep=.5mm, right, pos=.6]  {\scriptstyle{[f]}} (t2.270);
\end{tikzpicture}
=
\begin{tikzpicture}[string={.7cm}{1.5cm}, rounded corners=7, baseline={([yshift=-.5ex]current bounding box.center)}, math mode]
\node (s1) {};
\node (t1) [b=of s1] {};
\draw (s1.90) -- node[inner sep=.5mm, right]  {\scriptstyle{[f]}} (t1.-90);
\end{tikzpicture}
\quad \text{and} \quad
\begin{tikzpicture}[string={.7cm}{1.5cm}, rounded corners=7, baseline={([yshift=-.5ex]current bounding box.center)}, math mode, label distance=-1.5mm]
\node (s1) [label=90:{[\hat \eta]}] {}; \node (s2) [rr=of s1] {};
\node (t1) [bl=of s1] {}; \node (t2) [bl=of s2, label=-90:{[\hat \epsilon]}] {};
\draw (t1.-90) to [d r] node [inner sep=.5mm, left, pos=.6]  {\scriptstyle{[g]}} (s1.center)
	(s1.center) to [r r] node [inner sep=.5mm, right]  {\scriptstyle{[f]}} (t2.center)
	(t2.center) to [r d] node [inner sep=.5mm, right, pos=.4]  {\scriptstyle{[g]}} (s2.90);
\end{tikzpicture}
=
\begin{tikzpicture}[string={.7cm}{1.5cm}, rounded corners=7, baseline={([yshift=-.5ex]current bounding box.center)}, math mode]
\node (s1) {};
\node (t1) [b=of s1] {};
\draw (s1.90) -- node[inner sep=.5mm, right]  {\scriptstyle{[g]}} (t1.-90);
\end{tikzpicture}
\end{align}

Therefore we have shown that we can replace the axioms \ref{equ_snake_1} and \ref{equ_snake_2} of an adjunction, by the slightly simpler axioms \ref{equ_snake_identities}, which are called snake identities.
So far we have only saved a few constraint cells, but this pays off significantly when it comes to proving statements about adjunctions.
To demonstrate this, we will prove the following basic lemma:

\begin{lem} [proposition 13 in \cite{Baez2004}]
Let \( (f,g,\eta,\epsilon) \) be an equivalence in a bicategory \( \cat{B}. \)
Then the equivalence \( (f,g,\eta,\epsilon) \) satisfies both equations \ref{equ_snake_1} and \ref{equ_snake_2} if it satisfies one of it.
\end{lem}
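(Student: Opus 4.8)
The plan is to exploit the strictification machinery just developed, exactly as in Example \ref{ex_adjunction_strictified}, so that the two snake equations become the string-diagram identities \eqref{equ_snake_identities} in the 2-category \( \bst \), where they are far easier to compare. First I would record two preliminary facts about the strict biequivalences \( \ev \) and \( [-] \): being functors of bicategories they preserve invertible 2-morphisms, and being 2-locally bijective they both preserve and reflect equations of parallel 2-morphisms. Consequently \( (f,g,\eta,\epsilon) \) is an equivalence in \( \cat{B} \) if and only if the transported data \( ([f],[g],[\hat \eta],[\hat \epsilon]) \) consists of 1-morphisms together with invertible 2-morphisms \( [\hat \eta]\colon 1 \to [g]*[f] \) and \( [\hat \epsilon]\colon [f]*[g] \to 1 \) in \( \bst \); and, just as spelled out in Example \ref{ex_adjunction_strictified}, equation \eqref{equ_snake_1} (resp. \eqref{equ_snake_2}) holds in \( \cat{B} \) if and only if the first (resp. second) identity of \eqref{equ_snake_identities} holds in \( \bst \). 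Thus it suffices to prove the statement in a 2-category.

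Working now in \( \bst \), I would abbreviate \( [f],[g] \) by \( f,g \) and \( [\hat \eta],[\hat \epsilon] \) by \( \eta,\epsilon \), and write \( *,\circ \) for horizontal and vertical composition. Since all constraint cells of \( \bst \) are identities, the two snake identities read
\[
(\epsilon * 1_f)\circ(1_f * \eta) = 1_f \qquad\text{and}\qquad (1_g * \epsilon)\circ(\eta * 1_g) = 1_g,
\]
with \( \eta,\epsilon \) invertible. Assume the first holds and set \( \beta := (1_g * \epsilon)\circ(\eta * 1_g)\colon g \to g \), the left-hand side of the second identity. Since \( \eta * 1_g \) and \( 1_g * \epsilon \) are invertible (being whiskerings of the invertible 2-cells \( \eta \) and \( \epsilon \)), their vertical composite \( \beta \) is invertible. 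Hence it is enough to show that \( \beta \) is idempotent, \( \beta \circ \beta = \beta \): composing with \( \beta^{-1} \) then forces \( \beta = 1_g \), which is the desired second identity.

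To prove idempotency I would expand
\[
\beta \circ \beta = (1_g * \epsilon)\circ(\eta * 1_g)\circ(1_g * \epsilon)\circ(\eta * 1_g)
\]
and concentrate on the middle factor \( (\eta * 1_g)\circ(1_g * \epsilon) \). In string-diagram terms the \( \epsilon \)-cap of the lower copy of \( \beta \) and the \( \eta \)-cup of the upper copy act on horizontally disjoint parts of the word \( g * f * g \), being separated by the straight \( g \)-strand, so the interchange law lets me slide them past each other. After this rearrangement an \( f \)-labelled snake (a cup followed by the matching cap, in the configuration of the first identity of \eqref{equ_snake_identities}) appears; applying the assumed first snake identity straightens it, and bookkeeping of the remaining strands collapses the diagram to a single copy of \( \beta \). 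This yields \( \beta \circ \beta = \beta \) and hence \( \beta = 1_g \). The converse implication is entirely symmetric: one runs the same argument with the roles of \( f,\eta \) and \( g,\epsilon \) interchanged.

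The main obstacle is precisely this idempotency step: one must check that the interchange law really does decouple the counit of the lower copy of \( \beta \) from the unit of the upper copy, and that after straightening the \( f \)-snake the leftover (constraint-free) strands recompose to exactly \( \beta \) rather than to some other endomorphism of \( g \). In the 2-category \( \bst \) this is a short, purely diagrammatic \emph{yanking} manipulation, which is exactly the simplification advertised before the lemma; carrying out the same computation directly in \( \cat{B} \) would instead force one to track all the associators and unitors hidden in \eqref{equ_snake_1} and \eqref{equ_snake_2}. The only secondary point to verify is the preservation and reflection of invertibility along \( \ev \) and \( [-] \), and this is immediate since both are functors that are 2-locally bijective.
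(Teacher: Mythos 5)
Your reduction to \( \bst \) is exactly the paper's setup, but the core string-diagram computation takes a genuinely different route, and it is correct. The paper proves the second identity of \eqref{equ_snake_identities} directly by inserting a bubble \( [\hat\epsilon]\circ[(\hat\epsilon)^{-1}] = 1 \), splicing it into the \( [g] \)-zigzag, applying the first snake identity to the resulting \( [f] \)-snake, and then deleting the leftover circle; this uses only the invertibility of \( \epsilon \). You instead observe that \( \beta = (1_{[g]}*[\hat\epsilon])\circ([\hat\eta]*1_{[g]}) \) is invertible (using invertibility of both \( \eta \) and \( \epsilon \)) and reduce to the idempotency \( \beta\circ\beta=\beta \), which does follow as you describe: writing \( \beta\circ\beta \) out, the interchange law lets you commute the two \( [\hat\eta] \)-cups (and dually the two \( [\hat\epsilon] \)-caps) so that the inner cup--cap pair sits entirely on the middle \( [f] \)-strand in the configuration of the first identity of \eqref{equ_snake_identities}; explicitly the middle factor becomes \( 1_{[g]} * \bigl(([\hat\epsilon]*1_{[f]})\circ(1_{[f]}*[\hat\eta])\bigr) * 1_{[g]} = 1 \), and what remains is a single copy of \( \beta \). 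The invertible-idempotent trick buys a slightly more structured argument (no auxiliary inverse cell is ever drawn, only the algebraic fact that an invertible idempotent 2-cell is an identity), at the cost of invoking invertibility of \( \eta \) as well as \( \epsilon \); the paper's bubble insertion is more economical in its hypotheses but requires one extra diagrammatic move. Your converse direction by symmetry is also fine. The only point you should make explicit when writing this up is the interchange bookkeeping in the idempotency step, since that is where all the content sits; as verified above it does go through in \( \bst \).
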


\begin{proof}
Let us assume equation \ref{equ_snake_1} holds.
According to the discussion previous to the lemma it suffices to show that the second equation in \ref{equ_snake_identities} holds:

\begin{align*}
\begin{tikzpicture}[string={.7cm}{.7cm}, rounded corners=7, baseline={([yshift=-.5ex]current bounding box.center)}, math mode, label distance=-1.5mm]
\node (s1) [label=90:\scriptstyle{[\hat \eta]}] {}; \node (s2) [rr=of s1] {};
\node (t1) [bbbl=of s1] {}; \node (t2) [bbbl=of s2, label=-90:\scriptstyle{[\hat \epsilon]}] {};
\draw (t1.-90) to [d r] (s1.center)
	(s1.center) to [r r] (t2.center)
	(t2.center) to [r d] (s2.90);
\end{tikzpicture}
\quad = \quad
\begin{tikzpicture}[string={.7cm}{.7cm}, rounded corners=7, baseline={([yshift=-.5ex]current bounding box.center)}, math mode, label distance=-1.5mm]
\node (s1) [label=90:\scriptstyle{[\hat \eta]}] {}; \node (s2) [rr=of s1] {};
\node (t1) [bbbl=of s1] {}; \node (t2) [bbbl=of s2, label=-90:\scriptstyle{[\hat \epsilon]}] {};
\node (s3) [br=of s2, label=90:\scriptstyle{[(\hat \epsilon) ^{-1}]}] {}; \node (s4) [b=of s3, label=-90:\scriptstyle{[\hat \epsilon]}] {};
\draw (s3.center) [r l] to (s4.center) (s4.center) [l r] to (s3.center);
\draw (t1.-90) to [d r] (s1.center)
	(s1.center) to [r r] (t2.center)
	(t2.center) to [r d] (s2.90);
\end{tikzpicture}
\quad = \quad
\begin{tikzpicture}[string={.7cm}{.7cm}, rounded corners=7, baseline={([yshift=-.5ex]current bounding box.center)}, math mode, label distance=-1.5mm]
\node (s1) [label=90:\scriptstyle{[\hat \eta]}] {}; \node (s2) [rr=of s1] {};
\node (t1) [bbbl=of s1] {}; \node (t2) [bbbl=of s2, label=-90:\scriptstyle{[\hat \epsilon]}] {};
\node (s3) [br=of s2, label=90:\scriptstyle{[(\hat \epsilon) ^{-1}]}] {}; \node (s4) [b=of s3, label=-90:\scriptstyle{[\hat \epsilon]}] {};
\node (c1) [hbhl=of s3] {};
\draw (s3.center) [r l] to (s4.center)
	(s3.center) [l l] to (c1.90)
	(s4.center) [l l] to (c1.-90);
\draw (t1.-90) to [d r] (s1.center)
	(s1.center) to [r r] (t2.center)
	(t2.center) to [r r] (c1.-90)
	(c1.90) to [l d] (s2.90);
\end{tikzpicture}
= \\
\begin{tikzpicture}[string={.7cm}{.7cm}, rounded corners=7, baseline={([yshift=-.5ex]current bounding box.center)}, math mode, label distance=-1.5mm]
\node (s1) {}; \node (s2) [rrr= of s1] {};
\node (m1) [br=of s1] {}; \node (m2) [b=of m1] {}; \node (m3) [rr=of m2] {};
\node (t1) [bl=of m2] {}; \node (t2) [rr=of t1] {}; \node (t3) [rr=of t2] {};
\draw (s1.90) [d r] to (m1.-90)
	(m1.-90) [r r] to (s2.center)
	(s2.center) [r l] to (t3.center)
	(t3.center) [l l] to (m3.center)
	(m3.center) [l l] to (t2.center)
	(t2.center) [l l] to (m2.center)
	(m2.center) [l d] to (t1.-90);
\end{tikzpicture}
\quad = \quad
\begin{tikzpicture}[string={.7cm}{.7cm}, rounded corners=7, baseline={([yshift=-.5ex]current bounding box.center)}, math mode, label distance=-1.5mm]
\node (s1) {}; \node (s2) [rrr= of s1] {};
\node (m1) [br=of s1] {}; \node (m2) [b=of m1] {}; \node (m3) [rr=of m2] {};
\node (t1) [bl=of m2] {}; \node (t2) [rr=of t1] {}; \node (t3) [rr=of t2] {};
\draw
	(t2.center) [l r] to (s2.center)
	(s2.center) [r l] to (t3.center)
	(t3.center) [l l] to (m3.center)
	(m3.center) [l l] to (t2.center);
\draw (s1.90) -- (t1.-90);
\end{tikzpicture}
\quad = \quad
\begin{tikzpicture}[string={.7cm}{.7cm}, rounded corners=7, baseline={([yshift=-.5ex]current bounding box.center)}, math mode, label distance=-1.5mm]
\node (s1) {};
\node (t1) [bbb=of s1] {};
\draw (s1.90) -- (t1.-90);
\draw (.8,-1) circle [radius=.5cm];
\end{tikzpicture}
\quad = \quad
\begin{tikzpicture}[string={.7cm}{.7cm}, rounded corners=7, baseline={([yshift=-.5ex]current bounding box.center)}, math mode, label distance=-1.5mm]
\node (s1) {};
\node (t1) [bbb=of s1] {};
\draw (s1.90) -- (t1.-90);
\end{tikzpicture}
\end{align*}.
\end{proof}

\end{bei}
 \clearpage{}

\clearpage{}

\section{Tricategories and their diagrams} \label{sec_tricats}

\subsection{The definition of a tricategory} \label{sec_def_tricat}

This chapter summarizes some standard definitions of three dimensional category theory.
We start by giving Nick Gurski's algebraic definition of a tricategory.
Then we introduce some versions of tricategories with varying degrees of strictness, namely 3-categories, cubical tricategories and Gray-categories.

In section \ref{sec_def_bicat} the definition of a bicategory was stated in terms of 1-categorical transfors (see terminology \ref{term_transfor}).
Similarly the definition of a tricategory is stated most concisely in terms of 2-categorical transfors.
The additional level of transfors (i.e. modifications), can be used to replace the equations of 1-categorical transformations (in the bicategory-definition) by modifications (in the tricategory definition).
One then can impose new appropriate axioms in terms of equations between these modifications.

\begin{mydef}[Definition 4.1. in \cite{Gurski2013}] \label{def_tricat}
A Tricategory \( (\cat{T}; \otimes,I; a,l,r; \pi,\mu,\lambda,\rho) \) consists of the following data:
\begin{itemize}
\item
A collection of objects \( \ob{\cat{T}}. \)

\item
For each ordered pair \( (a,b) \) of objects of \( \cat{T} \), a bicategory \( \cat{T}(a,b), \) called the local bicategory at \( (a,b). \)
The n-cells of a local bicategory are called n+1-cells of the tricategory \( T, \) and the collection of n-cells of \( \cat{T} \) is denoted \( \nmor{n}{\cat{T}} \).

\item
For any triple of objects \( (a,b,c) \) a weak functor \( \otimes: \cat{T}(b,c) \times \cat{T}(a,b) \to \cat{T}(a,c), \) called composition.

\item
For any object \( a \) of \( \cat{T} \) a weak functor \( I_a: \cat{1} \to \cat{T}(a,a), \) where \( \cat{1} \) denotes the bicategory with one object, one 1-morphism and one 2-morphism.

\item
For each 4-tuple \( (a,b,c,d) \) of objects of \( \cat{T} \) an adjoint equivalence \( (a \dashv a ^{\adsq}, \eta ^{a}, \epsilon ^{a}) \)
\[
\begin{tikzcd}
\cat{T}(c,d) \times \cat{T}(b,c) \times \cat{T}(a,b)
\ar[r, "\otimes \times 1" {name=s1}]
\ar[d, "1 \times \otimes"]
	& \cat{T}(b,d) \times \cat{T}(a,b)
	\ar[d, "\otimes"] \\
\cat{T}(c,d) \times \cat{T}(a,c)
\ar[r, "\otimes" {name=t1}]
	& \cat{T}(a,d) \\
\ar[from=s1, to=t1, "a", Rightarrow, short=7pt]
\end{tikzcd}
\]
in \( \cat{Bicat} \big( \cat{T}(c,d) \times \cat{T}(b,c) \times \cat{T}(a,b), \cat{T}(a,d) \big). \)

\newpage

\item
For each ordered pair \( (a,b) \) of objects of \( \cat{T} \) adjoint equivalences \( (l \dashv l ^{\adsq}, \eta ^{l}, \epsilon ^{l}) \) and \( (r \dashv r ^{\adsq}, \eta ^{r}, \epsilon ^{r}) \)
\[
\begin{tikzcd}[column sep=tiny]
	& \cat{T}(b,b) \times \cat{T}(a,b)
	\ar[rd,"\otimes"]
	\ar[ld, leftarrow, "I_b \times 1" swap]
	& \\
\cat{T}(a,b)
\ar[rr,"1" {name=t1, swap}]
		& & \cat{T}(a,b)
		\ar[from=ul, to=t1, "l", Rightarrow, short=5pt]
\end{tikzcd}
\quad
\begin{tikzcd}[column sep=tiny]
	& \cat{T}(a,b) \times \cat{T}(a,a)
	\ar[rd,"\otimes"]
	\ar[ld, leftarrow, "1 \times I_a" swap]
	& \\
\cat{T}(a,b)
\ar[rr,"1" {name=t1, swap}]
		& & \cat{T}(a,b)
		\ar[from=ul, to=t1, "r", Rightarrow, short=5pt]
\end{tikzcd}
\]
in \( \cat{Bicat} \big( \cat{T}(a,b), \cat{T}(a,b) \big). \)

\item
For each 5 tuple \( (a,b,c,d,e) \) of objects of \( \cat{T} \) an invertible modification \( \pi \)

\[
\begin{tikzcd}[column sep=small]
	& \cat{\cat{T}^4}
	\ar[rr,"\otimes \times 1 \times 1"]
	\ar[dr, "1 \times \otimes \times 1" description]
	\ar[dl, "1 \times 1 \times \otimes" swap]
			& & \cat{\cat{T}^3}
			\ar[dr,"\otimes \times 1"]
			\ar[dl, Rightarrow, short=8pt, "a \times 1" swap] & \\
\cat{\cat{T}^3}
\ar[dr,"1 \times \otimes" swap]
		& & \cat{\cat{T}^3}
		\ar[rr,"\otimes \times 1"]
		\ar[dl,"1 \times \otimes" description]
		\ar[ll, Rightarrow, short=20pt, "1 \times a" swap]
				& & \cat{\cat{T}^2}
				\ar[dl,"\otimes"]
				\ar[dlll, Rightarrow, short=40pt, pos=.45, "a" swap] \\
	& \cat{\cat{T}^2}
	\ar[rr, "\otimes" swap]
			& & \cat{T}
\end{tikzcd}
\xRrightarrow{\phantom{x} \pi \phantom{x}}
\begin{tikzcd}[column sep=small]
	& \cat{\cat{T}^4}
	\ar[rr,"\otimes \times 1 \times 1"]
	\ar[dl, "1 \times 1 \times \otimes" swap]
			& & \cat{\cat{T}^3}
			\ar[dr,"\otimes \times 1"]
			\ar[dl, "1 \times \otimes" description]
			\ar[dlll, phantom, "="] & \\
\cat{\cat{T}^3}
\ar[dr,"1 \times \otimes" swap]
\ar[rr,"\otimes \times 1"]
		& & \cat{\cat{T}^2}
		\ar[dr,"\otimes" description]
		\ar[ld, Rightarrow, short=8pt, "a" swap]
				& & \cat{\cat{T}^2}
				\ar[dl,"\otimes"]
				\ar[ll, Rightarrow, short=20pt, "a" swap] \\
	& \cat{\cat{T}^2}
	\ar[rr, "\otimes" swap]
			& & \cat{T}
\end{tikzcd}
\]
in the bicategory \( \cat{Bicat}\big(\cat{T^4}(a,b,c,d,e),\cat{T}(a,e)\big), \)
where \( \cat{T^4} = \cat{T^4}(a,b,c,d,e) \) is for example an abbreviation for \( \cat{T}(d,e) \times \cat{T}(c,d) \times \cat{T}(b,c) \times \cat{T}(a,b). \)

\item
For each triple of objects of \( \cat{T} \) invertible modifications

\[
\begin{tikzcd}[column sep=huge]
\cat{\cat{T}}^2
\ar[rrd, bend left=30, "1" {name=s1}]
\ar[rd,"1 \times I \times 1" description]
\ar[rdd, bend right=30, "1" {swap, name=t2}] &[-10pt] & \\[-10pt]
	& \cat{\cat{T}}^3
	\ar[r,"\otimes \times 1"]
	\ar[d,"1 \times \otimes"]
	\ar[from=s1, Rightarrow, short=10pt, "r ^{\adsq} \times 1"]
	\ar[to=t2, Rightarrow, short=10pt, "1 \times l"]
		& \cat{\cat{T}}^2
		\ar[d,"\otimes"]
		\ar[dl, Rightarrow, short=25pt, "a"] \\
	& \cat{\cat{T}}^2
	\ar[r,"\otimes" swap]
		& \cat{T}
\end{tikzcd}
\xRrightarrow{\phantom{x} \mu \phantom{x}}
\begin{tikzcd}[column sep=huge]
\cat{\cat{T}}^2
\ar[rrd, bend left=30, "1" , "" {name=s1, pos=.7}]
\ar[rdd, bend right=30, "1" swap, "" {name=t1, pos=.85}] &[-10pt] & \\[-10pt]
		& & \cat{\cat{T}}^2
		\ar[d,"\otimes"] \\
	& \cat{\cat{T}}^2
	\ar[r, "\otimes" swap]
		& \cat{T}
\ar[from=s1, to=t1, Rightarrow, short=30pt, "1"]
\end{tikzcd}
\]

\[
\begin{tikzcd} [column sep=huge]
	& \cat{\cat{T}}^3
	\ar[rd, "\otimes \times 1"] & \\[-.9em]
\cat{\cat{T}}^2
\ar[ur, "I \times 1 \times 1"]
\ar[rr, bend right=12, "1" {name=t1, swap}]
\ar[from=ur, to=t1, Rightarrow, short=5pt, "l \times 1"]
\ar[d, "\otimes" swap]
\ar[drr, phantom, "=", yshift=-.8em]
		& & \cat{\cat{T}}^2
		\ar[d, "\otimes"] \\[+.9em]
\cat{T}
\ar[rr, "1" swap]
		& & \cat{T}
\end{tikzcd}
\xRrightarrow{\phantom{x} \lambda \phantom{x}}
\begin{tikzcd} [column sep=huge, row sep=.7em]
	& \cat{\cat{T}}^3
	\ar[dd, "1 \times \otimes"]
	\ar[rd, "\otimes \times 1"] & \\
\cat{\cat{T}}^2
\ar[dr, phantom, "="]
\ar[ur, "I \times 1 \times 1"]
\ar[dd, "\otimes" swap]
		& & \cat{\cat{T}}^2
		\ar[dl, Rightarrow, short=2em, "a"]
		\ar[dd, "\otimes"] \\[-.5em]
	& \cat{\cat{T}}^2
	\ar[dr, "\otimes"] & \\[+.5em]
\cat{T}
\ar[ur, "I \times 1"]
\ar[rr, "1" {swap, name=t1}]
\ar[from=ur, to=t1, Rightarrow, short=8pt, "l" {xshift=.2em}]
		& & \cat{T}
\end{tikzcd}
\]

\[
\begin{tikzcd} [column sep=huge]
	& \cat{\cat{T}}^3
	\ar[rd, "1 \times \otimes"] & \\[-.9em]
\cat{\cat{T}}^2
\ar[ur, "1 \times 1 \times I"]
\ar[rr, bend right=12, "1" {name=t1, swap}]
\ar[from=t1, to=ur, Rightarrow, short=5pt, "1 \times r ^{\adsq}"]
\ar[d, "\otimes" swap]
\ar[drr, phantom, "=", yshift=-.8em]
		& & \cat{\cat{T}}^2
		\ar[d, "\otimes"] \\[+.9em]
\cat{T}
\ar[rr, "1" swap]
		& & \cat{T}
\end{tikzcd}
\xRrightarrow{\phantom{x} \rho \phantom{x}}
\begin{tikzcd} [column sep=huge, row sep=.7em]
	& \cat{\cat{T}}^3
	\ar[dd, "\otimes \times 1"]
	\ar[rd, "1 \times \otimes"] & \\
\cat{\cat{T}}^2
\ar[dr, phantom, "="]
\ar[ur, "1 \times 1 \times I"]
\ar[dd, "\otimes" swap]
		& & \cat{\cat{T}}^2
		\ar[dl, Leftarrow, short=2em, "a"]
		\ar[dd, "\otimes"] \\[-.5em]
	& \cat{\cat{T}}^2
	\ar[dr, "\otimes"] & \\[+.5em]
\cat{T}
\ar[ur, "1 \times I"]
\ar[rr, "1" {swap, name=t1}]
\ar[from=ur, to=t1, Leftarrow, short=8pt, "r ^{\adsq}" {xshift=.2em}]
		& & \cat{T}
\end{tikzcd}
\]
\end{itemize}

The data of \( \cat{T} \) are subject to the following axioms:

\begin{itemize}
\item
For each 6-tuple \( (a,b,c,d,e,f) \) of objects of \( \cat{T} \) the following equation of 2-cells in the local bicategory \( T(a,f) \) holds.
Here for compactness we replace \( \otimes \) by juxtaposition and drop indices.

\[
\begin{tikzcd}[cells={font=\scriptsize}, sep=tiny]
	& & \node (1-3) [yshift=-.5cm] {(k(j(hg)))f}; \\
	\node (2-1) {(((k(jh))g)f}; & \node (2-2) [yshift=1cm] {((k((jh)g))f}; & \node (2-3) {k(((jh)g)f)}; & \node (2-4) [yshift=1cm] {k((j(hg))f)}; & \node (2-5) {k(j((hg))f))}; \\
	\node (3-1) {(((kj)h)g)f}; & \node (3-2) [yshift=.7cm] {(k(jh))(gf)}; & & \node (3-4) [yshift=.7cm] {k((jh)(gf))}; & \node (3-5) {k(j(h(gf)))}; \\
	& \node (4-2) {((kj)h)(gf)}; & & \node (4-4) {(kj)(h(gf))}; \\
\\ \\
	& & \node (1-3b) [yshift=-.5cm] {(k(j(hg)))f}; \\
	\node (2-1b) {((k(jh))g)f}; & \node (2-2b) [yshift=1cm] {(k((jh)g))f}; & & \node (2-4b) [yshift=1cm] {k((j(hg))f)}; & \node (2-5b) {k(j((hg))f))}; \\
	\node (3-1b) {(((kj)h)g)f}; & \node (3-2b) [xshift=.8cm, yshift=1cm] {((kj)(hg))f}; & & \node (3-4b) [xshift=-1cm, yshift=.7cm] {(kj)((hg)f)}; & \node (3-5b) {k(j(h(gf)))}; \\
	& \node (4-2b) {((kj)h)(gf)}; & & \node (4-4b) {(kj)(h(gf))}; \\
\ar[from=3-1, to=2-1, "(a1)1"]
\ar[from=2-1, to=2-2, "a1"]
\ar[from=2-2, to=1-3, "(1a)1"]
\ar[from=1-3, to=2-4, "a"]
\ar[from=2-4, to=2-5, "1a" {name=s3}]
\ar[from=2-5, to=3-5, "1(1a)"]
	\ar[from=3-1, to=4-2, "a" {name=tcong, swap}]
	\ar[from=4-2, to=4-4, "a" {name=t2, swap}]
	\ar[from=4-4, to=3-5, "a" swap]
		\ar[from=2-1, to=3-2, "a" {name=scong}]
		\ar[from=4-2, to=3-2, "a(11)" swap]
		\ar[from=3-2, to=3-4, "a" {name=s2}]
		\ar[from=3-4, to=3-5, "1a"]
		\ar[from=2-2, to=2-3, "a" {name=s1}]
		\ar[from=2-3, to=2-4, "1(a1)"]
		\ar[from=2-3, to=3-4, "1a"]
			\ar[from=1-3, to=2-3, "a", Rightarrow, short=3.5mm]
			\ar[from=scong, to=tcong, "a" swap, Rightarrow, yshift=-2mm, xshift=-1mm, short=3.5mm]
			\ar[from=s1, to=3-2, "\pi", Rightarrow, short=10pt]
			\ar[from=s2, to=t2, "\pi", Rightarrow, short=10pt]
			\ar[from=s3, to=3-4, "1 \pi", Rightarrow, short=10pt]
\ar[from=3-1b, to=2-1b, "(a1)1"]
\ar[from=2-1b, to=2-2b, "a1"]
\ar[from=2-2b, to=1-3b, "(1a)1"]
\ar[from=1-3b, to=2-4b, "a" {name=s1b}]
\ar[from=2-4b, to=2-5b, "1a" {name=s3}]
\ar[from=2-5b, to=3-5b, "1(1a)"]
	\ar[from=3-1b, to=4-2b, "a" {name=tcong, swap}]
	\ar[from=4-2b, to=4-4b, "a" {name=t2b, swap}]
	\ar[from=4-4b, to=3-5b, "a" {name=tcong, swap}]
		\ar[from=3-1b, to=3-2b, "a1"]
		\ar[from=3-2b, to=3-4b, "a" {name=t1b}]
		\ar[from=3-4b, to=4-4b, "(11)a" {swap, pos=.35}]
		\ar[from=3-2b, to=1-3b, "a1"]
		\ar[from=3-4b, to=2-5b, "a" {name=scong}]
			\ar[from=2-2b, to=3-2b, Rightarrow, short=10pt, "\pi 1" {swap, pos=.4}]
			\ar[from=s1b, to=t1b, Rightarrow, short=18pt, swap, "\pi"]
			\ar[from=t1b, to=t2b, Rightarrow, short=10pt, swap, "\pi"]
			\ar[from=scong, to=tcong, Rightarrow, "a ^{-1}", short=5mm]
				\ar[from=t2, to=1-3b, phantom, "\scalebox{2}{=}"]
\end{tikzcd}
\]

\item
For each 4-tuple \( (a,b,c,d) \) of objects of \( \cat{T} \) the following equations of 2 cells in \( T(a,d) \) hold.
The cells labeled with \( \cong \) are given by unique coherence isomorphisms in the local bicategory \( T(a,d). \)

\[
\begin{tikzcd}[row sep=small, cells={font=\scriptsize}]
		& & (h(Ig)f
		\ar[dd, "a"]
		\ar[dr, "(1l)1"] &
\\
	& ((hI)g)f
	\ar[dddd, Rightarrow, xshift=-1cm, short=10mm, "a" swap]
	\ar[ur, "a1"]
	\ar[dd, "a"]
			& & (hg)f
			\ar[dddd, "a"]
			\ar[dl, Rightarrow, "a", short=2mm, yshift=-3mm]
\\[-0.8em]
		& & h((Ig)f)
		\ar[dd, "1a" swap]
		\ar[dddr, bend left=10, "1(l1)" {description, xshift=3mm, pos=.3}, "" {name=s1}]
		\ar[dl, Rightarrow, short=7pt, yshift=4mm, "\pi"] &
\\[-.3em]
(hg)f
\ar[ddr, "a" swap]
\ar[uur, "(r ^{\adsq}1)1"]
	& (hI)(fg)
	\ar[dr, "a"]
	 \\[-.5em]
		& & h(I(gf))
		\ar[from=s1, Rightarrow, short=.2em, shift={(-3pt,-3pt)}, "1 \lambda"]
		\ar[dr,"1l" swap]
\\[-.5em]
	& h(gf)
	\ar[uu, "r ^{\adsq} (11)" swap]
	\ar[rr, bend right=20, "1" {name=t2, swap}]
	\ar[from=ur, to=t2, Rightarrow, short=.5em, "\mu"]
			& & h(gf)
\\[+3em]
		& & (h*Ig))f
		\ar[from=t2, phantom, "\scalebox{2}{=}" description]
		\ar[dr, "(1l)1"]
\\
	&((hI)g)f
	\ar[ur, "a1"]
		& & (hg)f
		\ar[dd, "a"]
\\
(hg)g
\ar[ur, "(r ^{\adsq}1)1"]
\ar[rrru, "11" swap, "" {pos=.75, name=st}]
\ar[rd, "a" swap]
\\[-.8em]
	& h(gf)
	\ar[rr, "1" {name=t, swap}]
			& & h(gf)
			\ar[from=uuul, to=st, Rightarrow, short=1mm, "\mu 1" {swap, yshift=2mm}]
			\ar[from=st, to=t, Rightarrow, phantom, "\cong" ]
\end{tikzcd}
\]

and

\[
\begin{tikzcd}[row sep=small, cells={font=\scriptsize}]
		& & h((gI)f)
		\ar[dd, leftarrow, "a"]
		\ar[dr, leftarrow, "1 (r ^{\adsq} 1)"] &
\\
	& h(g(If))
	\ar[dddd, Leftarrow, xshift=-1cm, short=10mm, "a" swap]
	\ar[ur, leftarrow, "1a"]
	\ar[dd, leftarrow, "a"]
			& & h(gf)
			\ar[dddd, leftarrow, "a"]
			\ar[dl, Leftarrow, "a", short=2mm, yshift=-3mm]
\\[-0.8em]
		& & (h(gI)f
		\ar[dd, leftarrow, "a1" swap]
		\ar[dddr, leftarrow, bend left=10, "(1 r ^{\adsq}) 1" {description, xshift=3mm, pos=.3}, "" {name=s1}]
		\ar[dl, Leftarrow, short=7pt, yshift=4mm, "\pi"] &
\\[-.3em]
h(gf)
\ar[ddr, leftarrow, "a" swap]
\ar[uur, leftarrow, "1(1l)"]
	& (hg)(Ig)
	\ar[dr, leftarrow, "a"]
\\[-.5em]
		& & ((hg)I)f
		\ar[from=s1, Leftarrow, short=.2em, shift={(-3pt,-3pt)}, "\rho 1"]
		\ar[dr, leftarrow,"r ^{\adsq}1" swap]
\\[-.5em]
	& h(gf)
	\ar[uu, leftarrow, "(11)l" swap]
	\ar[rr, leftarrow, bend right=20, "1" {name=t2, swap}]
	\ar[from=ur, to=t2, Leftarrow, short=.5em, "\mu"]
			& & (hg)f
\\[+3em]
		& & h((gI)f)
		\ar[from=t2, leftarrow, phantom, "\scalebox{2}{=}" description]
		\ar[dr, leftarrow, "1 (r ^{\adsq}1)"]
\\
	& h(g(If)
	\ar[ur, leftarrow, "1a"]
		& & h(gf)
		\ar[dd, leftarrow, "a"]
\\
h(gf)
\ar[ur, leftarrow, "1(1l)"]
\ar[rrru, leftarrow, "11" swap, "" {pos=.75, name=st}]
\ar[rd, leftarrow, "a" swap]
\\[-.8em]
	& (hg)f
	\ar[rr, leftarrow, "1" {name=t, swap}]
			& & (hg)f
			\ar[from=uuul, to=st, Leftarrow, short=1mm, "1 \mu" {swap, yshift=2mm}]
			\ar[from=st, to=t, Leftarrow, phantom, "\cong" ]
\end{tikzcd}
\]

\end{itemize}

\end{mydef}

\begin{teano}~
\begin{enumerate}[(i)]
\item
The categories \( \cat{T}(a,b)(f,g), \) i.e. the local categories of the local 2-categories of \( \cat{T} \), are called \emph{2-local categories} of \( \cat{T}. \)
Similarly, the sets \( \cat{T}(a,b)(f,g)(\alpha,\beta) \) are called \emph{3-local sets}.
\item
If \( P \) is a property of a bicategory, a tricategory \( \cat{T} \) is called locally \( P \) if all local bicategories of \( \cat{T} \) have the property \( P. \)
\item
We denote the single object in the source bicategory of \( I_a \) with \( \bullet, \) and call
\[
1_a := I_a(\bullet): a \to a
\]
the unit of \( a. \) Furthermore we define
\[
i_a := I_a(1_\bullet): 1_a \to 1_a
\]
and abbreviate the single component of the unit constraint of \( I_a \) as
\[
\phi_a := \phi ^{I_a} _{\bullet}: 1 _{(1_a)} \to i
\]
\item
Any cell of \( \cat{T} \) which appears as a component of any 1- or 2- transfor which is involved in the definition of a tricategory is called a constraint cell.
These are
\begin{itemize}[-]
\item
components of the constraints of the local bicategories, which are called \emph{local constraints} and denoted \( a ^{loc}, l ^{loc}, r ^{loc}. \)
\item
components of the constraints of the weak functors \( \otimes, I. \)
\item
components of the pseudonatural transformations \( a, a ^{\adsq}, l, l ^{\adsq},r, r ^{\adsq}. \)
\item
components of the modifications \( \epsilon ^{a}, \eta ^{a},\epsilon ^{l}, \eta ^{l},\epsilon ^{r}, \eta ^{r}, \pi, \mu, \lambda, \rho. \)
\end{itemize}
\item
The constraint cells \( \phi ^{\otimes} _{(\alpha,\beta)(\alpha',\beta')} \) are called interchangers of the tricategory \( \cat{T}. \)
\item
Any 2- resp. 3- morphism in \( \cat{T} \) which is a composite of constraint-cells and units is called a \emph{coherence} 2- resp. 3-morphism.
\end{enumerate}
\end{teano}

A 2-category (see definition \ref{def_2_category}) is equivalently defined as a \( \cat{Cat_1} \)-enriched category, where \( (\cat{Cat_1}, \times) \) is the monoidal 1-category of categories and functors.
Similarly a 3-category is a \( (\cat{2\text{-}Cat}_1) \)-enriched category, where \( (\cat{2\text{-}Cat}_1, \times) \) is the monoidal 1-category of 2-categories and strict functors.
Equivalently one can define a 3-category as a tricategory, which is "as strict as possible".

\begin{mydef} \label{def_strict_tricat}
A 3-category is a tricategory \( \cat{T} \) for which
\begin{itemize}
\item
the local bicategories \( \cat{T}(a,b) \) are strict,
\item
\( \otimes \) and \( I \) are strict functors,
\item
\( a,l,r \) are identity adjoint equivalences,
\item
the modifications \( \pi,\mu,\lambda,\rho \) are identities.
\end{itemize}
\end{mydef}

Every bicategory is biequivalent to a 2-category (see corollary \ref{cor_bicat_2_cat_equivalence}).
However not every tricategory is triequivalent to a 3-category, but only to a so called Gray-category, that is a tricategory in which everything is strict except the interchangers.
The definition of a Gray-category relies on the concept of a cubical functor.

\begin{mydef} [Definition 4.1. in \cite{Gordon1995}]
Let \( A_i \) and \( B \) be 2-categories.
A weak functor \( F: A_1 \times A_2 \times \dots \times A_n \to B \) that strictly preserves 1-units is called \emph{cubical} if the following condition holds:

If \( (f_1,f_2,\dots,f_n) \) and \( (g_1,g_2,\dots,g_n) \) are composable morphisms in the 2-category \( A_1 \times A_2 \times \dots \times A_n \) such that for all \( i>j, \) either \( g_i \) or \( f_j \) is a 1-unit, the constraint cell
\[
F(f_1,f_2,\dots,f_n) * F(g_1,g_2,\dots,g_n)
\xrightarrow{\phi}
F \big( (f_1,f_2,\dots,f_n) * (g_1,g_2,\dots,g_n) \big)
\]
is an identity.
\end{mydef}

\begin{bem}~
\begin{enumerate}[(i)]
\item
For \( n=1 \) a cubical functor is just a strict functor between 2-categories.
\item
In \cite{Gordon1995} the requirement that \( F \) strictly preserves 1-units is omitted, since it is erroneously assumed that this is already implied by the remaining conditions.
See \cite{MO_question} for a counterexample.
\end{enumerate}
\end{bem}

\begin{mydef}[Definition 8.1 and Theorem 8.12 in \cite{Gurski2013}]~ \label{def_cubical_tricat_Gray_cat}
\begin{enumerate}
\item
A tricategory is called cubical, if it is locally strict, and the functors \( (\otimes, I) \) are cubical functors.
\item
A cubical tricategory is called a Gray-category, if \( a,l,r \) are identity adjoint equivalences and \(\pi,\mu,\lambda,\rho \) are identity modifications.
\end{enumerate}
\end{mydef}

\subsection{Rewriting diagrams for Gray categories} \label{sec_rewriting_diagrams}

Similarly to the 2-dimensional string diagrams for bicategories, there exist 3-dimensional diagrams for Gray-categories,
where objects are represented by regions, 1-morphisms by planes between such regions, 2-morphisms by lines between such planes and 3- morphisms by points on lines.
These diagrams are rigorously defined in \cite{Barrett2012}.

Another approach to carry out calculations in Gray-categories graphically are the rewriting diagrams used for example by the online proof assistant \emph{globular} \cite{Bar2016}.
A rewriting diagram for a Gray category \( \cat{T} \) is just a diagram in a 2-local category of \( \cat{T}, \)
in which objects -which are 2-morphisms in \( \cat{T} \)- are represented via string diagrams.
These string diagrams for Gray categories work almost the same way, as string diagrams for 2-categories do.
The only difference is that, since the interchangers in \( \cat{T} \) are in general non-trivial, the height at which the disc of a 2-morphism is drawn matters.
For example the diagrams
\[
\begin{tikzpicture}[string={.7cm}{.7cm}, rounded corners=5, baseline={([yshift=-.5ex]current bounding box.center)}, math mode, label distance=-1.5mm]
\node (s1) {}; \node (s2) [r=of s1] {};
\node (mo1) [morphism, b=of s1] {\scriptstyle{\alpha}}; \node (mo2) [morphism, bb=of s2] {\scriptstyle{\beta}};
\node (t1) [bb=of mo1] {}; \node (t2) [b=of mo2] {};
\draw (s1) -- (mo1) -- (t1);
\draw (s2) -- (mo2) -- (t2);
\end{tikzpicture}
\quad \text{and} \quad
\begin{tikzpicture}[string={.7cm}{.7cm}, rounded corners=5, baseline={([yshift=-.5ex]current bounding box.center)}, math mode, label distance=-1.5mm]
\node (s1) {}; \node (s2) [r=of s1] {};
\node (mo1) [morphism, bb=of s1] {\scriptstyle{\alpha}}; \node (mo2) [morphism, b=of s2] {\scriptstyle{\beta}};
\node (t1) [b=of mo1] {}; \node (t2) [bb=of mo2] {};
\draw (s1) -- (mo1) -- (t1);
\draw (s2) -- (mo2) -- (t2);
\end{tikzpicture}
\]
do not represent the same 2-morphism in \( \cat{T}, \) since \( (1 \otimes \beta) \circ (\alpha \otimes 1) \) and \( (\alpha \otimes 1) \circ (1 \otimes \beta) \) are in general not equal.

Globular incorporates the non-strictness of the interchangers, by allowing only one disc per height and evaluating diagrams always from bottom to top.
We will adopt this convention, only that we reverse the vertical order and evaluate string diagrams from top to bottom in order to stay in line with section \ref{sec_string_diagrams_bicats}.
For example, the string-diagram
\[
\begin{tikzpicture}[string={.7cm}{.7cm}, rounded corners=5, baseline={([yshift=-.5ex]current bounding box.center)}, math mode]
\node (s1) {}; \node (s2) [r=of s1] {}; \node (s3) [r=of s2] {};
\node (mo1) [morphism, b=of s3] {\scriptstyle{\alpha}}; \node (mo2) [morphism, bb=of s1] {\scriptstyle{\beta}};
\node (mo3) [morphism, bbm={mo1}{s2}] {\scriptstyle{\gamma}};
\node (t1) [bb=of mo2] {}; \node (t2) [b=of mo3] {};
\draw (s1) -- (mo2) -- (t1);
\draw (s2) |- (mo3) -- (t2);
\draw (s3) |- (mo1) |- (mo3);
\end{tikzpicture}
\]
represents the morphism \( (1 \otimes \gamma) * (\beta \otimes 1 \otimes 1) * \ (1 \otimes 1 \otimes \alpha). \)

One might add that it is not a strict necessity to disallow more than 1-disc on the same height (in \cite{Bartlett2014} this is not required),
but this convention makes life slightly easier and still allows one to represent all composite 2-morphisms of \( \cat{T} \) via string diagrams.
For example the composite \( (\alpha * \beta) \otimes \gamma \) is - since \( \otimes \) is cubical -
equal to \( (1 \otimes \gamma) * (\alpha \otimes 1) * (\beta \otimes 1) \) and is therefore represented by the string diagram
\[
\begin{tikzpicture}[string={.7cm}{.7cm}, rounded corners=5, baseline={([yshift=-.5ex]current bounding box.center)}, math mode]
\node (s1) {}; \node (s2) [r=of s1] {};
\node (mo1) [morphism, b=of s1] {\scriptstyle{\alpha}};
\node (mo2) [morphism, bb=of s1] {\scriptstyle{\beta}};
\node (mo3) [morphism, bbb=of s2] {\scriptstyle{\gamma}};
\node (t1) [bb=of mo2] {}; \node (t2) [b=of mo3] {};
\draw (s1) -- (mo1) -- (mo2) -- (t1);
\draw (s2) -- (mo3) -- (t2);
\end{tikzpicture}.
\]

Note that composites of 2-morphisms, which make use of the strict equality of certain 1-morphisms in \( \cat{T}, \) can also be represented by string diagrams.
For example the composite \( f \xrightarrow{\alpha} g_1 \otimes g_2 = h_1 \otimes h_2 \xrightarrow{\beta} k \) is represented by
\[
\begin{tikzpicture}[string={.7cm}{.7cm}, rounded corners=5, baseline={([yshift=-.5ex]current bounding box.center)}, math mode]
\node (s1) {};
\node (mo1) [morphism, b=of s1] {\scriptstyle{\alpha}};
\node (mo2) [rec, b=of mo1] {\scriptstyle{=}};
\node (mo3) [morphism, b=of mo2] {\scriptstyle{\beta}};
\node (t1) [b=of mo3] {};
\draw (s1) -- (mo1) to [r d] node [right, pos=.7, inner sep=.4mm] {\scriptstyle{g2}} (mo2.27) (mo2.-27) to [d l] node [right, pos=.3, inner sep=.4mm] {\scriptstyle{h2}} (mo3) (mo3) -- (t1);
\draw (mo1) to [l d] node [left, pos=.7, inner sep=.4mm] {\scriptstyle{g1}} (mo2.153) (mo2.-153) to [d r] node [left, pos=.3, inner sep=.4mm] {\scriptstyle{h1}} (mo3);
\end{tikzpicture}.
\]

Now a 3-morphism in \( \cat{T} \) can be drawn as a morphism in a 2-local category of \( \cat{T} \), whose types are represented by string diagrams.
For example the rewriting diagram
\[
\begin{tikzpicture}[string={.7cm}{.7cm}, rounded corners=5, baseline={([yshift=-.5ex]current bounding box.center)}, math mode]
\node (s1) {}; \node (s2) [r=of s1] {};
\node (mo1) [morphism, b=of s1] {\scriptstyle{\alpha}};
\node (mo2) [morphism, bm={mo1}{s2}] {\scriptstyle{\beta}};
\node (t1) [b=of mo2] {};
\draw (s1) -- (mo1) to [d r] (mo2) (mo2) -- (t1);
\draw (s2) |- (mo2) (mo2) -- (t1);
\end{tikzpicture}
\xrightarrow{\phantom{xxx} \Pi \phantom{xxx}}
\begin{tikzpicture}[string={.7cm}{.7cm}, rounded corners=5, baseline={([yshift=-.5ex]current bounding box.center)}, math mode]
\node (s1) {}; \node (s2) [r=of s1] {};
\node (mo1) [morphism, b=of s2] {\scriptstyle{\alpha'}};
\node (mo2) [morphism, bm={mo1}{s1}] {\scriptstyle{\beta'}};
\node (t1) [b=of mo2] {};
\draw (s2) -- (mo1) to [d l] (mo2) (mo2) -- (t1);
\draw (s1) |- (mo2) (mo2) -- (t1);
\end{tikzpicture}
\]
represents the 3-morphism \( \Pi: \beta * (\alpha \otimes 1) \to (1 \otimes \alpha') * \beta'. \)

If a 3-morphism "acts only on a part of a diagram", one usually highlights the relevant subdiagrams and labels the 3-morphism as if it acted on the source subdiagram.
For example on writes
\[
\begin{tikzpicture}[string={.7cm}{.7cm}, rounded corners=5, baseline={([yshift=-.5ex]current bounding box.center)}, math mode]
\node (s1) {}; \node (s2) [r=of s1] {}; \node (s3) [r=of s2] {};
\node (mo1) [morphism, b=of s1] {\scriptstyle{\alpha}};
\node (mo2) [morphism, bm={mo1}{s2}] {\scriptstyle{\beta}};
\node (mo3) [morphism, bbb=of s3] {\scriptstyle{\gamma}};
\node (t1) [bb=of mo2] {}; \node (t2) [b=of mo3] {};
\draw (s1) -- (mo1) to [d r] (mo2) (mo2) -- (t1);
\draw (s2) |- (mo2) (mo2) -- (t1);
\draw (s3) -- (mo3) -- (t2);
\node [draw=blue, fit=(mo1) (mo2), rounded corners=0, inner sep=3mm] {};
\end{tikzpicture}
\xrightarrow{\phantom{xxx} \Pi \phantom{xxx}}
\begin{tikzpicture}[string={.7cm}{.7cm}, rounded corners=5, baseline={([yshift=-.5ex]current bounding box.center)}, math mode]
\node (s1) {}; \node (s2) [r=of s1] {}; \node (s3) [r=of s2] {};
\node (mo1) [morphism, b=of s2] {\scriptstyle{\alpha'}};
\node (mo2) [morphism, bm={mo1}{s1}] {\scriptstyle{\beta'}};
\node (mo3) [morphism, bbb=of s3] {\scriptstyle{\gamma}};
\node (t1) [bb=of mo2] {}; \node (t2) [b=of mo3] {};
\draw (s2) -- (mo1) to [d l] (mo2) (mo2) -- (t1);
\draw (s1) |- (mo2) (mo2) -- (t1);
\draw (s3) -- (mo3) -- (t2);
\node [draw=blue, fit=(mo1) (mo2), rounded corners=0, inner sep=3mm] {};
\end{tikzpicture}
\quad
\text{instead of}
\quad
\begin{tikzpicture}[string={.7cm}{.7cm}, rounded corners=5, baseline={([yshift=-.5ex]current bounding box.center)}, math mode]
\node (s1) {}; \node (s2) [r=of s1] {}; \node (s3) [r=of s2] {};
\node (mo1) [morphism, b=of s1] {\scriptstyle{\alpha}};
\node (mo2) [morphism, bm={mo1}{s2}] {\scriptstyle{\beta}};
\node (mo3) [morphism, bbb=of s3] {\scriptstyle{\gamma}};
\node (t1) [bb=of mo2] {}; \node (t2) [b=of mo3] {};
\draw (s1) -- (mo1) to [d r] (mo2) (mo2) -- (t1);
\draw (s2) |- (mo2) (mo2) -- (t1);
\draw (s3) -- (mo3) -- (t2);
\end{tikzpicture}
\xrightarrow{(1 \otimes 1_{\gamma}) * (\Pi \otimes 1)}
\begin{tikzpicture}[string={.7cm}{.7cm}, rounded corners=5, baseline={([yshift=-.5ex]current bounding box.center)}, math mode]
\node (s1) {}; \node (s2) [r=of s1] {}; \node (s3) [r=of s2] {};
\node (mo1) [morphism, b=of s2] {\scriptstyle{\alpha'}};
\node (mo2) [morphism, bm={mo1}{s1}] {\scriptstyle{\beta'}};
\node (mo3) [morphism, bbb=of s3] {\scriptstyle{\gamma}};
\node (t1) [bb=of mo2] {}; \node (t2) [b=of mo3] {};
\draw (s2) -- (mo1) to [d l] (mo2) (mo2) -- (t1);
\draw (s1) |- (mo2) (mo2) -- (t1);
\draw (s3) -- (mo3) -- (t2);
\end{tikzpicture}
\]

The discs in a string diagram have a fixed order from top to bottom.
Boxes need to be drawn in such a way that they contain only consecutive discs, since otherwise their content cannot be interpreted as a sub-composite of the composite represented by the string diagram.
For instance the box in the string diagram on the left is not admissible.
Instead one has to insert appropriate interchangers that rearrange the order, as shown on the right side.
\[
\begin{tikzpicture}[string={.7cm}{.7cm}, rounded corners=5, baseline={([yshift=-.5ex]current bounding box.center)}, math mode]
\node (s1) {}; \node (s2) [r=of s1] {}; \node (s3) [r=of s2] {};
\node (mo1) [morphism, bb=of s1] {\scriptstyle{\alpha}};
\node (mo2) [morphism, b=of s2] {\scriptstyle{\beta}};
\node (mo3) [morphism, bbb=of s3] {\scriptstyle{\gamma}};
\node (t1) [bb=of mo1] {}; \node (t2) [r=of t1] {}; \node (t3) [r=of t2] {};
\draw (s1) -- (mo1) -- (t1);
\draw (s2) -- (mo2) -- (t2);
\draw (s3) -- (mo3) -- (t3);
\node [draw=blue, fit=(mo2) (mo3), rounded corners=0, inner sep=2mm] {};
\end{tikzpicture}
\rightarrow \dots
\qquad \qquad \qquad
\begin{tikzpicture}[string={.7cm}{.7cm}, rounded corners=5, baseline={([yshift=-.5ex]current bounding box.center)}, math mode]
\node (s1) {}; \node (s2) [r=of s1] {}; \node (s3) [r=of s2] {};
\node (mo1) [morphism, bb=of s1] {\scriptstyle{\alpha}};
\node (mo2) [morphism, b=of s2] {\scriptstyle{\beta}};
\node (mo3) [morphism, bbb=of s3] {\scriptstyle{\gamma}};
\node (t1) [bb=of mo1] {}; \node (t2) [r=of t1] {}; \node (t3) [r=of t2] {};
\draw (s1) -- (mo1) -- (t1);
\draw (s2) -- (mo2) -- (t2);
\draw (s3) -- (mo3) -- (t3);
\node [draw=green, fit=(mo1) (mo2), rounded corners=0, inner sep=2mm] {};
\end{tikzpicture}
\rightarrow
\begin{tikzpicture}[string={.7cm}{.7cm}, rounded corners=5, baseline={([yshift=-.5ex]current bounding box.center)}, math mode]
\node (s1) {}; \node (s2) [r=of s1] {}; \node (s3) [r=of s2] {};
\node (mo1) [morphism, b=of s1] {\scriptstyle{\alpha}};
\node (mo2) [morphism, bb=of s2] {\scriptstyle{\beta}};
\node (mo3) [morphism, bbb=of s3] {\scriptstyle{\gamma}};
\node (t1) [bbb=of mo1] {}; \node (t2) [r=of t1] {}; \node (t3) [r=of t2] {};
\draw (s1) -- (mo1) -- (t1);
\draw (s2) -- (mo2) -- (t2);
\draw (s3) -- (mo3) -- (t3);
\node [draw=green, fit=(mo1) (mo2), rounded corners=0, inner sep=2mm] {};
\node [draw=blue, fit=(mo2) (mo3), rounded corners=0, inner sep=2mm] {};
\end{tikzpicture}
\rightarrow \dots
\]

Rewriting diagrams make no rigorous sense in general tricategories.
But at the end of this thesis we will explain how the results from section \ref{sec_strictification_for_tricats} can be used to replace equations in tricategories by equivalent equations in Gray-categories.
This will make rewriting diagrams employable also in the case of general tricategories.
\subsection{n-globular sets and n-magmoids} \label{sec_globular_sets_and_magmoids}

The language of transfors is the most concise way to give the definition of a tricategory.
But sometimes it is convenient to describe tricategories in more elementary terms and this section introduces the algebraic structures which are necessary for doing so.

First we formalize what is obtained when one forgets everything about composition and constraint-cells in a tricategory and keeps only the information of the types of the morphisms.

\begin{mydef}[Definition 1.4.5 in \cite{Leinster2004}]~
\begin{enumerate}
\item
A \emph{n-globular set} \( X \) consists of sets
\[
\nmor{0}{X}, \nmor{1}{X}, \dots, \nmor{n}{X}
\]
and maps
\[
s _{k}: \nmor{(k+1)}{X} \to \nmor{k}{X}
\quad \text{and} \quad
t _{k}: \nmor{(k+1)}{X} \to \nmor{k}{X}
\]
for all \( k \in \set{0, \dots, n-1}, \) subject to the \emph{globular identities} (here and in the following we drop the indices)
\[
ss = st
\quad \text{and} \quad
ts = tt.
\]
\item
A morphism \( f:X \to Y \) of n-globular sets \( X \) and \( Y \) consists of maps
\[
f_k: \nmor{k}{X} \to \nmor{k}{Y}
\quad \text{for all } k \in \{0 \dots n\}
\]
subject to
\[
f s = s f \quad \text{and} \quad f t = t f.
\]
\item
We write \( \gset{n} \) for the category of \( n \)-globular sets.
\end{enumerate}
\end{mydef}

\begin{terminology}
We call the elements of \( \nmor{k}{X} \) the k-morphisms of the n-globular set \( X. \)
We call two k-morphisms \( x,y \) parallel if \( sx=sy \) and \( tx=ty \) and by convention arbitrary 0-morphisms are called parallel as well.
\end{terminology}

\begin{bei} \label{ex_underlying_globe}
Every tricategory \( \cat{T} \) has an underlying 3-globular set, which we denote either \( \globe{\cat{T}} \) or, if no confusion is possible, simply \( \cat{T}. \)
\end{bei}

Similar to a tricategory having 1-local bicategories, 2-local categories or 3-local sets, an n-globular set has m-local (n-m)-globular-sets.

\begin{mydef}[m-local globular sets]~
Let \( X \) be a n-globular set and \( x,y \in \nmor{(m-1)}{X} \) parallel.

We define a (n-m)-globular set \( X(x,y) \) whose p-morphisms are given by those \( z \in \nmor{(p+m)}{X} \) which satisfy
\[
s ^{p+1} z = x \quad \text{and} \quad t ^{p+1} z = y,
\]
and call it the \emph{m-local} globular set of \( X \) at \( (x,y). \)

Furthermore if \( F:X \to Y \) is a morphism of n-globular sets, we define the \emph{m-local} morphism of globular sets
\[
F _{x,y} := F \restriction _{X(x,y)}: X(x,y) \to Y(Fx,Fy).
\]
\end{mydef}

\begin{bem}
Note that
\[
X(x,y) = X(sx,tx)(x,y) = X(sy,ty)(x,y).
\]
\end{bem}

Products in the category of n-globular sets exist and are given as follows:

\begin{mydef}[product of globular sets]
If \( X \) and \( Y \) are n-globular sets the \emph{product globular set} \( X \times Y \) is the globular set with
\[
\nmor{k}{X \times Y} = \nmor{k}{X} \times \nmor{k}{Y}
\]
and
\[
s ^{(X \times Y)} = s ^X \times s^Y.
\]
\end{mydef}

The concept of a 3-globular set is sufficient to describe the cells of a tricategory and their types, but it lacks compositions operations in order to state the axioms of a tricategory.
Therefore we introduce n-magmoids, which are roughly speaking globular sets together with composition operations.

\begin{mydef}[category of n-magmoids]~
\begin{enumerate}
\item
An n-magmoid \( M \) is an n-globular set \( M \) together with morphisms of (n-k-1)-globular sets
\[
\otimes ^{k} _{x,y,z}: M(y,z) \times M(x,y) \to M(x,z)
\]
for all \( k<n \) and every triple of parallel k-morphisms \( (x,y,z). \)

\item
A morphism of n-magmoids \( F:M \to N \) is a morphism of globular sets, such that
\[
F(v \otimes ^{k} _{x,y,z} w) = F(v) \otimes ^{k} _{Fx,Fy,Fz} F(w)
\]
for all \( k < n, \) every triple of parallel k-morphisms \( (x,y,z) \) and all \( (v,w) \) contained in \( M(y,z) \times M(x,y). \)

\item
We write \( \magm{n} \) for the category of \( n \)-magmoids.
\end{enumerate}
\end{mydef}

\begin{bei}
\begin{enumerate}
\item
Apart from the absence of units, a 1-magmoid consist of the same data as a category, but does not impose any axioms such as associativity of composition.
Therefore any 1-category has an underlying 1-magmoid.
\item
Similarly any tricategory has an underlying 3-magmoid.
\end{enumerate}
\end{bei}

The local structure of the underlying globular set of a magmoid is compatible with the magmoid structure, which gives rise to local magmoids and local morphisms of magmoids.

\begin{mydef}
Let \( F:M \to N \) be a morphism of n-magmoids and \( x,y \) parallel (m-1)-morphisms in \( M. \) Then there is an
\begin{enumerate}
\item
m-local (n-m)-magmoid \( M(x,y) \) whose underlying globular set is just the m-local (n-m)-globular set \( M(x,y). \)
\item
m-local morphism of (n-m)-magmoids
\[
F _{x,y}: M(x,y) \to N(Fx,Fy),
\]
whose underlying globular map is just the m-local (n-m)-globular map
\(
F _{x,y}.
\)
\end{enumerate}
\end{mydef}

\begin{terminology} [local properties] \label{term_local_properties}
Let P be a property of an (n-m)-magmoid.
We say an n-magmoid is m-locally P, if its m-local (n-m)-magmoids all have the property P.
Similarly if P is a property of a map of (n-m)-magmoids,
we say a map of n-magmoids is m-locally P, if its m-local (n-m)-magmoid-morphisms all have this property P.
\end{terminology}

\subsection{Tricategories as 3-magmoids with additional structure} \label{sec_tricats_as_magmoids}

The tricategories appearing in section \ref{sec_strictification_for_tricats} are most conveniently defined by constructing first their underlying 3-magmoids and then specifying their units and constraint cells.
Therefore in this section we give the "unpacked" definition of a tricategory as a 3-magmoid together with additional data and axioms.

In the following definition \( a \) is an object, \( f,g,h,k \) are 1-morphisms, \( \alpha, \beta, \gamma \) are 2-morphisms and \( \Lambda,\Pi \) are 3-morphisms in the 3-magmoid \( \cat{T}. \)
Each constraint cell is defined for any choice of indices which makes the types of the constraint cell well defined.
Each axiom holds for any choice of indices, for which the diagram or the equation that expresses the axiom is well defined.

\begin{mydef}[magmoidal definition of a tricategory] \label{def_tricat_as_magmoid}
A tricategory is a 3-magmoid \( \cat{(T, \otimes, *, \circ)} \) together with the following data:

\begin{itemize}
\item
The 2-local categories give rise to
\begin{itemize}[-]
\item
(Unit) 3-cells \( 1_\alpha: \alpha \to \alpha. \)
\end{itemize}

\item
The local bicategories give rise to
\begin{itemize}[-]
\item
(Unit) 2-cells \( 1_f: f \to f, \)
\item
\( a ^{loc} _{\alpha \beta \gamma}: (\alpha * \beta) * \gamma \to \alpha * (\beta * \gamma), \)
\item
\( l ^{loc} _{\alpha}: 1 _{t \alpha} * \alpha \to \alpha, \)
\item
\( r ^{loc} _{\alpha}: \alpha * 1 _{s \alpha} \to \alpha. \)
\end{itemize}

\item
The weak functors \( I_a \) from definition \ref{def_tricat} give rise to
\begin{itemize}[-]
\item
(Unit) 1-cells \( 1_a: a \to a, \)
\item
2-cells \( i_a: 1_a \to 1_a, \)
\item
3-cells \( \phi_a: 1 _{1_a} \to i_a. \)
\end{itemize}

\item
The weak functors \( \otimes \) from definition \ref{def_tricat} give rise to
\begin{itemize}[-]
\item
\( \phi ^{\otimes} _{(\alpha,\beta)(\alpha',\beta')}: (\alpha \otimes \beta) * (\alpha' \otimes \beta') \to (\alpha * \alpha') \otimes (\beta * \beta'). \)
\item
\( \phi ^{\otimes} _{(f,g)}: 1 _{f \otimes g} \to 1_f \otimes 1_g. \)
\end{itemize}

\item
The adjoint equivalences \( (a \dashv a ^{\adsq}, \eta ^{a}, \epsilon ^{a}) \) give rise to
\begin{itemize}[-]
\item
components of the 2-categorical transformations \( a \) and \( a ^{\adsq} \)
\[
a _{f,g,h}: (f \otimes g) \otimes h \to f \otimes (g \otimes h)
\]
\[
a _{\alpha \beta \gamma}: a _{t \alpha, t \beta, t \gamma} * (\alpha \otimes \beta) \otimes \gamma \to \alpha \otimes (\beta \otimes \gamma) * a _{s \alpha, s \beta, s \gamma}.
\]
\[
a ^{\adsq} _{f,g,h}: f \otimes (g \otimes h) \to (f \otimes g) \otimes h
\]
\[
a ^{\adsq} _{\alpha \beta \gamma}: a ^{\adsq} _{t \alpha, t \beta, t \gamma} * \alpha \otimes (\beta \otimes \gamma)
\to (\alpha \otimes \beta) \otimes \gamma * a ^{\adsq} _{s \alpha, s \beta, s \gamma}.
\]
\item
components of the modifications \( \eta ^{a} \) resp. \( \epsilon ^{a}: \)
\[
\eta ^{a} _{fgh}: 1 _{(f \otimes g) \otimes h} \xrightarrow{\sim} a ^{\adsq} _{fgh} * a _{fgh}
\]
\[
\epsilon ^{a} _{fgh}: a _{fgh} * a ^{\adsq} _{fgh} \xrightarrow{\sim} 1 _{f \otimes (g \otimes h)}.
\]
\end{itemize}

\item
The adjoint equivalences \( (l \dashv l ^{\adsq}, \eta ^{l}, \epsilon ^{l}) \) give rise to
\begin{itemize}[-]
\item
components of the 2-categorical transformations \( l \) and \( l ^{\adsq}: \)
\[
l _{f}: 1 _{tf} \otimes f \to f
\]
\[
l _{\alpha}: l _{t \alpha} * (i _{tt \alpha} \otimes \alpha) \to \alpha * l _{s \alpha}
\]
\[
l ^{\adsq} _{f}: f \to 1 _{tf} \otimes f
\]
\[
l ^{\adsq} _{\alpha}: l ^{\adsq} _{t \alpha} * \alpha \to (i _{tt \alpha} \otimes \alpha) * l ^{\adsq} _{s \alpha}
\]

\item
components of the modifications \( \eta ^{l} \) and \( \epsilon ^{l}: \)
\[
\eta ^{l}_f: 1 _{f} \to l ^{\adsq} _{f} * l _{f}
\]
\[
\epsilon ^{l}_f: l _{f} * l ^{\adsq} _{f} \to 1 _{tf \otimes f}
\]
\end{itemize}

\item
The adjoint equivalences \( (r \dashv r ^{\adsq}, \eta ^{r}, \epsilon ^{r}) \) give rise to
\begin{itemize}[-]
\item
components of the 2-categorical transformations \( r \) and \( r ^{\adsq} \):
\[
r _{f}: f \otimes 1 _{sf} \to f
\]
\[
r _{\alpha}: r _{t \alpha} * (\alpha \otimes i _{ss \alpha}) \to \alpha * r _{s \alpha}
\]
\[
r ^{\adsq} _{f}: f \to f \otimes 1 _{sf}
\]
\[
r ^{\adsq} _{\alpha}: r ^{\adsq} _{t \alpha} * \alpha \to (\alpha \otimes i _{ss \alpha}) * r ^{\adsq} _{s \alpha}
\]

\item
invertible components of the modifications \( \epsilon ^{r} \) and \( \eta ^{r}: \)
\[
\eta ^{r}_f: 1 _{f} \to r ^{\adsq} _{f} * r _{f}
\]
\[
\epsilon ^{r}_f: r _{f} * r ^{\adsq} _{f} \to 1 _{tf \otimes f}
\]
\end{itemize}

\item
The modifications \( \pi, \mu, \lambda, \rho, \) give rise to 3-cells
\[
\pi _{fghk}: 1_f * a _{ghk} * a _{f(g \otimes h)k} * (a _{fgh} \otimes 1_k)
\]
\[
\mu _{fg}: (1 \otimes l _{g}) * a _{f(1 _{sf}) g} * (r ^{\adsq} _{f} \otimes 1)
\]
\[
\lambda _{fg}: l _{f} \otimes 1_g \to  l _{(f \otimes g)} * a _{(1 _{tf})fg}
\]
\[
\rho _{fg}: 1_f \otimes r ^{\adsq} _{g} \to a _{fg(1 _{sg})} * r ^{\adsq} _{f \otimes g}.
\]
\end{itemize}

\vspace{1.5cm}

This data has to satisfy the following axioms:

\begin{itemize}
\item
Axioms expressing that \( T \) is 2-locally a category:
\[
(\Lambda_1 \circ \Lambda_2) \circ \Lambda_3 = \Lambda_1 \circ (\Lambda_2 \circ \Lambda_3),
\]
\[
1 _{t \Lambda} \circ \Lambda = \Lambda \text{ and } \Lambda \circ 1 _{s \Lambda} = \Lambda.
\]

\item
Any constraint 3-cell \( X \) is required to be invertible, i.e. we require the existence of a 3-cell \( X ^{-1} \) with \( X \circ X ^{-1} = X ^{-1} \circ X. \)

\item
Axioms expressing that \( T \) is locally a 2-category:
\begin{itemize}[-]
\item
Functoriality of \( * \) is equivalent to the equations
\[
(\Pi_1 \circ \Pi_2) * (\Lambda_1 \circ \Lambda_2) = (\Pi_1 * \Lambda_1) \circ (\Pi_2 * \Lambda_2)
\]
\[
1 _{\alpha} * 1_\beta = 1 _{\alpha*\beta.}
\]

\item
Naturality of \( a ^{loc} \) is equivalent to the commutativity of:
\[
\begin{tikzcd}[column sep = large]
(\alpha_1 * \alpha_2) * \alpha_3
\ar[r, "a ^{loc} _{\alpha_1 \alpha_2 \alpha_3}"]
\ar[d, "(\Pi_1 * \Pi_2) * \Pi_3" swap]
	& \alpha_1 * (\alpha_2 * \alpha_3)
	\ar[d, "\Pi_1 * (\Pi_2 * \Pi_3)"] \\
(\alpha'_1 * \alpha'_2) * \alpha'_3
\ar[r, "a ^{loc} _{\alpha'_1 \alpha'_2 \alpha'_3}"]
	& \alpha'_1 * (\alpha'_2 * \alpha'_3) \\
\end{tikzcd}
\]

\item
Naturality of \( l ^{loc} \) is equivalent to the commutativity of.
\[
\begin{tikzcd}
1 _{t \alpha} * \alpha
\ar[r, "l ^{loc} _{\alpha}"]
\ar[d, "1 _{(1 _{t \alpha})} * \Pi" swap]
	& \alpha
	\ar[d, "\Pi"] \\
1 _{t \alpha} * \alpha'
\ar[r, "l ^{loc} _{\alpha'}"]
	& \alpha'
\end{tikzcd}
\]

\item
Naturality of \( r ^{loc} \) is equivalent to the commutativity of.
\[
\begin{tikzcd}
\alpha * 1 _{s \alpha}
\ar[r, "r ^{loc} _{\alpha}"]
\ar[d, "\Pi * 1 _{(1 _{s \alpha})}" swap]
	& \alpha
	\ar[d, "\Pi"] \\
\alpha' * 1 _{s \alpha}
\ar[r, "r ^{loc} _{\alpha'}"]
	& \alpha'
\end{tikzcd}
\]

\item
The bicategory axioms are given by the commutative diagrams in remark \ref{rem_bicat_axioms_component_wise}.
\end{itemize}

\item
Axioms expressing that \( \otimes \) is a weak functor:
\begin{itemize}[-]
\item
That \( \otimes \) is locally given by 1-categorical functors corresponds to the equations
\begin{gather*}
(\Pi_1 \circ \Pi_2) \otimes (\Lambda_1 \circ \Lambda_2) = (\Pi_1 \otimes \Lambda_1) \circ (\Pi_2 \otimes \Lambda_2) \\
1 _{\alpha} \otimes 1_\beta = 1 _{\alpha \otimes \beta.}
\end{gather*}

\item
Naturality of the constraints \( \phi ^{\otimes} \) corresponds to the commutativity of the following diagram:
\[
\begin{tikzcd}
(\alpha_1 \otimes \beta_1) * (\alpha_2 \otimes \beta_2)
\ar[r, "\phi ^{\otimes}"]
\ar[d, "{(\Pi_1 \otimes \Lambda_1) * (\Pi_2 \otimes \Lambda_2)}" swap]
	& (\alpha_1 * \alpha_2) \otimes (\beta_1 * \beta_2)
	\ar[d, "{(\Pi_1 * \Pi_2) \otimes (\Lambda_1 * \Lambda_2)}"] \\
 (\alpha'_1 \otimes \beta'_1) * (\alpha'_2 \otimes \beta'_2)
\ar[r, "\phi ^{\otimes}"]
	& (\alpha'_1 * \alpha'_2) \otimes (\beta'_1 * \beta'_2) \\
\end{tikzcd}
\]

\item
The weak functor axioms \ref{diag_functor_axiom_asso} and \ref{diag_functor_axiom_unit} yield the commutativity of the following diagrams:

\begin{gather*}
\begin{tikzcd}[ampersand replacement = \&]
\big( (\alpha_1 \otimes \beta_1) * (\alpha_2 \otimes \beta_2) \big) * (\alpha_3 \otimes \beta_3)
\ar[r, "a ^{loc}"]
\ar[d, "\phi ^{\otimes} *1" swap]
	\& (\alpha_1 \otimes \beta_1) * \big( (\alpha_2 \otimes \beta_2) * (\alpha_3 \otimes \beta_3) \big)
	\ar[d, "1 * \phi ^{\otimes}"] \\
\big( (\alpha_1 * \alpha_2) \otimes (\beta_1 * \beta_2) \big) * (\alpha_3 \otimes \beta_3)
\ar[d, "\phi ^{\otimes}" swap]
	\& (\alpha_1 \otimes \beta_1) * \big( (\alpha_2 * \alpha_3) \otimes (\beta_2 * \beta_3) \big)
	\ar[d, "\phi ^{\otimes}"] \\
\big( (\alpha_1 * \alpha_2) * \alpha_3 \big) \otimes \big( (\beta_1 * \beta_2) * \beta_3 \big)
\ar[r, "a ^{loc}" swap]
	\& \big( \alpha_1 * (\alpha_2 * \alpha_3) \big) \otimes \big( \beta_1 * (\beta_2 * \beta_3) \big) \\
\end{tikzcd}
\\
\begin{tikzcd}[ampersand replacement = \&]
1 _{t \alpha \otimes t \beta} * (\alpha \otimes \beta)
\ar[r, "l ^{loc}"]
\ar[d, "\phi ^{\otimes} * 1" swap]
	\& \alpha \otimes \beta \\
(1 _{t \alpha} \otimes 1 _{t \beta}) * (\alpha \otimes \beta)
\ar[r, "\phi ^{\otimes}" swap]
	\& (1 _{t \alpha} * \alpha) \otimes (1 _{t \beta} * \beta)
	\ar[u, "l ^{loc} \otimes l ^{loc}" swap]
\end{tikzcd}
\\
\begin{tikzcd}[ampersand replacement = \&]
(\alpha \otimes \beta) * 1 _{s \alpha \otimes s \beta}
\ar[r, "r ^{loc}"]
\ar[d, "1 * \phi ^{\otimes}" swap]
	\& \alpha \otimes \beta \\
(\alpha \otimes \beta) * (1 _{s \alpha} \otimes 1 _{s \beta})
\ar[r, "\phi ^{\otimes}" swap]
	\& (\alpha * 1 _{s \alpha}) \otimes (\beta * 1 _{t \beta})
	\ar[u, "r ^{loc} \otimes r ^{loc}" swap]
\end{tikzcd}
\end{gather*}
\end{itemize}

\item
Axioms that express that \( 1_a \) is a weak functor:
\begin{itemize}[-]
\item
The weak functor axioms \eqref{diag_functor_axiom_unit} correspond to the commutativity of
\[
\begin{tikzcd}[column sep = large]
i_a * i_a
\ar[r, "\phi_a ^{-1} * 1"]
\ar[d, "1 * \phi_a ^{-1}" swap]
	& 1 _{(1_a)} * i_a
	\ar[d, "{l _{(i_a)}}"] \\
i_a * 1 _{(1_a)}
\ar[r, "{r _{(i_a)}}" swap]
	& i_a
\end{tikzcd}
\]
\item
When we write \( X \) for the diagonal in the previous diagram, the weak functor axiom \eqref{diag_functor_axiom_asso} corresponds to the commutativity of
\[
\begin{tikzcd}
(i_a * i_a) * i_a
\ar[r, "{X * 1}"]
\ar[d, "a" swap]
	& i_a * i_a
	\ar[r, "{X}"]
		& i_a
		\ar[d, equal] \\
i_a * (i_a * i_a)
\ar[r, "{1 * X}" swap]
	& i_a * i_a
	\ar[r, "{X}" swap]
		& i_a
\end{tikzcd}
\]
\end{itemize}
\item
Axioms that express \( (a, a ^{\adsq}, \eta, \epsilon) \) is an adjoint equivalence:
\begin{itemize}[-]
\item
The pseudonaturality of \( a \) corresponds to the commutativity of
\[
\begin{tikzcd}[column sep = large]
a _{t \alpha_1, t \alpha_2, t \alpha_3} * \big( (\alpha_1 \otimes \alpha_2) \otimes \alpha_3 \big)
\ar[d, "1 * \big( (\Lambda_1 \otimes \Lambda_2) \otimes \Lambda_3 \big)" swap]
\ar[r,"a _{\alpha_1 \alpha_2 \alpha_3}"]
	& \big( \alpha_1 \otimes (\alpha_2 \otimes \alpha_3) \big) * a _{s \alpha_1, s \alpha_2, s \alpha_3}
	\ar[d, "\big( \Lambda_1 \otimes (\Lambda_2 \otimes \Lambda_3) \big) * 1"] \\
a _{t \alpha_1', t \alpha_2', t \alpha_3'} * \big( (\alpha_1' \otimes \alpha_2') \otimes \alpha_3' \big)
\ar[r,"a _{\alpha_1' \alpha_2' \alpha_3'}" swap]
	& \big( \alpha_1' \otimes (\alpha_2' \otimes \alpha_3') \big) * a _{s \alpha_1', s \alpha_2', s \alpha_3'},
\end{tikzcd}
\]
and similar for \( a ^{\adsq}. \)

\item
That \( \epsilon \) is a modification corresponds to the following diagram, where we omit indices and abbreviate \( (\alpha \otimes \beta) \otimes \gamma \) with \( X \) resp.
\( \alpha \otimes (\beta \otimes \gamma) \) with \( X'. \)
\[
\begin{tikzcd}[font=\scriptsize, cramped]
(a ^{\adsq} * a) * X
\ar[d,"\epsilon * 1"]
\ar[r, "a ^{loc}"]
	& a ^{\adsq} * ( a * X )
	\ar[r,"a"]
		& a ^{\adsq} * ( X' * a )
		\ar[r, "(a ^{loc}) ^{-1}"]
			& ( a ^{\adsq} * X') * a
			\ar[dr, "a ^{\adsq}"] \\
1 * X
\ar[r,"l"]
	& X
	\ar[r,"r ^{-1}"]
		& X * 1
			& X * (a ^{\adsq} * a)
			\ar[l,"1 * \epsilon"]
				& ( X * a ^{\adsq}) * a
				\ar[l, "a ^{loc}"]
\end{tikzcd}
\]

\item
That \( \epsilon \) is a modification corresponds to a similar diagram.

\item
The axioms in the definition of an adjoint equivalence (see definition \ref{def_equivalence_adjunction}) yield the commutativity of following diagrams (again indices are omitted)
\[
\begin{tikzcd}[cramped]
a ^{\adsq}
\ar[r,"l ^{-1}"]
\ar[d,"1" swap]
	& 1 * a ^{\adsq}
	\ar[r, "\eta * 1"]
		& (a ^{\adsq} * a) * a ^{\adsq}
		\ar[d, "a ^{loc}"] \\
a ^{\adsq}
	& a ^{\adsq} * 1
	\ar[l, "r"]
		& a ^{\adsq} * (a * a ^{\adsq})
		\ar[l, "1 * \epsilon"]
\end{tikzcd}
\qquad
\begin{tikzcd}[cramped]
a
\ar[r, "r ^{-1}"]
\ar[d,"1" swap]
	& a * 1
	\ar[r, "1 * \eta"]
		& a * (a ^{\adsq} * a)
		\ar[d, "a ^{loc}"] \\
a
	& 1 * a
	\ar[l, "l"]
		& (a * a ^{\adsq}) * a
		\ar[l, "\epsilon * 1"].
\end{tikzcd}
\]
\end{itemize}

\item
The axioms that express that \( (l, l ^{\adsq}, \eta ^{l}, \epsilon ^{l}) \) and \( (r, r ^{\adsq}, \eta ^{r}, \epsilon ^{r}) \) are adjoint equivalences are analogous.

\item
That \( \lambda \) is a modification corresponds to the equality of the following two composites:
\begin{align*}
& (l _{t \alpha} \otimes 1 _{t \beta}) * \big( (i _{t^2 \alpha} \otimes \alpha) \otimes \beta \big) \\
\xrightarrow{\mathmakebox[1.3cm]{\lambda _{t \alpha t \beta} * 1}} \;
& (l _{t \alpha * t \beta} * a _{1 _{t^2 \alpha}, t \alpha, t \beta}) * \big( (i _{t^2 \alpha} \otimes \alpha) \otimes \beta \big) \\
\xrightarrow{\mathmakebox[1.3cm]{a ^{loc}}} \;
& l _{t \alpha * t \beta} * \big( a _{1 _{t^2 \alpha}, t \alpha, t \beta} * ((i _{t^2 \alpha} \otimes \alpha) \otimes \beta) \big) \\
\xrightarrow{\mathmakebox[1.3cm]{1 * a _{i  \alpha \beta}}} \;
& l _{t \alpha * t \beta} * \big( (i _{t^2 \alpha} \otimes (\alpha \otimes \beta)) * a _{1 _{s^2 \alpha}, s \alpha, s \beta} \big) \\
\xrightarrow{\mathmakebox[1.3cm]{a ^{loc}}} \;
& \big( l _{t \alpha * t \beta} * (i _{t^2 \alpha} \otimes (\alpha \otimes \beta)) \big) * a _{1 _{s^2 \alpha}, s \alpha, s \beta} \\
\xrightarrow{\mathmakebox[1.3cm]{l _{\alpha \otimes \beta}*1}} \;
& \big( (\alpha \otimes \beta) * l _{s \alpha * s \beta} \big) * a _{1 _{s^2 \alpha}, s \alpha, s \beta} \\
\xrightarrow{\mathmakebox[1.3cm]{a ^{loc}}} \;
& (\alpha \otimes \beta) * ( l _{s \alpha * s \beta} * a _{1 _{s^2 \alpha}, s \alpha, s \beta} ) \\
& \phantom{xxxxxxxxxx} = \\
& (l _{t \alpha} \otimes 1 _{t \beta}) * \big( (i _{t^2 \alpha} \otimes \alpha) \otimes \beta \big) \\
\xrightarrow{\mathmakebox[1.3cm]{\phi}} \;
& \big( (l _{t \alpha} * (i _{t^2 \alpha} \otimes \alpha) \big) \otimes (1 _{t \beta} * \beta) \\
\xrightarrow{\mathmakebox[2.5cm]{l _{\alpha} \otimes \big((r _{\beta} ^{loc})^{-1} \circ l ^{loc} _{\beta} \big) }} \;
& (\alpha * l _{s \alpha}) \otimes (\beta * 1 _{s \beta}) \\
\xrightarrow{\mathmakebox[1.3cm]{\phi ^{-1}}} \;
& (\alpha \otimes \beta) * (l _{s \alpha} \otimes 1 _{s \beta}) \\
\xrightarrow{\mathmakebox[1.3cm]{1 * \lambda _{s \alpha s \beta}}} \;
& (\alpha \otimes \beta) * ( l _{s \alpha * s \beta} * a _{1 _{s^2 \alpha}, s \alpha, s \beta} ) \\
\end{align*}

\item
That \( \pi,\mu,\rho \) are modifications yields similar equations.
\end{itemize}

\end{mydef}

\subsection{Virtually strict triequivalences} \label{sec_virtually_strict_functor}

The following notion of a virtually strict functor is in one to one correspondence to the usual notion of a strict functor of tricategories (definition 4.13 in \cite{Gurski2013}).
The advantage of working with virtually strict functors is that one gets rid of the trivial constraint data in the usual definition of a strict tricategory-functor and therefore composition becomes much easier, which in turn enables one to define a 1-category of tricategories and virtually strict functors.
(See also the discussion at the beginning of chapter 6.2 in \cite{Gurski2013}).

\begin{mydef} [definition 6.6 in \cite{Gurski2013}]
Let \( \cat{T} \) and \( \cat{T'} \) be tricategories.
A morphism of magmoids \( F:\cat{T} \to \cat{T'} \) is called a virtually strict functor, if it \emph{(strictly) preserves units and constraint cells}:
\[
F (1_a) = 1 _{Fa}, \;
F(1_f) = 1 _{Ff}, \;
F(1_\alpha) = 1 _{F \alpha}, \;
F (i_a) = i _{Fa}, \;
F (\phi_a) = \phi _{Fa}, \;
\dots
\]
(See the data in definition \ref{def_tricat_as_magmoid} for a complete list of constraints).
Composition of virtually strict functors is given by composition of morphisms of magmoids.
\end{mydef}

\begin{bem}
One can show (one direction is not completely trivial, though) that these virtually strict functors are the same as the ones from definition 6.2 in \cite{Gurski2013}.
The slightly unwieldy name "virtually strict functor" stems from the fact that virtually strict functors are strictly speaking no strict functors in the usual sense, but equivalent to those.
\end{bem}

\begin{satz} [definition 6.7 in \cite{Gurski2013}]
There is a 1-category \( \cat{Tricat_1} \) with tricategories as objects and virtually strict functors as morphisms.
\end{satz}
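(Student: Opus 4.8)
The plan is to exhibit \( \cat{Tricat_1} \) as a (non-full) subcategory of the category \( \magm{3} \) of 3-magmoids. Since a virtually strict functor is by definition a morphism of magmoids satisfying extra preservation conditions, composition and identities can be borrowed from \( \magm{3} \), provided the class of virtually strict functors is closed under these operations and the categorical axioms are inherited. I would therefore organize the verification into three steps: identities, closure under composition, and inheritance of associativity and the unit laws.

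First I would check that the identity morphism of magmoids \( 1_{\cat{T}}: \cat{T} \to \cat{T} \) on the underlying 3-magmoid of a tricategory \( \cat{T} \) is a virtually strict functor. This is immediate, since the identity fixes every cell and hence sends every unit and every constraint cell of \( \cat{T} \) to itself, which is exactly what the definition demands.

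The main step is to show that the composite \( G \circ F \) of virtually strict functors \( F: \cat{T} \to \cat{T'} \) and \( G: \cat{T'} \to \cat{T''} \) is again virtually strict. As \( \magm{3} \) is a category, \( G \circ F \) is already a morphism of magmoids, so it only remains to verify strict preservation of units and constraint cells. For each constraint cell listed in definition \ref{def_tricat_as_magmoid} this follows by the same two-step computation; for the associator, for example,
\[
(G \circ F)(a_{fgh}) = G\big(F(a_{fgh})\big) = G(a_{Ff, Fg, Fh}) = a_{GFf, GFg, GFh} = a_{(GF)f,\, (GF)g,\, (GF)h},
\]
using first that \( F \) preserves the associator and then that \( G \) does. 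The analogous chains for the unit cells \( 1_a, 1_f, 1_\alpha, i_a, \phi_a \), the local constraints \( a^{loc}, l^{loc}, r^{loc} \), the interchangers \( \phi^{\otimes} \), the components of \( a, a^{\adsq}, l, l^{\adsq}, r, r^{\adsq} \) together with their (co)units, and the modification components \( \pi, \mu, \lambda, \rho \), are identical in form, so I would remark that they all run by the same pattern rather than writing each out.

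Finally, associativity of composition and the left and right unit laws are inherited verbatim from \( \magm{3} \), because composition of virtually strict functors is by definition composition of the underlying morphisms of magmoids. This establishes \( \cat{Tricat_1} \) as a subcategory of \( \magm{3} \). I do not anticipate any genuine obstacle: the sole content is the closure-under-composition check, which is routine once one observes that "preserving a constraint cell indexed by certain lower cells" is stable under composition precisely because morphisms of magmoids preserve the magmoid operations out of which the indexing types are built. The only point requiring mild care is to confirm that each constraint cell of the target is determined by the images of the source data, which is guaranteed since \( F \) and \( G \) respect the composition operations \( \otimes, *, \circ \).
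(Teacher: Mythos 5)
Your proposal is correct and follows the same route as the paper: identities and the category axioms are inherited from \( \magm{3} \), and the only substantive point is that composites of constraint-preserving magmoid morphisms are again constraint-preserving, which you verify in the same way (the paper merely states this observation without writing out the chains). No discrepancies to report.
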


\begin{proof}
The units of \( \cat{Tricat_1} \) are given by units of \( \magm{3}. \)
Note that the composite of two constraint-preserving magmoid morphisms is a constraint-preserving magmoid morphism as well.
Then the category axioms in \( \cat{Tricat_1} \) hold, since they hold in \( \magm{3}. \)
\end{proof}

\begin{bem} \label{rem_virtual_functor_locally}
Locally a virtually strict functor is a morphism of 2-magmoids, which preserves constraint cells. And that in turn is just a strict functor of bicategories.
\end{bem}

\begin{mydef}
A virtually strict functor \( F: T \to T' \) is called \emph{triessentially surjective} if for any \( a' \in \ob{\cat{T'}} \) there exists an object \( a \in \ob{\cat{T}} \) together with a biequivalence \( F a \to a'. \)
\end{mydef}

\begin{mydef}[virtually strict triequivalence]
We call a virtually strict functor \( F: \cat{T} \to \cat{T'} \) a virtually strict triequivalence, if it is triessentially surjective and locally a strict biequivalence (see also remark \ref{rem_virtual_functor_locally}).
\end{mydef}

\begin{bem} \label{rem_characterization_virtually_strict_triequivalence}
We can further unpack this definition using the characterizations of biequivalence and equivalence.
Proving that a virtually strict functor \( F \) is a triequivalence then comes down to check three surjectivity and one bijectivity condition, namely
\begin{itemize}
\item
\( F \) is triessentially surjective
\item
\( F \) is locally biessentially surjective
\item
\( F \) is 2-locally essentially surjective
\item
\( F \) is 3-locally a bijection.
\end{itemize}
\end{bem}

\subsection{Coherence for tricategories} \label{sec_coherence_for_tricats}

In order to state the coherence theorem for tricategories we need to introduce free tricategories.
The underlying data from which the free construction starts is a \( \cat{Cat \text{-} Gph} \)-enriched graph (for \( \cat{V} \)-enriched graph see definition \ref{def_V_graph}).

\begin{mydef}~ \label{def_cat_2_graph}
We call the category \( \cat{(Cat\text{-}Gph)\text{-}Gph} \) the category of \emph{category-enriched 2-graphs} and denote it \( \cat{Cat\text{-}2Gph}. \)
\end{mydef}

Now the construction of a free tricategory over a category-enriched 2-graph works analogously to the construction of a free bicategory over a \( \cat{Cat} \)-graph.

\begin{mydef}[(compare definition 6.12 in \cite{Gurski2013}] \label{def_free_tricat}
Let \( X \) be a category enriched 2-graph.
Then the free tricategory \( F X \) over \( X \) is defined as follows:
\begin{itemize}
\item
\( \ob{F X}=\ob{X}. \)
\item
The 1-morphisms of \( FX \) are defined inductively as follows:

\( \mor{\oper{F}X} \) contains the so called \( F \)-basic 1-morphisms which are
\begin{itemize}[-]
\item
for each \( f \in \ob{X(a,b)} \) a 1-morphism \( f: a \to b, \)
\item
for each object \( a \in \ob{X} \) a unit 1-morphism \( \fc{1} _a: a \to a. \)
\end{itemize}

For \( v: b \to c \) and \( w: a \to b \) in \( \mor{\oper{F}G} \) we require the formal composite \( v \fstar w: a \to c \) to be a 1-morphism of \( FG. \)

\item
The 2-morphisms of \( FX \) are defined inductively as follows:

There are the following \( F \)-basic 2-morphisms:
\begin{itemize}[-]
\item
For any \( \alpha \in \ob{X(a,b)(f,g)} \) a 2-morphism \( \alpha: f \to g: a \to b, \)
\item
unit 2-morphisms \( 1_v: v \to v \) for all \( v \in \mor{FX}, \)
\item
2-cells \( i_a: 1_a \to 1_a \) for all \( a \in \ob{X}, \)
\item
constraint cells
\( l_v: 1 _{tv} \otimes v \to v \) and \( l_v ^{\adsq} : v \to 1 _{tv} \otimes v \) for all \( v \in \mor{FX}, \)
\item
constraint cells
\( r_v: v \otimes 1 _{sv} \to v \) and \( r_v ^{\adsq} : v \to v \otimes 1 _{sv} \) for all \( v \in \mor{FX}, \)
\item
constraint cells
\( a _{v_1 v_2 v_3}: (v_1 \otimes v_2) \otimes v_3 \to v_1 \otimes (v_2 \otimes v_3) \) and \( a _{v_1 v_2 v_3} ^{\adsq}: v_1 \otimes (v_2 \otimes v_3) \to (v_1 \otimes v_2) \otimes v_3 \).
\end{itemize}

For \( \tau:w \to w' : b \to c, \; \sigma: v \to v' : a \to b \) and \( \sigma':v' \to v'': a \to b \) in \( {\mmor{F X}} \) we require the formal composites
\( \tau \fstar \sigma: w \fstar v \to w' \fstar v': a \to c \) and \( \sigma' \circ \sigma: v \to v'': a \to b \) to be 2-morphisms of \( FX. \)

\item
The construction of the 3-morphisms of \( FX \) works analogously to the construction of the 2-morphisms in definition \ref{def_free_bicat}:
The basic 3-morphisms are the morphisms in the categories \( X(a,b)(f,g) \) and adjunct 3-cells corresponding to the data in the magmoidal definition of a tricategory \ref{def_tricat_as_magmoid}.
The general 3-cells are then built inductively from the basic 3-cells by composing formally along objects, 1-morphisms and 2-morphisms.
Finally one quotients out the congruence relation generated by the composition laws of the categories \( X(a,b)(f,g) \) and by the tricategory axioms from definition \ref{def_tricat_as_magmoid}.
\end{itemize}
\end{mydef}

When we write \( U: \cat{Tricat_1} \to \cat{Cat\text{-}2Gph} \) for the forgetful functor from the 1-category of tricategories and virtually strict functors to the category of category enriched 2-graphs,
\( FG \) satisfies the usual universal property for free constructions.

\begin{satz}[proposition 6.8 in \cite{Gurski2013}] \label{prop_universal_property_free_tricat}
Let \( X \) be a Cat-2-Graph.
The morphism of Cat-2-Graphs \( \iota_X: X \to UFX, \) which sends any cell of \( X \) to the cell in \( UFX \) with the same name is a universal arrow from \( X \) to \( U. \)

This means for any morphism of Cat-2-Graphs \( S: X \to UT, \) where \( T \in \cat{Tricat_1} \) there is a unique virtually strict functor \( S' :FX \to T \) such that \( S = US' \circ \iota. \)
\end{satz}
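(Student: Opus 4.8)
The plan is to mirror the bicategorical argument of Proposition~\ref{prop_bicat_catgraph_adjunction} one dimension higher. By Proposition~\ref{prop_characterization_adjunctions_in_cat} a universal arrow is precisely an object-plus-morphism satisfying the stated existence-and-uniqueness property, so it suffices to establish that property. First I would record the routine fact that $\iota_X$ is a morphism of Cat-2-Graphs: by the construction in Definition~\ref{def_free_tricat} the cells of $X$ occur verbatim as $F$-basic cells of $FX$, and among the relations quotiented out are the category laws of each $X(a,b)(f,g)$, so the name-preserving assignment $\iota_X$ is compatible with all the enrichment data. It then remains to produce, for each morphism of Cat-2-Graphs $S: X \to UT$ with $T \in \cat{Tricat_1}$, a unique virtually strict $S': FX \to T$ with $US' \circ \iota_X = S$.

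For existence I would define $S'$ by recursion following the inductive description of $FX$, just as $S'$ and $\bar S'$ are built in the proof of Proposition~\ref{prop_bicat_catgraph_adjunction}. On objects put $S'a := Sa$. On $F$-basic $1$-morphisms send each $f \in \ob{X(a,b)}$ to $Sf$ and each unit $\fc 1_a$ to $1_{Sa}$, and extend by $S'(v \fstar w) := S'v \otimes S'w$. On $F$-basic $2$-morphisms send each $\alpha \in \ob{X(a,b)(f,g)}$ to $S\alpha$, and send each unit and each constraint generator ($1_v$, $i_a$, $l_v$, $l_v^{\adsq}$, $r_v$, $r_v^{\adsq}$, $a_{v_1v_2v_3}$, $a_{v_1v_2v_3}^{\adsq}$) to the correspondingly-named unit or constraint $2$-cell of $T$ from Definition~\ref{def_tricat_as_magmoid}, extending along the two formal compositions of $2$-cells via $\otimes$ and $*$ of $T$. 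Finally, on the pre-quotient collection $\overline{\mmmor{FX}}$ I would define an assignment $\bar S'$ the same way, sending the basic $3$-cells (the morphisms of the categories $X(a,b)(f,g)$ and the adjunct $3$-cells corresponding to $\pi,\mu,\lambda,\rho,\eta^a,\epsilon^a,\dots,\phi_a$) to the like-named data of $T$ and extending along composition in all three directions.

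The crux is to check that $\bar S'$ respects the congruence relation defining $\mmmor{FX}$, so that it descends to a well-defined $S'$ on the quotient; this is the main obstacle. As in the bicategory case it suffices to verify that $\bar S'$ sends the two sides of each \emph{generator} of the congruence to equal $3$-cells of $T$, and one does this generator by generator: the local category laws land on the associativity and unit equations of the $2$-local categories of $T$, and each tricategory-axiom generator lands on the corresponding axiom of $T$ as listed in Definition~\ref{def_tricat_as_magmoid}. These equalities hold precisely because $T$ is a tricategory, so no genuinely new input is needed, only a careful level-by-level induction and the bookkeeping matching each relation to its witnessing axiom. Once $S'$ is well defined it is a morphism of $3$-magmoids that by construction preserves every unit and constraint cell strictly, hence a virtually strict functor, and $US' \circ \iota_X = S$ holds by construction on the cells of $X$.

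For uniqueness I would argue exactly as in Proposition~\ref{prop_bicat_catgraph_adjunction}: a virtually strict functor strictly preserves all three compositions together with every unit and constraint cell, so it is completely determined by its values on objects and on the $F$-basic $1$-, $2$- and $3$-morphisms. If $P: FX \to T$ is virtually strict with $UP \circ \iota_X = S$, then $P$ agrees with $S'$ on objects and on every basic cell originating from $X$, while strict preservation of units and constraints forces the values on the adjunct generators, e.g. $P(\fc 1_a) = 1_{Pa} = 1_{S'a} = S'(\fc 1_a)$ and $P(a_{v_1v_2v_3}) = a_{(Pv_1)(Pv_2)(Pv_3)} = a_{(S'v_1)(S'v_2)(S'v_3)} = S'(a_{v_1v_2v_3})$, and likewise for $i_a$, the remaining unit and constraint $2$-cells, and the basic constraint $3$-cells. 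Hence $P$ and $S'$ coincide on all generators, and therefore $P = S'$.
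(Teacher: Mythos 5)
The paper offers no proof of this proposition at all -- it is simply imported from Gurski -- so there is nothing to compare against except the paper's own proof of the bicategorical analogue, Proposition \ref{prop_bicat_catgraph_adjunction}. Your proposal is a correct and complete outline of exactly that argument transported up one dimension: recursion on the free generators, descent of $\bar S'$ through the congruence because each generating relation is witnessed by a tricategory axiom of $T$, and uniqueness because a virtually strict functor is determined by its values on generators. One small slip: the fact that ``universal arrow'' means precisely the stated existence-and-uniqueness property is the \emph{definition} of a universal arrow, not the content of Proposition \ref{prop_characterization_adjunctions_in_cat} (which goes in the other direction, from universal arrows to adjunctions); this does not affect the argument.
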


Another important part of the adjunction \( F \dashv U: \cat{Tricat_1} \to \cat{Cat\text{-}2Gph} \) proposition \ref{prop_universal_property_free_tricat} gives rise to
(see \ref{prop_characterization_adjunctions_in_cat}), is its counit, which acts through evaluation.

\begin{satz}
The components
\(
\epsilon _{\cat{T}}: FU \cat{T} \to \cat{T}
\)
of the counit of the adjunction \( F \dashv U: \cat{Tricat_1} \to \cat{Cat\text{-}2Gph} \) from proposition \ref{prop_universal_property_free_tricat} are given through evaluation.
\end{satz}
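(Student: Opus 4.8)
The plan is to mimic exactly the argument used for the counit in the bicategorical case (the corollary following Proposition \ref{prop_bicat_catgraph_adjunction}). Proposition \ref{prop_universal_property_free_tricat} exhibits, for each category-enriched 2-graph \( X, \) a universal arrow \( \iota_X: X \to UFX \) from \( X \) to \( U. \) Feeding this collection of universal arrows into Proposition \ref{prop_characterization_adjunctions_in_cat} (with \( G := U \)) produces the adjunction \( F \dashv U \) whose unit has components \( \iota_X. \) By the formula for the counit in the proof of Proposition \ref{prop_characterization_adjunctions_in_cat}, the component \( \epsilon_{\cat{T}}: FU\cat{T} \to \cat{T} \) is then characterised as the unique virtually strict functor satisfying \( U\epsilon_{\cat{T}} \circ \iota_{U\cat{T}} = 1_{U\cat{T}}. \)

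To identify \( \epsilon_{\cat{T}} \) concretely, I would apply the universal property of Proposition \ref{prop_universal_property_free_tricat} to the morphism of category-enriched 2-graphs \( S := 1_{U\cat{T}}: U\cat{T} \to U\cat{T}. \) It yields the unique virtually strict functor \( S': FU\cat{T} \to \cat{T} \) with \( US' \circ \iota = S, \) and by the uniqueness clause \( S' = \epsilon_{\cat{T}}. \) It then remains to unwind the inductive construction of \( S', \) carried out as in the proof of Proposition \ref{prop_bicat_catgraph_adjunction} but one dimension higher. Since \( S \) is the identity on \( U\cat{T}, \) on \( F \)-basic cells \( S' \) returns the cell itself (a basic 1-morphism \( f \) goes to \( f, \) a formal unit \( \fc{1}_a \) goes to \( 1_a, \) and likewise for the basic 2- and 3-cells), while the induction rules force \( S' \) to send each formal composite to the genuine \( \otimes \)-, \( * \)- or \( \circ \)-composite in \( \cat{T} \) and each adjunct constraint cell of \( FU\cat{T} \) to the corresponding honest constraint cell of \( \cat{T}. \) This is precisely what is meant by saying that \( \epsilon_{\cat{T}} \) acts by evaluation.

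The only point requiring care is well-definedness of \( S' \) on 3-cells: the inductive assignment must respect the congruence relation — generated by the local composition laws and the tricategory axioms of Definition \ref{def_tricat_as_magmoid} — that is quotiented out in the construction of \( FX \) in Definition \ref{def_free_tricat}. I expect this to be the main obstacle in a fully detailed write-up, exactly as relating \( \bar S' \) to \( S' \) was the crux in the bicategorical Proposition \ref{prop_bicat_catgraph_adjunction}. It is, however, already subsumed by Proposition \ref{prop_universal_property_free_tricat}, which guarantees the existence and uniqueness of \( S'; \) so no verification beyond invoking that proposition is needed, and the proof reduces to the bookkeeping of the two identifications above.
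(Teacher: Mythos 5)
Your proposal is correct and takes exactly the route the paper intends: the paper omits the proof of this proposition, but it is the verbatim analogue of its proof of the corresponding bicategorical corollary, which you explicitly mimic by feeding the universal arrows of proposition \ref{prop_universal_property_free_tricat} into proposition \ref{prop_characterization_adjunctions_in_cat}, identifying \( \epsilon_{\cat{T}} \) with the unique \( S' \) induced by \( S = 1_{U\cat{T}}, \) and unwinding the inductive construction of \( S' \) to see that it evaluates formal composites. Your closing remark that well-definedness on 3-cells is already subsumed by the existence and uniqueness clause of proposition \ref{prop_universal_property_free_tricat} is also the right way to dispose of the only delicate point.
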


Now we can state the coherence theorem for tricategories.

\begin{terminology}
A category enriched 2-graph \( X \) is called 2-locally discrete, if each \( X(a,b)(f,g) \) is a discrete category.
\end{terminology}

\begin{satz}[corollary 10.6 in \cite{Gurski2013}] \label{prop_tricat_coherence_theorem}
Let \( X \) be a 2-locally discrete category enriched 2-graph. Then any parallel 3-morphisms in \( FX \) are equal.
\end{satz}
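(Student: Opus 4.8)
My plan is first to reduce the statement to one about \emph{coherence} 3-morphisms. Since $X$ is 2-locally discrete, each category $X(a,b)(f,g)$ contains only identity morphisms, so by the inductive description of $\mmmor{FX}$ in definition \ref{def_free_tricat} every basic 3-morphism of $FX$ is either a unit or one of the adjunct constraint 3-cells listed in the magmoidal definition \ref{def_tricat_as_magmoid}. As every 3-morphism of $FX$ is a formal $(\otimes,*,\circ)$-composite of basic ones, it follows that every 3-morphism of $FX$ is a coherence 3-morphism. Hence it suffices to prove that parallel coherence 3-morphisms in $FX$ are equal, the tricategorical analogue of proposition \ref{prop_coherence_bicats_free_version}.

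Second, I would strictify. Using the universal property of the free construction (proposition \ref{prop_universal_property_free_tricat}), I would compare $FX$ with the free Gray-category $\oper{Gr}X$ on the same generating data: a morphism $X \to U(\oper{Gr}X)$ of $\cat{Cat\text{-}2Gph}$ induces a unique virtually strict functor $v\colon FX \to \oper{Gr}X$. The crucial input is that this $v$ is in fact a virtually strict \emph{triequivalence}. Since $v$ strictly preserves units and constraint cells it carries coherence 3-cells to coherence 3-cells, and in the Gray-category $\oper{Gr}X$ every constraint cell except the interchangers $\phi^{\otimes}$ is an identity (definition \ref{def_cubical_tricat_Gray_cat}); the strict interchange axioms then force any two parallel composites of interchangers to agree, so parallel coherence 3-cells in $\oper{Gr}X$ are equal.

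Third, I would transfer the result back. Given parallel coherence 3-morphisms $\Pi,\Pi'$ of $FX$, their images $v\Pi,v\Pi'$ are parallel coherence 3-cells of $\oper{Gr}X$, hence equal by the previous step. As a virtually strict triequivalence is 3-locally a bijection (remark \ref{rem_characterization_virtually_strict_triequivalence}), $v$ is injective on the 3-local set containing $\Pi$ and $\Pi'$, whence $\Pi = \Pi'$.

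The main obstacle is the claim that the strict comparison $v$ is a triequivalence: this is exactly the force of Gurski's coherence theorem and is not reachable from the bicategorical results of this chapter, so in practice I would cite corollary 10.6 of \cite{Gurski2013} for it, the three paragraphs above serving as the conceptual skeleton. The auxiliary coherence for Gray-categories (uniqueness of parallel composites of interchangers) is comparatively routine, and the only further point needing care is verifying that, once $X$ is 2-locally discrete, the adjunct constraint 3-cells really do exhaust the non-unit basic 3-cells of $FX$.
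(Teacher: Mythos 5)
The paper does not actually prove this statement: it is imported wholesale as corollary 10.6 of Gurski's book, and your proposal ultimately rests on the same citation, since you defer the one step you identify as the main obstacle (that the strict comparison \( v: FX \to \oper{Gr}X \) is a triequivalence) to Gurski. As a reconstruction of how that corollary is derived from the main coherence theorem, your skeleton is broadly sound: the observation that 2-local discreteness makes every 3-morphism of \( FX \) a coherence 3-morphism matches the remark the paper itself makes right after the statement, the comparison functor exists by proposition \ref{prop_universal_property_free_tricat}, and transferring equality back along a 3-locally bijective triequivalence is fine. (One bookkeeping point: the triequivalence is Gurski's theorem, of which 10.6 is the corollary you are being asked to prove, so citing 10.6 for the triequivalence is circular in the letter if not in spirit.)

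The genuine gap is in the step you call ``comparatively routine.'' It is not true that the Gray-category axioms force parallel composites of interchangers to agree. This is precisely the braiding phenomenon: a braided monoidal category is a doubly degenerate Gray-category in which the braiding and the inverse braiding are parallel composites of interchangers and local unitors, and they differ unless the braiding is a symmetry. So ``parallel coherence 3-cells in a Gray-category are equal'' is false in general, and what rescues it for \( \oper{Gr}X \) is not strict interchange but the typing discipline of the free construction: generating 2-cells of a Cat-enriched 2-graph only inhabit hom-categories between \emph{generating} 1-cells, never the freely adjoined units, so the Eckmann--Hilton configurations that produce distinct parallel braidings cannot be formed. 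Establishing coherence for the free Gray-category on 2-locally discrete data is therefore a substantial part of Gurski's chapter 10, not a consequence of the axioms; your argument would need either to carry out that analysis or to cite it explicitly alongside the triequivalence.
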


Let \( X \) be a \( \cat{Cat} \)-enriched 2-graph, and \( A \subseteq X \) be the \( \cat{Cat} \)-enriched sub-2-graph with
\( \ob{A} = \ob{X}, \; A(a,b) = X(a,b) \) and \( A(a,b)(f,g) \) is the the discrete category with the same objects as \( X(a,b)(f,g). \)
Then \( FA \subseteq FX \) is a sub-tricategory whose 3-morphisms are precisely the coherence 3-morphisms of \( FX. \)
This yields the following corollary.

\begin{kor}
Parallel coherence 3-morphisms in a free tricategory are equal.
\end{kor}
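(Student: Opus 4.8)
The plan is to deduce the statement directly from Proposition~\ref{prop_tricat_coherence_theorem} by realising the coherence $3$-morphisms of $FX$ as the $3$-morphisms of a free tricategory over a $2$-locally discrete $\cat{Cat}$-enriched $2$-graph, namely the sub-$2$-graph $A\subseteq X$ constructed in the paragraph preceding the statement. Concretely, I would fix two parallel coherence $3$-morphisms $\Gamma,\Gamma'$ in $FX$ and show that they are in fact parallel $3$-morphisms of $FA$, whereupon the proposition applies verbatim.

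First I would record that $A$ is $2$-locally discrete: by construction each $A(a,b)(f,g)$ is the discrete category on the objects of $X(a,b)(f,g)$, so Proposition~\ref{prop_tricat_coherence_theorem} is applicable to $A$ and tells us that any two parallel $3$-morphisms of $FA$ coincide. Next I would invoke the identification asserted just above the statement, that $FA\subseteq FX$ is a sub-tricategory whose $3$-morphisms are precisely the coherence $3$-morphisms of $FX$. Since $\ob{A}=\ob{X}$ and $A(a,b)=X(a,b)$, the inductive clauses of Definition~\ref{def_free_tricat} produce the same $1$- and $2$-morphisms for $FA$ as for $FX$; only the generating basic $3$-cells differ, and discretising the categories $X(a,b)(f,g)$ discards exactly the non-identity basic $3$-cells originating from $X$, leaving the adjunct constraint $3$-cells and the units coming from the data of Definition~\ref{def_tricat_as_magmoid}. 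Their formal composites, modulo the congruence generated by the tricategory axioms, are by definition the coherence $3$-morphisms.

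Granting this, the conclusion is immediate: $\Gamma$ and $\Gamma'$ lie in $FA$, and because $FA$ and $FX$ share the same underlying $1$- and $2$-cells, their common source and target $2$-morphisms in $FX$ are again source and target in $FA$, so $\Gamma$ and $\Gamma'$ are parallel there; the proposition then forces $\Gamma=\Gamma'$. I expect the only genuine work to be the identification $\mmmor{FA}=\{\text{coherence }3\text{-morphisms of }FX\}$: one must verify that the recursive generation of $3$-cells and the congruence from the tricategory axioms restrict cleanly from $FX$ to $FA$, so that $FA$ is honestly a sub-tricategory and its $3$-cells are neither more nor fewer than the constraint-and-unit composites. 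Once that bookkeeping is secured, no further computation is required.
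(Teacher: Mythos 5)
Your proposal is correct and follows essentially the same route as the paper: the paper likewise forms the $2$-locally discrete sub-$2$-graph $A\subseteq X$, identifies $FA\subseteq FX$ as the sub-tricategory whose $3$-morphisms are exactly the coherence $3$-morphisms of $FX$, and applies Proposition~\ref{prop_tricat_coherence_theorem} to $FA$. The additional bookkeeping you flag (that $FA$ and $FX$ share the same $1$- and $2$-cells and that the congruence restricts cleanly) is exactly what the paper implicitly relies on.
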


We can also rephrase the coherence theorem to refer to general tricategories:

\begin{kor}
Two coherence 3-morphisms in a tricategory \(  \cat{T} \) are equal, if they lift to parallel coherence 3-morphisms in \( FU \cat{T}. \)
\end{kor}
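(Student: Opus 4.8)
The plan is to reduce the statement to the immediately preceding corollary (parallel coherence $3$-morphisms in a free tricategory are equal), in exactly the same way that the bicategorical Corollary \ref{cor_coherence_formally_defined_version} reduces to Proposition \ref{prop_coherence_bicats_free_version}. The bridge between the free tricategory $FU\cat{T}$ and $\cat{T}$ itself is the counit $\epsilon_{\cat{T}}: FU\cat{T} \to \cat{T}$, which by the preceding proposition acts through evaluation and is, being a component of the counit of the adjunction $F \dashv U$, a virtually strict functor. In particular $\epsilon_{\cat{T}}$ is a morphism of $3$-magmoids, hence an honest function on $3$-morphisms that respects equality.

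Concretely, I would first name the data: suppose the coherence $3$-morphisms $\Gamma, \Gamma'$ in $\cat{T}$ lift to parallel coherence $3$-morphisms $\tilde\Gamma, \tilde\Gamma'$ in $FU\cat{T}$, so that $\epsilon_{\cat{T}}(\tilde\Gamma) = \Gamma$ and $\epsilon_{\cat{T}}(\tilde\Gamma') = \Gamma'$. Next I would invoke the coherence theorem for free tricategories (the preceding corollary, which descends from Proposition \ref{prop_tricat_coherence_theorem}): since $\tilde\Gamma$ and $\tilde\Gamma'$ are parallel coherence $3$-morphisms of the free tricategory $FU\cat{T}$, they are equal, $\tilde\Gamma = \tilde\Gamma'$. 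Applying the function $\epsilon_{\cat{T}}$ to both sides then yields
\[
\Gamma = \epsilon_{\cat{T}}(\tilde\Gamma) = \epsilon_{\cat{T}}(\tilde\Gamma') = \Gamma',
\]
which is the claim.

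I expect no serious obstacle here: all the genuine content sits in the coherence theorem for free tricategories, which is assumed. The only points requiring care are bookkeeping ones. First, the hypothesis must genuinely place $\tilde\Gamma, \tilde\Gamma'$ among the \emph{coherence} $3$-morphisms of $FU\cat{T}$, so that the free-case result is applicable; this is precisely why the coherence qualifier on the lifts cannot be dropped, since arbitrary parallel $3$-morphisms of $FU\cat{T}$ need not agree. Second, one uses that $\epsilon_{\cat{T}}$, being a virtually strict functor, is a well-defined map on $3$-cells, so that it may be applied to the equation $\tilde\Gamma = \tilde\Gamma'$. Neither step involves any calculation; the proof is essentially the abstract observation that a coherence statement proved in a free object transports along the counit to the corresponding statement in an arbitrary tricategory.
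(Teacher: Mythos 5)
Your argument is correct and is precisely the intended one: the paper leaves this corollary without an explicit proof because it follows immediately from the preceding corollary (parallel coherence 3-morphisms in a free tricategory are equal) by applying the counit \( \epsilon_{\cat{T}}: FU\cat{T} \to \cat{T} \), exactly as you describe. Your bookkeeping remarks about why the lifts must be coherence morphisms and why \( \epsilon_{\cat{T}} \) respects equality are accurate and match the analogous bicategorical reduction of corollary \ref{cor_coherence_formally_defined_version} to proposition \ref{prop_coherence_bicats_free_version}.
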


The objects, 1- and 2-morphisms of a tricategory form a algebra, that reminds of a bicategory, only that the equations which hold in a bicategory are now replaced by coherence 3-morphisms.
Therefore the coherence theorem for bicategories, which states that parallel constraint-2-morphisms in a free bicategory are equal, implies that in a free tricategory parallel constraint 2-morphisms are isomorphic via a coherence 3-morphism. And due to the coherence-theorem for tricategories this 3-coherence is unique.

\begin{kor} \label{cor_FX_is_semi_free}
Any two parallel constraint 2-morphisms in a free tricategory are uniquely isomorphic via a coherence 3-morphism.
\end{kor}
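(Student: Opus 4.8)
The plan is to derive the existence of the claimed coherence 3-morphism from the coherence theorem for bicategories (proposition \ref{prop_coherence_bicats_free_version}) and its uniqueness from the coherence theorem for tricategories (proposition \ref{prop_tricat_coherence_theorem}). Uniqueness and invertibility I would dispatch first, since they are immediate: any coherence 3-morphism is a $(\otimes,*,\circ)$-composite of units and of the constraint 3-cells of definition \ref{def_tricat_as_magmoid}, each of which is invertible, so it is automatically an isomorphism; and if $\Gamma,\Gamma'\colon \gamma_1\to\gamma_2$ are two coherence 3-morphisms between parallel constraint 2-morphisms, then they are parallel coherence 3-morphisms in $FX$ and hence equal by the coherence theorem for tricategories (proposition \ref{prop_tricat_coherence_theorem}). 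The entire content is therefore the \emph{existence} of some coherence 3-morphism $\gamma_1\to\gamma_2$.

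For existence I would introduce the free bicategory $FG$ over the $\cat{Cat}$-graph $G$ with $\ob G=\ob X$ and $G(a,b)$ the discrete category on $\ob{X(a,b)}$. Then $\mor{FG}$ and $\mor{FX}$ coincide under the renaming $\fstar\leftrightarrow\otimes$, and, since $G$ is locally discrete, \emph{every} 2-morphism of $FG$ is a coherence morphism. Next I would define by induction an assignment $\Phi$ from the formal 2-morphisms $\overline{\mmor{FG}}$ to the constraint 2-morphisms of $FX$: the basic constraint cells $\fc a,\fc l,\fc r$ and $(\fc a)^{-1},(\fc l)^{-1},(\fc r)^{-1}$ go to $a,l,r$ and to the adjoint partners $a^{\adsq},l^{\adsq},r^{\adsq}$, units go to units, and $\Phi$ sends $\fcirc$ to $*$ and $\fstar$ to $\otimes$. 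By construction $\Phi$ preserves types, and its image consists of all constraint 2-morphisms of $FX$ not involving the cell $i_a$; the latter has no bicategorical analogue, but this is harmless because $\phi_a\colon 1_{1_a}\to i_a$ is itself a coherence 3-cell, so any occurrence of $i_a$ may be rewritten as the genuine unit $1_{1_a}$ at the cost of a coherence 3-morphism.

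The key step is to show that $\Phi$ sends congruent formal 2-morphisms to 2-morphisms connected by a coherence 3-morphism. Write $\gamma\approx\gamma'$ when a coherence 3-morphism $\gamma\to\gamma'$ exists; as in lemma \ref{lem_coherence_congruence_relation} this is an equivalence relation (reflexivity from units, symmetry from invertibility, transitivity from $\circ$) compatible with $\otimes$ and $*$, so the relation $\simeq$ on $\overline{\mmor{FG}}$, with $\delta\simeq\delta'$ holding exactly when $\Phi\delta\approx\Phi\delta'$, is again a congruence. I would then check that $\simeq$ contains each of the ten generating relations of the congruence of definition \ref{def_free_bicat}: associativity and unitality of $\fcirc$ correspond to $a^{loc},l^{loc},r^{loc}$; functoriality and unitality of $\fstar$ correspond to the interchangers $\phi^{\otimes}$; invertibility of the constraints corresponds to $\eta^a,\epsilon^a$ and their $l,r$ analogues; naturality corresponds to the component 3-cells $a_{\alpha\beta\gamma},l_\alpha,r_\alpha$; and the pentagon and triangle relations correspond precisely to $\pi$ and to $\mu$ (together with $\lambda,\rho$). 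Since $\simeq$ is a congruence containing all generators, it contains the whole congruence, so $\delta\sim\delta'$ implies $\Phi\delta\approx\Phi\delta'$.

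To conclude, given parallel constraint 2-morphisms $\gamma_1,\gamma_2$ in $FX$ I would lift them (after the $i_a$-rewriting above) to formal coherence 2-morphisms $\delta_1,\delta_2\in\overline{\mmor{FG}}$ with $\Phi\delta_i=\gamma_i$; these are parallel in $FG$ because $\Phi$ is a bijection on 1-cells, so proposition \ref{prop_coherence_bicats_free_version} gives $\delta_1\sim\delta_2$, whence $\gamma_1\approx\gamma_2$, as required. The main obstacle is the bookkeeping in the key step: one has to match each generating relation of definition \ref{def_free_bicat} with the correct constraint 3-cell of definition \ref{def_tricat_as_magmoid} and verify that the source and target of that 3-cell are exactly the two $\Phi$-images — routine but lengthy, with the pentagon and triangle cases needing the most care because the $FX$-constraints are adjoint-equivalence data rather than strict inverses.
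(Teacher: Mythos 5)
Your proposal is correct and follows essentially the same route as the paper, which justifies this corollary only by the informal remark preceding it (the 2-dimensional part of a free tricategory is a free bicategory with equations replaced by coherence 3-cells, so bicategorical coherence gives existence and tricategorical coherence gives uniqueness); you have simply made that sketch precise. The only small point to adjust is that uniqueness should be cited from the corollary ``parallel coherence 3-morphisms in a free tricategory are equal'' (obtained from proposition \ref{prop_tricat_coherence_theorem} via the locally discrete sub-2-graph $A\subseteq X$) rather than from proposition \ref{prop_tricat_coherence_theorem} itself, which assumes $X$ is 2-locally discrete.
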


\subsection{Quotienting out coherence} \label{sec_quotiening_out_coherence}

In this section we make precise how one can "quotient out coherence" from a tricategory.
This can be done by either quotienting out a system of coherence-3-morphisms \ref{prop_coh_3_system}, or by quotienting out coherence 2- and 3- morphisms simultaneously \ref{prop_coherence_23_system}.
Both results will be needed in section \ref{sec_strictification_for_tricats}.

\begin{mydef}[Coh-3-system]
Let \( \cat{T} \) be a tricategory.
A subset \( C \subseteq \mmmor{T} \) is called a coherence 3-system of \( \cat{T} \), if it satisfies the following conditions:
\begin{enumerate}
\item
\( C \) is closed under \( \otimes, *, \circ. \)
\item
\( 1_\alpha \in C \) for all \( \alpha \in \mmor{T}. \)
\item
Any 3-morphism in \( C \) is invertible, and
\[
\Gamma \in C \implies \Gamma ^{-1} \in C.
\]
\item
\( \Gamma, \Gamma' \in C, \text{ parallel } \implies \Gamma = \Gamma'. \)
\end{enumerate}
\end{mydef}

\begin{lem} \label{lem_coherence_3_congruence_relation}
Let \( C \subseteq \mmmor{\cat{T}} \) be a coherence-3-system of a tricategory \( \cat{T}, \) then
\[
\alpha \sim_2 \beta \iff \exists \Gamma \in C: \alpha \to \beta
\]
defines a \( (*,\circ) \)-congruence relation (see definition \ref{def_congruence_rel}) on \( \mmor{\cat{T}} \) and
\[
\Pi \sim_3 \Lambda \iff \exists \Gamma, \Gamma' \in C: \Gamma \circ \Pi \circ \Gamma' = \Lambda
\]
defines a \( (\otimes, *, \circ) \) congruence relation on \( \mmmor{\cat{T}}. \)
\end{lem}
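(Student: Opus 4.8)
The plan is to transcribe the proof of the bicategorical Lemma~\ref{lem_coherence_congruence_relation} one dimension upward, using the four defining properties of a coherence-$3$-system $C$ exactly where the bicategorical argument used the corresponding facts about coherence $2$-morphisms in $\bhat$: closure under $\otimes,*,\circ$ (property~1) replaces closure of coherence morphisms under composition, the presence of all $1_\alpha$ (property~2) replaces the fact that identities are coherence morphisms, closure under inverses (property~3) replaces invertibility of coherence morphisms, and the uniqueness of parallel members of $C$ (property~4) replaces Lemma~\ref{lem_unique_coherence_in_bhat}.

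First I would check $\sim_2$. It is an equivalence relation: reflexivity holds because $1_\alpha\in C$ is a member of $C$ with source and target $\alpha$; symmetry because $\Gamma\colon\alpha\to\beta$ in $C$ yields $\Gamma^{-1}\colon\beta\to\alpha$ in $C$; transitivity because a $\circ$-composite of members of $C$ again lies in $C$. For compatibility with the two compositions $\otimes$ and $*$ available on $2$-morphisms, suppose $\Gamma_i\in C$ witnesses $\alpha_i\sim_2\beta_i$ and that $\alpha_1\otimes\alpha_2$, $\beta_1\otimes\beta_2$ (resp.\ the $*$-composites) are defined. Since $\alpha_i$ and $\beta_i$ are parallel, the composite $\Gamma_1\otimes\Gamma_2$ (resp.\ $\Gamma_1*\Gamma_2$) is defined, lies in $C$ by property~1, and is a $3$-morphism from one composite to the other; hence the composites are $\sim_2$-related. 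No use of property~4 is needed here.

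Next I would check $\sim_3$. That it is an equivalence relation again rests only on properties~1--3: reflexivity from $\Pi=1_{t\Pi}\circ\Pi\circ 1_{s\Pi}$ with $1_{t\Pi},1_{s\Pi}\in C$; symmetry by conjugating with $\Gamma^{-1},(\Gamma')^{-1}\in C$; transitivity by $\circ$-composing the two outer coherence $3$-morphisms. Compatibility with $\otimes$ and with $*$ follows from the \emph{strict} interchange laws together with closure: expanding $\Lambda_1\otimes\Lambda_2=(\Gamma_1\circ\Pi_1\circ\Gamma_1')\otimes(\Gamma_2\circ\Pi_2\circ\Gamma_2')$ and applying the interchange of $\otimes$ with $\circ$ (functoriality of $\otimes$ in Definition~\ref{def_tricat_as_magmoid}) rewrites the right-hand side as $(\Gamma_1\otimes\Gamma_2)\circ(\Pi_1\otimes\Pi_2)\circ(\Gamma_1'\otimes\Gamma_2')$, which exhibits the outer coherence $3$-morphisms required for $\Pi_1\otimes\Pi_2\sim_3\Lambda_1\otimes\Lambda_2$; the $*$-case is verbatim, using instead the interchange law of the local bicategories.

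The one genuinely non-formal step, just as in the bicategorical lemma, is compatibility of $\sim_3$ with vertical composition $\circ$. Writing $\Gamma_i\circ\Pi_i\circ\Gamma_i'=\Lambda_i$ and assuming $\Pi_1\circ\Pi_2$ and $\Lambda_1\circ\Lambda_2$ are defined, associativity gives $\Lambda_1\circ\Lambda_2=\Gamma_1\circ\Pi_1\circ(\Gamma_1'\circ\Gamma_2)\circ\Pi_2\circ\Gamma_2'$. The inner factor $\Gamma_1'\circ\Gamma_2$ is a member of $C$ whose source and target are both the $2$-morphism $s\Pi_1=t\Pi_2$ — this equality being precisely the hypothesis that $\Pi_1\circ\Pi_2$ is defined — so it is parallel to $1_{s\Pi_1}\in C$. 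Property~4 then forces $\Gamma_1'\circ\Gamma_2=1_{s\Pi_1}$, which the unit law cancels, leaving $\Lambda_1\circ\Lambda_2=\Gamma_1\circ(\Pi_1\circ\Pi_2)\circ\Gamma_2'$ with $\Gamma_1,\Gamma_2'\in C$; thus $\Pi_1\circ\Pi_2\sim_3\Lambda_1\circ\Lambda_2$. This cancellation is where the uniqueness clause of a coherence-$3$-system is indispensable, and the careful bookkeeping of the source and target $2$-morphisms of the coherence cells is the only point that needs attention.
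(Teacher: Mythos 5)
Your proof is correct and follows essentially the same route as the paper's: equivalence-relation properties from closure of \( C \) under units, inverses and \( \circ \); compatibility of \( \sim_3 \) with \( \otimes \) and \( * \) via the interchange laws; and, for compatibility with \( \circ \), the cancellation of the inner factor \( \Gamma_1' \circ \Gamma_2 \) against \( 1_{s\Pi_1} \) using uniqueness of parallel members of \( C \) — which is exactly the paper's key step.
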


\begin{proof} ~
\begin{itemize}[-]
\item
\( \sim_2 \) is reflexive, symmetric and transitive, because \( C \) contains units, inverses and is closed under \( \circ. \)
\( \sim_2 \) is compatible with \( \otimes \) and \( *, \) because \( C \) is closed under \( \otimes \) and \( *. \)
Therefore \( \sim_2 \) is a congruence relation.
\item
Again \( \sim_3 \) is reflexive, symmetric and transitive, because \( C \) contains units, inverses and is closed under \( \circ. \)

To show that \( \sim_3 \) is compatible with \( \otimes \) let \( \Lambda_1 = \Gamma_1 \circ \Lambda'_1 \circ \Gamma'_1 \) and \( \Lambda_2 = \Gamma_2 \circ \Lambda'_2 \circ \Gamma'_2, \) then
\begin{align*}
\Lambda_1 \otimes \Lambda_2 = (\Gamma_1 \circ \Lambda'_1 \circ \Gamma'_1) \otimes (\Gamma_2 \circ \Lambda'_2 \circ \Gamma'_2) \\
= (\Gamma_1 \otimes \Gamma_2) \circ (\Lambda_1' \otimes \Lambda_2') \circ (\Gamma'_1 \otimes \Gamma'_2).
\end{align*}
Similarly one shows compatibility with \( *. \)

For compatibility with \( \circ \) let \( \Lambda_1, \Lambda'_1, \Lambda_2, \Lambda'_2 \) be as before,
with \( \Lambda_1 \) and \( \Lambda_2 \) resp. \( \Lambda'_1 \) and \( \Lambda'_2 \) being \( \circ \)- composable.
Then we have
\[
\Lambda_1 \circ \Lambda_2 = \Gamma_1 \circ \Lambda'_1 \circ \Gamma'_1 \circ \Gamma_2 \circ \Lambda'_2 \circ \Gamma'_2 \\
= \Gamma_1 \circ \Lambda'_1 \circ \Lambda'_2 \circ \Gamma'_2,
\]
where we used that \( \Gamma'_1 \circ \Gamma_2 = 1 _{t \Lambda'_2}, \) because parallel 3-morphisms in \( C \) are equal.
\end{itemize}
\end{proof}

\begin{satz}~ \label{prop_coh_3_system}
Let \( C \) be a coherence-3-system of a tricategory \( \cat{T}. \)
\begin{enumerate}
\item
There is a tricategory \( \cat{T}/\scriptstyle{C} \), whose 0- and 1-morphisms are the 0- and 1-morphisms of \( \cat{T} \)
and whose 2- and 3-morphisms are equivalence classes with respect to the relations \( \sim_2 \) and \( \sim_3 \) from lemma \ref{lem_coherence_3_congruence_relation}.

We write \( [\alpha] \) resp. \( [\Pi] \) for the equivalence class of a 2- resp. 3-morphism of \( \cat{T} \)
and we may also write \( [a] \) resp. \( [f], \) if we consider \( a \) resp. \( f \) as an object resp. 1-morphism of \( \cat{T}/\scriptstyle{C}. \)
The remaining data of \( \cat{T}/\scriptstyle{C} \) are given as follows:

\begin{itemize}
\item
The types of the 2- and 3-morphisms of \( \cat{T}/\scriptstyle{C} \) are given as
\[
s[\alpha] = s \alpha, \quad
t[\alpha] = t \alpha, \quad
s[\Pi] = [s \Pi], \quad
t[\Pi] = [t \Pi].
\]

\item
Composition for 2-morphisms in \( \cat{T}/\scriptstyle{C} \) is defined by
\[
[\alpha] \otimes [\beta] = [\alpha \otimes \beta]
\quad \text{and} \quad
[\alpha] * [\beta] = [\alpha * \beta].
\]
The composition operations \( (\otimes, *) \) of 3-morphisms are defined by
\[
[\Pi] \otimes [\Lambda] = [\Pi \otimes \Lambda]
\quad \text{and} \quad
[\Pi] * [\Lambda] = [\Pi * \Lambda].
\]
To define \( (\circ) \) we observe, that \( s [\Pi] = t [\Lambda] \) implies the existence of composable \( \Pi', \Lambda' \) with \( [\Pi'] = [\Pi] \) and \( [\Lambda'] = [\Lambda]. \)
Then we define
\[
[\Pi] \circ [\Lambda] = [\Pi' \circ \Lambda'].
\]

\item
The constraints of \( \cat{T}/\scriptstyle{C} \) are given as follows:

\[
1 _{[a]} = [1 _{a}], 1 _{[f]} = [1 _{f}], a _{[\alpha][\beta][\gamma]} = [a _{\alpha \beta \gamma}], \rho _{[f],[g]} = [\rho _{f,g}], \dots.
\]
\end{itemize}

\item
Mapping morphisms in \( \cat{T} \) to their respective equivalence classes in \( \cat{T}/\scriptstyle{C} \) defines a virtually strict triequivalence
\(
[-]: \cat{T} \to \cat{T}/\scriptstyle{C}.
\)
\end{enumerate}
\end{satz}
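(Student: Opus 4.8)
The plan is to verify both claims through the magmoidal definition of a tricategory (Definition \ref{def_tricat_as_magmoid}), exploiting that the quotient map $[-]$ is a surjective morphism of $3$-magmoids so that the axioms can be transported wholesale from $\cat{T}$. First I would construct the underlying $3$-magmoid of $\cat{T}/C$ as the quotient of the underlying $3$-magmoid of $\cat{T}$ by the pair of congruence relations $(\sim_2,\sim_3)$ from Lemma \ref{lem_coherence_3_congruence_relation}. Objects and $1$-cells are untouched; the types of $[\alpha]$ and $[\Pi]$ are well defined because $\sim_2$-related $2$-cells are parallel and $\sim_3$-related $3$-cells have $\sim_2$-related sources and targets. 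Since $\sim_2$ and $\sim_3$ are congruences, the operations $\otimes$ and $*$ descend directly. The only composition needing care is $\circ$ on $3$-cells: $s[\Pi]=t[\Lambda]$ only gives $s\Pi\sim_2 t\Lambda$, so I would precompose $\Pi$ with a coherence $\Gamma\in C$ joining $t\Lambda$ to $s\Pi$ to obtain a strictly composable representative, and then use the compatibility of $\sim_3$ with $\circ$ (already shown in the lemma) to see the result is independent of the choices. This makes $[-]$ a morphism of $3$-magmoids.

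Next I would equip $\cat{T}/C$ with its constraint cells and units. Those depending only on objects and $1$-cells (e.g.\ $1_{[f]}=[1_f]$, the $3$-cells $\pi,\mu,\lambda,\rho$, the components of $\eta^a,\epsilon^a,\dots$, and the associator/unitor $2$-cells) descend immediately, as no quotient is taken on $1$-cells. The constraint $3$-cells depending on $2$-cells — the local constraints $a^{loc}_{\alpha\beta\gamma}, l^{loc}_{\alpha}, r^{loc}_{\alpha}$, the interchangers $\phi^{\otimes}$, and the $2$-cell components of $a,a^{\adsq},l,l^{\adsq},r,r^{\adsq}$ — require a well-definedness check. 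Here I would invoke the corresponding naturality axiom in $\cat{T}$: if $\alpha\sim_2\alpha'$ and so on via coherences in $C$, naturality exhibits $a^{loc}_{\alpha'\beta'\gamma'}$ as a $\circ$-conjugate of $a^{loc}_{\alpha\beta\gamma}$ by the $\otimes$- and $*$-composites of those coherences, which again lie in $C$ by closure; hence the two representatives are $\sim_3$-equal and the image is well defined. Invertibility of constraint cells is inherited because $C$ is closed under inverses, so $[X]^{-1}=[X^{-1}]$.

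With all data in place, the tricategory axioms of Definition \ref{def_tricat_as_magmoid} follow formally: each axiom is an equation between $(\otimes,*,\circ)$-composites of cells, the map $[-]$ preserves all three operations together with all units and constraints, and $[-]$ is surjective, so applying $[-]$ to each instance of an axiom holding in $\cat{T}$ yields the corresponding instance in $\cat{T}/C$. This establishes part~$1$ and simultaneously shows $[-]$ is a virtually strict functor.

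For part~$2$ I would use the characterization in Remark \ref{rem_characterization_virtually_strict_triequivalence}. Triessential surjectivity and local biessential surjectivity are trivial because $[-]$ is the identity on objects and $1$-cells; $2$-local essential surjectivity holds because $[-]$ is onto on $2$-cells. The one substantive point is $3$-local bijectivity. Surjectivity is clear: given $[\Pi]\colon[\alpha]\to[\beta]$ one conjugates a representative by coherences $\delta\colon\alpha\to s\Pi$ and $\gamma\colon t\Pi\to\beta$ in $C$ to land on the nose at source $\alpha$ and target $\beta$. Injectivity is where the defining properties of a coherence-$3$-system are really used: if $\Pi,\Lambda\colon\alpha\to\beta$ satisfy $[\Pi]=[\Lambda]$, then $\Gamma\circ\Pi\circ\Gamma'=\Lambda$ for some $\Gamma,\Gamma'\in C$, and necessarily $\Gamma'\colon\alpha\to\alpha$ and $\Gamma\colon\beta\to\beta$ are parallel to the units $1_{\alpha},1_{\beta}\in C$; by the condition that parallel members of $C$ agree we get $\Gamma'=1_{\alpha}$ and $\Gamma=1_{\beta}$, whence $\Pi=\Lambda$. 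The main obstacle is therefore not conceptual but organizational — carefully discharging the well-definedness of the $2$-cell-dependent constraints via the naturality axioms and then transporting the full list of tricategory axioms — with the genuine content concentrated in the $3$-local injectivity argument just sketched.
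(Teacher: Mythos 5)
Your proposal is correct and follows essentially the same route as the paper: quotient the underlying $3$-magmoid by the congruences of Lemma \ref{lem_coherence_3_congruence_relation} (choosing composable representatives for $\circ$), check well-definedness of the $2$-cell-indexed constraints via the corresponding naturality axioms in $\cat{T}$, transport the tricategory axioms along the surjective strict magmoid morphism $[-]$, and reduce the triequivalence claim to $3$-local bijectivity, with injectivity coming from uniqueness of parallel morphisms in $C$. Your treatment of $3$-local surjectivity (conjugating a representative by coherences to adjust its types) is in fact spelled out slightly more carefully than the paper's ``trivially satisfied.''
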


\begin{proof}~
\begin{itemize}[-]
\item
All types in \( \cat{T}/\scriptstyle{C} \) are well-defined and satisfy the globularity conditions \( st = ss \) and \( ts = tt, \) therefore \( \cat{T}/\scriptstyle{C} \) caries the structure of a globular set.
\item
Because \( \sim_2 \) and \( \sim_3 \) are congruence relations, the operations \( (\otimes, *, \circ) \) are well-defined.
And since the operations \( (\otimes,*,\circ) \) are also compatible with the globular structure, the 3-magmoid structure of \( \cat{T}/\scriptstyle{C} \) is established.
\item
It follows directly from the definition of composition in \( \cat{T}/\scriptstyle{C} \) that \( [-]: \cat{T} \to \cat{T}/\scriptstyle{C} \) is a morphism of magmoids.
\item
It is clear that the units and constraints subscripted with 0- and 1-morphisms are well-defined.
Constraints subscripted with 2-morphisms (see the data in section \ref{sec_tricats_as_magmoids} for a list of such constraint cells) are well-defined, due to the naturality of the corresponding constraint morphisms in \( \cat{T}. \)
For example let
\[
\Gamma_1: \alpha_ 1 \to \alpha'_1, \quad \Gamma_2: \alpha_ 2 \to \alpha'_2, \quad \Gamma_3: \alpha_ 3 \to \alpha'_3
\]
be \( \otimes \)-composable 3-morphisms in \( C. \)
Then
\[
[a _{\alpha_1 \alpha_2 \alpha_3}] = [a _{\alpha'_1 \alpha'_2 \alpha'_3}]
\]
because of the commutativity of the square
\[
\begin{tikzcd}[column sep = huge, row sep = large]
a _{t \alpha_1, t \alpha_2, t \alpha_3} * \big( (\alpha_1 \otimes \alpha_2) \otimes \alpha_3 \big)
\ar[d, "{1 * \big( (\Gamma_1 \otimes \Gamma_2) \otimes \Gamma_3 \big)}" swap]
\ar[r, "a _{\alpha_1 \alpha_2 \alpha_3}"]
	& \big( \alpha_1 \otimes (\alpha_2 \otimes \alpha_3) \big) * a _{s \alpha_1, s \alpha_2, s \alpha_3}
	\ar[d, "{\big( \Gamma_1 \otimes (\Gamma_2 \otimes \Gamma_3) \big) * 1}"] \\
a _{t \alpha'_1, t \alpha'_2, t \alpha'_3} * \big( (\alpha'_1 \otimes \alpha'_2) \otimes \alpha'_3 \big)
\ar[r, "a _{\alpha'_1 \alpha'_2 \alpha'_3}" swap]
	& \big( \alpha'_1 \otimes (\alpha'_2 \otimes \alpha'_3) \big) * a _{s \alpha'_1, s \alpha'_2, s \alpha'_3}.
\end{tikzcd}
\]

The types of the constraint-cells are in line with what is required by section (\ref{sec_tricats_as_magmoids}).
This is the case because of the way the constraint cells are defined and because \( [-] \) is a morphism of 3-magmoids.
For example

\begin{align*}
s (a _{[\alpha_1][\alpha_2][\alpha_3]}) = s [a _{\alpha_1 \alpha_2 \alpha_3}] = [s (a _{\alpha_1 \alpha_2 \alpha_3})] \\
= [a _{t \alpha_1, t \alpha_2, t \alpha_3} * \big( (\alpha_1 \otimes \alpha_2) \otimes \alpha_3 \big)] \\
= a _{t [\alpha_1], t [\alpha_2], t [\alpha_3]} * \big( ([\alpha_1] \otimes [\alpha_2]) \otimes [\alpha_3] \big).
\end{align*}

\item
Any axiom for \( \cat{T}/\scriptstyle{C} \) is the image of an axiom of \( \cat{T} \) under \( [-]. \)
For example
\[
\begin{tikzcd}[column sep = huge, row sep = large]
a _{t [\alpha_1], t [\alpha_2], t [\alpha_3]} * \big( ([\alpha_1] \otimes [\alpha_2]) \otimes [\alpha_3] \big)
\ar[d, "{1 * \big( ([\Pi_1] \otimes [\Pi_2]) \otimes [\Pi_3] \big)}" swap]
\ar[r, "a _{[\alpha_1] [\alpha_2] [\alpha_3]}"]
	& \big( [\alpha_1] \otimes ([\alpha_2] \otimes [\alpha_3]) \big) * a _{s [\alpha_1], s [\alpha_2], s [\alpha_3]}
	\ar[d, "{\big( [\Pi_1] \otimes ([\Pi_2] \otimes [\Pi_3]) \big) * 1}"] \\
a _{t [\alpha'_1], t [\alpha'_2], t [\alpha'_3]} * \big( ([\alpha'_1] \otimes [\alpha'_2]) \otimes [\alpha'_3] \big)
\ar[r, "a _{[\alpha'_1] [\alpha'_2] [\alpha'_3]}" swap]
	& \big( [\alpha'_1] \otimes ([\alpha'_2] \otimes [\alpha'_3]) \big) * a _{s [\alpha'_1], s [\alpha'_2], s [\alpha'_3]} \\
\end{tikzcd}
\]
commutes, because it is the image of
\[
\begin{tikzcd}[column sep = huge, row sep = large]
a _{t \alpha_1, t \alpha_2, t \alpha_3} * \big( (\alpha_1 \otimes \alpha_2) \otimes \alpha_3 \big)
\ar[d, "{1 * \big( (\Pi_1 \otimes \Pi_2) \otimes \Pi_3 \big)}" swap]
\ar[r, "a _{\alpha_1 \alpha_2 \alpha_3}"]
	& \big( \alpha_1 \otimes (\alpha_2 \otimes \alpha_3) \big) * a _{s \alpha_1, s \alpha_2, s \alpha_3}
	\ar[d, "{\big( \Pi_1 \otimes (\Pi_2 \otimes \Pi_3) \big) * 1}"] \\
a _{t \alpha'_1, t \alpha'_2, t \alpha'_3} * \big( (\alpha'_1 \otimes \alpha'_2) \otimes \alpha'_3 \big)
\ar[r, "a _{\alpha'_1 \alpha'_2 \alpha'_3}" swap]
	& \big( \alpha'_1 \otimes (\alpha'_2 \otimes \alpha'_3) \big) * a _{s \alpha'_1, s \alpha'_2, s \alpha'_3} \\
\end{tikzcd}
\]
under \( [-]. \)
Analogously one shows the other axioms from section \ref{sec_tricats_as_magmoids} hold.

\item
We already observed that \( [-] \) is a morphism of magmoids.
That \( [-] \) preserves units and constraints as well, follows directly from their definition in \( \cat{T}/\scriptstyle{C}. \)
Therefore \( [-] \) is a strict functor of tricategories.

\item
Finally we show that \( [-] \) is a virtually strict triequivalence.
We start by showing that \( [-] \) is 3-locally a bijection.
3-local surjectivity is trivially satisfied and 3-local injectivity follows because uniqueness of parallel 3-morphisms in \( C \) implies \( \Pi = \Pi' \) if
\(
\Pi, \Pi': \alpha \to \beta \) and
\(
[\Pi] = [\Pi'].
\)
Since \( [-] \) is 2-locally surjective and 3-locally a bijection, it is 2-locally an equivalence.
Then since \( [-] \) is 1-locally surjective and 2-locally an equivalence, it is 1-locally a biequivalence.
And finally because \( [-] \) is 0-locally surjective and 1-locally a biequivalence, it is a triequivalence.
\end{itemize}
\end{proof}

\begin{lem} \label{lem_C_under_quotient}
Let \( C \) be a coherence 3-system in a tricategory \( \cat{T}. \)
Then for \( \Gamma \in C \) we have
\[
s [\Gamma] = t [\Gamma]
\quad \text{and} \quad
[\Gamma] = 1 _{s [\Gamma]} = 1 _{t [\Gamma]}.
\]
\end{lem}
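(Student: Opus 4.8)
The plan is to unwind the definitions of the types and of the relations $\sim_2$, $\sim_3$ from Lemma \ref{lem_coherence_3_congruence_relation}, and to exploit the three defining closure properties of a coherence-3-system (it contains all units $1_\alpha$, it is closed under inverses, and each of its members is invertible). Throughout I would fix the notation $\Gamma: \alpha \to \beta$, so that $s\Gamma = \alpha$ and $t\Gamma = \beta$ are 2-morphisms of $\cat{T}$.

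First I would establish $s[\Gamma] = t[\Gamma]$. By the definition of the types in $\cat{T}/C$ given in Proposition \ref{prop_coh_3_system} we have $s[\Gamma] = [s\Gamma] = [\alpha]$ and $t[\Gamma] = [t\Gamma] = [\beta]$, so it suffices to show $[\alpha] = [\beta]$, i.e. $\alpha \sim_2 \beta$. This is immediate: $\Gamma \in C$ is a 3-morphism $\alpha \to \beta$, which by the very definition of $\sim_2$ witnesses $\alpha \sim_2 \beta$.

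Next I would prove $[\Gamma] = 1_{s[\Gamma]}$ by showing $\Gamma \sim_3 1_\alpha$. Since $C$ is a coherence-3-system it contains the unit $1_\alpha$ and the inverse $\Gamma^{-1}$, and invertibility in the 2-local category gives $\Gamma^{-1} \circ \Gamma = 1_{s\Gamma} = 1_\alpha$. Hence $\Gamma^{-1} \circ \Gamma \circ 1_\alpha = 1_\alpha$ with $\Gamma^{-1}, 1_\alpha \in C$, which is exactly the data required by the definition of $\sim_3$ to conclude $\Gamma \sim_3 1_\alpha$, that is $[\Gamma] = [1_\alpha]$. Because $[-]: \cat{T} \to \cat{T}/C$ is a virtually strict functor (established in Proposition \ref{prop_coh_3_system}) it preserves units, so $[1_\alpha] = 1_{[\alpha]} = 1_{s[\Gamma]}$. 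Combining this with the first step then yields $1_{s[\Gamma]} = 1_{t[\Gamma]}$, completing the argument.

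There is no genuine obstacle here; the statement is a direct consequence of the axioms of a coherence-3-system together with the type- and unit-preservation properties of $[-]$. The only point requiring a little care is the bookkeeping of sources and targets: I must check that the composite $\Gamma^{-1} \circ \Gamma \circ 1_\alpha$ is well-formed and has both source and target equal to $\alpha$ (so that it is genuinely equal to $1_\alpha$), which relies on the right-to-left reading of $\circ$ and on $s\Gamma = \alpha$.
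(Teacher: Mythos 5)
Your proof is correct and follows essentially the same route as the paper: the paper also establishes $[\Gamma]=1$ by exhibiting a $\sim_3$-conjugation of $\Gamma$ by a unit and by $\Gamma^{-1}$ (writing $[\Gamma]=[1_{t\Gamma}\circ\Gamma\circ\Gamma^{-1}]=[1_{t\Gamma}]$ and then using unit-preservation of $[-]$), the only cosmetic difference being that you land on $1_{s\Gamma}$ where the paper lands on $1_{t\Gamma}$.
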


\begin{proof}
\[
[\Gamma] = [1 _{t \Gamma} \circ \Gamma \circ \Gamma ^{-1}] = [1 _{t \Gamma}] = 1 _{[t \Gamma]} = 1 _{t [\Gamma]} = 1 _{s [\Gamma]}.
\]
\end{proof}

We have seen that uniqueness of parallel 3-morphisms in \( C \) is a necessary requirement, for quotienting out a coherence 3-system \( C \subseteq \mmmor{\cat{T}} \) from a tricategory \( \cat{T}. \)
(Otherwise \( \circ \)-composition would not be well-defined in \( \cat{T}/\scriptstyle{C}. \))
Similarly parallel coherence 2-morphisms need to be equal up to unique coherence, if we want to "quotient out coherence 2-morphisms" in a tricategory.
That motivates the following definition:

\begin{mydef}
A tricategory is called \emph{semi-free}, if (parallel) coherence 2-morphisms are isomorphic via a uniquely determined coherence 3-morphism.
\end{mydef}

\begin{mydef}[Coherence-(2,3)-system] \label{def_coherence_23_system}
Let \( \cat{T}	\) be a semi-free tricategory, \( C_2 \) the collection of all coherence 2-morphisms of \( \cat{T} \) and \( C_3 \subseteq \mmmor{T}. \)

Then \( C_3 \) is called a \emph{coherence-(2,3)-system} if the following conditions are satisfied:
\begin{enumerate}
\item
\( C_3 \) is a coherence-3-system.
\item \label{list_C_3_is_C_2_complete}
If \( \gamma,\gamma' \) are parallel 2-morphism in \( C_2, \) the unique coherence \( \Gamma: \gamma \to \gamma' \) is contained in \( C_3. \)
\item \label{list_interchanger_in_23_coherence_system}
All interchangers of the form
\begin{align*}
& \phi _{(\gamma,\gamma')(\alpha,\alpha')}: (\gamma \otimes \gamma') * (\alpha \otimes \alpha') \to (\gamma * \alpha) \otimes (\gamma' * \alpha') \quad \text{or} \\
& \phi _{(\alpha,\alpha')(\gamma,\gamma')}: (\alpha \otimes \alpha') * (\gamma \otimes \gamma') \to (\alpha * \gamma) \otimes (\alpha' * \gamma')
\end{align*}
with \( \gamma,\gamma' \in C_2 \) and \( \alpha,\alpha' \in \mmor{\cat{T}}, \) are contained in \( C_3. \)
\item \label{list_local_constraints_in_C_3}
All local constraints, i.e. components of \( a ^{loc}, l ^{loc}, r ^{loc}, \) are contained in \( C_3. \)
\end{enumerate}
\end{mydef}

\begin{bem}~
\begin{enumerate}
\item
If \( \gamma: f \to g \) and \( \gamma ^{\adsq}: g \to f \) are morphisms in \( C_2, \)
it follows that \( \gamma, \gamma ^{\adsq} \) are part of an equivalence whose unit and counit are morphisms in \( C_3. \)
The weak inverse \( \gamma ^{\adsq} \in C_2 \) is not unique, but once it is chosen, unit and counit are unique as morphisms of \( C_3. \)
\item
From \eqref{list_C_3_is_C_2_complete} in definition \ref{def_coherence_23_system} it follows that all constraint 3-cells,
which are indexed exclusively with 2-morphisms from \( C_2 \) are contained in \( C_3. \)
Also all constraint 3-cells which are components of a modification are contained in \( C_3. \)
\end{enumerate}
\end{bem}

\begin{lem} \label{lem_coherence_23_congruence_relation}
Let \( C_3 \) be a coherence-(2,3)-system of a tricategory \( \cat{T}. \)
Then
\[
f \sim_1 g \iff \exists \gamma: f \to g \in C_2.
\]
defines a \( \otimes \)-congruence relation (see definition \ref{def_congruence_rel}) on \( \mor{\cat{T}}, \)
\[
\alpha \sim_2 \alpha' \iff \exists \gamma,\gamma' \in C_2 \text{ and } \Gamma: \gamma * \alpha \to \alpha' * \gamma' \in C_3.
\]
defines a \( (\otimes, *) \)-congruence relation on \( \mmor{\cat{T}} \) and
\[
\Lambda \sim_3 \Lambda' \iff \exists \gamma, \gamma' \in C_2 \text{ and } \Gamma,\Gamma' \in C_3: \Gamma \circ (1_\gamma * \Lambda)= (\Lambda' * 1 _{\gamma'}) \circ \Gamma'
\]
defines a \( (\otimes, *, \circ) \)-congruence relation on \( \mmmor{\cat{T}}. \)

\end{lem}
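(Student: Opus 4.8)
The plan is to handle the three relations $\sim_1,\sim_2,\sim_3$ in turn, in each case transporting the argument of Lemma \ref{lem_coherence_congruence_relation} one dimension upward and reading off reflexivity, symmetry, transitivity and the compatibility clauses from the closure properties bundled into the coherence-$(2,3)$-system $C_3$ of Definition \ref{def_coherence_23_system} together with semi-freeness of $\cat{T}$. The standing tools are: $C_2$ is closed under $\otimes$ and $*$, contains all unit $2$-cells, and contains a weak inverse $\gamma^{\adsq}$ of each of its members (with unit and counit in $C_3$); $C_3$ is closed under $\otimes,*,\circ$ and inversion, contains all units $1_\alpha$, all local constraints $a^{loc},l^{loc},r^{loc}$ by \eqref{list_local_constraints_in_C_3}, all interchangers with coherence-$2$-morphism arguments by \eqref{list_interchanger_in_23_coherence_system}, and the unique coherence $3$-cell between any two parallel members of $C_2$ by \eqref{list_C_3_is_C_2_complete}; and, decisively, parallel members of $C_3$ are equal, while parallel members of $C_2$ are joined by a unique coherence $3$-cell in $C_3$ (semi-freeness).

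For $\sim_1$ the argument is verbatim the one for $\sim_1$ in Lemma \ref{lem_coherence_congruence_relation}: reflexivity from unit $2$-cells, symmetry from weak inverses in $C_2$, transitivity from closure of $C_2$ under $*$, and $\otimes$-compatibility from closure of $C_2$ under $\otimes$. For $\sim_2$ I would first dispatch the equivalence axioms: reflexivity of $\alpha\sim_2\alpha$ via the witness $\gamma=1_{t\alpha}$, $\gamma'=1_{s\alpha}$ and $\Gamma=(r^{loc}_\alpha)^{-1}\circ l^{loc}_\alpha\in C_3$; symmetry by whiskering the given $\Gamma$ with $\gamma^{\adsq},(\gamma')^{\adsq}$ and their units/counits and inverting, all inside $C_3$; transitivity by horizontally pasting two witness squares along $a^{loc}$-associators. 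Compatibility with $\otimes$ then comes from conjugating $\Gamma_1\otimes\Gamma_2$ by the interchangers $\phi_{(\gamma_1,\gamma_2)(\alpha_1,\alpha_2)}$ and $\phi^{-1}_{(\alpha_1',\alpha_2')(\gamma_1',\gamma_2')}$, which lie in $C_3$ by \eqref{list_interchanger_in_23_coherence_system}.

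The delicate clause for $\sim_2$ is compatibility with $*$. Given witnesses $\Gamma_i\colon\gamma_i*\alpha_i\to\alpha_i'*\gamma_i'$, the inner coherence $2$-morphisms $\gamma_1'$ and $\gamma_2$ are parallel (both run between the $1$-morphism types at the junction of the $*$-composite), so by semi-freeness they are bridged by a unique $\Theta\in C_3$; I would then assemble the required witness for $\alpha_1*\alpha_2\sim_2\alpha_1'*\alpha_2'$ by pasting $\Gamma_1*1$, $\Theta*1$ and $1*\Gamma_2$ along $a^{loc}$-associators, which is the tricategorical replacement for the step in Lemma \ref{lem_coherence_congruence_relation} where parallel coherence $2$-morphisms were simply declared equal. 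The relation $\sim_3$ carries an honest equation as its defining clause, so it is the precise analogue of $\sim_2$ of Lemma \ref{lem_coherence_congruence_relation}: reflexivity uses the naturality of $l^{loc}$ and $r^{loc}$ against the $3$-cell $\Lambda$, symmetry and transitivity invert and paste witness pairs, and compatibility with $\otimes$ and $*$ is pure interchange (the interchangers now entering through the whiskerings $1_\gamma*(-)$ and $(-)*1_{\gamma'}$ and through $\phi$-cells of \eqref{list_interchanger_in_23_coherence_system}). The one subtle clause is compatibility with $\circ$: to splice a vertical paste $\Lambda_1\circ\Lambda_2$ I would use semi-freeness to get unique $\Theta\colon\gamma_1\to\gamma_2$ and $\Theta'\colon\gamma_1'\to\gamma_2'$ in $C_3$ matching the two pairs of middle coherence $2$-morphisms, conjugate the second square's witness by $\Theta,\Theta'$ so that it lands in $C_3$ \emph{parallel} to the first square's lower witness, and then collapse the two into one by ``parallel members of $C_3$ are equal.''

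I expect the main obstacle to be bookkeeping rather than conceptual novelty: confirming that every bridging cell built in the $*$-clause of $\sim_2$ and the $\circ$-clause of $\sim_3$ genuinely lies in $C_3$, and that the pasted equalities actually close up after the associators and interchangers are inserted. The real content sits exactly in those two clauses, where one must invoke semi-freeness to manufacture the unique $3$-cells $\Theta,\Theta'$ reconciling the middle coherence $2$-morphisms, and, for $\sim_3$ alone, additionally invoke the uniqueness of parallel $3$-cells in $C_3$ to identify the middle coherence $3$-cells; every remaining verification is closure of $C_2$ and $C_3$ under the magmoid operations, applied mechanically.
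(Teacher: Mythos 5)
Your proposal is correct and follows essentially the same route as the paper: the paper packages your ``bridge the parallel middle coherence $2$-cells by the unique $C_3$-cell'' step as a separate lemma (free choice of $C_2$-conjugators, lemma \ref{lem_free_choice_of_C2_conjugators}), which it then invokes to assume the conjugators literally coincide in the $*$- and $\circ$-clauses, and it collapses the two middle $C_3$-cells in the $\circ$-clause by uniqueness of parallel $C_3$-morphisms exactly as you do. The only point you gloss over is that the $*$-clause for $\sim_3$ requires the same bridging of $\gamma'_1$ with $\gamma_2$ as the $*$-clause for $\sim_2$ (it is not ``pure interchange''), but the mechanism you already describe applies there verbatim.
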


Trying to establish a relation \( \alpha \sim_2 \alpha' \) resp. \( \Lambda \sim_3 \Lambda' \) includes choosing morphisms \( \gamma, \gamma' \in C_2. \)
The following lemma states that any such choice, for which \( \gamma * \alpha \) and \( \alpha' * \gamma' \) resp. \( \gamma * s \Lambda \) and \( s \Lambda' * \gamma' \) are parallel, is eligible to establish such a relation.

\begin{lem}[free choice of \( C_2 \)-conjugators]~ \label{lem_free_choice_of_C2_conjugators}
\begin{enumerate}
\item
Let \( \alpha \sim_2 \alpha' \) and \( \gamma, \gamma' \in C_2 \) with \( \gamma: t \alpha \to t \alpha' \) and \( \gamma': s \alpha' \to s \alpha. \)
Then there exists a unique \( \Gamma \in C_3 \) s.t. \( \Gamma: \gamma * \alpha \to \alpha' * \gamma'. \)
\item
Let \( \Lambda \sim_3 \Lambda' \) and \( \gamma, \gamma' \in C_2 \) with \( \gamma: t^2 \Lambda \to t^2 \Lambda' \) and \( \gamma': s^2 \Lambda' \to s^2 \Lambda. \)
Then there exist unique \( \Gamma, \Gamma' \in C_3 \) s.t. \( \Gamma \circ (1_ \gamma * \Lambda) = (\Lambda' * 1_ {\gamma'}) \circ \Gamma'. \)
\end{enumerate}
\end{lem}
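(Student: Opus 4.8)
The plan is to handle both parts by the same two-move strategy: obtain \emph{uniqueness} essentially for free from the defining property of a coherence-3-system, and then establish \emph{existence} by transporting a witness supplied by the relations $\sim_2$ resp. $\sim_3$ of Lemma \ref{lem_coherence_23_congruence_relation} along suitable connecting coherence 3-cells. For uniqueness I would argue as follows: in each statement the defining equation forces any candidate $\Gamma$ (and in part (2) also $\Gamma'$) to have prescribed source and target 2-morphisms, and membership in $C_3$ is required. Since a coherence-3-system contains at most one morphism between any parallel pair, two solutions are parallel members of $C_3$ and hence equal. This settles uniqueness in both parts at once, so all the content lies in existence.

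For existence in part (1), I would unfold $\alpha \sim_2 \alpha'$ to obtain witnessing $\gamma_0,\gamma_0' \in C_2$ and $\Gamma_0 \colon \gamma_0 * \alpha \to \alpha' * \gamma_0'$ in $C_3$. The given $\gamma$ is parallel to $\gamma_0$ and the given $\gamma'$ is parallel to $\gamma_0'$, both as coherence 2-morphisms, so semi-freeness of $\cat{T}$ together with condition \ref{list_C_3_is_C_2_complete} of Definition \ref{def_coherence_23_system} furnishes unique connecting coherence 3-cells $\delta \colon \gamma \to \gamma_0$ and $\delta' \colon \gamma_0' \to \gamma'$, both lying in $C_3$. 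I would then set $\Gamma := (1_{\alpha'} * \delta') \circ \Gamma_0 \circ (\delta * 1_\alpha)$; closure of $C_3$ under $*$ and $\circ$, together with $1_\alpha,1_{\alpha'}\in C_3$, shows $\Gamma \in C_3$, and a routine type check confirms $\Gamma \colon \gamma * \alpha \to \alpha' * \gamma'$.

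For part (2) I would first observe that the witnessing data for $\Lambda \sim_3 \Lambda'$ — namely $\gamma_0,\gamma_0'\in C_2$ and $\Gamma_0,\Gamma_0'\in C_3$ with $\Gamma_0 \circ (1_{\gamma_0} * \Lambda) = (\Lambda' * 1_{\gamma_0'}) \circ \Gamma_0'$ — already exhibits $s\Lambda \sim_2 s\Lambda'$ and $t\Lambda \sim_2 t\Lambda'$, with $\Gamma_0'$ and $\Gamma_0$ as the respective conjugators. Writing $\alpha = s\Lambda$, $\beta = t\Lambda$ and using the same connecting cells $\delta,\delta'$ as above, I would define $\Gamma := (1_{\beta'} * \delta') \circ \Gamma_0 \circ (\delta * 1_\beta)$ and $\Gamma' := (1_{\alpha'} * \delta') \circ \Gamma_0' \circ (\delta * 1_\alpha)$, so that membership in $C_3$ and the correct types are again immediate (indeed $\Gamma$ and $\Gamma'$ coincide with the unique cells produced by part (1) applied to $t\Lambda \sim_2 t\Lambda'$ and $s\Lambda \sim_2 s\Lambda'$).

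The only genuine computation is then verifying the required equation $\Gamma \circ (1_\gamma * \Lambda) = (\Lambda' * 1_{\gamma'}) \circ \Gamma'$, and this is the step I expect to be the main obstacle, although it is driven entirely by the interchange law rather than by any new idea. The key identity is $(\delta * 1_\beta) \circ (1_\gamma * \Lambda) = \delta * \Lambda = (1_{\gamma_0} * \Lambda) \circ (\delta * 1_\alpha)$, obtained by interchange together with the unit laws; substituting it and the symmetric identity on the $\Lambda'$ side lets me collapse the left-hand side onto the witnessing equation for $\gamma_0,\gamma_0'$ and re-expand it as the right-hand side. Everything else is formal, so the proof reduces to this bookkeeping plus the at-most-one-morphism property. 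One mild point I would flag is the orientation of the conjugator $\gamma'$: the composites $\alpha' * \gamma'$ and $s\Lambda' * \gamma'$ only typecheck for one orientation, so where the stated type runs the other way I would read $\gamma'$ via its coherence weak inverse $(\gamma')^{\adsq} \in C_2$, whose existence together with unit and counit in $C_3$ is guaranteed by the remark following Definition \ref{def_coherence_23_system}.
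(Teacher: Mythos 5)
Your proposal is correct and follows essentially the same route as the paper: the paper likewise extracts witnesses \( \gamma_2,\gamma_2',\Gamma_2,\Gamma_2' \) from the relation, conjugates by the unique \( C_3 \)-cells connecting \( \gamma \) to \( \gamma_2 \) and \( \gamma_2' \) to \( \gamma' \), and verifies the required identity via a three-storey ladder whose top and bottom squares commute by the local interchange law and whose middle square commutes by assumption. The only cosmetic differences are that the paper writes out only the second case and leaves uniqueness implicit, both of which you make explicit.
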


\begin{proof}
We prove the slightly more difficult second case.
So let \( \Lambda \sim_3 \Lambda'. \)
Then there are \( \gamma_2, \gamma'_2 \in C_2 \) and \( \Gamma_2, \Gamma'_2 \in C_3 \) with \( \Gamma_2 \circ (1 _{\gamma_2} * \Lambda) = (\Lambda'  * 1_ {\gamma'_2}) \circ \Gamma'_2. \)
Since \( \gamma,\gamma_2 \) and \( \gamma',\gamma'_2 \) are respectively parallel there exist \( C_3 \)-morphisms
\( \Gamma _{\gamma,\gamma_2}: \gamma \to \gamma_2 \) and \( \Gamma _{\gamma'_2,\gamma'}: \gamma'_2 \to \gamma'. \)
Now the desired morphisms \( \Gamma \) and \( \Gamma' \) are given uniquely by the vertical parts in the commutative square

\[
\begin{tikzcd}
\gamma * s \Lambda
\ar[r, "1 * \Lambda"]
\ar[d, "\Gamma _{\gamma,\gamma_2} * 1" swap]
	& \gamma * t \Lambda
	\ar[d, "\Gamma _{\gamma,\gamma_2} * 1"] \\
\gamma_2 * s \Lambda
\ar[r, "1 * \Lambda"]
\ar[d, "\Gamma'_2 " swap]
	& \gamma_2 * t \Lambda
	\ar[d, "\Gamma_2"] \\
s \Lambda' * \gamma'_2
\ar[r, "\Lambda' * 1"]
\ar[d, "1 * \Gamma _{\gamma'_2,\gamma'}" swap]
	& t \Lambda' * \gamma'_2
	\ar[d, "1 * \Gamma _{\gamma'_2,\gamma'}"] \\
s \Lambda' * \gamma'
\ar[r, "\Lambda' * 1"]
	& t \Lambda' * \gamma'
\end{tikzcd}
\]

where the top and the bottom square commute because of the interchange law in the local bicategory at hand, and the middle square commutes by assumption.
\end{proof}

A coherence-(2,3)-system is in particular a coherence-3-system, therefore one could examine relatedness of 2- and 3-morphism with either of the relations of
lemma \ref{lem_coherence_3_congruence_relation} or lemma \ref{lem_coherence_23_congruence_relation}.
The following lemma states that coherence-3-relatedness implies coherence-(2,3)-relatedness.

\begin{lem}[optionality of \( C_2 \) conjugation] \label{lem_optionality_C2_conjugation}
For the relations of lemma \ref{lem_coherence_23_congruence_relation} the following holds.
\begin{enumerate}
\item \label{list_optional_C2_conj_for_2_morphs}
For \( \alpha,\alpha' \in \mmor{\cat{T}} \) the existence of \( \Gamma \in C_3 \) with \( \alpha \xrightarrow{\Gamma} \alpha' \) implies \( \alpha \sim_2 \alpha'. \)
\item \label{list_optional_C2_conj_for_3_morphs}
For \( \Lambda, \Lambda' \in \mmmor{\cat{T}} \) the existence of \( \Gamma, \Gamma' \in C_3 \) with \( \Gamma \circ \Lambda = \Lambda' \circ \Gamma' \) implies \( \Lambda \sim_3 \Lambda'. \)
\end{enumerate}
\end{lem}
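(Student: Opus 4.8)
The plan is to prove both parts by taking the $C_2$-conjugators $\gamma,\gamma'$ to be \emph{identity} $2$-cells and bridging between the given $C_3$-conjugation and the shape required by the relations of Lemma \ref{lem_coherence_23_congruence_relation} using the local unitors $l^{loc},r^{loc}$. The point is that these unitors lie in $C_3$ by condition \eqref{list_local_constraints_in_C_3} of Definition \ref{def_coherence_23_system}, that their naturality is available from the magmoidal axioms of Definition \ref{def_tricat_as_magmoid}, and that units $1_f,1_g$ are coherence $2$-cells, hence in $C_2$.

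For part \eqref{list_optional_C2_conj_for_2_morphs}, given $\Gamma\in C_3$ with $\Gamma:\alpha\to\alpha'$, the cells $\alpha,\alpha'$ are parallel, say $\alpha,\alpha':f\to g$. I would choose $\gamma:=1_g$ and $\gamma':=1_f$, both in $C_2$, and set
\[
\Gamma'' := (r^{loc}_{\alpha'})^{-1}\circ\Gamma\circ l^{loc}_{\alpha},
\]
which has type $1_g * \alpha \to \alpha' * 1_f$ since $l^{loc}_{\alpha}: 1_g*\alpha\to\alpha$ and $r^{loc}_{\alpha'}:\alpha'*1_f\to\alpha'$. As $C_3$ is closed under $\circ$ and inverses and contains the local constraints, $\Gamma''\in C_3$, so this directly witnesses $\alpha\sim_2\alpha'$.

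For part \eqref{list_optional_C2_conj_for_3_morphs}, write $\Lambda:\alpha\to\beta$ and $\Lambda':\alpha'\to\beta'$ with all four $2$-cells parallel $f\to g$, so the hypothesis $\Gamma\circ\Lambda=\Lambda'\circ\Gamma'$ has $\Gamma:\beta\to\beta'$ and $\Gamma':\alpha\to\alpha'$ in $C_3$. Again I would use the trivial conjugators $\gamma:=1_g=1_{t^2\Lambda}$ and $\gamma':=1_f=1_{s^2\Lambda}$, and put
\[
\Gamma_1 := (r^{loc}_{\beta'})^{-1}\circ\Gamma\circ l^{loc}_{\beta}, \qquad
\Gamma_2 := (r^{loc}_{\alpha'})^{-1}\circ\Gamma'\circ l^{loc}_{\alpha},
\]
both manifestly in $C_3$. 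Naturality of $l^{loc}$ and $r^{loc}$ gives the identities $1_{(1_g)}*\Lambda=(l^{loc}_{\beta})^{-1}\circ\Lambda\circ l^{loc}_{\alpha}$ and $\Lambda'*1_{(1_f)}=(r^{loc}_{\beta'})^{-1}\circ\Lambda'\circ r^{loc}_{\alpha'}$; substituting these and cancelling the unitor pairs, both $\Gamma_1\circ(1_{(1_g)}*\Lambda)$ and $(\Lambda'*1_{(1_f)})\circ\Gamma_2$ reduce respectively to
\[
(r^{loc}_{\beta'})^{-1}\circ(\Gamma\circ\Lambda)\circ l^{loc}_{\alpha} \quad\text{and}\quad (r^{loc}_{\beta'})^{-1}\circ(\Lambda'\circ\Gamma')\circ l^{loc}_{\alpha},
\]
which agree by the hypothesis $\Gamma\circ\Lambda=\Lambda'\circ\Gamma'$. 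This is exactly the $\sim_3$-witness, so $\Lambda\sim_3\Lambda'$.

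The genuine content is only the observation that conjugating by the local unitors turns a plain $C_3$-conjugation into the $C_2$-decorated form of the relations, together with the memberships $l^{loc},r^{loc}\in C_3$ and $1_f,1_g\in C_2$; everything else is formal. The one step needing care is the type- and order-bookkeeping in the $\circ$-composites and checking that the naturality squares of $l^{loc}$ and $r^{loc}$ apply to $\Lambda$ and $\Lambda'$ with the correct identity arguments $1_{(1_g)}$ and $1_{(1_f)}$; once these are set up, cancellation makes the final equality immediate, so I do not expect any serious obstacle.
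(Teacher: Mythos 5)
Your proof is correct and follows exactly the route the paper indicates: conjugating by the local unitors $l^{loc},r^{loc}$ (which lie in $C_3$ by condition \eqref{list_local_constraints_in_C_3}) and invoking their naturality to convert the plain $C_3$-conjugation into the $C_2$-decorated form, with trivial conjugators $1_f,1_g\in C_2$. The paper's own proof is a one-line appeal to these same two facts; you have merely supplied the routine type-checking and cancellation it leaves implicit.
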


\begin{proof}
Both statements follow easily from the naturality of \( l ^{loc} \) and \( r ^{loc} \) plus the fact that \( l ^{loc} \) and \( r ^{loc} \) are contained in \( C_3. \)
\end{proof}

Before proving lemma \ref{lem_coherence_23_congruence_relation}, we mention some conventions which are used in the following.
For any \( \gamma \in C_2 \) the notation for a (non unique) adjoint equivalence will be \( (\gamma, \gamma ^{\adsq}, \epsilon ^{\gamma}, \eta ^{\gamma}) \) and the superscripts of unit and counit are usually dropped.
Furthermore we will usually not give (all) types in the definition of morphisms. If these morphisms appear later on in calculations, the types are assumed in such a way that the equations make sense.
For example if we define \( \Lambda \) and \( \Lambda' \) to be 3-morphisms in \( \cat{T} \) and later on talk about the composite \( \Lambda * \Lambda' \) this implies \( s^2 \Lambda = t ^2 \Lambda'. \)
When working in local bicategories we usually omit local constraints (i.e. components of \( a ^{loc}, l ^{loc}, r ^{loc}) \) and brackets.

\begin{proof}[Proof of lemma \ref{lem_coherence_23_congruence_relation}] ~

\begin{itemize}
\item
\( \sim_1 \) is reflexive, symmetric and transitive, because \( C_2 \) contains units and weak inverses and because \( C_2 \) is closed under \( *. \)
\( \sim_1 \) is compatible with \( \otimes, \) because \( C_2 \) is closed under \( \otimes. \)

\item
\( \sim_2 \) is reflexive, because
\[
1 _{t \alpha} * \alpha
\xrightarrow{l_\alpha}
\alpha
\xrightarrow{r_\alpha ^{\adsq}}
\alpha * 1 _{s \alpha}
\]
establishes the relation \( \alpha \sim_1 \alpha. \)
For 3-morphisms
\[
\gamma_1 * \alpha \xrightarrow{\Gamma_1} \alpha' * \gamma'_1
\quad \text{and} \quad
\gamma_2 * \alpha'  \xrightarrow{\Gamma_2} \alpha ^{\prime \prime} * \gamma'_2
\]
we have the following 3-morphisms in \( C_3 \)
\begin{align*}
& \gamma_1 ^{\adsq} * \alpha'
\xrightarrow{1 * 1 * \epsilon ^{-1}}
\gamma_1 ^{\adsq} * \alpha' * \gamma'_1 * \gamma_1 ^{\prime \adsq}
\xrightarrow{1 * \Gamma ^{-1} * 1}
\gamma_1 ^{\adsq} * \gamma_1 * \alpha * \gamma ^{\prime \adsq}
\xrightarrow{\eta ^{-1} * 1 * 1}
\alpha * \gamma ^{\prime \adsq},
\\
& \gamma_2 * \gamma_1 * \alpha
\xrightarrow{1 * \Gamma_1}
\gamma_2 * \alpha' * \gamma'_1
\xrightarrow{\Gamma_2 * 1}
\alpha ^{\prime \prime} * \gamma'_2 * \gamma'_1
\end{align*}
and therefore \( \sim_2 \) is symmetric and transitive.

\item
Suppose that \( \alpha_1 \sim_2 \alpha'_1 \) and \( \alpha_2 \sim_2 \alpha'_2. \)
Then there are 3-morphisms
\[
\gamma_1 * \alpha_1 \xrightarrow{\Gamma_1} \alpha_1' * \gamma'_1
\quad \text{and} \quad
\gamma_2 * \alpha_2 \xrightarrow{\Gamma_2} \alpha_2' * \gamma'_2.
\]
We assume \( \alpha_1, \alpha_2 \) are \( \otimes \)-composable and show \( \alpha_1 \otimes \alpha_2 \sim_2 \alpha'_1 \otimes \alpha'_2 \) by considering the morphism
\begin{alignat*}{2}
& (\gamma_1 \otimes \gamma_2) * (\alpha_1 \otimes \alpha_2)
& \; \xrightarrow{\phi_{(\gamma_1,\gamma_2)(\alpha_1,\alpha_2)}} \;
& (\gamma_1 * \alpha_1) \otimes (\gamma_2 * \alpha_2) \\
\xrightarrow{\Gamma_1 \otimes \Gamma_2} \;
& (\alpha'_1 * \gamma'_1) \otimes (\alpha'_2 * \gamma'_2)
& \; \xrightarrow{\phi _{(\alpha'_1,\alpha'_2)(\gamma'_1,\gamma'_2)}} \;
& (\alpha'_1 \otimes \alpha'_2) * (\gamma'_1 \otimes \gamma'_2).
\end{alignat*}
which is in \( C_3 \) due to requirement (\ref{list_interchanger_in_23_coherence_system}) in definition \ref{def_coherence_23_system}.

Now let us assume \( \alpha_1, \alpha_2 \) and \( \alpha'_1, \alpha'_2 \) are \( * \)-composable, i.e. \( s \alpha_1 = t \alpha_2 \) and \( s \alpha'_1 = t \alpha'_2. \)
Then \( \gamma'_1 \) and \( \gamma_2 \) are parallel and because of lemma \ref{lem_free_choice_of_C2_conjugators} we can assume \( \gamma'_1 = \gamma_2. \)
That \( \alpha_1 * \alpha_2 \sim_2 \alpha'_1 * \alpha'_2 \) then follows by considering
\[
\gamma_1 * \alpha_1 * \alpha_2
\xrightarrow{\Gamma_1 * 1}
\alpha'_1 * \gamma'_1 * \alpha_2
= \alpha'_1 * \gamma_2 * \alpha_2
\xrightarrow{1 * \Gamma_2}
\alpha'_1 * \alpha'_2  * \gamma'_2.
\]

\item
To show reflexivity for \( \sim_3 \)	we observe that for arbitrary \( \Lambda \in \mmmor{\cat{T}} \) the diagram
\[
\begin{tikzcd}
1 * s \Lambda
\ar[r, "1 * \Lambda"]
\ar[d, "l"l]
	& 1 * t \Lambda
	\ar[d, "l"] \\
s \Lambda
\ar[r,"\Lambda"]
\ar[d, "r"]
	& t \Lambda
	\ar[d, "r"] \\
s \Lambda * 1
\ar[r, "\Lambda * 1"]
	& t \Lambda * 1
\end{tikzcd}
\]
commutes, due to naturality of the local left and right units.

Next we show symmetry for \( \sim_3 .\) If \( \Lambda \sim_3 \Lambda' \) we have 2-morphisms \( \gamma,\gamma' \) and 3-morphisms \( \Gamma, \Gamma' \) such that the following diagram commutes:
\[
\begin{tikzcd} [column sep = large]
\gamma * s \Lambda
\ar[r, "1 * \Lambda"]
\ar[d, "\Gamma' "]
	& \gamma * t \Lambda
	\ar[d," \Gamma "] \\
s \Lambda' * \gamma'
\ar[r, " \Lambda' * 1 "]
	& t \Lambda'  * \gamma'.
\end{tikzcd}
\]

Then the exterior square in
\[
\begin{tikzcd}[column sep = large]
\gamma ^{\adsq} * s \Lambda'
\ar[r, "1 * \Lambda' "]
\ar[d,"1 * 1 * \epsilon ^{-1}" swap]
	& \gamma ^{\adsq} * t \Lambda'
	\ar[d,"1 * 1 * \epsilon ^{-1}"] \\
\gamma ^{\adsq} * s \Lambda' * \gamma' * \gamma ^{\prime \adsq}
\ar[r, "1 * \Lambda' * 1 * 1"]
\ar[d, "1 * (\Lambda') ^{-1} * 1" swap]
	& \gamma ^{\adsq} * t \Lambda' * \gamma' * \gamma ^{\prime \adsq}
	\ar[d, "1 * \Lambda ^{-1} * 1"] \\
\gamma ^{\adsq} * \gamma * s \Lambda * \gamma ^{\prime \adsq}
\ar[r, "1 * 1 * \Lambda * 1"]
\ar[d, "\eta ^{-1} * 1 * 1" swap]
	& \gamma ^{\adsq} * \gamma * t \Lambda * \gamma ^{\prime \adsq}
	\ar[d, "\eta ^{-1} * 1 * 1"] \\
s \Lambda * \gamma ^{\prime \adsq}
\ar[r, "\Lambda * 1"]
	& s \Lambda * \gamma ^{\prime \adsq}.
\end{tikzcd}
\]
commutes, because all small squares commute due to the local interchange law and the assumed commutativity of the initial diagram.
Therefore we have \( \Lambda' \sim_3 \Lambda \) and symmetry is shown.
A similar calculation shows transitivity of \( \sim_3. \)

\item
It remains to show that \( \sim_3 \) is a congruence relation.
For that suppose \( \Lambda_1 \sim \Lambda_1' \) and \( \Lambda_2 \sim \Lambda_2'. \) Then we have commutative squares
\[
\begin{tikzcd} [column sep = large]
\gamma_1 * s \Lambda_1
\ar[r, "1 * \Lambda_1"]
\ar[d, "\Gamma_1' "]
	& \gamma_1 * t \Lambda_1
	\ar[d," \Gamma_1"] \\
s \Lambda'_1 * \gamma'_1
\ar[r, " \Lambda'_1 * 1 "]
	& t \Lambda'_1  * \gamma'_1
\end{tikzcd}
\; \text{and} \;
\begin{tikzcd} [column sep = large]
\gamma_2 * s \Lambda_2
\ar[r, "1 * \Lambda_2"]
\ar[d, "\Gamma_2' "]
	& \gamma_2 * t \Lambda_2
	\ar[d," \Gamma_2"] \\
s \Lambda'_2 * \gamma'_2
\ar[r, " \Lambda'_2 * 1 "]
	& t \Lambda'_2  * \gamma'_2.
\end{tikzcd}
\]
Now suppose \( \Lambda_1, \Lambda_2 \) are \( \otimes \)-composable (this implies \( \Lambda'_1, \Lambda'_2 \) are \( \otimes \)-composable).
Then \( \Lambda_1 \otimes \Lambda_2 \sim_3 \Lambda'_1 \otimes \Lambda' _2, \) follows from the commutativity of
\[
\begin{tikzcd}[column sep = huge]
(\gamma_1 \otimes \gamma_2) * (s \Lambda_1 \otimes s \Lambda_2)
\ar[r,"1 * (\Lambda_1 \otimes \Lambda_2)"]
\ar[d, "\phi ^{\otimes}" swap]
	& (\gamma_1 \otimes \gamma_2) * (t \Lambda_1 \otimes t \Lambda_2)
	\ar[d, "\phi ^{\otimes}"] \\
(\gamma_1 * s \Lambda_1) \otimes (\gamma_2 * s \Lambda_2)
\ar[r,"(1 * \Lambda_1) \otimes (1 * \Lambda_2)"]
\ar[d,"\Gamma'_1 \otimes \Gamma'_2" swap]
	& (\gamma_1 * t \Lambda_1) \otimes (\gamma_2 * t \Lambda_2)
	\ar[d,"\Gamma_1 \otimes \Gamma_2"] \\
(\Lambda'_1 * \gamma'_1) \otimes (s \Lambda'_2 * \gamma'_2)
\ar[r,"(\Lambda_1 * 1) \otimes (\Lambda_2 * 1)"]
\ar[d, "(\phi ^{\otimes}) ^{-1}" swap]
	& (t \Lambda'_1 * \gamma'_1) \otimes (t \Lambda'_2 * \gamma'_2)
	\ar[d, "(\phi ^{\otimes}) ^{-1}"] \\
(s \Lambda'_1 \otimes s \Lambda'_2) * (\gamma'_1 \otimes \gamma'_2)
\ar[r,"(\Lambda'_1 \otimes \Lambda'_2) * 1"]
	& (t \Lambda'_1 \otimes t \Lambda'_2) * (\gamma'_1 \otimes \gamma'_2).
\end{tikzcd}
\]

Now suppose \( \Lambda_1, \Lambda_2 \) and \( \Lambda'_1, \Lambda'_2 \) are \( * \)-composable, i.e. \( s^2 \Lambda_1 = t^2 \Lambda_2 \) and \( s^2 \Lambda'_1 = t^2 \Lambda'_2. \)
Then \( \gamma'_1 \) and \( \gamma_2 \) are parallel morphisms and by lemma \ref{lem_free_choice_of_C2_conjugators} we can assume \( \gamma'_1 = \gamma_2. \)
Now \( \Lambda_1 * \Lambda_2 \sim_3 \Lambda'_1 * \Lambda'_2 \) follows from the commutativity of
\[
\begin{tikzcd}[column sep = huge]
\gamma_1 * s \Lambda_1 * s \Lambda_2
\ar[r,"1 * \Lambda_1 * \Lambda_2"]
\ar[d, "\Gamma'_1 * 1" swap]
	& \gamma_1 * t \Lambda_1 * t \Lambda_2
	\ar[d, "\Gamma_1 * 1"] \\
s \Lambda'_1 * \gamma'_1 * s \Lambda_2
\ar[r, "\Lambda'_1 * 1 * \Lambda_2"]
\ar[d, "1 * \Gamma'_2 " swap]
	& t \Lambda'_1 * \gamma'_1 * t \Lambda_2
	\ar[d, "1 * \Gamma_2"] \\
s \Lambda'_1 * s \Lambda'_2 * \gamma'_2
\ar[r, "\Lambda'_1 * \Lambda'_2 * 1"]
	& s \Lambda'_1 * s \Lambda'_2 * \gamma'_2.
\end{tikzcd}
\]

Now suppose \( \Lambda_1, \Lambda_2 \) and \( \Lambda'_1 , \Lambda'_2 \) are \( \circ \)-composable, i.e. \( s \Lambda_1 = t \Lambda_2 \) and \( s \Lambda' _1 = t \Lambda' _2. \)
Then in particular \( s^2 \Lambda_2 = t^2 \Lambda_1 \) and \( s^2 \Lambda'_2 = t^2 \Lambda'_1, \)
and because of lemma \ref{lem_free_choice_of_C2_conjugators} we can assume \( \gamma_1 = \gamma_2 \) and \( \gamma'_1 = \gamma'_2. \)
This in turn implies that \( \Gamma'_1 \) and \( \Gamma_2 \) are parallel morphisms, and therefore \( \Gamma'_1 = \Gamma_2. \)
Now \( \Lambda_1 \circ \Lambda_2 \sim_3 \Lambda'_1 \circ \Lambda'_2 \) follows from the commutativity of
\[
\begin{tikzcd} [column sep = large]
\gamma_2 * s \Lambda_2
\ar[r, "1 * \Lambda_2"]
\ar[d," \Gamma'_2"]
\ar[rrr, "1 * (\Lambda_1 \circ \Lambda_2)", bend left=20]
	& \gamma_2 * t \Lambda_2
	\ar[r,equal, "" {name=s1}]
		&[-30pt] \gamma_1 * s \Lambda_1
		\ar[r,"1 * \Lambda_1"]
			& \gamma_1 * t \Lambda_1
			\ar[d, "\Gamma_1"] \\
s \Lambda'_2 * \gamma'_2
\ar[r, " \Lambda'_2 * 1 "]
\ar[rrr, "(\Lambda'_1 \circ \Lambda'_2) * 1", bend right=20]
	& t \Lambda'_2  * \gamma'_2
	\ar[r, equal, "" {name=t1}]
		& s \Lambda'_1 * \gamma'_1
		\ar[r,"\Lambda'_1 * 1"]
			& t \Lambda'_1 * \gamma'_1.
\ar[from=s1, to=t1, "\Gamma_2 = \Gamma'_1" description, shorten <=3pt]
\end{tikzcd}
\]
\end{itemize}
\end{proof}

\begin{satz} \label{prop_coherence_23_system}
Let \( C_3 \) be a coherence-(2,3)-system of a tricategory \( \cat{T}. \)
\begin{enumerate}
\item
There is a tricategory \( \cat{T}/\scriptstyle{C _3} \), whose objects are the objects of \( \cat{T} \)
and whose 1-,2- and 3-morphisms are equivalence classes with respect to the congruence relations \( \sim_1, \sim_2 \) and \( \sim_3 \) from lemma \ref{lem_coherence_23_congruence_relation}.

We write \( [f] \) resp. \( [\alpha] \) resp. \( [\Pi] \) for the equivalence class of a 1-resp. 2- resp. 3-morphism of \( \cat{T} \)
and we may also write \( [a], \) if we consider \( a \) as an object of \( \cat{T}/\scriptstyle{C}. \)
The remaining data of \( \cat{T}/\scriptstyle{C _3} \) is given as follows:
\begin{itemize}
\item
The types of the 1-,2- and 3-morphisms are given by
\[
s[f]=sf, \;
t[f]=tf, \;
s[\alpha] = [s \alpha], \;
t[\alpha] = [t \alpha], \;
s[\Lambda] = [s \Lambda], \;
t[\Lambda] = [t \Lambda]
\]
\item
Composition along objects is given by
\[
[f] \otimes [g] = [f \otimes g], \quad [\alpha] \otimes [\beta] = [\alpha \otimes \beta], \quad \text{and} \quad [\Lambda] \otimes [\Pi] = [\Lambda \otimes \Pi].
\]
\item
Composition along 1-morphisms is given for 2-morphisms by
\[
[\alpha]*[\beta] = [\alpha' * \beta'],
\]
where \( \alpha', \beta' \in \mmor{\cat{T}} \) are \( * \)-composable with \( [\alpha] = [\alpha'] \) and \( [\beta] = [\beta'], \)
and for 3-morphisms by
\[
[\Lambda]*[\Pi] = [\Lambda' * \Pi'],
\]
where \( \Lambda', \Pi' \in \mmmor{\cat{T}} \) are \( * \)-composable with \( [\Lambda] = [\Lambda'] \) and \( [\Pi] = [\Pi']. \)

\item
Composition along 2-morphisms is given by
\[
[\Lambda] \circ [\Pi] = [\Lambda' \circ \Pi'],
\]
where \( \Lambda', \Pi' \in \mmmor{\cat{T}} \) are \( \circ \)-composable with \( [\Lambda] = [\Lambda'] \) and \( [\Pi] = [\Pi']. \)
\item
The constraints of \( \cat{T}/\scriptstyle{C _3} \) are given as follows:
\[
1 _{[a]} = [1 _{a}], 1 _{[f]} = [1 _{f}], a _{[\alpha][\beta][\gamma]} = [a _{\alpha \beta \gamma}], \rho _{[f],[g]} = [\rho _{f,g}], \dots,
\]
where for
\[
a ^{loc} _{[\alpha_1][\alpha_2][\alpha_3]} = [a ^{loc} _{\alpha_1 \alpha_2 \alpha_3}]
\quad \text{and} \quad
\phi ^{\otimes} _{([\alpha_1][\beta_1])([\alpha_2][\beta_2])} = [\phi ^{\otimes} _{(\alpha_1 \beta_1)(\alpha_2 \beta_2)}]
\]
we choose the representatives of the indices on the left hand side, such that the right hand side is defined.
\end{itemize}

\item
Mapping morphisms in \( \cat{T} \) to their respective equivalence classes in \( \cat{T}/\scriptstyle{C} \) defines a virtually strict triequivalence
\(
[-]: \cat{T} \to \cat{T}/\scriptstyle{C}.
\)
\end{enumerate}
\end{satz}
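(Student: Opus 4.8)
The plan is to follow the scheme of the proof of proposition \ref{prop_coh_3_system}, adapting it to the extra quotient of $1$-morphisms by $\sim_1$. First I would check that the source and target maps descend to the equivalence classes and satisfy the globular identities $ss=st$ and $ts=tt$, so that $\cat{T}/C_3$ is a globular set: $\sim_1$-related $1$-morphisms are parallel, and $\sim_2$-related (resp. $\sim_3$-related) cells have $\sim_1$-related (resp. $\sim_2$-related) boundaries, which is exactly what makes the prescribed types well-typed. Promoting this to a $3$-magmoid, the operation $\otimes$ descends directly because $\sim_1,\sim_2,\sim_3$ are congruence relations by lemma \ref{lem_coherence_23_congruence_relation}, and with respect to $\otimes$ the map $[-]\colon\cat{T}\to\cat{T}/C_3$ is then tautologically a morphism of magmoids.

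The main obstacle is the well-definedness of the compositions $*$ (of $2$- and of $3$-morphisms) and $\circ$, which — unlike in proposition \ref{prop_coh_3_system} — can no longer be carried out on the nose, since $s[\alpha]=t[\beta]$ only means $s\alpha\sim_1 t\beta$. I would settle this in two steps. For \emph{existence} of composable representatives I would whisker by a coherence $2$-morphism: given $\gamma\in C_2$ with $\gamma\colon t\beta\to s\alpha$, the whiskered cell $\gamma*\beta$ has target $s\alpha$ and satisfies $\gamma*\beta\sim_2\beta$, as one sees from the definition of $\sim_2$ by taking the conjugators $\gamma^{\adsq}$ and a unit together with the unit of the adjunction $\gamma\dashv\gamma^{\adsq}$, which lies in $C_3$; this uses that $C_2$ contains units and weak inverses and is closed under $*$. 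For \emph{independence} of the chosen representatives I would invoke the congruence property of $\sim_2$ and $\sim_3$ from lemma \ref{lem_coherence_23_congruence_relation} together with the free-choice lemma \ref{lem_free_choice_of_C2_conjugators}, which guarantees that any two admissible choices of conjugators yield the same class. The same circle of ideas handles the constraints subscripted by $2$-morphisms, such as $a^{loc}_{[\alpha_1][\alpha_2][\alpha_3]}$ and $\phi^{\otimes}_{([\alpha_1][\beta_1])([\alpha_2][\beta_2])}$: here I would use naturality of the corresponding constraint in $\cat{T}$ exactly as in proposition \ref{prop_coh_3_system}, now combined with conditions (\ref{list_interchanger_in_23_coherence_system}) and (\ref{list_local_constraints_in_C_3}) of definition \ref{def_coherence_23_system}, which place the relevant interchangers and local constraints into $C_3$ and thereby make the necessary comparison squares commute after applying $[-]$.

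Granting well-definedness, I would observe that every tricategory axiom for $\cat{T}/C_3$ is the image under $[-]$ of the corresponding axiom of $\cat{T}$ from the magmoidal definition \ref{def_tricat_as_magmoid}: since $[-]$ preserves all composites, units and constraints, each commuting diagram in $\cat{T}$ maps to a commuting diagram in $\cat{T}/C_3$. This simultaneously shows that $\cat{T}/C_3$ is a tricategory and that $[-]$ is a virtually strict functor.

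Finally, to see that $[-]$ is a virtually strict triequivalence I would verify the four conditions of remark \ref{rem_characterization_virtually_strict_triequivalence}. The map $[-]$ is $0$-, $1$- and $2$-locally surjective — every object, $1$-morphism and $2$-morphism of the quotient is literally the class of a cell of $\cat{T}$, and conjugating a representative by coherence lets one realise any prescribed type — so it is triessentially surjective, locally biessentially surjective and $2$-locally essentially surjective. It remains to show that $[-]$ is $3$-locally a bijection: surjectivity is immediate after a similar conjugation, and for injectivity I would argue that if $\Lambda,\Lambda'\colon\alpha\to\beta$ are parallel with $[\Lambda]=[\Lambda']$, then the conjugating data $\gamma,\gamma'\in C_2$ and $\Gamma,\Gamma'\in C_3$ witnessing $\Lambda\sim_3\Lambda'$ are between parallel cells and hence forced to be identities, since parallel coherence cells are unique (lemma \ref{lem_C_under_quotient} and semi-freeness), whence $\Lambda=\Lambda'$. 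The hardest single point throughout is the well-definedness of $*$ and $\circ$ treated in the second paragraph; once the congruence machinery of lemmas \ref{lem_coherence_23_congruence_relation} and \ref{lem_free_choice_of_C2_conjugators} is in place, the remainder is a direct transcription of the argument for proposition \ref{prop_coh_3_system}.
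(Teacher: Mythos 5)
Your overall architecture matches the paper's: reduce to the argument of proposition \ref{prop_coh_3_system}, use the congruence lemma \ref{lem_coherence_23_congruence_relation} together with the free-choice lemma \ref{lem_free_choice_of_C2_conjugators} to make the compositions well-defined, and then check the conditions of remark \ref{rem_characterization_virtually_strict_triequivalence}. However, your 3-local injectivity argument is wrong as stated. If \( \Lambda,\Lambda'\colon\alpha\to\beta \) are parallel with \( \Lambda\sim_3\Lambda', \) the conjugating data are \emph{not} forced to be identities: the cells \( \gamma\colon t^2\Lambda\to t^2\Lambda' \) are endo-coherence-2-morphisms of a 1-morphism, and parallel coherence 2-morphisms are only uniquely \emph{isomorphic}, not equal (e.g. \( r_{1_a} * l^{\adsq}_{1_a} \) is a coherence endomorphism of \( 1_a \) distinct from \( 1_{1_a} \) in general); and even after invoking lemma \ref{lem_free_choice_of_C2_conjugators} to normalise \( \gamma=\gamma'=1, \) the resulting \( \Gamma\colon 1*\beta\to\beta*1 \) and \( \Gamma'\colon 1*\alpha\to\alpha*1 \) have distinct source and target, so they cannot be identities either. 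What closes the argument is that these normalised \( \Gamma,\Gamma' \) are the unique coherences \( (r^{loc})^{-1}\circ l^{loc}, \) so the relation \( \Gamma\circ(1*\Lambda)=(\Lambda'*1)\circ\Gamma' \) sandwiches \( \Lambda \) and \( \Lambda' \) between two naturality squares for \( l^{loc} \) and \( r^{loc}; \) commutativity of the outer rectangle then forces the middle square, i.e. \( \Lambda=\Lambda'. \) This extra step (the paper's three-storey diagram) is genuinely needed; "parallel coherence cells are unique" alone does not yield it.

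You also underestimate where the bulk of the work lies. The well-definedness of \( \otimes, *, \circ \) is indeed settled by the congruence lemma plus whiskering, but most of the paper's proof is devoted to the well-definedness of the \emph{constraints}, and this is not "exactly as in proposition \ref{prop_coh_3_system}", because the indices are now themselves only defined up to \( \sim_1 \) and \( \sim_2. \) One must show, for instance, that \( f\sim_1 f' \) implies \( 1_f\sim_2 1_{f'}, \) that \( \phi^{\otimes}_{([\alpha_1],[\alpha_2])([\alpha_3],[\alpha_4])} \) is independent of representatives (a nontrivial chase combining naturality of \( \phi^{\otimes} \) with requirement \eqref{list_interchanger_in_23_coherence_system}), and — a part your sketch omits entirely — that the top-level components of \( a,l,r \) and the components of the modifications \( \pi,\mu,\lambda,\rho \) and of \( \eta^a,\epsilon^a,\dots \) are well-defined; the latter rely on semi-freeness and requirement \eqref{list_C_3_is_C_2_complete} (uniqueness of coherences between parallel coherence 2-morphisms), not merely on naturality. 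These verifications are routine in spirit but must actually be carried out for the proposition to be proved.
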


\begin{proof}
Everything is proven literally as in proposition \ref{prop_coh_3_system}, except the well-definedness of the constraints and the fact that \( [-] \) is a virtually strict triequivalence.

We start by proving that units and constraints of \( \cat{T}/\scriptstyle{C _3} \) are well-defined.
Because the objects of \( \cat{T}/\scriptstyle{C_3} \) are the objects of \( \cat{T}, \) the constraints which are indexed over objects
- namely \( 1 _{[a]}, i _{[a]} \; \text{and} \; \phi _{[a]} \) - are well-defined.
The remaining units are also well-defined:

Firstly if \( f \sim_1 f' \) there is a 2-morphism \( f \xrightarrow{\gamma} f' \) in \( C_2 \) and the 3-morphism
\[
\gamma * 1_f
\xrightarrow{r ^{loc} _{\gamma}}
\gamma
\xrightarrow{(l ^{loc} _{\gamma}) ^{-1}}
1 _{f'} * \gamma,
\]
is in \( C_3 \) by requirement \eqref{list_local_constraints_in_C_3} in definition \ref{def_coherence_23_system}.
Thus we have \( 1_f \sim_2 1 _{f'}. \)

Secondly if \( \alpha \sim_2 \alpha' \) there is a 3-morphism \( \gamma * \alpha \xrightarrow{\Gamma} \alpha' * \gamma' \) and the diagram
\[
\begin{tikzcd}
\gamma * \alpha
\ar[r, "1 * 1_\alpha"]
\ar[d, "\Gamma"]
	& \gamma * \alpha
	\ar[d, "\Gamma"] \\
\alpha' * \gamma'
\ar[r, "1 _{\alpha'} * 1"]
	& \alpha' * \gamma'
\end{tikzcd}
\]
commutes trivially. Thus we have \( 1_\alpha \sim_3 1 _{\alpha'}. \)

To show the well-definedness of \( \phi ^{\otimes} _{[f_1],[f_2]} \) assume we have \( f_i \sim_1 f'_i \) via \( \gamma_i * f_i \xrightarrow{\Gamma_i} f'_i * \gamma'_i \) for \( i \in \set{1,2}. \)
Because \( \sim_2 \) is a congruence relation and because we already have shown that \( h \sim_1 h' \) implies \( 1_h \sim_2 1 _{h'}, \)
we have \( 1 _{f_1 \otimes f_2} \sim_2 1 _{f'_1 \otimes f'_2} \) and \( 1 _{f_1} \otimes 1 _{f_2} \sim_2 1 _{f' _1} \otimes 1 _{f' _2}. \)
Therefore we can conclude together with lemma \ref{lem_free_choice_of_C2_conjugators}, that there are \( C_3 \)-morphisms
\begin{align*}
& \Gamma': (\gamma_1 \otimes \gamma_2) * 1 _{f_1 \otimes f_2} \to 1 _{f'_1 \otimes f'_2} * (\gamma'_1 \otimes \gamma'_2)
\quad \text{and} \\
& \Gamma: (\gamma_1 \otimes \gamma_2) * (1 _{f_1} \otimes 1 _{f_2}) \to (1 _{f'_1} \otimes 1 _{f'_2}) * (\gamma'_1 \otimes \gamma'_2).
\end{align*}
Now, due to the uniqueness of \( C_3 \)-morphisms, the following diagram commutes
\[
\begin{tikzcd}[column sep = large]
(\gamma_1 \otimes \gamma_2) * 1 _{f_1 \otimes f_2}
\ar[r, "1 * \phi ^{\otimes} _{(f_1,f_2)}"]
\ar[d, "\Gamma' " swap]
	& (\gamma_1 \otimes \gamma_2) * (1 _{f_1} \otimes 1 _{f_2})
	\ar[d, "\Gamma"] \\
1 _{f'_1 \otimes f'_2} * (\gamma'_1 \otimes \gamma'_2)
\ar[r, "\phi ^{\otimes} _{(f'_1,f'_2)} * 1" swap]
	& (1 _{f'_1} \otimes 1 _{f'_2}) * (\gamma'_1 \otimes \gamma'_2),
\end{tikzcd}
\]
and this shows that \( \phi ^{\otimes} _{[f_1],[f_2]} \) is well-defined.

To show the well-definedness of \( \phi _{([\alpha_1],[\alpha_2])([\alpha_3],[\alpha_4])} \)
let for \( i \in \set{1,2,3,4} \) be \( \alpha_i \sim_2 \alpha'_i \) via \( \gamma_i * \alpha_i \xrightarrow{\Gamma_i} \alpha'_i * \gamma' \) such that
\[
(\alpha_1 * \alpha_3) \otimes (\alpha_2 * \alpha_4)
\quad \text{and} \quad
(\alpha'_1 * \alpha'_3) \otimes (\alpha'_2 * \alpha'_4)
\]
exist.
Due to lemma \ref{lem_free_choice_of_C2_conjugators}, we can assume
\( \gamma'_1 = \gamma_3 =: \gamma _{1',3} \) and \( \gamma'_2 = \gamma_4 =: \gamma _{2',4}. \)
Now the exterior of the following diagram commutes, since all small squares commute either by coherence or by naturality of \( \phi ^{\otimes}. \)
\[
\begin{tikzcd}
(\gamma_1 \otimes \gamma_2) * (\alpha_1 \otimes \alpha_2) * (\alpha_3 \otimes \alpha_4)
\ar[r, "1 * \phi ^{\otimes}"]
\ar[d, "\phi ^{\otimes} * 1"]
	& (\gamma_1 \otimes \gamma_2) * \big( (\alpha_1 * \alpha_3) \otimes (\alpha_2 * \alpha_4) \big)
	\ar[d, "\phi ^{\otimes}"] \\
\big( (\gamma_1 * \alpha_1) \otimes (\gamma_2 * \alpha_2) \big) * (\alpha_3 \otimes \alpha_4)
\ar[r, "\phi ^{\otimes}"]
\ar[d, "(\Gamma_1 \otimes \Gamma_2) * 1"]
	& (\gamma_1 * \alpha_1 * \alpha_3) \otimes (\gamma_2 * \alpha_2 * \alpha_4)
	\ar[d, "(\Gamma_1 * 1) \otimes ( \Gamma_2 * 1)"] \\
\big( (\alpha'_1 * \gamma'_1) \otimes (\alpha'_2 * \gamma'_2) \big) * (\alpha_3 \otimes \alpha_4)
\ar[r, "\phi ^{\otimes}"]
\ar[d, "(\phi ^{\otimes}) ^{-1} * 1"]
	& (\alpha'_1 * \gamma'_1 * \alpha_3) \otimes (\alpha'_2 * \gamma'_2 * \alpha_4)
	\ar[dd, equal] \\
(\alpha'_1 \otimes \alpha'_2) * (\gamma'_1 \otimes \gamma'_2) * (\alpha_3 \otimes \alpha_4)
\ar[d,"1 * \phi ^{\otimes}"] \\
(\alpha'_1 \otimes \alpha'_2) * \big( (\gamma _{1',3} * \alpha_3) \otimes (\gamma _{2',4} * \alpha_4) \big)
\ar[r,"\phi ^{\otimes}"]
\ar[d,"1 * (\Gamma_3 \otimes \Gamma_4)"]
	& (\alpha'_1 * \gamma _{1',3} * \alpha_3) \otimes (\alpha'_2 * \gamma _{2',4} * \alpha_4)
	\ar[d,"(1 * \Gamma_3) \otimes (1 * \Gamma_4)"] \\
(\alpha'_1 \otimes \alpha'_2) * \big( (\alpha'_3 * \gamma'_3) \otimes (\alpha'_4 * \gamma'_4) \big)
\ar[r,"\phi ^{\otimes}"]
\ar[d,"1 * (\phi ^{\otimes}) ^{-1}"]
	& (\alpha'_1 * \alpha'_3 * \gamma'_3) \otimes (\alpha'_2 * \alpha'_4 * \gamma'_4)
	\ar[d, "(\phi ^{\otimes}) ^{-1}"] \\
(\alpha'_1 \otimes \alpha'_2) * (\alpha'_3 \otimes \alpha'_4) * (\gamma'_3 \otimes \gamma'_4)
\ar[r, "\phi ^{\otimes} * 1"]
	& \big( (\alpha'_1 * \alpha'_3) \otimes (\alpha'_2 * \alpha'_4) \big) * (\gamma'_3 \otimes \gamma'_4)
\end{tikzcd}
\]
Since all interchangers which appear as vertical arrows are contained in \( C_3 \) by requirement \ref{list_interchanger_in_23_coherence_system} of definition \ref{def_coherence_23_system},
the vertical parts of the diagram are contained in \( C_3. \)
Thus we have shown the well-definedness of \( \phi _{([\alpha_1],[\alpha_2])([\alpha_3],[\alpha_4])}. \)

To show the well-definedness of \( a ^{loc} _{[\alpha_1][\alpha_2][\alpha_3]}, \) let for \( i \in \set{1,2,3} \) be \( \alpha_i \sim_2 \alpha'_i \) via
\( \gamma_i * \alpha_i \xrightarrow{\Gamma_i} \alpha'_i * \gamma'_i, \) such that the composites
\[
\alpha_1 * \alpha_2 * \alpha_3
\quad \text{and} \quad
\alpha'_1 * \alpha'_2 * \alpha'_3
\]
exist.
Because \( \sim_2 \) is a congruence relation there are \( \gamma,\gamma' \in C_2 \) and a \( C_3 \)-morphism
\[
\Gamma: \gamma * \alpha_1 * \alpha_2 * \alpha_3
\to
\alpha'_1 * \alpha'_2 * \alpha'_3 * \gamma'.
\]
Then the diagram
\[
\begin{tikzcd} [column sep = huge]
\gamma_1 * \alpha_1 * \alpha_2 * \alpha_3
\ar[r, "1 * a ^{loc} _{\alpha_1 * \alpha_2 * \alpha_3}"]
\ar[d, "\Gamma" swap]
	& \gamma * \alpha_1 * \alpha_2 * \alpha_3
	\ar[d, "\Gamma"] \\
\alpha'_1 * \alpha'_2 * \alpha'_3 * \gamma'
\ar[r, "a ^{loc} _{\alpha'_1 * \alpha'_2 * \alpha'_3} * 1"]
	& \alpha'_1 * \alpha'_2 * \alpha'_3 * \gamma'
\end{tikzcd}
\]
commutes, because it consists of two parallel morphisms in \( C_3 \) and we have shown that \( a ^{loc} _{\alpha_1 \alpha_2 \alpha_3} \sim_3 a ^{loc} _{\alpha'_1 \alpha'_2 \alpha'_3}. \)

The well-definedness of the remaining local constraints \( l ^{loc} \) and \( r ^{loc} \) is shown analogously.
If \( \alpha \sim_2 \alpha' \) via \( \gamma' * \alpha \xrightarrow{\Gamma} \alpha' * \gamma' \) the existence of \( \Gamma': \gamma * 1 * \alpha \to 1 * \alpha' * \gamma' \) in \( C_3 \)
follows from the fact that \( \sim_2 \) is a congruence relation and from lemma \ref{lem_free_choice_of_C2_conjugators}.
Then we can construct the diagram
\[
\begin{tikzcd}
\gamma * 1 * \alpha
\ar[r, "1 * l ^{loc} _\alpha"]
\ar[d, "\Gamma' " swap]
	& \gamma * \alpha
	\ar[d, "\Gamma"] \\
1 * \alpha' * \gamma'
\ar[r, "l ^{loc} _{\alpha'} * 1"]
	& \alpha' * \gamma
\end{tikzcd}
\]
which commutes because it consists of two parallel morphisms in \( C_3 \) and \( l ^{loc} _{\alpha} \sim_2 l ^{loc} _{\alpha'} \) is shown.
The proof for \( r ^{loc} \) is analogous.

Next we show that the bottom-level components of the 2-categorical transformations \( a, l, r \) are are well-defined.
For that purpose let \( f_i \sim_1 f_i' \) via \( f_i \xrightarrow{\gamma_i} f'_i. \)
Since the 3-cell
\[
\begin{tikzcd}[column sep = large]
(f_1 \otimes f_2) \otimes f_3
\ar[r,"a _{f_1 f_2 f_3}"]
\ar[d,"(\gamma_1 \otimes \gamma_2) \otimes \gamma_3" swap]
	& f_1 \otimes (f_2 \otimes f_3)
	\ar[d,"\gamma_1 \otimes (\gamma_2 \otimes \gamma_3)"]
	\ar[dl, Rightarrow, "a _{\gamma_1 \gamma_2 \gamma_3} ^{-1}" description, short=5pt] \\
(f'_1 \otimes f'_2) \otimes f'_3
\ar[r,"a _{f'_1 f'_2 f'_3}" swap]
	& f'_1 \otimes (f'_2 \otimes f'_3) \\
\end{tikzcd}
\]
is a coherence morphism between coherence 2-morphisms it is contained in \( C_3 \) by requirement \ref{list_C_3_is_C_2_complete} of definition \ref{def_coherence_23_system}.
This proves that \( a _{f_1 f_2 f_3} \sim_2 a _{f'_1 f'_2 f'_3}. \)

Similarly if \( f \sim_1 f' \) via \( f \xrightarrow{\gamma} f' \) we have \( l _{f} \sim_2 l _{f'} \) and \( r _{f} \sim_2 r _{f'} \) because
\[
\gamma * l _{f}
\xrightarrow{l _{\gamma} ^{-1}}
l _{f'} * (i _{t f} \otimes \gamma)
\quad \text{and} \quad
\gamma * r _{f}
\xrightarrow{r _{\gamma} ^{-1}}
r _{f'} * (\gamma \otimes i _{s f})
\]
are morphism in \( C_3. \)

Now we show that the top level components of the 2-categorical transformations \( a,l,r \) are well-defined.
For that let \( \alpha_i  \sim_2 \alpha'_i \) via \( \gamma_i * \alpha_i \xrightarrow{\Gamma_i} \alpha'_i * \gamma'_i. \)
Now we consider the following diagram, where the arrows labeled with \( \sim \) are coherence 3-morphisms that are contained in \( C_3. \)
\[
\begin{tikzcd}
\big( \gamma_1 \otimes (\gamma_2 \otimes \gamma_3) \big) * a * \big( (\alpha_1 \otimes \alpha_2) \otimes \alpha_3 \big)
\ar[r, "1 * a"]
\ar[d, "\sim" {sloped, anchor=north}]
	& \big( \gamma_1 \otimes (\gamma_2 \otimes \gamma_3) \big) * \big( \alpha_1 \otimes (\alpha_2 \otimes \alpha_3) \big) * a
	\ar[d, "\sim" {sloped, anchor=south}] \\
a * \big( \big( (\gamma_1 * \alpha_1) \otimes (\gamma_2 * \alpha_2) \big) \otimes (\gamma_3 * \alpha_3) \big)
\ar[r, "a"]
\ar[d, "1 * \big( (\Gamma_1 \otimes \Gamma_2) \otimes \Gamma_3 \big)" swap]
	& \big( (\gamma_1 * \alpha_1) \otimes \big( (\gamma_2 * \alpha_2) \otimes (\gamma_3 * \alpha_3) \big) \big) * a
	\ar[d, "\big( \Gamma_1 \otimes (\Gamma_2 \otimes \Gamma_3) \big) * 1"] \\
a * \big( \big( (\alpha'_1 * \gamma'_1) \otimes (\alpha'_2 * \gamma'_2) \big) \otimes (\alpha'_3 * \gamma'_3) \big)
\ar[r, "a"]
\ar[d, "\sim" {sloped, anchor=north}]
	& \big( (\alpha'_1 * \gamma'_1) \otimes \big( (\alpha'_2 * \gamma'_2) \otimes (\alpha'_3 * \gamma'_3) \big) \big) * a
	\ar[d, "\sim" {sloped, anchor=south}] \\
a * \big( (\alpha'_1 \otimes \alpha'_2) \otimes \alpha'_3 \big) * \big( (\gamma'_1 \otimes \gamma'_2) \otimes \gamma'_3 \big)
\ar[r, "a * 1"]
	& \big( \alpha'_1 \otimes (\alpha'_2 \otimes \alpha'_3) \big) * a * \big( (\gamma'_1 \otimes \gamma'_2) \otimes \gamma'_3 \big).
\end{tikzcd}
\]
The top and bottom square commute commute because of the coherence theorem for tricategories
and the middle square commutes because of the naturality of the top level components of \( a. \)
Therefore we have shown
\(
a _{\alpha_1 \alpha_2 \alpha_3} \sim_3 a _{\alpha'_1 \alpha'_2 \alpha'_3}.
\)

Similarly \( l_\alpha \sim_3 l_\alpha' \) follows from the commutativity of the diagram
\[
\begin{tikzcd}
\gamma * l * (i \otimes \alpha)
\ar[r, "1 * l_\alpha"]
\ar[d, "\sim" {sloped, anchor=north}]
	& \gamma * \alpha * l
	\ar[d, equal] \\
l * \big( i \otimes (\gamma * \alpha) \big)
\ar[r, "l _{\gamma * \alpha}"]
\ar[d, "1 * (1 \otimes \Gamma)" swap]
	& \gamma * \alpha * l
	\ar[d, "\Gamma * 1"] \\
l * \big( i \otimes (\alpha' * \gamma') \big)
\ar[r, "l _{\alpha' * \gamma'}"]
\ar[d, "\sim" {sloped, anchor=north}]
	& \alpha' * \gamma' * l
	\ar[d, "\sim" {sloped, anchor=south}] \\
l * (i \otimes \alpha') * (i \otimes \gamma')
\ar[r, "l _{\alpha'} * 1"]
	& \alpha' * l * (i \otimes \gamma'),
\end{tikzcd}
\]
where again the arrows labeled with \( \sim \) are coherence 3-morphisms from \( C_3. \)
That \( r_\alpha \sim_3 r_\alpha' \) follows analogously.

That the components of the modifications \( \epsilon^l ,\eta^l, \epsilon^r, \eta^r, \epsilon^a, \eta^a, \pi, \lambda, \mu \) and \( \rho \) are well-defined follows directly from the coherence theorem for tricategories.
If \( f \sim_1 f' \) via \( f \xrightarrow{\gamma} f' \), we have \( \epsilon ^{l}_f \sim_3 \epsilon ^{l} _{f'} \) because the diagram
\[
\begin{tikzcd}
\gamma * l_f * l_f ^{\adsq}
\ar[r, "1 * \epsilon ^{l} _f"]
\ar[d, "\sim" {sloped, anchor=north}]
	& \gamma * 1_f
	\ar[d, "\sim" {sloped, anchor=south}] \\
l _{f'} * l _{f'} ^{\adsq} * \gamma
\ar[r, "\epsilon ^{l} _{f'} * 1"]
	& 1 _{f'} * \gamma
\end{tikzcd}
\]
commutes by coherence.
Again the arrow labeled with \( \sim \) is a coherence 3-morphism and moreover contained in \( C_3. \)

The proof that the components of \( \eta^l, \epsilon^r, \eta^r, \epsilon^a, \eta^a, \pi, \lambda, \mu \) and \( \rho \) are well-defined is analogous.

Now we show that \( [-] \) is a virtually strict triequivalence by checking that it satisfies the conditions in remark \ref{rem_characterization_virtually_strict_triequivalence}.
It is trivial that \( [-] \) is surjective and locally surjective.

For 2-local surjectivity we have to show that for any \( [\alpha']: [f] \to [g] \) there is a 2-morphism \( \alpha: f \to g \) with \( \alpha \sim_2 \alpha'. \)
Suppose \( \alpha': f' \to g' \) is a representative of \( [\alpha']. \) That means there are coherence 2-morphisms \( \gamma_f: f \to f' \) and \( \gamma_g: g \to g'. \)
We claim that \( \alpha := \gamma_g  ^{\adsq} * \alpha' * \gamma_f \) is the desired morphism.
To show that, we consider the following diagram, in which the 3-cell in the left triangle and the 3-cell in the middle square are given by local constraints
and are therefore contained in \( C_3 \) by requirement \eqref{list_local_constraints_in_C_3} in definition \ref{def_coherence_23_system}.
The 3-cell in the right triangle is the unique coherence 3-morphism \( \gamma_g * \gamma ^{\adsq}_g \to 1'_g \) (note that \( T \) is semi-free)
and therefore contained in \( C_3 \) by requirement \eqref{list_C_3_is_C_2_complete} in definition \ref{def_coherence_23_system}.
Therefore the whole diagram is a 3-cell contained in \( C_3 \) and we have shown \( \alpha \sim_2 \alpha'. \)
\[
\begin{tikzcd}
f
\ar[r, "\gamma_f"]
\ar[d, "\gamma_f" {swap, name=t1}]
	& f'
	\ar[r, "\alpha' " {name=s2}]
	\ar[ld, "1_f' " {pos=.35}]
		&g'
		\ar[r, "\gamma_g ^{\adsq}"]
		\ar[dr, "1_g' " {swap, name=t3}]
			& g
			\ar[d, "\gamma_g"] \\
f'
\ar[rrr, "\alpha' " {swap, name=t2}]
			& & & g'.
\ar[from=ull, to=t1, Rightarrow, short=4mm]
\ar[from=s2, to=t2, Rightarrow, short=5mm]
\ar[from=u, to=t3, Rightarrow, short=2mm]
\end{tikzcd}
\]

Before we show 3-local bijectivity, we observe that for \( \Lambda,\Lambda' \in \mmmor{\cat{T}} \) the existence of \( \gamma, \gamma' \in C_2 \) and \( \Gamma,\Gamma' \in C_3 \) such that
\( \Gamma \circ (1 _{\gamma} * \Lambda * 1 _{\gamma'}) \circ \Gamma' = \Lambda' \) implies \( \Lambda \sim \Lambda'. \)
This follows from the commutativity of the diagram
\[
\begin{tikzcd} [column sep = large]
\gamma * s \Lambda
\ar[r, "1 * \Lambda"]
\ar[d, "1 * 1 * \epsilon ^{-1}"]
	& \gamma * t \Lambda
	\ar[d,"1 * 1 * \epsilon ^{-1}"] \\
\gamma * s \Lambda * \gamma' * \gamma ^{\prime \adsq}
\ar[r, "1 * \Lambda * 1 * 1"]
\ar[d, "(\Gamma') ^{-1} * 1"]
	& \gamma * t \Lambda * \gamma' * \gamma ^{\prime \adsq}
	\ar[d, "\Gamma * 1"] \\
s \Lambda' * \gamma ^{\prime \adsq}
\ar[r, "\Lambda' * 1"]
	& t \Lambda' * \gamma ^{\prime \adsq}.
\end{tikzcd}
\]

For 3-local surjectivity we have to show that for parallel \( \alpha, \beta \in \mmor{\cat{T}} \) and any \( [\Lambda']: [\alpha] \to [\beta], \)
there is a 3-morphism \( \Lambda: \alpha \to \beta \) such that \( [\Lambda]=[\Lambda']. \)
Suppose \( \Lambda': \alpha' \to \beta' \) is a representative of \( [\Lambda']. \)
Then there are \( \gamma_\alpha, \gamma'_\alpha, \gamma_\beta, \gamma'_\beta \in C_2 \) and \( \Gamma_\alpha, \Gamma_\beta \in C_3 \)
with \( \Gamma_\alpha: \gamma_\alpha * \alpha \to \alpha' * \gamma'_\alpha \) and \( \Gamma_\beta: \gamma_\beta * \beta \to \beta' * \gamma'_\beta. \)
Since \( \gamma_\alpha, \gamma_\beta \) and \( \gamma'_\alpha, \gamma'_\beta \) are parallel,
we can assume by lemma \ref{lem_free_choice_of_C2_conjugators} \( \gamma_\alpha = \gamma_\beta := \gamma \) and \( \gamma'_\alpha = \gamma'_\beta := \gamma'. \)
Then the morphism
\begin{align*}
& \alpha \xrightarrow{\eta * 1} \gamma ^{\adsq} * \gamma * \alpha
\xrightarrow{1 * \Gamma_\alpha} \gamma ^{\adsq} * \alpha' * \gamma' \\
\xrightarrow{1 * \Lambda' * 1} & \gamma ^{\adsq} * \beta' * \gamma'
\xrightarrow{1 * \Gamma_\beta ^{-1}} \gamma ^{\adsq} * \gamma * \beta
\xrightarrow{ \eta ^{-1} * 1} \beta
\end{align*}
is of the form \( \Gamma \circ (1 _{\gamma ^{\adsq}} * \Lambda' * 1 _{\gamma'}) \circ \Gamma', \)
with \( \Gamma = (\eta ^{-1} * 1) \circ (1 * \Lambda ^{-1} _{\beta}) \) and \( \Gamma' = (1 * \Lambda _{\alpha}) \circ (\eta * 1), \)
and therefore \( \sim_3 \)-related to \( \Lambda'. \)
That means we have found the desired morphism \( \Lambda. \)

For showing 3-local injectivity suppose we have parallel 3-morphisms \( \Lambda, \Lambda': \alpha \to \beta \) with \( [\Lambda]=[\Lambda']. \)
Then we consider the following diagram:
\[
\begin{tikzcd}
1 * s \Lambda
\ar[r, "1 * \Lambda"]
\ar[d, "l ^{loc}" swap]
	& 1 * t \Lambda
	\ar[d, "l ^{loc}"] \\
s \Lambda
\ar[r, "\Lambda"]
\ar[d, equal]
	& t \Lambda
	\ar[d, equal] \\
s \Lambda'
\ar[r, "\Lambda' "]
\ar[d, "(r ^{loc})^{-1}" swap]
	& t \Lambda'
	\ar[d, "(r ^{loc})^{-1}"] \\
s \Lambda' * 1
\ar[r, "\Lambda' * 1"]
	& t \Lambda' * 1
\end{tikzcd}
\]
By naturality of \( l ^{loc}, r ^{loc} \) the top and bottom square commute and because of lemma \ref{lem_free_choice_of_C2_conjugators} the exterior diagram commutes.
Therefore the middle square commutes and we have shown \( \Lambda = \Lambda'. \)

\end{proof}

\begin{satz} \label{prop_T_mod_C23}
Let \( C_3 \) be a coherence-(2,3)-system of a semi-free tricategory \( \cat{T}. \)
Then for any \( \gamma \in C_2 \) and \( \Gamma \in C_3 \) we have
\[
[\gamma] = 1 _{s[\gamma]} = 1_{t[\gamma]}
\quad \text{and} \quad
[\Gamma] = 1 _{s[\Gamma]} = 1 _{t[\Gamma]}.
\]
\end{satz}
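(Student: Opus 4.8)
The plan is to mirror the argument of lemma \ref{lem_C_under_quotient}, which handles the purely 3-dimensional case: in both statements the point is that a coherence cell becomes an identity after passing to the quotient, and in both cases this reduces to showing that the cell is congruent to an identity. Throughout I will use that \( [-]: \cat{T} \to \cat{T}/\scriptstyle{C_3} \) is a virtually strict functor (proven in proposition \ref{prop_coherence_23_system}), so that it preserves units, giving \( [1_g] = 1_{[g]} \) and \( [1_\beta] = 1_{[\beta]}, \) and that source and target commute with \( [-] \) in the sense \( s[\gamma] = [s\gamma], \) \( t[\Gamma] = [t\Gamma], \) etc.

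For the first statement, let \( \gamma: f \to g \) be a morphism of \( C_2. \) First I would record the types: since \( \gamma \in C_2 \) witnesses \( f \sim_1 g \) (by the definition of \( \sim_1 \) in lemma \ref{lem_coherence_23_congruence_relation}), we get \( [f] = [g], \) and hence \( s[\gamma] = [f] = [g] = t[\gamma]. \) It then remains to show \( [\gamma] = 1_{[g]}. \) For this I would verify \( \gamma \sim_2 1_g \) directly from the definition of \( \sim_2, \) choosing the \( C_2 \)-conjugators \( 1_g \) on the left and \( \gamma \) on the right; the required \( C_3 \)-cell \( 1_g * \gamma \to 1_g * \gamma \) is then simply the identity \( 1_{1_g * \gamma}, \) which lies in \( C_3 \) since \( C_3 \) contains all unit 3-cells. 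Hence \( [\gamma] = [1_g] = 1_{[g]} = 1_{t[\gamma]} = 1_{s[\gamma]}. \)

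For the second statement, let \( \Gamma: \alpha \to \beta \) be a morphism of \( C_3. \) Again I first settle the types: by part \eqref{list_optional_C2_conj_for_2_morphs} of lemma \ref{lem_optionality_C2_conjugation} the mere existence of \( \Gamma \in C_3 \) from \( \alpha \) to \( \beta \) gives \( \alpha \sim_2 \beta, \) so \( s[\Gamma] = [\alpha] = [\beta] = t[\Gamma]. \) To see \( [\Gamma] = 1_{[\beta]} \) I would invoke part \eqref{list_optional_C2_conj_for_3_morphs} of the same lemma, which lets me avoid any \( C_2 \)-conjugation: taking \( \Gamma_1 = 1_\beta \) and \( \Gamma_1' = \Gamma \) (both in \( C_3 \)), the equation \( 1_\beta \circ \Gamma = 1_\beta \circ \Gamma \) holds trivially and yields \( \Gamma \sim_3 1_\beta. \) Therefore \( [\Gamma] = [1_\beta] = 1_{[\beta]} = 1_{t[\Gamma]} = 1_{s[\Gamma]}. \)

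The computation itself is routine bookkeeping; the one point that requires care—and the reason the proof is not literally identical to lemma \ref{lem_C_under_quotient}—is that \( \sim_2 \) and \( \sim_3 \) are defined only up to conjugation by \( C_2 \)- and \( C_3 \)-cells, so one cannot simply write \( \Gamma \circ \Gamma^{-1} \) and read off the result. The main (and only mild) obstacle is thus to select conjugators that both lie in the appropriate coherence system and make the two sides parallel; here the trivial choices (units, and the cells themselves) already work, and lemma \ref{lem_optionality_C2_conjugation} packages exactly the fact that the additional \( C_2 \)-conjugation is optional, so no appeal to the sharper lemma \ref{lem_free_choice_of_C2_conjugators} is needed.
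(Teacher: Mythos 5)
Your proof is correct and follows essentially the same strategy as the paper: exhibit explicit witnesses establishing \( \gamma \sim_2 1 \) and \( \Gamma \sim_3 1, \) then use that the quotient map sends units to units. The only difference is cosmetic — the paper witnesses \( 1_{s\gamma} \sim_2 \gamma \) via \( l^{loc} \) composed with the unit of an adjoint equivalence for \( \gamma, \) and witnesses \( \Gamma \sim_3 1_{s\Gamma} \) via \( \Gamma^{-1} \circ \Gamma = 1 \circ 1, \) whereas your choices (the identity 3-cell with conjugators \( 1_g \) and \( \gamma, \) and the tautology \( 1_\beta \circ \Gamma = 1_\beta \circ \Gamma \) fed into lemma \ref{lem_optionality_C2_conjugation}) are if anything slightly more economical.
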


\begin{proof}
Since
\(
1 _{s \gamma} * 1 _{s \gamma}
\xrightarrow{l ^{loc}}
1 _{s \gamma}
\xrightarrow{\eta}
\gamma * \gamma ^{\adsq}
\)
is contained in \( C_3 \) it follows that \( 1 _{s \gamma} \sim_2 \gamma \) and therefore \( 1 _{s[\gamma]} = 1 _{[s \gamma]}= [1 _{s \gamma}] = [\gamma]. \)

With lemma \ref{lem_optionality_C2_conjugation} and the identity \( 1 _{s \Gamma} \circ 1 _{s \Gamma} = \Gamma ^{-1} \circ \Gamma \) it follows that
\( 1 _{s \Gamma} \sim_3 \Gamma \) and therefore \( 1 _{s[\Gamma]} = 1 _{[s \Gamma]}= [1 _{s \Gamma}] = [\Gamma]. \)
\end{proof}

\begin{kor}
The free tricategory \( FX \) over a category enriched 2-graph \( X \) is triequivalent to a strict tricategory via a virtually strict triequivalence.
\end{kor}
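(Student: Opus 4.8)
The plan is to realize $FX$ as a tricategory from which one can "quotient out all coherence" via Proposition \ref{prop_coherence_23_system}, and then to recognize the resulting quotient as a strict tricategory. First I would check that $FX$ is semi-free, which is exactly the content of Corollary \ref{cor_FX_is_semi_free}: in a free tricategory any two parallel constraint $2$-morphisms are uniquely isomorphic via a coherence $3$-morphism. Hence the hypothesis needed to even speak of a coherence-$(2,3)$-system for $FX$ is met, with $C_2$ the collection of all coherence $2$-morphisms.

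Next I would take $C_3$ to be the collection of \emph{all} coherence $3$-morphisms of $FX$ and verify that it is a coherence-$(2,3)$-system in the sense of Definition \ref{def_coherence_23_system}. That $C_3$ is a coherence-$3$-system is nearly immediate: it is closed under $\otimes,*,\circ$ and under inverses, it contains every unit $1_\alpha$, and --- this is the one genuinely substantive point --- parallel elements of $C_3$ are equal, which is precisely the coherence theorem for tricategories (Proposition \ref{prop_tricat_coherence_theorem} and the ensuing corollary on parallel coherence $3$-morphisms in a free tricategory). The three remaining conditions of a $(2,3)$-system then hold automatically, since the unique coherence between parallel coherence $2$-cells, the interchangers $\phi^\otimes$, and the local constraints $a^{loc},l^{loc},r^{loc}$ are all themselves coherence $3$-morphisms and therefore lie in $C_3$.

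With this in hand, Proposition \ref{prop_coherence_23_system} immediately supplies a virtually strict triequivalence $[-]\colon FX \to FX/C_3$, so it only remains to identify $FX/C_3$ as a strict tricategory by checking the clauses of Definition \ref{def_strict_tricat}. Here I would invoke Proposition \ref{prop_T_mod_C23}: since $C_2$ is the collection of \emph{all} coherence $2$-morphisms and $C_3$ of \emph{all} coherence $3$-morphisms, every element of $C_2$ and of $C_3$ is sent by $[-]$ to an identity. Consequently the local constraints become identities (the local bicategories are strict), the interchangers and unit constraints of $\otimes$ and $I$ become identities (these functors are strict), the $2$-cell components of $a,l,r$ together with their adjoint $3$-cell data become identities (they are identity adjoint equivalences), and the modifications $\pi,\mu,\lambda,\rho$ become identities --- exactly the data required of a strict tricategory.

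I expect the main obstacle to be essentially bookkeeping rather than conceptual: one must confirm that each piece of constraint data enumerated in Definition \ref{def_strict_tricat} is indeed a coherence cell, so that it falls into $C_2$ or $C_3$ and is therefore forced to be trivial by Proposition \ref{prop_T_mod_C23}. The only conceptually essential input is the coherence theorem for tricategories, which is precisely what guarantees that the choice $C_3 = \{\text{all coherence } 3\text{-cells}\}$ satisfies the "parallel implies equal" axiom of a coherence-$3$-system; everything else reduces to the already-established properties of the quotient construction.
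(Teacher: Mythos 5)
Your proposal is correct and follows essentially the same route as the paper: establish semi-freeness of \( FX \) via Corollary \ref{cor_FX_is_semi_free}, observe that the collection of all coherence 3-morphisms is a coherence-(2,3)-system (the paper calls this ``trivial,'' while you correctly identify the coherence theorem as the one substantive input), and then apply Propositions \ref{prop_coherence_23_system} and \ref{prop_T_mod_C23} to obtain the virtually strict triequivalence onto a quotient in which all constraint data become identities. Your write-up merely makes explicit the bookkeeping the paper leaves implicit.
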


\begin{proof}
According to corollary \ref{cor_FX_is_semi_free} \( FX \) is a semi-free tricategory.
Furthermore the collection of all coherence 3-morphisms in \( FX \) is trivially a coherence-(2,3)-system \( C_3. \)
Then it follows from proposition \ref{prop_T_mod_C23} that \( (FX)/C_3 \) is a strict tricategory and \( [-] \) is the desired virtually strict triequivalence.
\end{proof}

\subsection{Strictification for the tricategory of formal composites} \label{sec_strictification_for_tricats}

The overall strategy in this section is analogous to section \ref{sec_strictification_for_bicats}:
First we will define for an arbitrary tricategory \( \cat{T} \) the tricategory of "formal composites" \( \that \) together with a virtually strict triequivalence \( \ev: \that \to \cat{T}, \) given by evaluation.
Then we strictify the tricategory \( \that \) along a sequence of virtually strict triequivalences, until we finally arrive at a Gray category \( \tst. \)
At the end of the section we explain how the strict triequivalences \( \that \to \cat{T} \) and \( \that \to \tst \) can be used to reduce calculations in tricategories to much simpler calculations in Gray categories.

\begin{satz}
Let \( \cat{T} \) be a tricategory.
Then the following defines a tricategory \( \that \) and a virtually strict triequivalence \( \ev: \that \to \cat{T}: \)
\begin{itemize}
\item
The objects of \( \that \) are the same as the objects of \( \cat{T}, \) and \( \ev \) acts on objects as the identity.
\item
The 1-morphisms of \( \that \) are built inductively as follows:

Any 1-morphism in \( \cat{T} \) is a 1-morphisms in \( \that \) as well (with the same types), these 1-morphisms of \( \that \) are also called \( \that \)-basic 1-morphisms.

For any two 1-morphisms in \( \that \) (not necessarily \( \that \)-basic ones) \( \hat f: b \to c \) and \( \hat g: a \to b, \)
we require the formal composite \( \hat f \hs \hat g: a \to c \) to be a 1-morphism of \( \that. \)

Composition of 1-morphisms (along objects) is given by the \( \hot \)-operator.

\item
The action of \(\ev \) on 1-morphisms is defined via induction:

On basic 1-morphisms \( \ev \) is the identity, and \( \ev (\hat f \hot \hat g) := \ev(\hat f) \otimes \ev(\hat g). \)
\item
Now we can inductively define \( \mmor{\that}: \)

The \( \that \)-basic 2-morphisms of \( \that \) are triples
\[ (\alpha,\hat f,\hat g): \hat f \to \hat g, \quad \text{where } \hat f, \hat g \in \mor{ \that} \text{ and } \alpha: \ev(\hat f) \to \ev(\hat g). \]

For 2-morphisms
\[
\hat \beta: \hat g_1 \to \hat g_2: b \to c, \quad \hat \alpha_1: \hat f_1 \to \hat f_2: a \to b \quad \text{and} \quad \hat \alpha_2: \hat f_2 \to \hat f_3: a \to b
\]
we require the formal composites
\[
\hat \beta \hot \hat \alpha_1: \hat g_1 \hot \hat f_1 \to \hat  g_2 \hot \hat  f_2 : a \to c,
\quad \text{and} \quad \hat \alpha_2 \hs \hat \alpha_1: f_1 \to f_3: a \to b.
\]
to be 2-morphisms of \( \that. \)

Composition of 2-morphisms along objects resp. 1-morphisms is given by the \( \hot \)- resp. the \( \hs \)-operator.

\item
On 2-morphisms \( \ev \) is inductively defined as follows:
\begin{gather*}
\ev(\alpha, \hat f, \hat g) = \alpha, \\
\ev(\hat \alpha \hot \hat \beta) = \ev(\hat \alpha) \otimes \ev(\hat \beta) \quad \text{and} \quad
\ev(\hat \alpha \hs \hat \beta) = \ev(\hat \alpha) * \ev(\hat \beta).
\end{gather*}

\item
The 3-morphisms of \( \that \) are triples \( (\Pi, \hat \alpha, \hat \beta) : \hat \alpha \to \hat \beta, \)
where \( \Pi: \ev(\hat \alpha) \to \ev(\hat \beta) \) is a 3-morphism in \( \cat{T}. \)

Composition of 3-morphisms in \( \that \) is inherited from composition in \( \cat{T}: \)
\begin{align*}
(\Pi, \hat \alpha_1, \hat \alpha_2) \hot (\Lambda, \hat \beta_1, \hat \beta_2) & := (\Pi \otimes \Lambda, \hat \alpha_1 \hot \hat \beta_1, \hat \alpha_2 \hot \hat \beta_2), \\
(\Pi, \hat \alpha_1, \hat \alpha_2) \hs (\Lambda, \hat \beta_1, \hat \beta_2) & := (\Pi * \Lambda, \hat \alpha_1 \hs \hat \beta_1, \hat \alpha_1 \hs \hat \beta_2,) \\
(\Pi, \hat \alpha_2, \hat \alpha_3) \hc (\Lambda, \hat \alpha_1, \hat \alpha_2) & := (\Pi \circ \Lambda, \hat \alpha_1, \hat \alpha_3).
\end{align*}

\item
\( \ev \) acts on 3-morphisms via \( \ev(\Pi, \hat \alpha, \hat \beta) = \Pi. \)

\item
The unit- and constraint-cells of \( \that \) are essentially constraint cells for \( \cat{T}, \) only that their types are now considered as formal composites.
The units are given by
\[
\hat 1_a = 1_a, \quad \hat 1 _{\hat f} = (1 _{\ev(\hat f)}, \hat f, \hat f) \quad \text{and} \quad 1 _{\hat \alpha} =
(1 _{\ev(\hat \alpha)}, \hat \alpha, \hat \alpha)
\]
The constraints are given by
\begin{align*}
& \hat a _{\hat f \hat g \hat h}= \big( a _{\ev(f) \ev(g) \ev(h)}, (\hat f \hot \hat g) \hot \hat h, \hat f \hot (\hat g \hot \hat h) \big)
\\
& \hat \alpha _{\hat \alpha, \hat \beta, \hat \gamma} =
\big( a _{\ev (\hat \alpha) \ev (\hat \beta) \ev (\hat \gamma)}, \: \;
a _{{t \hat \alpha, t \hat \beta, t \hat \gamma}} * ((\hat \alpha \hot \hat \beta) \hot \hat \gamma), \: \;
(\hat \alpha \hot (\hat \beta \hot \hat \gamma)) * a _{{s \hat \alpha, s \hat \beta, s \hat \gamma}} \big)
\end{align*}
and analogously for the remaining constraint cells (see the data from definition \ref{def_tricat_as_magmoid} for a list of all constraint cells).
\end{itemize}

\end{satz}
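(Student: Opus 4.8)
The plan is to follow the blueprint of Proposition \ref{prop_bhat_is_bicat}, but now using the magmoidal description of a tricategory from Definition \ref{def_tricat_as_magmoid}. First I would observe that the data listed above equip $\that$ with a $3$-globular set (the globular identities $ss=st$ and $ts=tt$ hold by construction) and, via the three formal operators $\hot,\hs,\hc$, with a $3$-magmoid; here one only checks that each composite triple $(\Pi\circ\Lambda,\dots)$, $(\Pi * \Lambda,\dots)$, $(\Pi\otimes\Lambda,\dots)$ is again a well-typed triple, which is immediate because the corresponding composite exists in $\cat{T}$ whenever the $\that$-types match. Note that, in contrast to $\bhat$, only the $3$-cells of $\that$ collapse to single triples; its $2$-cells remain genuine formal composites, so that associativity and unitality of the local composition $\hs$ are witnessed by constraint $3$-cells rather than holding strictly. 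It then remains to verify the axioms of Definition \ref{def_tricat_as_magmoid} and to check that $\ev$ is a virtually strict triequivalence.

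The whole argument rests on two observations, exactly parallel to the bicategory case. Firstly, it is immediate from the inductive definitions that $\ev$ strictly preserves all three compositions, all units and all constraint cells; in particular, once $\that$ is known to be a tricategory, $\ev$ is automatically a virtually strict functor. Secondly, a $3$-morphism of $\that$ is a triple $(\Pi,\hat\alpha,\hat\beta)$ and is therefore completely determined by its source, its target and its $\ev$-image $\Pi$; consequently two \emph{parallel} $3$-morphisms $\hat\Pi,\hat\Lambda$ of $\that$ satisfy $\hat\Pi=\hat\Lambda$ if and only if $\ev\hat\Pi=\ev\hat\Lambda$. This is the engine that reduces every statement about $\that$ to the corresponding statement in $\cat{T}$.

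Each axiom in Definition \ref{def_tricat_as_magmoid} is an equation of $3$-cells (functoriality of $*$ and $\otimes$, naturality of $a^{loc},l^{loc},r^{loc}$ and of $\phi^{\otimes}$, the pentagon and triangle, the weak-functor axioms for $\otimes$ and $I$, the adjoint-equivalence axioms, and the modification axioms for $\pi,\mu,\lambda,\rho$). For each such axiom I would first note that, since the corresponding diagram is built from the same constraint cells and compositions along both paths, its two sides are genuinely parallel $3$-morphisms of $\that$ — their source and target \emph{formal} composites coincide on the nose. Applying $\ev$ and using that $\ev$ strictly preserves everything turns the equation into the corresponding axiom in $\cat{T}$, which holds because $\cat{T}$ is a tricategory; the second observation above then transports this equality back to $\that$. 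Invertibility of the constraint $3$-cells is handled the same way, the inverse of $(\Pi,\hat\alpha,\hat\beta)$ being $(\Pi^{-1},\hat\beta,\hat\alpha)$.

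Finally I would establish that $\ev$ is a virtually strict triequivalence by checking the four conditions of Remark \ref{rem_characterization_virtually_strict_triequivalence}. Since $\ev$ is the identity on objects and sends each basic cell to itself, it is surjective on objects, on $1$-morphisms and on $2$-morphisms, which yields triessential surjectivity, local biessential surjectivity and $2$-local essential surjectivity at once; and it is $3$-locally a bijection, injectivity being the second observation and surjectivity being the fact that any $3$-morphism $\Pi\colon\ev\hat\alpha\to\ev\hat\beta$ of $\cat{T}$ is hit by the triple $(\Pi,\hat\alpha,\hat\beta)$. The main obstacle is not any single deep computation but the breadth of the verification in the previous paragraph: one must keep careful track of the formal types so as to confirm that the two sides of each axiom are literally parallel in $\that$ (so that the faithfulness observation applies) and that each of the many specified constraint triples is genuinely well-typed. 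Once this bookkeeping is organised, every case collapses to an axiom already available in $\cat{T}$.
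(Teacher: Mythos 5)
Your proposal is correct and follows essentially the same route as the paper's proof: establish that \( \ev \) is a magmoid morphism strictly preserving units and constraints, use the fact that parallel 3-morphisms of \( \that \) are equal iff their \( \ev \)-images are, transport each axiom of Definition \ref{def_tricat_as_magmoid} back from \( \cat{T} \), and verify the four surjectivity/bijectivity conditions of Remark \ref{rem_characterization_virtually_strict_triequivalence}. Your additional remarks on well-typedness of the composite triples and on the parallelism of the two sides of each axiom are exactly the bookkeeping the paper leaves implicit.
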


\begin{proof}
First we observe that \( \that \) is a 3-magmoid and \( \ev: \that \to \cat{T} \) is a morphism of 3-magmoids.
Furthermore by definition \( \ev \) strictly preserves units and constraint cells.

Next we show, that the axioms in the description of a tricategory as a 3-magmoid (see definition \ref{def_tricat_as_magmoid}) are satisfied for \( \that. \)
Because \( \ev \) preserves composition and constraint cells strictly, any diagram which expresses an axiom of definition \ref{def_tricat_as_magmoid} is mapped to the respective diagram for \( \cat{T}. \)
This already shows the claim, because for parallel \( \hat \Pi, \hat \Lambda \in \that \) we have \( \hat \Pi = \hat \Lambda \) if and only if \( \ev(\hat \Pi) = \ev(\hat \Lambda). \)

By now we established that \( \that \) is a 3-category and \( \ev: \that \to \cat{T} \) a virtually strict functor.
According to remark \ref{rem_characterization_virtually_strict_triequivalence} \( \ev \) is even a virtually strict triequivalence,
since it is surjective, 1-locally surjective, 2-locally surjective and 3-locally a bijection.
\end{proof}

It is often useful to describe a formal binary composite as a string of morphisms \( (f_i)_{i \in \set{1 \dots n}} \) together with a chosen bracketing.
To formalize this description we need the following definition.

\begin{mydef} \label{def_bracketing}
Let \( [n] \) denote the graph
\[
\langle 0 \rangle \xrightarrow{1} \langle 1 \rangle \xrightarrow{2} \dots \xrightarrow{n} \langle n \rangle.
\]
and let \( F_m [n] \) denote the free 1-magmoid over this graph.
\begin{enumerate}
\item
A morphism \( \sigma: \langle 0 \rangle \to \langle n \rangle \) in \( F_m [n] \) is called a \emph{bracketing} of length n.
\item \label{list_bracketed_sequence}
Let \( \sigma \) be a bracketing of length n and \( (f_i) _{i \in \set{1 \dots n}} \) a sequence of composable morphism in a magmoid \( \cat{M}. \)
We write
\[
(f_i) _{i \in \set{1 \dots n}} ^{\sigma}
\]
for the image of \( \sigma \) under the unique morphism of magmoids \( F_m [n] \to \cat{M}, \) which sends \( i \) to \( f_i. \)
\end{enumerate}
\end{mydef}

\begin{notation} \label{not_comseq}
We use the notation of definition \ref{def_bracketing} (\ref{list_bracketed_sequence}) for sequences of 1- and 2-morphisms.
For the latter we clarify, whether \( (\alpha_i) _{i \in \set{1 \dots n}} \) (often written shortly as \( (\alpha_i)_n \)) is regarded as a sequence of 2-morphisms
in the 1-magmoid with operation \( \otimes \) or the 1-magmoid with operation \( *, \) by either writing
\[
\comseq{\otimes}{(\alpha_i)} _{i \in \set{1 \dots n}} ^{\sigma}
\quad \text{or} \quad
\comseq{*}{(\alpha_i)} _{i \in \set{1 \dots n}} ^{\sigma}.
\]
\end{notation}

\begin{bem} \label{rem_non_lifting_coherence_3_morphisms_in_that}
Parallel coherence 2-morphism in the bicategory \( \bhat \) from proposition \ref{prop_bhat_is_bicat}) lift to parallel coherence 2-morphisms in \( \oper{F} \bhat \)
(see the proof of lemma \ref{lem_unique_coherence_in_bhat}).
The same holds for parallel coherence 2-morphism in \( \that \) as we will see in the proof of proposition \ref{prop_that_semifree}.
However not every coherence 3-morphism in \( \that \) lifts to a coherence 3-morphism in \( \oper{F} \that. \)
Examples (see \ref{ex_non_lifting_coherence}) of such non-lifting coherence 3-morphisms arise from two different phenomena:
\begin{enumerate}
\item \label{list_that_FT_have_different_basics_rem}
If the types of an \( \oper{F} \)-basic 2-morphism \( \mu \in \mmor{\oper{F} \that}\) are not \( \oper{F} \)-basic, it follows that \( \mu \) is a coherence 2-morphism.
But if the types of a \( \that \)-basic 2-morphism \( \hat \alpha \in \mmor{ \that} \) are not \( \that \)-basic, \( \hat \alpha \) need not be a coherence 2-morphism.
That means, for \( f,g,h \in \mor{\cat{T}}, \) \( \that \) contains basic 2-morphisms of the form \( f \hot g \to h, \) but \( \oper{F} \that \) does not contain basic 2-morphisms of the form \( f \fotimes g \to h. \)

\item \label{1_unit_lifts_non_unique_to_FT_rem}
A unit \( \hat 1_a \in \mor{ \that} \) has two different lifts, namely the constraint cell \(1 ^{\sms{F}} _a \in \mor{\oper{F} \that} \) and the \( \oper{F} \)-basic 1-morphism \( \hat 1_a. \)
\end{enumerate}
\end{bem}

Now we construct examples of non-lifting coherence 3-morphisms, which arise from the two phenomena described in remark \ref{rem_non_lifting_coherence_3_morphisms_in_that}.

\begin{bei} [examples for remark \ref{rem_non_lifting_coherence_3_morphisms_in_that}]~ \label{ex_non_lifting_coherence}
\begin{enumerate}
\item \label{list_that_FT_have_different_basics_ex}
Let \( \hat \alpha_1, \hat \alpha_2, \hat \alpha_3, \hat \beta \) be \( \that \)-basic 2-morphisms in \( \that, \) such that the following composite exists:
\[
\hat \beta \hs \big( \hat a _{t \hat \alpha_1, t \hat \alpha_2, t \hat \alpha_3} \hs ((\hat \alpha_1 \hot \hat \alpha_2) \hot \hat \alpha_3)) \big)
\xrightarrow{\hat 1 \hs \hat a _{\hat \alpha_1 \hat \alpha_2 \hat \alpha_3}}
\hat \beta \hs \big( (\hat \alpha_1 \hot (\hat \alpha_2 \hot \hat \alpha_3)) \hs \hat a _{s \hat \alpha_1, s \hat \alpha_2, s \hat \alpha_3} \big).
\]
Furthermore let us assume \( \hat \beta \) is not a constraint-cell.
A lift \( \mu \) of \( \hat a _{\hat \alpha_1 \hat \alpha_2 \hat \alpha_3} \) to a coherence 3-morphism in \( \oper{F} \that \) needs to have target 1-morphism \( t \hat \alpha_1 \fotimes (t \hat \alpha_2 \fotimes t \hat \alpha_3), \)
but a lift \( \nu \) of \( \hat \beta \) will have source 1-morphism \( s \hat \beta = t \hat \alpha_1 \hot (t \hat \alpha_2 \hot t \hat \alpha_3). \)
Therefore \( \nu \) and \( \mu \) are not \( \fstar \)- composable in \( \oper{F} \that \) and \( \hat 1 _{\hat \beta} \hs \hat a  _{\hat \alpha_1 \hat \alpha_2 \hat \alpha_3} \) does not lift to a coherence 3-morphism in \( \oper{F} \that. \)

\item
Consider a 2-morphism \( \hat \alpha: \hat f \to \hat 1_a \in \that \) with \( \hat \alpha \neq  \hat 1 _{(\hat 1_a)}, \hat \alpha \neq \hat i_a \)
and another 2-morphism \( \hat \beta: \hat g \to \hat g' \) in \( \that \) such that the following composite exists.
\[
\big( \hat l _{\hat g'} \hs ( \hat i _a \hot \hat \beta) \big) \hs (\hat \alpha \hot \hat 1 _{g})
\xrightarrow{\hat l _{\beta} \hs 1}
(\hat \beta \hs \hat l _{\hat g}) \hs (\hat \alpha \hot \hat 1 _{g}).
\]
Similarly to \eqref{list_that_FT_have_different_basics_ex} one can show that lifts of \( \hat l_\beta \) and \( 1 _{\hat \alpha \hot \hat 1 _{g}} \) to coherence 3-morphisms in \( \oper{F} \that \) are not \( * \)-composable.
Therefore there exists no lift of \( \hat l _{\beta} \hs 1 _{\hat \alpha \hot \hat 1 _{g}} \) to a coherence 3-morphism in \( \oper{F} \that. \)
\end{enumerate}
\end{bei}

Because of the previous remark it is clear that the coherence theorem for tricategories does not imply that parallel constraint 3-morphisms in \( \that \) are equal.
But one can show that at least parallel constraint 3-morphisms between coherence 2-morphisms are equal:

\begin{satz} \label{prop_that_semifree}
\( \that \) is a semi-free tricategory.
\end{satz}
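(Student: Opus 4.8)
The goal is to show that any two parallel coherence 2-morphisms $\hat\gamma,\hat\gamma'\colon \hat f\to\hat g$ in $\that$ are isomorphic via a coherence 3-morphism, and that this 3-morphism is \emph{unique}. The plan is to handle existence and uniqueness by two different routes: existence by lifting to the free tricategory $\oper{F}\that$ (mimicking Lemma \ref{lem_unique_coherence_in_bhat}), and uniqueness by pushing forward along $\ev$ into $\cat{T}$, a detour that is forced on us by Remark \ref{rem_non_lifting_coherence_3_morphisms_in_that}. As in the bicategory case I would first set up the ``unfolding'' assignment $\hat f\mapsto \hat f^{\oper{f}}$ sending a 1-morphism of $\that$ to a 1-morphism of $\oper{F}\that=\oper{F}U\that$: a $\that$-basic non-unit $f$ goes to the corresponding $F$-basic 1-morphism, a unit $\hat 1_a$ goes to the adjunct unit $\fc 1_a$, and $(\hat f\hot\hat g)^{\oper{f}}:=\hat f^{\oper{f}}\fotimes\hat g^{\oper{f}}$.

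For existence I would prove, by induction on the number of constraint cells and units in a $(\hot,\hs)$-decomposition, that every coherence 2-morphism $\hat\gamma\colon\hat f\to\hat g$ in $\that$ lifts along the counit $\epsilon\colon\oper{F}\that\to\that$ to a coherence 2-morphism $\mu\colon\hat f^{\oper{f}}\to\hat g^{\oper{f}}$. The base case checks the four families $\hat 1_{\hat h},\ \hat a_{\hat h_1\hat h_2\hat h_3},\ \hat l_{\hat h},\ \hat r_{\hat h}$ and their inverses; the inductive step splits $\hat\gamma=\hat\gamma_1\hs\hat\gamma_2$ or $\hat\gamma=\hat\gamma_1\hc\hat\gamma_2$ and forms $\mu_1\fstar\mu_2$ resp.\ $\mu_1\fcirc\mu_2$, the source/target bookkeeping $(s\hat\gamma)^{\oper{f}}=s\mu$ and $(t\hat\gamma)^{\oper{f}}=t\mu$ going through verbatim as in Lemma \ref{lem_unique_coherence_in_bhat}. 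Given parallel $\hat\gamma,\hat\gamma'$, their lifts $\mu,\mu'$ are then parallel constraint 2-morphisms in the free tricategory $\oper{F}\that$, so Corollary \ref{cor_FX_is_semi_free} yields a unique coherence 3-morphism $\Xi\colon\mu\to\mu'$; since $\epsilon$ strictly preserves constraint cells and $\epsilon\mu=\hat\gamma$, $\epsilon\mu'=\hat\gamma'$, the image $\epsilon\Xi\colon\hat\gamma\to\hat\gamma'$ is a coherence 3-morphism, establishing existence.

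For uniqueness, suppose $\hat\Gamma,\hat\Gamma'\colon\hat\gamma\to\hat\gamma'$ are coherence 3-morphisms in $\that$. Because the 3-morphisms of $\that$ are triples determined by their evaluation, $\ev$ is 3-locally a bijection, so it suffices to prove $\ev\hat\Gamma=\ev\hat\Gamma'$ in $\cat{T}$. Both are coherence 3-morphisms between the parallel coherence 2-morphisms $\ev\hat\gamma,\ev\hat\gamma'$, and I would conclude by invoking the coherence theorem for tricategories in the general-tricategory form derived from Proposition \ref{prop_tricat_coherence_theorem}: two coherence 3-morphisms in $\cat{T}$ that lift to parallel coherence 3-morphisms in $\oper{F}U\cat{T}$ agree. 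The lifts are produced by replacing each basic constituent of $\ev\hat\Gamma,\ev\hat\Gamma'$ by its adjunct, and they are made \emph{parallel} by conjugating with the unique coherence 3-isomorphisms connecting the (a priori distinct) lifts of the parallel source and target coherence 2-morphisms, which exist again by Corollary \ref{cor_FX_is_semi_free}.

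I expect this uniqueness step to be the main obstacle, for exactly the reason recorded in Remark \ref{rem_non_lifting_coherence_3_morphisms_in_that}: a coherence 3-morphism of $\that$ need \emph{not} lift to a coherence 3-morphism of $\oper{F}\that$, so, unlike the existence argument, one cannot simply transport uniqueness down from the semi-freeness of $\oper{F}\that$. Detouring through $\ev$ repairs this, but the delicate part is the bookkeeping needed before the coherence theorem for $\cat{T}$ applies, namely choosing compatible lifts of the endpoint 2-morphisms into $\oper{F}U\cat{T}$ and conjugating so that the two candidate lifts become genuinely parallel.
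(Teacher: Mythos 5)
Your existence argument is sound and in fact makes explicit what the paper's proof leaves implicit: the standard lifts \( \hat \gamma ^{\oper{f}}, \hat \gamma ^{\prime \oper{f}} \) of parallel coherence 2-morphisms are parallel coherence 2-morphisms of \( \oper{F} \that \) (their types are \( \hat f ^{\oper{f}} \to \hat g ^{\oper{f}} \) for the common types \( \hat f, \hat g \)), so corollary \ref{cor_FX_is_semi_free} supplies a coherence 3-morphism between them whose image under \( \epsilon \) is the required isomorphism. Two small slips there: 2-morphisms of a tricategory compose via \( \hot \) and \( \hs \), not \( \hs \) and \( \hc \), and your base case must also cover \( \hat i_a \) and the adjoint pseudo-inverses \( \hat a ^{\adsq}, \hat l ^{\adsq}, \hat r ^{\adsq} \), not only the four families carried over from the bicategory lemma.

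The uniqueness half contains a genuine gap, and it originates in a misreading of remark \ref{rem_non_lifting_coherence_3_morphisms_in_that}. The obstructions recorded there all involve 3-morphisms whose source or target 2-morphism contains a non-coherence basic 2-cell; when \( \hat \Gamma: \hat \gamma \to \hat \gamma' \) runs between \emph{coherence} 2-morphisms, every constituent of a decomposition of \( \hat \Gamma \) into constraint 3-cells and units is again a coherence cell between coherence cells, the lifts are \( \fotimes \)-, \( \fstar \)- and \( \fcirc \)-composable at every stage, and an induction shows that \( \hat \Gamma \) lifts to the unique coherence \( \hat \gamma ^{\oper{f}} \to \hat \gamma ^{\prime \oper{f}} \) in \( \oper{F} \that \) --- this direct lift is exactly how the paper proves uniqueness, so the detour through \( \cat{T} \) is not forced. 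Worse, the detour as you describe it does not close. Writing \( \mu: \tilde \gamma_1 \to \tilde \delta_1 \) and \( \mu': \tilde \gamma_2 \to \tilde \delta_2 \) for the two lifts and \( \Xi_s: \tilde \gamma_2 \to \tilde \gamma_1, \; \Xi_t: \tilde \delta_1 \to \tilde \delta_2 \) for your conjugating coherences, the coherence theorem gives \( \Xi_t \fcirc \mu \fcirc \Xi_s = \mu', \) and applying \( \epsilon \) yields only \( \ev \hat \Gamma' = \epsilon(\Xi_t) \circ \ev \hat \Gamma \circ \epsilon(\Xi_s). \) The factors \( \epsilon(\Xi_s): \ev \hat \gamma \to \ev \hat \gamma \) and \( \epsilon(\Xi_t): \ev \hat \gamma' \to \ev \hat \gamma' \) are coherence 3-endomorphisms in \( \cat{T} \); since \( \Xi_s \) is not an endomorphism in the free tricategory (its source and target are distinct, merely \( \epsilon \)-equal, coherence 2-morphisms), no coherence theorem forces \( \epsilon(\Xi_s) \) to be an identity, and asserting that it is one is itself an instance of the uniqueness statement you are trying to prove. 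To repair the argument you must choose the lifts of \( \ev \hat \Gamma \) and \( \ev \hat \Gamma' \) with literally equal sources and targets from the outset, which is precisely the inductive lifting claim above --- i.e.\ the repair collapses your route into the paper's.
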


\begin{proof}
For a \( \that \)-basic 1-morphism \( f \) we define \( f ^{\oper{f}} \) as the following 1-morphism in \( \oper{F} \that: \)
\[
f ^{\oper{f}} :=
\begin{cases}
1_a ^{\sms{F}}\text{ (the unit at \( a \) in \( \oper{F} \that \)),}
	& \text{if } f = 1_a \\
f \text{ (as a morphism in \( \oper{F} \that \)),}
	& \text{if } f \text{ is not a unit}.
\end{cases}
\]
For a general 1-morphism \( \hat f = \comseq{\hot}{(f_i)} _{i \in 1 \dots n} ^{\sigma} \in \mor{ \that}, \) we define
\[
\hat f ^{\oper{f}} := \comseq{\fotimes}{(f_i ^{\oper{f}})} _{i \in 1 \dots n} ^{\sigma} \in \mor{\oper{F} \that},
\]
where \( \fotimes \) denotes composition along objects in the free tricategory \( \oper{F} \that. \)

If \( \hat \gamma: \hat f \to \hat g \) in \( \that \) is a coherence 2-morphism, it is either a constraint 2-cell or a unit,
or there exist unique 2-morphisms \( \gamma_1 \) and \( \gamma_2 \) and a uniquely determined composition symbol (\( \otimes \) or *),
such that \( \gamma \) is the composite of \( \gamma_1 \) and \( \gamma_2 \) with respect to that composition symbol.
Furthermore \( \gamma_1 \) and \( \gamma_2 \) are necessarily coherence 2-morphisms.
Therefore we can define a coherence 2-morphism \( \hat \gamma ^{\oper{f}}: \hat f ^{\oper{f}} \to \hat g ^{\oper{f}} \) in \( \oper{F} \that \) inductively as follows:
If \( \hat \gamma \) is a constraint cell or a unit, \( \hat \gamma ^{\oper{f}} \) is the unique coherent lift of \( \hat \gamma \) with \( s \hat \gamma ^{\oper{f}} = \hat f ^{\oper{f}}. \)
Otherwise \( \hat \gamma \) is either of the form \( \hat \gamma_1 \hot \hat \gamma_2 \) or \( \hat \gamma_1 \hs \hat \gamma_2, \) with coherence 2-morphisms \( \hat \gamma_1 \) and \( \hat \gamma_2. \)
Then we can define \( \hat \gamma:= \hat \gamma_1 ^{\oper{f}} \fotimes \hat \gamma_2 ^{\oper{f}} \) or \( \hat \gamma := \hat \gamma_1 ^{\oper{f}} \fstar \hat \gamma_2 ^{\oper{f}}, \)
since the respective composites exist in \( \oper{F} \that. \)

Now we claim that for coherence 2-morphisms \( \hat \gamma, \hat \gamma' \in \mmor{ \that}, \) a coherence 3-morphism \( \hat \Gamma: \hat \gamma \to \hat \gamma' \)
lifts to the unique coherence 3-morphisms \( \hat \gamma ^{\oper{f}} \to \hat \gamma ^{\prime \oper{f}} \) in \( \oper{F} \that. \)
Since any coherence 3-morphism \( \hat \Gamma \) is a \( (\hot, \hs, \hc) \)-composite of constraint 3-cells and units, existence can be shown via induction:
It is easy to check that if \( \hat \Gamma \) is a single constraint cell a lift with the required types does exist.
Otherwise \( \hat \Gamma \) is of the form \( \hat \Gamma_1 \hot \hat \Gamma_2, \) resp. \( \hat \Gamma_1 \hs \hat \Gamma_2, \) resp. \( \hat \Gamma_1 \hc \hat \Gamma_2, \) with coherence 2-morphisms \( \hat \Gamma_1 \) and \( \hat \Gamma_2. \)
By the induction hypothesis we can assume \( \hat \Gamma_1 ^{\oper{f}}, \hat \Gamma_2 ^{\oper{f}} \) are the required lifts of \( \hat \Gamma_1, \hat \Gamma_2. \)
Moreover it is easy to check that \( \hat \Gamma_1 ^{\oper{f}}, \hat \Gamma_2 ^{\oper{f}} \) are \( \fotimes \)- resp. \( \fstar \)- resp. \( \fcirc \)-composable if
\( \hat \Gamma_1, \hat \Gamma_2 \) are \( \hot \)- resp. \( \hs \)- resp. \( \hc \)-composable.
Therefore \( \hat \Gamma_1 ^{\oper{f}} \fotimes \hat \Gamma_2 ^{\oper{f}}, \) resp. \( \hat \Gamma_1 ^{\oper{f}} \fstar \hat \Gamma_2 ^{\oper{f}}, \)
resp. \( \hat \Gamma_1 ^{\oper{f}} \fcirc \hat \Gamma_2 ^{\oper{f}} \) is the desired lift of \( \hat \Gamma. \)

Now if \( \hat \Gamma, \hat \Gamma': \hat \gamma \to \hat \gamma' \) are parallel coherence-3-morphisms between coherence-2-morphisms \( \hat \gamma, \hat \gamma', \)
the lifts \( \hat \Gamma ^{\oper{f}} \) and \( \hat \Gamma ^{\prime \oper{f}} \) are parallel coherence 3-morphisms and therefore equal.
Thus \( \hat \Gamma \) and \( \hat \Gamma' \) are equal.
\end{proof}

In the following we will strictify \( \that \) along a sequence
\[
\that \xrightarrow{\ftilde} \ttil \xrightarrow{\barf} \tbar \xrightarrow{[-]} \tcu \xrightarrow{[-]} \tprime \xrightarrow{[-]} \tst
\]
of virtually strict triequivalences, until we finally arrive at a Gray-category \( \tst. \)
The tricategories \( \ttil \) and \( \tbar \) are constructed on sub-globular sets of \( \that. \)
More precisely we have
\[
\tbar \subseteq \ttil \subseteq \that \quad \text{as globular sets}
\]
and even
\[
\ttil \subseteq \that \quad \text{as 3-magmoids}.
\]
The remaining tricategories are then obtained by "quotienting out coherence" by means of proposition \ref{prop_coh_3_system} and \ref{prop_coherence_23_system}.

The first step of our strictification maps \( \that \) to a slightly simpler tricategory \( \ttil, \) whose underlying 3-magmoid is a sub-3-magmoid of \( \that. \)
Basically this is achieved by sending units on non-\( \that \)-basic 1-morphisms to \( \otimes \)-composites of units on \( \that \)-basic 1-morphisms
(e.g. \( \hat 1 _{(f \hot g) \hot h} \mapsto (\hat 1_f \hot \hat 1_g) \hot \hat 1_h \)) and by sending cells \( i_a \) to units \( 1 _{(1_a)}. \)
Therefore we define \( \ttil \) as the following sub-magmoid of \( \that: \)

\begin{mydef}
For any tricategory \( \cat{T} \) we define a 3-magmoid \( \ttil, \) as the following sub-3-magmoid of \( \that: \)
\begin{itemize}
\item
\( \ob{\that} = \ob{\ttil} \) and \( \mor{\that} = \mor{\ttil}. \)

\item
A \( \that \)-basic 2-morphism is called \( \ttil \)-basic, if it is not of the form \( \hat 1 _{\hat f} \) or \( \hat i_a, \)
where \( \hat f \) is a non-basic 1-morphism of \( \that \) and \( a \) an arbitrary object of \( \that. \)

The 2-morphisms of \( \ttil \) are the smallest collection of 2-morphisms in \( \that \) which contains all \( \ttil \)-basic 2-morphisms and is closed under \( \hot \) and \( \hs. \)

\item
The 3-morphisms of \( \ttil \) are the 3-morphisms of \( \that \) whose types are in \( \mmor{\ttil}. \)
\end{itemize}
\end{mydef}

\begin{satz}
There is a morphism of magmoids \( \ftilde: \that \to \ttil \) defined as follows:
\begin{itemize}
\item
On objects and 1-morphisms \( \ftilde \) is the identity.
\item
On 2-morphisms \( \ftilde \) is defined inductively. On \( \that \)-basic 2-morphisms we define
\[
\ftilde (\alpha, \hat f, \hat g) :=
\begin{cases}
\hat 1 _{ \hat f} & \text{if } \hat f = \hat g = \hat 1_a \text{ and } \alpha = i_a \\
\comseq{\otimes}{(\hat 1 _{f_i})} _{n} ^{\sigma} & \text{if } \hat f = (f _i) _n ^{\sigma} \text{ and } \alpha = 1 _{\ev \hat f} \\
(\alpha, \hat f, \hat g) & \text{else}.
\end{cases}
\]
We extend \( \ftilde \) on general 2-morphisms via the recursion rules
\begin{align*}
\ftilde (\hat \alpha \hot \hat \beta) & = \ftilde (\hat \alpha) \hot \ftilde (\hat \beta) \\
\ftilde (\hat \alpha \hs \hat \beta) & = \ftilde (\hat \alpha) \hs \ftilde (\hat \beta).
\end{align*}

\item
On 3-morphisms \( \ftilde \) is defined as
\[
\ftilde \hat \Pi := \hat \Xi _{t \hat \Pi} \hc \hat \Pi \hc \hat \Xi ^{-1} _{\hat s \hat \Pi},
\]
where
\[
\hat \Xi _{\hat \alpha}: \hat \alpha \to \ftilde{\hat \alpha}
\]
is inductively defined as follows:
\[
\hat \Xi _{(\alpha, \hat f, \hat g)} :=
\begin{cases}
\hat \phi_a ^{-1} & \text{if } \hat f = \hat g = \hat 1_a \text{ and } \alpha = i_a \\
\text{unique coherence } \hat \Gamma: \hat 1 _{\hat f} \xrightarrow{\sim} \ftilde ( \hat 1 _{\hat f}) & \text{if } (\alpha, \hat f, \hat g) = \hat 1 _{\hat f} \\
1 _{(\alpha, \hat f, \hat g)} & \text{else}
\end{cases}
\]
\[
\hat \Xi_{\hat \alpha \hot \hat \beta} := \hat \Xi _{\hat \alpha} \hot \hat \Xi _{\hat \beta}
\qquad \text{and} \qquad
\hat \Xi_{\hat \alpha \hs \hat \beta} := \hat \Xi _{ \hat \alpha} \hs \hat \Xi _{\hat \beta}
\]
\end{itemize}
\end{satz}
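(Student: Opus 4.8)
The plan is to check, in turn, the three requirements for a morphism of $3$-magmoids: compatibility with source and target on every level, that the image of $\ftilde$ lands in the sub-$3$-magmoid $\ttil$, and preservation of the three composition operators $\hot$, $\hs$, $\hc$. On objects and $1$-morphisms $\ftilde$ is the identity, so here all three requirements are immediate, the operator $\hot$ on $1$-morphisms being preserved on the nose.

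On $2$-morphisms I would first note that $\mmor{\that}$ consists of formal $(\hot,\hs)$-composites of $\that$-basic $2$-morphisms and hence carries a free magmoid structure; every $2$-morphism has a unique parse, so the recursion rules define $\ftilde$ unambiguously and automatically make it preserve $\hot$ and $\hs$. A case check on the three clauses then shows that each returns a $2$-morphism parallel to its argument, so $s$ and $t$ are preserved, and that each returns either a $\ttil$-basic $2$-morphism or a $\hot$-composite of such (the outputs $\hat 1_{\hat 1_a}$ of clause one and $\comseq{\otimes}{(\hat 1 _{f_i})}_n^{\sigma}$ of clause two are $\hot$-composites of $\ttil$-basic cells, while clause three fixes every $\ttil$-basic cell). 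Closure of $\ttil$ under $\hot,\hs$ and induction then give $\ftilde(\mmor{\that})\subseteq\mmor{\ttil}$.

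The heart of the argument is the level of $3$-morphisms. I would first check that each conjugator $\hat\Xi_{\hat\alpha}\colon\hat\alpha\to\ftilde\hat\alpha$ is a well-defined invertible $3$-morphism of the asserted type. The only clause needing real input is the unit case, where $\hat\Xi_{\hat 1_{\hat f}}$ is the unique coherence $3$-morphism from $\hat 1_{\hat f}$ to $\comseq{\otimes}{(\hat 1 _{f_i})}_n^{\sigma}$; both are parallel coherence $2$-morphisms, so this unique isomorphism exists exactly because $\that$ is semi-free (proposition \ref{prop_that_semifree}). The other basic clauses give the constraint cell $\hat\phi_a^{-1}$ or an identity, invertibility is stable under $\hot,\hs,\hc$, and the recursion distributes $\hat\Xi$ over $\hot$ and $\hs$; checking the types in each clause yields $\hat\Xi_{\hat\alpha}\colon\hat\alpha\to\ftilde\hat\alpha$ by induction. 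Consequently $\ftilde\hat\Pi=\hat\Xi_{t\hat\Pi}\hc\hat\Pi\hc\hat\Xi^{-1}_{s\hat\Pi}$ is well-typed with $s\ftilde\hat\Pi=\ftilde(s\hat\Pi)$ and $t\ftilde\hat\Pi=\ftilde(t\hat\Pi)$; this simultaneously gives globular compatibility at level $3$ and shows that $\ftilde\hat\Pi$ has types in $\mmor{\ttil}$, hence is a $3$-morphism of $\ttil$.

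Finally I would verify preservation of the three compositions of $3$-morphisms. For $\hc$ the conjugation telescopes: in $\ftilde\hat\Pi\hc\ftilde\hat\Lambda$ the inner factor $\hat\Xi^{-1}_{s\hat\Pi}\hc\hat\Xi_{t\hat\Lambda}$ is an identity since $s\hat\Pi=t\hat\Lambda$, leaving exactly $\ftilde(\hat\Pi\hc\hat\Lambda)$. For $\hot$ I would expand $\ftilde(\hat\Pi\hot\hat\Lambda)$, rewrite its end conjugators via the recursion $\hat\Xi_{t\hat\Pi\hot t\hat\Lambda}=\hat\Xi_{t\hat\Pi}\hot\hat\Xi_{t\hat\Lambda}$ (and dually for the source), and then apply the interchange law relating $\hot$ and $\hc$ that holds in $\that$ (functoriality of $\hot$, from the magmoidal axioms of definition \ref{def_tricat_as_magmoid}) to regroup the conjugated $\hot$-rows into $\ftilde\hat\Pi\hot\ftilde\hat\Lambda$; the case of $\hs$ is verbatim, using functoriality of $\hs$. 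I expect this regrouping to be the main obstacle, not for conceptual depth but because it is where all the type bookkeeping concentrates: one must confirm that the conjugator on the composite type factors as the $\hot$- (resp. $\hs$-) product of the componentwise conjugators and that the interchange law applies to the bracketing at hand. The rest is routine structural induction.
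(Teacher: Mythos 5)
Your proposal is correct and follows essentially the same route as the paper: the paper's own proof is much terser, simply asserting that preservation of $\hc$ follows from invertibility of $\hat\Xi$, preservation of $\hot$ from the recursion for $\hat\Xi$ together with functoriality of $\otimes$, and preservation of $\hs$ from the recursion together with the local interchange law, deferring the explicit telescoping and regrouping computations to the analogous (slightly harder) proof of proposition \ref{prop_bar_is_magmoid_morph}. Your additional observations --- the unique-parse justification for the recursive definition on $2$-morphisms, and the appeal to semi-freeness of $\that$ (proposition \ref{prop_that_semifree}) for the unit clause of $\hat\Xi$ --- are exactly the implicit ingredients the paper relies on.
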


\begin{proof}
\begin{itemize}[-]
\item
\( \ftilde \) is clearly a morphism of globular sets.
\item
\( \ftilde \) preserves composition of 1-morphisms since we have by definition
\(
\ftilde (\hat f \hot \hat g) = \hat f \hot \hat g = \ftilde \hat f \hot \ftilde \hat g.
\)
\item
\( \ftilde \) preserves composition of 2-morphisms along objects and along 1-morphisms, which follows directly from the inductive definition of \( \ftilde. \)
\item
That \( \ftilde \) preserves composition of 3-morphisms along objects follows from the inductive definition of \( \hat \Xi \) together with the functoriality of \( \otimes. \)
For a detailed calculation compare with the proof of proposition \ref{prop_bar_is_magmoid_morph}, where an analogous (slightly more complicated) situation appears.
\item
That \( \ftilde \) preserves composition of 3-morphisms along 1-morphisms follows from the inductive definition of \( \hat \Xi \) together with the interchange laws for the local bicategories.
(For details compare with the proof of proposition \ref{prop_bar_is_magmoid_morph}.)
\item
That \( \ftilde \) preserves composition of 3-morphisms along 2-morphisms follows from the invertibility of \( \hat \Xi. \)
(For details compare with the proof of proposition \ref{prop_bar_is_magmoid_morph}.)
\end{itemize}
\end{proof}

It follows immediately from its definition, that \( \ftilde \) acts trivial on morphisms in \( \ttil. \)
Therefore we record:

\begin{satz}
Considered as a magmoid-endomorphism on \( \that, \) \( \ftilde \) is a projection (\( \ftilde \circ \ftilde = \ftilde \)) with image \( \ttil. \)
\end{satz}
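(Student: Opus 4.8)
The plan is to reduce the statement to two facts that are already available: the proposition that defines $\ftilde$ as a morphism of magmoids with codomain $\ttil$ (so $\ftilde$ takes values in $\ttil$), and the observation recorded immediately above, namely that $\ftilde$ restricts to the identity on every cell of $\ttil$. Granting these, both assertions become one-line consequences, so the genuine content lies in a careful reading of the definition of $\ftilde$ rather than in any computation. First I would make precise what is meant by viewing $\ftilde$ as an endomorphism of $\that$: it is the composite of $\ftilde \colon \that \to \ttil$ with the inclusion of 3-magmoids $\ttil \hookrightarrow \that$, and $\ftilde \hc \ftilde$ is the composite of this endomorphism with itself.

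For idempotence I would argue cellwise, organised by dimension. On objects and $1$-morphisms $\ftilde$ is literally the identity, so there is nothing to check. For a $2$-morphism $\hat\alpha$ of $\that$, the value $\ftilde(\hat\alpha)$ lies in $\mmor{\ttil}$ since the codomain of $\ftilde$ is $\ttil$; applying the recorded fact that $\ftilde$ fixes $\ttil$-cells then gives $\ftilde(\ftilde(\hat\alpha)) = \ftilde(\hat\alpha)$. For a $3$-morphism $\hat\Pi$, the value $\ftilde\hat\Pi = \hat\Xi_{t\hat\Pi} \hc \hat\Pi \hc \hat\Xi^{-1}_{s\hat\Pi}$ has both source and target in $\mmor{\ttil}$, hence is itself a $3$-cell of $\ttil$, and the same fixed-point argument applies. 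Thus $\ftilde \hc \ftilde = \ftilde$ in every dimension, which is exactly the desired equality of magmoid endomorphisms.

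For the image, one inclusion is definitional: every value of $\ftilde$ lies in $\ttil$, so $\operatorname{im}(\ftilde) \subseteq \ttil$. For the reverse inclusion I would use the fixed-point property once more: any cell $y$ of $\ttil$ satisfies $y = \ftilde(y)$, which exhibits $y$ as a value of $\ftilde$ and gives $\ttil \subseteq \operatorname{im}(\ftilde)$. Combining the two inclusions yields $\operatorname{im}(\ftilde) = \ttil$.

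The only step that is not entirely formal is the recorded claim that $\ftilde$ acts trivially on $\ttil$, and if I wanted the argument fully self-contained I would spell it out by induction over the construction of cells. On a $\ttil$-basic $2$-morphism $\ftilde$ falls into the third branch of its definition, since a $\ttil$-basic cell is by definition neither some $\hat i_a$ nor a unit $\hat 1_{\hat f}$ on a non-basic $\hat f$, so it is fixed; the recursion rules for $\hot$ and $\hs$ then propagate this to every $2$-cell of $\ttil$. Correspondingly $\hat\Xi$ is the identity on such $2$-cells, whence $\ftilde\hat\Pi = \hat\Pi$ for every $3$-cell of $\ttil$. Checking that the defining branches of $\ftilde$ and of $\hat\Xi$ collapse to identities precisely on the $\ttil$-basic generators is the one place demanding attention, but it is routine and is what the preceding sentence already asserts, so I expect no real obstacle here.
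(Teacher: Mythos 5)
Your proposal is correct and takes essentially the same approach as the paper, which records this proposition without proof as an immediate consequence of the two facts you isolate: \( \ftilde \) takes values in \( \ttil \) and fixes every cell of \( \ttil. \) One pedantic point: a \( \ttil \)-basic \( 2 \)-morphism may still be a unit \( \hat 1 _{\hat f} \) with \( \hat f \) a \( \that \)-basic \( 1 \)-morphism, in which case the \emph{second} branch of the definition of \( \ftilde \) applies rather than the third, but that branch returns \( \hat 1 _{\hat f} \) itself (and the corresponding \( \hat \Xi \) is the identity by semi-freeness of \( \that \)), so your conclusion is unaffected.
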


\begin{satz} \label{prop_ttil_is_tricat_tdl_is_triequ}
We can equip \( \ttil \) with units and constraints,
\[
\tilde 1 _{a} = \ftilde (\hat 1_a), \; \tilde i_a = \ftilde (\hat i _a), \; \tilde a _{\tilde \alpha, \tilde \beta, \tilde \gamma} = \ftilde (\hat a _{\tilde \alpha, \tilde \beta, \tilde \gamma}), \dots
\]
which makes \( \ttil \) a tricategory and \( \ftilde: \that \to \ttil \) a strict triequivalence.
\end{satz}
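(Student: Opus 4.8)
The plan is to exploit the two facts already established about \( \ftilde \): that it is a morphism of \( 3 \)-magmoids \( \that \to \ttil \), and that it is a projection acting as the identity on the sub-\( 3 \)-magmoid \( \ttil \). Since the units and constraints of \( \ttil \) are defined as the \( \ftilde \)-images of the corresponding cells of \( \that \), and \( \ftilde \) has image \( \ttil \) and respects source and target, these data are automatically cells of \( \ttil \) of the correct type. So the first step is merely to record that \( \ttil \) carries all the data demanded by the magmoidal description of a tricategory (definition \ref{def_tricat_as_magmoid}), after which the real work is to verify the axioms and the triequivalence conditions.

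Next I would show that \( \ftilde \) strictly preserves all units and constraints, i.e. that it is a virtually strict functor onto \( \ttil \). For constraints indexed only over objects and \( 1 \)-morphisms — such as \( \tilde 1_a \), \( \tilde a _{\hat f \hat g \hat h} \), \( \tilde l _{\hat f} \) — this is immediate, because \( \ftilde \) fixes objects and \( 1 \)-morphisms, so the required identity \( \ftilde(\hat a _{\hat f \hat g \hat h}) = \tilde a _{\hat f \hat g \hat h} \) holds by the very definition of the \( \ttil \)-constraints. The substance lies in the constraints indexed over \( 2 \)-morphisms — the top-level components of \( a, l, r, a ^{\adsq}, l ^{\adsq}, r ^{\adsq} \) and the components of \( \pi, \mu, \lambda, \rho \) and the \( \eta, \epsilon \)'s — since there \( \ftilde \) simultaneously conjugates the \( 3 \)-cell by the coherence isomorphisms \( \hat \Xi \) and replaces each indexing \( 2 \)-cell \( \hat \alpha \) by \( \ftilde \hat \alpha \). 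Here I would verify identities of the form \( \ftilde(\hat a _{\hat \alpha \hat \beta \hat \gamma}) = \tilde a _{\ftilde \hat \alpha, \ftilde \hat \beta, \ftilde \hat \gamma} \), and analogously for the remaining constraints, by combining the naturality of these constraints in \( \that \) with the inductive definition of \( \hat \Xi \). I expect this re-indexing compatibility to be the step that costs the most work.

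With \( \ftilde \) known to be a magmoid morphism that strictly preserves every unit and constraint, the tricategory axioms for \( \ttil \) follow formally. Each axiom diagram of definition \ref{def_tricat_as_magmoid} for \( \ttil \) is the image under \( \ftilde \) of the corresponding axiom diagram for \( \that \), taken with its indices restricted to \( \ttil \)-cells: the vertices are \( \hs \)-, \( \hot \)- and \( \hc \)-composites that \( \ftilde \) carries to the corresponding \( \ttil \)-composites, and the edges are \( \that \)-constraints that \( \ftilde \) carries to \( \ttil \)-constraints. Because \( \ftilde \) is a function on \( 3 \)-cells preserving all three compositions, a commuting \( \that \)-diagram is sent to a commuting \( \ttil \)-diagram; since \( \that \) is a tricategory, every such diagram commutes, and hence \( \ttil \) is a tricategory and \( \ftilde \) is a virtually strict functor.

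Finally I would check the four conditions of remark \ref{rem_characterization_virtually_strict_triequivalence}. Triessential surjectivity and local biessential surjectivity are trivial, as \( \ftilde \) is the identity on objects and on \( 1 \)-morphisms; \( 2 \)-local essential surjectivity holds because \( \ftilde \) is in fact surjective on \( 2 \)-morphisms, its image being exactly \( \mmor{\ttil} \). For \( 3 \)-local bijectivity I would observe that on the set of \( 3 \)-cells between fixed parallel \( 2 \)-cells \( \hat \alpha, \hat \beta \) the map \( \ftilde \) is \( \hat \Pi \mapsto \hat \Xi _{\hat \beta} \hc \hat \Pi \hc \hat \Xi ^{-1} _{\hat \alpha} \), i.e. conjugation by the fixed invertible coherence cells \( \hat \Xi \), which is a bijection onto the \( 3 \)-cells \( \ftilde \hat \alpha \to \ftilde \hat \beta \) with inverse given by conjugation by the \( \hat \Xi ^{-1} \). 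In summary, the only genuinely technical point is the middle step — the compatibility of \( \ftilde \) with re-indexing of the \( 2 \)-cell-indexed constraints — while everything else is either definitional or a direct transport of the axioms and structure of \( \that \).
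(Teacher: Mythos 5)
Your proposal is correct and follows essentially the same route as the paper, which omits this proof by pointing to the (structurally identical, slightly harder) proof of proposition \ref{prop_tbar_is_a_tricat}: define the units and constraints as \( \ftilde \)-images, check types via the magmoid-morphism property, transport the axioms of \( \that \) along \( \ftilde \), establish strict preservation of the \( 2 \)-cell-indexed constraints by combining naturality with the inductive definition of \( \hat \Xi \), and get \( 3 \)-local bijectivity from the fact that \( \ftilde \) acts on \( 3 \)-local sets by conjugation with invertible cells. You correctly identify the re-indexing compatibility of the \( 2 \)-cell-indexed constraints as the only substantive step, which is exactly where the paper's model proof spends its effort.
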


\begin{proof}
We omit the proof, since it is a slightly simpler version (due to the simpler definition of \( \hat \Xi \) in comparison with \( \tilde \Theta \)) of the proof of proposition \ref{prop_tbar_is_a_tricat}.
\end{proof}

The tricategory \( \ttil \) is already slightly stricter as \( \that, \) since some of its constraint cells are identities:

\begin{lem} \label{lem_phi_a_and_phi_fg_are_identities}
Let \( a \) be an object of \( \ttil \) and \( \hat f, \hat g \) be composable 1-morphisms of \( \ttil. \)
Then the constraint cells \( \tilde \phi_a: \tilde 1 _{(1_a)} \to \tilde i_a \) and \( \tilde \phi_{(\hat f, \hat g)}: \tilde 1 _{\hat f \hot \hat g} \to \tilde 1 _{\hat f} \hot \tilde 1 _{\hat g} \) are identities.
\end{lem}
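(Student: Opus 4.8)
The plan is to compute each constraint cell explicitly from the definition of $\ftilde$ and then to finish with a single appeal to semi-freeness of $\that$ (proposition \ref{prop_that_semifree}). Since the units and constraints of $\ttil$ are defined as the $\ftilde$-images of the corresponding data of $\that$ (proposition \ref{prop_ttil_is_tricat_tdl_is_triequ}), I have $\tilde \phi_a = \ftilde(\hat \phi_a)$ and $\tilde \phi_{(\hat f, \hat g)} = \ftilde(\hat \phi_{(\hat f, \hat g)})$. Recall that on a 3-morphism $\hat \Pi$ the map $\ftilde$ is given by $\ftilde \hat \Pi = \hat \Xi_{t \hat \Pi} \hc \hat \Pi \hc \hat \Xi^{-1}_{s \hat \Pi}$ with each $\hat \Xi$ a coherence 3-morphism. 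As $\hat \phi_a$ and $\hat \phi_{(\hat f, \hat g)}$ are constraint 3-cells, both images are $\hc$-composites of coherence 3-morphisms, hence themselves coherence 3-morphisms. The substance of the proof is therefore only to check that each image has equal source and target, both being coherence 2-morphisms; a coherence 3-morphism from a coherence 2-morphism to itself is then forced to be an identity.

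For $\tilde \phi_a$ I would first record the endpoints. We have $s \hat \phi_a = \hat 1_{(\hat 1_a)}$ and $t \hat \phi_a = \hat i_a$. Since $\hat 1_a = 1_a$ is a $\that$-basic 1-morphism, the unit $\hat 1_{(\hat 1_a)}$ is $\ttil$-basic and is fixed by $\ftilde$ (second clause of $\ftilde$ on basic 2-cells, with a one-element sequence), whereas the first clause gives $\ftilde(\hat i_a) = \hat 1_{(\hat 1_a)}$. Hence the source and target of $\ftilde(\hat \phi_a)$ are the same coherence 2-morphism $\hat 1_{(\hat 1_a)}$. (Directly, $\hat \Xi_{\hat i_a} = \hat \phi_a^{-1}$ and $\hat \Xi_{\hat 1_{(\hat 1_a)}}$ is the identity, so the composite collapses to $\hat \phi_a^{-1} \hc \hat \phi_a = 1_{\hat 1_{(\hat 1_a)}}$; but I prefer the uniform conclusion below.)

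For $\tilde \phi_{(\hat f, \hat g)}$ the endpoints are $s \hat \phi_{(\hat f, \hat g)} = \hat 1_{\hat f \hot \hat g}$ and $t \hat \phi_{(\hat f, \hat g)} = \hat 1_{\hat f} \hot \hat 1_{\hat g}$. The key combinatorial observation is that, writing $\hat f$ and $\hat g$ as bracketed sequences of basic 1-morphisms $(f_i)^{\sigma}$ and $(g_j)^{\rho}$, the composite $\hat f \hot \hat g$ is the concatenated sequence carrying the combined bracketing $\sigma \hot \rho$; since forming a bracketed composite is functorial in the bracketing, the second clause of $\ftilde$ yields \( \ftilde(\hat 1_{\hat f \hot \hat g}) = \comseq{\otimes}{(\hat 1_{f_i})}^{\sigma} \hot \comseq{\otimes}{(\hat 1_{g_j})}^{\rho} = \ftilde(\hat 1_{\hat f}) \hot \ftilde(\hat 1_{\hat g}) \). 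As $\ftilde$ preserves $\hot$, the right-hand side is also $\ftilde(\hat 1_{\hat f} \hot \hat 1_{\hat g})$, so the source and target of $\ftilde(\hat \phi_{(\hat f, \hat g)})$ coincide, and both are $\hot$-composites of units on basic 1-morphisms, i.e. coherence 2-morphisms.

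Finally I would conclude uniformly. In either case $\ftilde(\hat \phi)$ is a coherence 3-morphism whose source equals its target, this common value being a coherence 2-morphism $\gamma$. Because $\that$ is semi-free (proposition \ref{prop_that_semifree}), there is a unique coherence 3-morphism $\gamma \to \gamma$; the identity $1_\gamma$ is one such, so any coherence 3-morphism $\gamma \to \gamma$ equals $1_\gamma$. Thus $\tilde \phi_a$ and $\tilde \phi_{(\hat f, \hat g)}$ are identities. I expect the main obstacle to be exactly the bookkeeping of the middle two paragraphs---tracing the definition of $\ftilde$ (and of the conjugating isomorphisms $\hat \Xi$) carefully enough to see that the two endpoints of each constraint are sent to the \emph{same} coherence 2-morphism; once that is established, the semi-freeness argument is immediate.
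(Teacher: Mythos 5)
Your proof is correct and follows essentially the same route as the paper's (much terser) argument: the paper simply says that unravelling the definitions shows each constraint is a coherence automorphism of a coherence 2-morphism, and then invokes semi-freeness of \( \that \). Your write-up supplies exactly that unravelling — identifying the \( \ftilde \)-images of the endpoints and checking they coincide — so it is a fleshed-out version of the same proof.
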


\begin{proof}
When we unravel definitions we realize that \( \tilde \phi_a \) as well as \( \tilde \phi_{(\hat f, \hat g)} \) is given as a coherence automorphism on a coherence 2-morphism.
Therefore \( \tilde \phi_a \) and \( \tilde \phi_{(\hat f, \hat g)} \) are identities, because \( \that \) is semi-free.
\end{proof}

The tricategory \( \ttil \) is still too complicated, to further strictify it by applying propositions \ref{prop_coh_3_system} and \ref{prop_T_mod_C23}.
Therefore we show that \( \ttil \) is triequivalent to a "more structured" tricategory \( \tbar, \) in which any 2-morphism is a \( * \)-composite of so called \( \tbar \)-basic 2-morphisms.
Such a \( \tbar\)-basic 2-morphism is a \( \hot \)-composite of precisely one \( \ttil \)-basic 2-morphisms and arbitrary many unit 2-morphism in \( \ttil. \)
Any 2-morphism \( \tilde \alpha \in \mmor{\ttil} \) can be "deformed" to a 2-morphism in \( \tbar \) by continually applying interchangers and local unit constraints.
This is essentially what the virtually strict triequivalence \( \barf: \ttil \to \tbar \) does.

We start by defining \( \tbar \) as a 3-globular set \( \tbar \subseteq \ttil: \)

\begin{mydef}
\( \tbar \) is the following 3-globular subset of \( \ttil: \)
\begin{itemize}
\item
\( \ob{\tbar} = \ob{\ttil} \) and \( \mor{\tbar} = \mor{\ttil}. \)

\item
A \( \tbar \)-basic 2-morphism is a 2-morphism of the form
\[
(\tilde 1 _{f_1} \hot \dots \hot \tilde 1 _{f_m} \hot \tilde \alpha \hot \tilde 1 _{f _{m+1}} \hot \dots \hot \tilde 1 _{f _n}) ^{\sigma},
\]
where \( m \leq n \) are natural numbers, \( f_i \) are \( \that \)-basic 1-morphisms and \( \tilde \alpha \) is a \( \ttil \)-basic 2-morphism.

\( \mmor{\tbar} \) is the smallest collection of 2-morphisms in \( \ttil, \) which contains all \( \tbar \)-basic 2-morphism and is closed under \( \hs. \)

\item
The 3-morphisms of \( \tbar \) are the 3-morphisms of \( \that \) whose types are contained in \( \mmor{\tbar}. \)
\end{itemize}
\end{mydef}

\begin{bem}
With notation \ref{not_comseq} we obtain that any 2-morphism in \( \tbar \) is of the form \( \comseq{\hs}{(\bar \alpha_i)} ^{\sigma} _{n}, \)
where \( \bar \alpha_i \) are \( \tbar \)-basic 2-morphisms.
\end{bem}

Since \( \mmor{\tbar} \) is not closed under \( \hot, \) \( \tbar \) is not a sub-3-magmoid of 	\( \ttil \) or \( \that. \)
Therefore we define a magmoidal structure on \( \tbar. \)

\begin{mydef} \label{def_magmoid_structure_of_tbar}
\( \tbar \) carries the structure of a 3-magmoid \( (\tbar, \botimes, \bstar, \bcirc) \), where
\begin{itemize}
\item
\( \bstar \) and \( \bcirc \) are the restrictions of \( \hs \) and \( \hc \) on \( \tbar \subseteq \that. \)
\item
On 1-morphisms \( \botimes = \hot. \)
\item
Let \( \bar \alpha = \comseq{\hs}{(\bar \alpha_i)} ^{\sigma} _{n} \) and \( \bar \beta = \comseq{\hs}{(\bar \beta_j)} ^{\tau} _{m} \) be arbitrary 2-morphisms in \( \tbar, \)
where \( \bar \alpha_i \) and \( \bar \beta_j \) are \( \tbar \)-basic 2-morphisms.

If \( \bar \alpha \hot \bar \beta \) is contained in \( \mmor{ \that}, \) we define
\[
\bar \alpha \botimes \bar \beta := \bar \alpha \hot \bar \beta.
\]

Otherwise we define
\[
\bar \alpha \botimes \bar \beta
=
\comseq{\hs}{(\tilde 1 _{t \bar \alpha} \hot \bar \beta_j)} _{m}^{\tau}
\hs
\comseq{\hs}{(\bar \alpha_i \hot \tilde 1 _{s \bar \beta})} _{n}^{\sigma}.
\]

\item
Next we define a 3-morphism
\[
\tilde \Theta _{\bar \alpha, \bar \beta}: \bar \alpha \hot \bar \beta \to \bar \alpha \botimes \bar \beta.
\]

If \( \bar \alpha \hot \bar \beta \) is contained in \( \mmor{ \tbar}, \) we define
\[
\tilde \Theta _{\bar \alpha, \bar \beta} = \tilde 1 _{\bar \alpha \hot \bar \beta}
\]

Otherwise \( \tilde \Theta _{\bar \alpha, \bar \beta} \) is obtained by applying the counit \( \epsilon: \oper{F} \ttil \to \ttil \) to the following unique coherence 3-morphisms in \( \oper{F} \ttil: \)
\[
\comseq{\fstar}{(\bar \alpha_i)} _{n}^{\sigma} \fotimes \comseq{\fstar}{(\bar \beta_j)} _{m}^{\tau}
\rightarrow
\comseq{\fstar}{(1 ^{\sms{F}} _{t \bar \alpha} \fotimes \bar \beta_j)} _{m}^{\tau}
\fstar
\comseq{\fstar}{(\bar \alpha_i \fotimes 1 ^{\sms{F}} _{s \bar \beta})} _{n}^{\sigma}.
\]

\item
On 3-morphisms \( \botimes \) is defined by
\[
\bar{\Pi} \botimes \bar{\Lambda} := \tilde \Theta _{t \Pi, t \Lambda} \hc (\bar{\Pi} \hot \bar{\Lambda}) \hc \tilde \Theta _{s \Pi, s \Lambda}.
\]
\end{itemize}
\end{mydef}

\begin{satz} \label{prop_bar_is_magmoid_morph}
There is a morphism of 3-magmoids \( \barf: \ttil \to \tbar, \) defined as follows:

\begin{itemize}
\item
On objects and 1-morphisms \( \barf \) is the identity.
\item
On \( \ttil \)-basic 2-morphisms \( \barf \) is the identity.
On the remaining 2-morphisms of \( \ttil, \) \( \barf \) is defined via the recursion rules
\[
\barf (\tilde \alpha \hs \tilde \beta) := \barf (\tilde \alpha) \hs \barf (\tilde \beta) \qquad
\barf (\tilde \alpha \hot \tilde \beta) := \barf (\tilde \alpha) \botimes \barf (\tilde \beta).
\]

\item
On 3-morphisms \( \barf \) is defined as follows:
\[
\barf \tilde \Pi := \tilde \Theta _{t \tilde \Pi} \hc \tilde \Pi \hc \tilde \Theta ^{-1} _{s \tilde \Pi},
\]
where
\(
\tilde \Theta _{\tilde \alpha}: \tilde \alpha \to \barf \tilde \alpha,
\)
is a 3-morphism inductively defined as follows:

For a \( \ttil \)-basic 2-morphism \( \tilde \alpha \) we define \( \tilde \Theta _{\tilde \alpha} = 1 _{\tilde \alpha}. \)

The definition of \( \tilde \Theta _{\tilde \alpha} \) extends to general \( \tilde \alpha \in \mmor{\ttil} \) via recursion:
\[
\tilde \Theta _{\tilde \alpha \hs \tilde \beta} := \tilde \Theta _{ \tilde \alpha} \hs \tilde \Theta _{\tilde \beta}
\qquad
\tilde \Theta _{\tilde \alpha \hot \tilde \beta} := \tilde \Theta _{\barf \tilde \alpha,\barf \tilde \beta} \circ \tilde \Theta _{\tilde \alpha} \hot \tilde \Theta _{\tilde \beta}
\]
\end{itemize}
\end{satz}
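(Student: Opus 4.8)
The plan is to check the three defining properties of a morphism of $3$-magmoids: that $\barf$ is a map of globular sets, and that it carries each of the composition operations $\hot,\hs,\hc$ of $\ttil$ to the operations $\botimes,\bstar,\bcirc$ of $\tbar$ in every dimension in which they are defined. The recursive clauses defining $\barf$ and the normalising $3$-cells $\tilde\Theta_{\tilde\alpha}$ are unambiguous because the $2$-morphisms of $\ttil$ inherit from $\that$ a unique formal decomposition: a $2$-morphism is either $\ttil$-basic, or uniquely a formal $\hot$- or $\hs$-composite of two $2$-morphisms. I would first record this and use it throughout as the justification for the definitions and for the proofs by induction on formal structure.

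The first genuine step is well-definedness together with the globular-map property. By induction on $\tilde\alpha$ one shows $\barf\tilde\alpha\in\mmor{\tbar}$ (a $\ttil$-basic $2$-cell is $\tbar$-basic; $\mmor{\tbar}$ is closed under $\hs=\bstar$; and $\botimes$ lands in $\tbar$ by Definition~\ref{def_magmoid_structure_of_tbar}) and, simultaneously, that $\tilde\Theta_{\tilde\alpha}\colon\tilde\alpha\to\barf\tilde\alpha$ is a well-typed $3$-cell, which forces $\barf\tilde\alpha$ to be parallel to $\tilde\alpha$. Hence $s\barf\tilde\alpha=s\tilde\alpha=\barf s\tilde\alpha$ and likewise for targets, while on $3$-cells the formula $\barf\tilde\Pi=\tilde\Theta_{t\tilde\Pi}\hc\tilde\Pi\hc\tilde\Theta_{s\tilde\Pi}^{-1}$ gives $s\barf\tilde\Pi=\barf s\tilde\Pi$ and $t\barf\tilde\Pi=\barf t\tilde\Pi$ at once. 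Preservation of $\hot$ and $\hs$ on $2$-morphisms is then immediate from the definition of $\barf$, once one notes that $\bstar$ is the restriction of $\hs$ to $\tbar$ and reads off $\barf(\tilde\alpha\hot\tilde\beta)=\barf\tilde\alpha\botimes\barf\tilde\beta$.

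The substance lies in the three composition laws for $3$-morphisms. Composition along $2$-morphisms ($\hc=\bcirc$) is the easiest: for $\hc$-composable $\tilde\Pi,\tilde\Lambda$ one has $s\tilde\Pi=t\tilde\Lambda$, so $\tilde\Theta_{s\tilde\Pi}=\tilde\Theta_{t\tilde\Lambda}$ and the inner factor $\tilde\Theta_{s\tilde\Pi}^{-1}\hc\tilde\Theta_{t\tilde\Lambda}$ telescopes to a unit; invertibility of $\tilde\Theta$ then collapses $\barf\tilde\Pi\hc\barf\tilde\Lambda$ to $\barf(\tilde\Pi\hc\tilde\Lambda)$. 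For composition along $1$-morphisms ($\hs=\bstar$) I would apply the interchange law of the local bicategory to rewrite $\barf\tilde\Pi\hs\barf\tilde\Lambda$ as $(\tilde\Theta_{t\tilde\Pi}\hs\tilde\Theta_{t\tilde\Lambda})\hc(\tilde\Pi\hs\tilde\Lambda)\hc(\tilde\Theta_{s\tilde\Pi}^{-1}\hs\tilde\Theta_{s\tilde\Lambda}^{-1})$, and then use the recursion rule $\tilde\Theta_{\tilde\alpha\hs\tilde\beta}=\tilde\Theta_{\tilde\alpha}\hs\tilde\Theta_{\tilde\beta}$ to identify the outer factors with $\tilde\Theta_{t(\tilde\Pi\hs\tilde\Lambda)}$ and $\tilde\Theta_{s(\tilde\Pi\hs\tilde\Lambda)}^{-1}$, yielding $\barf(\tilde\Pi\hs\tilde\Lambda)$.

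The main obstacle is preservation along objects, $\barf(\tilde\Pi\hot\tilde\Lambda)=\barf\tilde\Pi\botimes\barf\tilde\Lambda$, because here the normalising $\tilde\Theta_{\tilde\alpha}$ and the interchanging $\tilde\Theta_{\bar\alpha,\bar\beta}$ of Definition~\ref{def_magmoid_structure_of_tbar} intertwine. I would expand $\barf(\tilde\Pi\hot\tilde\Lambda)$ via $t(\tilde\Pi\hot\tilde\Lambda)=t\tilde\Pi\hot t\tilde\Lambda$ and the recursion $\tilde\Theta_{\tilde\alpha\hot\tilde\beta}=\tilde\Theta_{\barf\tilde\alpha,\barf\tilde\beta}\hc(\tilde\Theta_{\tilde\alpha}\hot\tilde\Theta_{\tilde\beta})$, split the inverse of $\tilde\Theta_{s(\tilde\Pi\hot\tilde\Lambda)}$ accordingly, and apply functoriality of $\hot$ (the interchange law) to recombine the middle factors as $(\tilde\Theta_{t\tilde\Pi}\hc\tilde\Pi\hc\tilde\Theta_{s\tilde\Pi}^{-1})\hot(\tilde\Theta_{t\tilde\Lambda}\hc\tilde\Lambda\hc\tilde\Theta_{s\tilde\Lambda}^{-1})=\barf\tilde\Pi\hot\barf\tilde\Lambda$. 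What remains is $\tilde\Theta_{\barf t\tilde\Pi,\barf t\tilde\Lambda}\hc(\barf\tilde\Pi\hot\barf\tilde\Lambda)\hc\tilde\Theta_{\barf s\tilde\Pi,\barf s\tilde\Lambda}^{-1}$, which, using $\barf t=t\barf$ and $\barf s=s\barf$ from the globular step, is exactly $\barf\tilde\Pi\botimes\barf\tilde\Lambda$ (the final factor being $\tilde\Theta^{-1}$, as forced by typing). The only real care is in keeping the two flavours of $\tilde\Theta$ and their types aligned; no coherence input is needed beyond the well-definedness of $\tilde\Theta_{\bar\alpha,\bar\beta}$, which rests on the semi-freeness of $\that$ from Proposition~\ref{prop_that_semifree}.
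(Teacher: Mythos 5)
Your proposal is correct and follows essentially the same route as the paper's proof: the telescoping of $\tilde\Theta_{s\tilde\Pi}^{-1}\hc\tilde\Theta_{t\tilde\Lambda}$ for $\hc$, the local interchange law together with $\tilde\Theta_{\tilde\alpha\hs\tilde\beta}=\tilde\Theta_{\tilde\alpha}\hs\tilde\Theta_{\tilde\beta}$ for $\hs$, and the expansion of $\tilde\Theta_{\tilde\alpha\hot\tilde\beta}$ via the recursion followed by functoriality of $\hot$ to recombine the middle factors into $\barf\tilde\Pi\hot\barf\tilde\Lambda$ and hence $\barf\tilde\Pi\botimes\barf\tilde\Lambda$. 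Your extra remarks on the uniqueness of formal decompositions and the typing of the two flavours of $\tilde\Theta$ only make explicit what the paper leaves implicit.
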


\begin{proof}
It is easy to see that \( \barf \) is a morphism of globular sets.

By definition \( \barf \) strictly preserves \( \otimes \)-composition of 1- and 2-morphisms and \( * \)-composition of 2-morphisms.

A direct calculation shows that \( \barf \) strictly preserves \( \otimes \)-composition of 3-morphism:
\begin{align*}
& \barf ( \tilde \Pi \hot  \tilde \Lambda) \\
={} & \tilde \Theta _{t  \tilde \Pi \hot t  \tilde \Lambda} \circ ( \tilde \Pi \hot \tilde \Lambda) \circ \tilde \Theta ^{-1} _{s  \tilde \Pi \hot s \tilde \Lambda} \\
={} & \tilde \Theta _{\barf (t  \tilde \Pi), \barf (t  \tilde \Lambda)} \circ ( \tilde \Theta _{t  \tilde \Pi} \hot \tilde \Theta _{t \tilde \Lambda}) \circ ( \tilde \Pi \hot \tilde \Lambda)
\circ (\tilde \Theta ^{-1} _{s  \tilde \Pi} \hot \tilde \Theta ^{-1} _{t  \tilde \Lambda}) \circ \tilde \Theta ^{-1} _{\barf (s  \tilde \Pi), \barf (s  \tilde \Lambda)} \\
={} & \tilde \Theta _{\barf (t  \tilde \Pi), \barf (t  \tilde \Lambda)} \circ (\barf  \tilde \Pi \hot \barf  \tilde \Lambda) \circ \tilde \Theta ^{-1} _{\barf (s  \tilde \Pi), \barf (s  \tilde \Lambda)} \\
={} & \barf  \tilde \Pi \botimes \barf  \tilde \Lambda.
\end{align*}
Also \( * \)-composition of 3-morphisms is strictly preserved:
\begin{align*}
\barf (\tilde \Pi \hs \tilde \Lambda)
={} & \tilde \Theta _{t (\tilde \Pi \hs \tilde \Lambda)} \circ (\tilde \Pi \hs \tilde \Lambda) \circ \tilde \Theta ^{-1} _{s (\tilde \Pi \hs \tilde \Lambda)} \\
={} & (\tilde \Theta _{t \tilde \Pi} \hs \tilde \Theta _{t \tilde \Lambda}) \circ (\tilde \Pi \hs \tilde \Lambda) \circ (\tilde \Theta ^{-1} _{s \tilde \Pi} \hs \tilde \Theta ^{-1} _{s \tilde \Lambda}) \\
={} & (\tilde \Theta _{t \tilde \Pi} \circ \tilde \Pi \circ \tilde \Theta ^{-1} _{s \tilde \Pi}) \hs (\tilde \Theta _{t \tilde \Lambda} \circ \tilde \Lambda \circ \tilde \Theta ^{-1} _{s \tilde \Lambda}) \\
={} & \barf \tilde \Pi \bstar \barf \tilde \Lambda.
\end{align*}
Finally \( \circ \)-composition of 3-morphisms is strictly preserved as well:
\begin{align*}
\barf (\tilde \Pi  \circ \tilde \Lambda)
={} & \tilde \Theta _{t \tilde \Pi} \circ \tilde \Pi \circ \tilde \Lambda \circ \tilde \Theta ^{-1} _{s \tilde \Lambda} \\
={} & \tilde \Theta _{t \tilde \Pi} \circ \tilde \Pi \circ \tilde \Theta ^{-1} _{s \tilde \Pi} \circ \Theta _{ t \tilde \Lambda} \circ \tilde \Lambda \circ \tilde \Theta ^{-1} _{s \tilde \Lambda} \\
={} & \barf \tilde \Pi \circ \barf \tilde \Lambda.
\end{align*}
\end{proof}

\begin{bem}~
If we consider \( \ttil \) and \( \tbar \) as globular sets and \( \barf: \ttil \to \that \) as a morphism of globular sets,
we observe \( \barf \) acts trivially on \( \tbar \subseteq \ttil \) and that \( \barf \) is a projection (\( \barf \circ \barf = \barf. \))
\end{bem}

\begin{satz} \label{prop_tbar_is_a_tricat}
The 3-magmoid \(\tbar \) carries a unique tricategory structure, for which \( \barf: \ttil \to \tbar \) becomes a virtually strict triequivalence.
\end{satz}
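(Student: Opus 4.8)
The plan is to transport the tricategory structure of \( \ttil \) along the surjective magmoid morphism \( \barf \), exactly as was done for \( \ftilde \) in proposition \ref{prop_ttil_is_tricat_tdl_is_triequ}. Concretely, I would equip \( \tbar \) with units and constraint cells by pushing forward those of \( \ttil \): for cells \( \bar \alpha, \bar \beta, \bar \gamma \) of \( \tbar \) I set \( \bar 1_a := \barf(\tilde 1_a) \), \( \bar a _{\bar \alpha \bar \beta \bar \gamma} := \barf(\tilde a _{\bar \alpha \bar \beta \bar \gamma}) \), and analogously for every constraint in the list of definition \ref{def_tricat_as_magmoid}. Since \( \barf \) is a morphism of \( 3 \)-magmoids (proposition \ref{prop_bar_is_magmoid_morph}) and restricts to the identity on \( \tbar \subseteq \ttil \), the types come out correctly: applying \( \barf \) to the source of, say, \( \tilde a _{\bar \alpha \bar \beta \bar \gamma} \) turns \( \hs \) and \( \hot \) into \( \bstar \) and \( \botimes \) and fixes the \( \tbar \)-cells, yielding precisely \( \bar a _{t \bar \alpha, t \bar \beta, t \bar \gamma} \bstar ((\bar \alpha \botimes \bar \beta) \botimes \bar \gamma) \), the source required by definition \ref{def_tricat_as_magmoid}. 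Hence the data are well defined.

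\textbf{Axioms.} Next I would verify the axioms of definition \ref{def_tricat_as_magmoid}. Each axiom is an equation between \( (\botimes, \bstar, \bcirc) \)-composites of constraints and cells. The key observation is that, because \( \barf \) preserves the three compositions and the constraints of \( \tbar \) are by definition \( \barf \)-images, every such composite in \( \tbar \) equals \( \barf \) of the corresponding composite in \( \ttil \). Therefore each axiom in \( \tbar \), evaluated at cells \( \bar \alpha, \dots \in \tbar \), is the \( \barf \)-image of the same axiom in \( \ttil \) evaluated at those very cells (regarded as cells of \( \ttil \)); since the latter holds, so does the former. This is the transfer mechanism already used in propositions \ref{prop_coh_3_system} and \ref{prop_coherence_23_system}, so \( \tbar \) becomes a tricategory.

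\textbf{Virtual strictness and the main obstacle.} For \( \barf \) to be virtually strict it must satisfy \( \barf(\tilde c _{\mathrm{ind}}) = \bar c _{\barf(\mathrm{ind})} \) for each constraint \( c \). When the indices are objects or \( 1 \)-morphisms this is immediate, since \( \barf \) fixes them and the \( \tbar \)-constraint is the pushforward by definition; this disposes of all constraints indexed by \( 0 \)- and \( 1 \)-cells (among them \( \pi, \mu, \lambda, \rho \), the components \( \eta^a, \epsilon^a \), and the unit data \( \phi_a \)). The real content is preservation of the constraint \( 3 \)-cells indexed by \( 2 \)-morphisms — the top-level components \( a _{\alpha \beta \gamma}, l_\alpha, r_\alpha \) and their adjoint versions, the interchangers \( \phi ^{\otimes} _{(\alpha,\beta)(\alpha',\beta')} \), and the local constraints \( a ^{loc}, l ^{loc}, r ^{loc} \). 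For these I must show \( \barf(\tilde c _{\tilde \alpha \dots}) = \barf(\tilde c _{\barf \tilde \alpha \dots}) \), and I expect this to be the main obstacle. It should follow from the pseudonaturality (resp. naturality) of the relevant transformation (resp. functor) in \( \ttil \) — itself one of the axioms of definition \ref{def_tricat_as_magmoid} — together with the defining formula \( \barf \tilde \Pi = \tilde \Theta _{t \tilde \Pi} \hc \tilde \Pi \hc \tilde \Theta ^{-1} _{s \tilde \Pi} \). Concretely, the naturality square for the component \( c \), taken with the \( \tilde \Theta _{\tilde \alpha} \colon \tilde \alpha \to \barf \tilde \alpha \) as its vertical edges, combined with the inductive clause \( \tilde \Theta _{\tilde \alpha \hot \tilde \beta} = \tilde \Theta _{\barf \tilde \alpha, \barf \tilde \beta} \hc (\tilde \Theta _{\tilde \alpha} \hot \tilde \Theta _{\tilde \beta}) \), rewrites the \( \tilde \Theta \)-conjugate of \( \tilde c _{\tilde \alpha \dots} \) into that of \( \tilde c _{\barf \tilde \alpha \dots} \); this is a diagram chase of the same flavour as the well-definedness checks in the proof of proposition \ref{prop_coherence_23_system}.

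\textbf{Triequivalence and uniqueness.} Finally I would check the four conditions of remark \ref{rem_characterization_virtually_strict_triequivalence}. Triessential surjectivity is trivial since \( \barf \) is the identity on objects; local biessential surjectivity and \( 2 \)-local essential surjectivity hold because \( \tbar \subseteq \ttil \) and \( \barf \) restricts to the identity there, so \( \barf \) is onto \( 1 \)- and \( 2 \)-cells. For \( 3 \)-local bijectivity, surjectivity is again the projection property, and injectivity follows because parallel \( \tilde \Pi, \tilde \Lambda \) share source and target, hence the same \( \tilde \Theta \)'s, so the invertible conjugation \( \tilde \Pi \mapsto \tilde \Theta _{t \tilde \Pi} \hc \tilde \Pi \hc \tilde \Theta ^{-1} _{s \tilde \Pi} \) is injective. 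Uniqueness is then immediate: any tricategory structure on the (already fixed) \( 3 \)-magmoid \( \tbar \) making \( \barf \) virtually strict must satisfy \( \bar c _{\bar \alpha \dots} = \barf(\tilde c _{\bar \alpha \dots}) \) — taking the preimage \( \bar \alpha \in \ttil \) to be \( \bar \alpha \) itself — so all units and constraints are forced.
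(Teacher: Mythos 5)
Your proposal is correct and follows essentially the same route as the paper: define the units and constraints of \( \tbar \) as \( \barf \)-images (which forces uniqueness), transfer the axioms as \( \barf \)-images of the corresponding axioms of \( \ttil \), and reduce strict preservation of the constraint cells indexed by 2-morphisms to the naturality of the relevant transformation combined with the recursive clause for \( \tilde \Theta \), exactly as in the paper's diagram chases for \( a ^{loc}, l ^{loc}, \phi ^{\otimes} \) and \( l _{\tilde \alpha} \). The triequivalence argument (conjugation by the invertible \( \tilde \Theta \) gives 3-local bijectivity, the projection property gives the surjectivity conditions) also matches the paper's.
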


\begin{proof}
Since \( \barf \) is required to be a strict functor, the units of \( \tbar \) need to be
\[
\bar 1 _{a} = \barf (\tilde 1_a), \;
\bar 1 _{\hat f} = \barf (\tilde 1 _{\hat f}), \;
\bar 1 _{\bar \alpha} = \barf (\tilde 1 _{\bar \alpha}),
\]
the constraints of \( \tbar \) need to be
\[
\bar{i} _{a} := \barf (\tilde i _{a}), \;
\bar{a} _{\hat f \hat g \hat h} := \barf (\tilde a _{\hat f \hat g \hat h}), \;
\bar{a} _{\bar{\alpha} \bar{\beta} \bar{\gamma}} := \barf (\tilde a _{\bar{\alpha} \bar{\beta} \bar{\gamma}}),
\]
and analogous for the remaining constraints (see definition \ref{def_tricat_as_magmoid} for a complete list of constraints).
Now we show that these constraint data equip \( \tbar \) with the structure of a tricategory:

First we check that the types of the constraint-cells are as they need to be according to definition \ref{def_tricat_as_magmoid}.
This is the case because of the way the constraints of \( \tbar \) are defined, and because \( \barf \) is a morphism of magmoids.
For example
\[
s (\bar{i} _{a}) = s (\barf \tilde i _{a}) = \barf s (\tilde i _{a}) = \barf \tilde 1_a = \bar{1}_a,
\quad \text{or}
\]
\begin{align*}
s (\bar{a} _{\bar{\alpha} \bar{\beta} \bar{\gamma}}) ={}&
s (\barf \tilde a _{\bar{\alpha}, \bar{\beta}, \bar{\gamma}}) =
\barf s (\tilde a _{\bar{\alpha}, \bar{\beta}, \bar{\gamma}}) \\ ={}&
\barf \big(\tilde a _{t \alpha, t \beta, t \gamma} \hs \big( (\bar{\alpha} \hot \bar{\beta}) \hot \bar{\gamma} \big) \big) =
\bar{a} _{t \alpha, t \beta, t \gamma} \hs \big( (\bar{\alpha} \botimes \bar{\beta}) \botimes \bar{\gamma} \big),
\end{align*}
and analogously for the remaining constraint cells.

Now we check that the axioms of definition \ref{def_tricat_as_magmoid} are satisfied for \( \tbar. \)

This follows, because any axiom for \( \tbar \) is the image of an axiom for \( \ttil \) under \( \barf. \)

For example consider 3-morphisms
\[
\bar{\Pi} _{1}: \bar{\alpha} \to \bar{\alpha}', \;
\bar{\Pi} _{2}: \bar{\beta} \to \bar{\beta}', \;
\bar{\Pi} _{3}: \bar{\gamma} \to \bar{\gamma}'
\]
then
\[
\begin{tikzcd}
\bar{a} _{t \bar{\alpha}, t \bar{\beta}, t \bar{\gamma}} \hs \big( (\bar{\alpha} \botimes \bar{\beta}) \botimes \bar{\gamma} \big)
\ar[r, "\bar{a} _{\bar{\alpha},\bar{\beta},\bar{\gamma}}"]
\ar[d, "{1 \hs \big( (\bar{\Pi}_1 \botimes \bar{\Pi}_2) \botimes \bar{\Pi}_3 \big)}" swap]
	& \big( \bar{\alpha} \botimes (\bar{\beta} \botimes \bar{\gamma}) \big) \hs \bar{a} _{s \bar{\alpha}, s \bar{\beta}, s \bar{\gamma}}
	\ar[d, "{\big( \bar{\Pi}_1 \botimes (\bar{\Pi}_2 \botimes \bar{\Pi}_3) \big) \hs 1}"] \\
\bar{a} _{t \bar{\alpha}', t \bar{\beta}', t \bar{\gamma}'} \hs \big( (\bar{\alpha}' \botimes \bar{\beta}') \botimes \bar{\gamma}' \big)
\ar[r, "\bar{a} _{\bar{\alpha}',\bar{\beta}',\bar{\gamma}'}"]
	& \big( \bar{\alpha}' \botimes (\bar{\beta}' \botimes \bar{\gamma}') \big) \hs \bar{a} _{s \bar{\alpha}', s \bar{\beta}', s \bar{\gamma}'} \\
\end{tikzcd}
\]
commutes, because it is the image of the following diagram under \( \barf. \)
\[
\begin{tikzcd}
\tilde{a} _{t \bar{\alpha}, t \bar{\beta}, t \bar{\gamma}} \hs \big( (\bar{\alpha} \hot \bar{\beta}) \hot \bar{\gamma} \big)
\ar[r, "\tilde{a} _{\bar{\alpha},\bar{\beta},\bar{\gamma}}"]
\ar[d, "{1 \hs \big( (\bar{\Pi}_1 \hot \bar{\Pi}_2) \hot \bar{\Pi}_3 \big)}" swap]
	& \big( \bar{\alpha} \hot (\bar{\beta} \hot \bar{\gamma}) \big) \hs \tilde a _{s \bar{\alpha}, s \bar{\beta}, s \bar{\gamma}}
	\ar[d, "{\big( \bar{\Pi}_1 \hot (\bar{\Pi}_2 \hot \bar{\Pi}_3) \big) \hs 1}"] \\
\tilde{a} _{t \bar{\alpha}', t \bar{\beta}', t \bar{\gamma}'} \hs \big( (\bar{\alpha}' \hot \bar{\beta}') \hot \bar{\gamma}' \big)
\ar[r, "\tilde{a} _{\bar{\alpha}',\bar{\beta}',\bar{\gamma}'}"]
	& \big( \bar{\alpha}' \hot (\bar{\beta}' \hot \bar{\gamma}') \big) \hs \tilde a _{s \bar{\alpha}', s \bar{\beta}', s \bar{\gamma}'} \\
\end{tikzcd}
\]

In proposition \ref{prop_bar_is_magmoid_morph} we have already shown that \( \barf \) is a morphism of magmoids.
For showing \( \barf \) is a virtually strict functor it remains to show, that \( \barf \) strictly preserves units and constraints.

Since \( \barf \) is the identity on objects and 1-morphisms of \( \ttil, \) all units or constraints indexed by objects or 1-morphisms are strictly preserved.
Those are
\[
\tilde 1_a, \, \tilde i_a, \, \tilde \phi _a, \,
\tilde a _{\hat f \hat g \hat h}, \, \tilde l _{\hat f}, \, \tilde r _{\hat f}, \,
\tilde a ^{\adsq} _{\hat f \hat g \hat h}, \, \tilde l  ^{\adsq} _{\hat f}, \, \tilde r  ^{\adsq} _{\hat f}, \,
\tilde \eta ^{a} _{\hat f \hat g \hat h}, \, \tilde \epsilon ^{a} _{\hat f \hat g \hat h}, \,
\tilde \eta ^{l} _{\hat f}, \, \tilde \epsilon ^{l} _{\hat f}, \,
\tilde \eta ^{r} _{\hat f}, \, \tilde \epsilon ^{r} _{\hat f}, \,
\tilde \lambda _{\hat f \hat g}, \, \tilde \mu _{\hat f \hat g}, \, \tilde \rho _{\hat f \hat g}, \, \tilde \pi _{\hat f \hat g \hat h \hat k}.
\]

3-units are preserved due to the identity
\[
\barf (\tilde 1 _{\tilde \alpha}) = \tilde \Theta _{\tilde \alpha} \circ \tilde 1 _{\tilde \alpha} \circ \tilde \Theta ^{-1} _{\tilde \alpha}
= \tilde 1 _{t(\tilde \Theta _{\tilde \alpha})} = \bar{1} _{\barf (\tilde \alpha).}
\]

The associators from the local bicategories are preserved, as can be seen as follows:
Both morphisms
\[
\tilde a ^{loc} _{\barf \tilde \alpha, \barf \tilde \beta, \barf \tilde \gamma} = \bar a ^{loc} _{\barf \tilde \alpha, \barf \tilde \beta, \barf \tilde \gamma}
\quad \text{and} \quad
\barf (\tilde a ^{loc} _{\tilde \alpha \tilde \beta \tilde \gamma})
\]
make the following diagram commute and are therefore equal.
\[
\begin{tikzcd}[column sep = huge]
(\tilde \alpha \hs \tilde \beta) \hs  \tilde \gamma
\ar[r, "\tilde a ^{loc} _{\tilde \alpha \tilde \beta \tilde \gamma}"]
\ar[d, "{(\tilde \Theta _{\tilde \alpha} \hs \tilde \Theta _{\tilde \beta}) \hs \tilde \Theta _{\tilde \gamma}}" swap]
	& \tilde \alpha \hs (\tilde \beta \hs \tilde \gamma)
	\ar[d, "{\tilde \Theta _{\tilde \alpha} \hs (\tilde \Theta _{\tilde \beta} \hs \tilde \Theta _{\tilde \gamma})}"]
	\\
(\barf \tilde \alpha \hs \barf \tilde \beta) \hs \barf \tilde \gamma
\ar[r]
	& \barf \tilde \alpha \hs (\barf \tilde \beta \hs \barf \tilde \gamma)
\end{tikzcd}
\]

To show that the local unit-constraints are preserved we consider the following diagram:
\[
\begin{tikzcd}[column sep = huge]
\tilde 1 _{t \tilde \alpha} \hs \tilde \alpha
\ar[r, "\tilde l ^{loc} _{\tilde \alpha}"]
\ar[d, "\tilde 1 \hs \tilde \Theta _{\tilde \alpha}" swap]
	& \tilde \alpha
	\ar[d, "\tilde \Theta _{\tilde \alpha}"] \\
\tilde 1 _{t \tilde \alpha} \hs \barf \tilde \alpha
\ar[r, "\tilde l ^{loc} _{\barf \tilde \alpha}"]
\ar[d, "\tilde \Theta _{\tilde 1 _{t \tilde \alpha}} * 1" swap]
	& \barf \tilde \alpha
	\ar[d, "1"] \\
\barf \tilde 1 _{t \tilde \alpha} \hs \barf \tilde \alpha
\ar[r]
	& \barf \tilde \alpha
\end{tikzcd}
\]
The upper square commutes because of of the naturality of the local unit constraints.
The lower square commutes for
\(
\barf (\tilde l ^{loc} _{\barf \tilde \alpha}) = \bar l ^{loc} _{\barf \tilde \alpha}.
\)
Then the exterior square commutes as well.
Since the latter is the case if and only if the bottom morphism is equal to
\(
\barf \tilde l ^{loc} _{\tilde \alpha}
\)
it follows that
\(
\barf \tilde l ^{loc} _{\tilde \alpha} = \bar l ^{loc} _{\barf \tilde \alpha}.
\)
A similar argument shows that
\(
\barf \tilde r ^{loc} _{\tilde \alpha} = \bar r ^{loc} _{\barf \tilde \alpha}.
\)

To show that the interchangers are preserved we consider the following diagram:
\[
\begin{tikzcd}[column sep = huge, row sep = large]
(\tilde \alpha \hot \tilde \beta) \hs (\tilde \alpha' \hot \tilde \beta')
\ar[d, "{(\tilde \Theta _{\tilde \alpha} \hot \tilde \Theta _{\tilde \beta}) \hs (\tilde \Theta _{\tilde \alpha'} \hot \tilde \Theta _{\tilde \beta'})}" swap]
\ar[r, "\tilde \phi ^{\otimes}"]
	& (\tilde \alpha \hs \tilde \alpha') \hot (\tilde \beta \hs \tilde \beta')
	\ar[d, "{(\tilde \Theta _{\tilde \alpha} \hs \tilde \Theta _{\tilde \alpha'}) \hot (\tilde \Theta _{\tilde \beta} \hs \tilde \Theta _{\tilde \beta'})}"] \\
(\barf \tilde \alpha \hot \barf \tilde \beta) \hs (\barf \tilde \alpha' \hot \barf \tilde \beta')
\ar[d, "\tilde \Theta _{\barf \tilde \alpha, \barf \tilde \beta} \hs \tilde \Theta _{\barf \tilde \alpha', \barf \tilde \beta'}" swap]
\ar[r, "\tilde \phi ^{\otimes}"]
	& (\barf \tilde \alpha \hs \barf \tilde \alpha') \hot (\barf \tilde \beta \hs \barf \tilde \beta')
	\ar[d, "\tilde \Theta _{\barf \tilde \alpha \hs \barf \tilde \alpha', \tilde \beta \hs \barf \tilde \beta'}"] \\
(\barf \tilde \alpha \botimes \barf \tilde \beta) \hs (\barf \tilde \alpha' \botimes \barf \tilde \beta')
\ar[r]
	& (\barf \tilde \alpha \hs \barf \tilde \alpha') \botimes (\barf \tilde \beta \hs \barf \tilde \beta') \\
\end{tikzcd}
\]

The upper square commutes because of the naturality of the interchanger.
The lower square commutes for the 3-morphism
\(
\bar{\phi} _{(\barf \tilde \alpha,\barf \tilde \beta)(\barf \tilde \alpha',\barf \tilde \beta')}.
\)
Then the exterior square commutes as well.
Since the latter is the case if and only if the bottom morphism is equal to
\(
\barf(\tilde \phi _{(\tilde \alpha, \tilde \beta)(\tilde \alpha', \tilde \beta')})
\)
it follows that
\(
\barf(\tilde \phi _{(\tilde \alpha, \tilde \beta)(\tilde \alpha', \tilde \beta')}) =
\bar{\phi} _{(\barf \tilde \alpha,\barf \tilde \beta)(\barf \tilde \alpha',\barf \tilde \beta')}.
\)

To show that left units are preserved we consider the following diagram, where the subscripts of \( \tilde \Theta \) are omitted.
\[
\begin{tikzcd}
\tilde l _{t \tilde \alpha} \hs (\tilde i _{t^2 \tilde \alpha} \hot \tilde \alpha)
\ar[r, "\tilde l _{\tilde \alpha}"]
\ar[d, "\tilde 1 \hs ( \tilde 1 \hot \tilde \Theta _{\tilde \alpha})"]
\ar[dd, start anchor=west, end anchor=west, "\tilde \Theta" swap, bend right=40]
	& \tilde \alpha \hs \tilde l _{s \tilde \alpha}
	\ar[d, "\tilde \Theta _{\tilde \alpha} \hs \tilde 1"]
	\ar[dd, start anchor=east, end anchor=east, "\tilde \Theta", bend left=40] \\
\tilde l _{t \tilde \alpha} \hs (\tilde i _{t^2 \tilde \alpha} \hot \barf \tilde \alpha)
\ar[r, "\tilde l _{\barf \tilde \alpha}"]
\ar[d, "\tilde \Theta"]
	& \barf \tilde \alpha \hs \tilde l _{s \tilde \alpha}
	\ar[d, "\tilde \Theta"] \\
\barf \big( \tilde l _{t \tilde \alpha} \hs (\tilde i _{t^2 \tilde \alpha} \hot \tilde \alpha) \big)
\ar[r, "\bar l _{ \barf \tilde \alpha}"]
	& \barf \big( \tilde \alpha \hs \tilde l _{s \tilde \alpha} \big) \\
\end{tikzcd}
\]
The upper square in the middle commutes by naturality, the lower one by definition of \( \bar l _{\barf \tilde \alpha}. \)
The following calculation shows that the left triangle commutes, and an analogous calculation shows that the right triangle commutes.
\begin{align*}
\tilde \Theta _{\tilde l \hs (\tilde i \hot \tilde \alpha)} ={}& \tilde \Theta _{\tilde l} \hs \tilde \Theta _{\tilde i \hot \tilde \alpha} \\ ={}&
\tilde \Theta _{\tilde l} \hs \big( \tilde \Theta _{\barf \tilde i, \barf \tilde \alpha} \hc (\tilde \Theta _{\tilde i} \hot \tilde \Theta _{\tilde \alpha}) \big) \\ ={}&
\tilde \Theta _{\tilde l} \hs \big( \tilde \Theta _{\barf \tilde i, \barf \tilde \alpha} \hc (\tilde \Theta _{\tilde i} \hot \tilde 1 _{\barf \tilde \alpha})
\hc (\tilde 1 _{\tilde i} \hot \tilde \Theta _{\tilde \alpha}) \big) \\ ={}&
(\tilde \Theta _{\tilde l} \hs \tilde \Theta _{\barf \tilde i, \barf \tilde \alpha}) \hc \big( \tilde 1 _{\tilde l} \hs (\tilde \Theta _{\tilde i} \hot \tilde 1 _{\barf \tilde \alpha}) \big)
\hc \big( \tilde 1 _{\tilde l} \hs (\tilde 1 _{\tilde i} \hot \tilde \Theta _{\tilde \alpha} ) \big) \\ ={}&
\big( \tilde \Theta _{\tilde l} \hs (\tilde \Theta _{\barf \tilde i, \barf \tilde \alpha} \hc (\tilde \Theta _{\tilde i} \hot \tilde 1 _{\barf \tilde \alpha})) \big)
\hc \big( \tilde 1 _{\tilde l} \hs (\tilde 1 _{\tilde i} \hot \tilde \Theta _{\tilde \alpha} ) \big) \\ ={}&
\big( \tilde \Theta _{\tilde l} \hs (\tilde \Theta _{\barf \tilde i, \barf \tilde \alpha} \hc (\tilde \Theta _{\tilde i} \hot \tilde \Theta _{\barf \tilde \alpha})) \big)
\hc \big( \tilde 1 _{\tilde l} \hs (\tilde 1 _{\tilde i} \hot \tilde \Theta _{\tilde \alpha} ) \big) \\ ={}&
\tilde \Theta _{\tilde l \hs (\tilde i \hot \barf \tilde \alpha)} \hc \big( \tilde 1 _{\tilde l} \hs (\tilde 1 _{\tilde i} \hot \tilde \Theta _{\tilde \alpha} ) \big).
\end{align*}
It follows that the exterior diagram commutes as well, but since this is the case if and only if the bottom morphism of the exterior square is \( \barf \tilde l _{\tilde \alpha} \) we have shown
\[
\barf \tilde l _{\tilde \alpha} = \bar l _{ \barf \tilde \alpha}.
\]
An analogous argument shows that components of the right unit \( r \) and the associator \( a \) are preserved.

Finally we show \( \barf \) is a triequivalence.
Note that \( \barf \) is 3-locally a bijection, because \( \barf \) acts on a 3-local set through conjugation with invertible 3-morphisms.
Moreover \( \barf \) is 0-, 1- and 2-locally surjective and therefore a triequivalence by remark \ref{rem_characterization_virtually_strict_triequivalence}.
\end{proof}

In a cubical tricategory a certain kind of interchangers is given by identities (see definition \ref{def_cubical_tricat_Gray_cat}).
Those interchangers are not yet identities in \( \tbar, \) but they are given as local constraint cells as the following lemma shows.
This is important, since the next step in our strictification will be to "quotient out" the local constraints of \( \that. \)

\begin{lem} \label{lem_interchanger_in_tbar}
In \( \tbar \) interchangers of the form \( \phi ^{\otimes} _{(\bar \alpha,\bar \beta)(\bar \gamma, \bar 1)} \) or \( \phi ^{\otimes} _{(\bar 1,\bar \alpha)(\bar \beta, \bar \gamma)} \)
are given as \( (\hs,\hc) \)-composites of the local constraint cells \( a ^{loc}, l ^{loc}, r ^{loc}. \)
\end{lem}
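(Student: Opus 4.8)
The plan is to recognise the interchanger as a constraint cell of the \emph{transported} tricategory structure on $\tbar$, to rewrite it as a $\tilde \Theta$-conjugate of a genuine interchanger, and then to certify that it coincides with an explicit local-constraint composite by pushing everything down to $\cat{T}$ along $\ev$, where a coherence argument is available. First I would use that $\barf: \ttil \to \tbar$ is a virtually strict functor (propositions \ref{prop_bar_is_magmoid_morph} and \ref{prop_tbar_is_a_tricat}), so it preserves interchangers strictly and the $\tbar$-interchanger equals $\barf$ of the corresponding $\ttil$-interchanger. Applying the conjugation formula $\barf \tilde \Pi = \tilde \Theta _{t \tilde \Pi} \hc \tilde \Pi \hc \tilde \Theta ^{-1} _{s \tilde \Pi}$ exhibits $\phi ^{\otimes} _{(\bar \alpha, \bar \beta)(\bar \gamma, \bar 1)}$ (and likewise $\phi ^{\otimes} _{(\bar 1, \bar \alpha)(\bar \beta, \bar \gamma)}$), regarded as a $3$-cell of $\that$, as a composite $\tilde \Theta _t \hc \tilde \phi ^{\otimes} \hc \tilde \Theta ^{-1} _s$ each of whose factors is an interchanger or a local constraint. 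Unwinding the $\botimes$-formula of definition \ref{def_magmoid_structure_of_tbar} on the source and target, I would check that this $3$-cell is \emph{parallel} to an explicit $3$-cell $\Psi$ built only from $a ^{loc}, l ^{loc}, r ^{loc}$, namely the one that matches up the two sequentialised $\hs$-strings and absorbs the unit factors $\bar \beta \bstar \bar 1$ resp. $\bar 1 \bstar \bar \beta$. Since $\phi ^{\otimes}$ is natural and the operations are functorial, it suffices to treat $\tbar$-basic $\bar \alpha, \bar \beta, \bar \gamma$; the general case then follows by decomposing along the $\hs$-factors.

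Because $\phi ^{\otimes} _{(\bar \alpha, \bar \beta)(\bar \gamma, \bar 1)}$ and $\Psi$ are parallel $3$-cells of $\that$, they are equal if and only if their images under $\ev$ agree, by the faithfulness of $\ev$ on parallel $3$-cells observed in the construction of $\that$. Here $\ev \Psi$ is literally a $(*,\circ)$-composite of the local constraints $a ^{loc}, l ^{loc}, r ^{loc}$ of $\cat{T}$, since $\barf$, $\ftilde$ and $\ev$ all preserve local constraints and act trivially on $\tbar$-indexed ones. On the other side, $\ev(\tilde \Theta _t \hc \tilde \phi ^{\otimes} \hc \tilde \Theta ^{-1} _s)$ is a $(*,\circ)$-composite of local constraints together with genuine interchangers of $\cat{T}$; the decisive structural point is that \emph{every} interchanger occurring carries a unit entry --- both the original $\phi ^{\otimes} _{(1,\alpha)(\beta,\gamma)}$ and the ones hidden in the sequentialisation $\tilde \Theta$, which only ever tensors a string against a unit. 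Thus the remaining task reduces to a purely tricategorical identity in $\cat{T}$: this particular composite of unit-entry interchangers and local constraints equals a composite of local constraints alone.

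I would prove that $\cat{T}$-identity by the coherence theorem for tricategories: both composites are coherence $3$-cells of $\cat{T}$ with the same parallel source and target, so by the formally-defined coherence corollary it is enough to lift them to parallel coherence $3$-cells in $FU \cat{T}$ and invoke proposition \ref{prop_tricat_coherence_theorem}. Existence of such lifts is unobstructed over $\cat{T}$ --- the pathology of remark \ref{rem_non_lifting_coherence_3_morphisms_in_that} is specific to $\that$, whose basic $2$-cells may have non-basic types, and does not arise over the honest $2$-cells of $\cat{T}$; conceptually the identity is exactly what the weak functor unit axiom \ref{diag_functor_axiom_unit} for $\otimes$ provides, trading a unit-entry interchanger for local unitors. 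I expect the main obstacle --- and the point where the unit-entry hypothesis is indispensable --- to be verifying that the two lifts are genuinely \emph{parallel}: one must confirm that the unit slots force the source and target $2$-cells to lift to the same $1$-cells of the local free bicategory, so that the coherence theorem applies. Without a unit entry the interchanger is a nontrivial non-coherence datum and no description by local constraints can exist, which is precisely why the lemma is restricted to the two stated shapes.
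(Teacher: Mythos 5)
Your opening moves match the paper's: you identify \( \bar\phi^{\otimes}_{(\bar\alpha,\bar\beta)(\bar\gamma,\bar 1)} \) with \( \barf(\tilde\phi^{\otimes}) = \tilde\Theta_{t} \hc \tilde\phi^{\otimes}\hc\tilde\Theta^{-1}_{s}, \) aim to compare it with an explicit local-constraint composite, and reduce equality of parallel \(3\)-cells to a coherence statement. The gap is in \emph{where} you run the coherence argument. You push down to \( \cat{T} \) along \( \ev \) and propose to lift both composites to parallel coherence \(3\)-cells of \( FU\cat{T}, \) asserting that the lifting pathology of remark \ref{rem_non_lifting_coherence_3_morphisms_in_that} is specific to \( \that \) and does not arise over the honest \(2\)-cells of \( \cat{T}. \) That is not so: the two phenomena described there concern the relation between \emph{any} tricategory and the free tricategory on its underlying \( \cat{Cat} \)-\(2\)-graph. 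A \(2\)-morphism \( \alpha: f\otimes g \to h \) of \( \cat{T} \) is a generator of \( FU\cat{T} \) whose source is the single generating \(1\)-cell \( f\otimes g, \) not the formal composite \( f\fotimes g, \) and every unit of \( \cat{T} \) has two distinct lifts. Both obstructions occur in the very composite you must lift, since \( \ev(\tilde\Theta) \) whiskers constraint cells against arbitrary non-coherence \(2\)-morphisms whose types are \( \otimes \)-composites. So the existence of composable lifts in \( FU\cat{T} \) is exactly as problematic as over \( \that, \) and the parallelism of the two lifts --- which you yourself flag as the main obstacle --- is left entirely unaddressed; it is the substance of the proof, not a final verification.

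The paper avoids all of this by never leaving the free world: it writes down a single commuting diagram of coherence \(3\)-cells in \( \oper{F}\ttil \) between explicitly specified \(2\)-cells (where \( \tilde\Theta \) has a canonical lift by its very definition via \( \epsilon: \oper{F}\ttil\to\ttil \)), reads off the left path as a lift of \( \bar\phi^{\otimes} \) and the right path as a composite whose \( \epsilon \)-image consists of local constraints and identity unit-comparison cells (lemma \ref{lem_phi_a_and_phi_fg_are_identities}); parallelism is then manifest and coherence in the free tricategory applies directly. Two smaller points: the reduction to \( \tbar \)-basic entries by naturality of \( \phi^{\otimes} \) does not work as stated --- relating the interchanger of a \( * \)-composite to those of its factors needs the functor associativity axiom \eqref{diag_functor_axiom_asso}, and the paper instead treats general \( \hs \)-strings at once; and the unit axiom \eqref{diag_functor_axiom_unit} only disposes of interchangers with units in \emph{both} slots of one pair, not of the single-unit shapes \( \phi^{\otimes}_{(\bar\alpha,\bar\beta)(\bar\gamma,\bar 1)} \) the lemma is actually about.
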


\begin{proof}
We consider an interchanger \( \bar \phi ^{\otimes}_{(\bar \alpha,\bar \beta)(\bar \gamma, \bar 1)} \) with
\( \bar \alpha = \comseq{\hs}{(\bar \alpha_i)}_n ^{\sigma}, \) \( \bar \beta = \comseq{\hs}{(\bar \beta_i)}_m ^{\tau} \) and \( \bar \gamma = \comseq{\hs}{(\bar \gamma)}_l ^{\mu} \)
for \( \tbar \)-basic 2-morphisms \( \bar \alpha_i, \bar \beta_i, \bar \gamma_i. \)
Furthermore we assume neither \( \bar \alpha \hot \bar \beta, \) nor \( \bar \gamma \hot \bar 1 \) are contained in \( \mmor{ \tbar} \) (these cases are proven similarly).

Now we consider the following diagram in \( \oper{F} \ttil, \) where for compactness we wrote \( \otimes, * \) and \( 1 \) to mean \( \fotimes, \fstar \) and \( 1 ^{\sms{F}}. \)
All morphisms in the diagram are given by coherence, and therefore the diagram commutes.
\[
\begin{tikzcd}[column sep=tiny, cramped, font=\scriptsize]
\big( \comseq{*}{(1 \otimes \bar \beta_i)}^{\tau}_m *
\comseq{*}{(\bar \alpha_i \otimes 1)}^{\sigma}_n \big) *
\big( (1 \otimes 1)*
\comseq{*}{(\bar \gamma_i \otimes 1)}^{\mu}_l \big)
\ar[d]
\ar[r]
	& \big( \comseq{*}{(1 \otimes \bar \beta_i)}^{\tau}_m *
	\comseq{*}{(\bar \alpha_i \otimes 1)}^{\sigma}_n \big) *
	\big( 1*
	\comseq{*}{(\bar \gamma_i \otimes 1)}^{\mu}_l \big)
	\ar[d] \\
\big( \comseq{*}{(\bar \alpha_i)}^{\sigma}_n \otimes \comseq{*}{(\bar \beta_i)}^{\tau}_m \big) *
\big( \comseq{*}{(\bar \gamma_i)}^{\mu}_l \otimes 1 \big)
\ar[d]
	& \big( \comseq{*}{(1 \otimes \bar \beta_i)}^{\tau}_m
	* (1) \big) *
	\big( \comseq{*}{(\bar \alpha_i \otimes 1)}^{\sigma}_n
	* \comseq{*}{(\bar \gamma_i \otimes 1)}^{\mu}_l \big)
	\ar[d] \\
\big( \comseq{*}{(\bar \alpha_i)}^{\sigma}_n * \comseq{*}{(\bar \gamma_i)}^{\mu}_l \big) \otimes
\big( \comseq{*}{(\bar \beta_i)}^{\tau}_m * 1 \big)
\ar[r]
	& \big( \comseq{*}{(1 \otimes \bar \beta_i)}^{\tau}_m
	* (1 \otimes 1) \big) *
	\big( \comseq{*}{(\bar \alpha_i \otimes 1)}^{\sigma}_n
	* \comseq{*}{(\bar \gamma_i \otimes 1)}^{\mu}_l \big)
\end{tikzcd}
\]

The first and last morphism in the left path of the diagram, are lifts of \( \tilde \Theta ^{-1} \) resp. \( \tilde \Theta. \)
The second morphism in the left path is a lift of the interchanger \( \tilde \phi ^{\otimes}_{(\bar \alpha,\bar \beta)(\bar \gamma,1)}. \)
Therefore the left path constitutes a lift of \( \bar \phi ^{\otimes}_{(\bar \alpha,\bar \beta)(\bar \gamma,1)}. \)

The first and the last morphism in the right path are identities under \( \epsilon: \oper{F} \ttil \to \ttil, \)
since the unit comparison cells of \( \otimes \) are identities in \( \ttil \) by lemma \ref{lem_phi_a_and_phi_fg_are_identities}.
The second morphism in the right path is a \( (\fstar, \fcirc) \)-composite of local constraint cells.
Therefore under \( \epsilon \) the right path as a whole is given as a \( (\hs, \hc) \)-composite of local constraint cells and so is \( \bar \phi ^{\otimes}_{(\bar \alpha,\bar \beta)(\bar \gamma,1)}. \)
\end{proof}

\begin{satz} \label{prop_coh_3_system_tbar}
Let \( C \subseteq \mmmor{\tbar} \) be the collection of 3-morphisms which are coherence morphisms for a local bicategory of \( \tbar. \)
In other words a morphism in \( C \) is a \( (\hs,\hc) \)-composite of unit-3-morphisms and components of \( \bar{a} ^{loc}, \bar{l} ^{loc}, \bar{r} ^{loc}. \)
Then \( C \) is a coherence-3-system.
\end{satz}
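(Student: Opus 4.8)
The plan is to verify the four defining conditions of a coherence-3-system directly against the operations $(\botimes,\bstar,\bcirc)$ of $\tbar$ from definition \ref{def_magmoid_structure_of_tbar}. Three of them are essentially immediate. Closure under $\bstar=\hs$ and $\bcirc=\hc$ holds by the very definition of $C$, since a $(\hs,\hc)$-composite of $(\hs,\hc)$-composites of local constraints and units is again such a composite and $\tbar$ is closed under these operations. Every $\bar 1_{\bar\alpha}$ is a unit and hence lies in $C$ by definition. Invertibility and closure under inverses follow because each generator $\bar a^{loc},\bar l^{loc},\bar r^{loc}$ is a component of a natural isomorphism of a local bicategory (so invertible, with inverse again a local constraint) and units are invertible; the inverse of a $(\hs,\hc)$-composite is the reversed composite of the inverses, so it stays in $C$.

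The real work is closure under $\botimes$. First I would reduce to whiskers: since $\botimes$ is part of the weak functor structure of the tricategory $\tbar$ (proposition \ref{prop_tbar_is_a_tricat}), it is strictly functorial with respect to composition along 2-morphisms, so
\[
\bar\Pi \botimes \bar\Lambda = (\bar 1_{t\bar\Pi} \botimes \bar\Lambda) \bcirc (\bar\Pi \botimes \bar 1_{s\bar\Lambda}),
\]
and by the already-established closure under $\bcirc$ it suffices to treat the two whiskers $\bar 1_{\bar\gamma}\botimes\bar\Lambda$ and $\bar\Pi\botimes\bar 1_{\bar\delta}$ for $\bar\Pi,\bar\Lambda\in C$. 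For such a whisker I would unfold the definition of $\botimes$ on 3-morphisms,
\[
\bar 1_{\bar\gamma}\botimes\bar\Lambda = \tilde\Theta_{\bar\gamma,\, t\bar\Lambda} \bcirc (\bar 1_{\bar\gamma}\hot\bar\Lambda) \bcirc \tilde\Theta_{\bar\gamma,\, s\bar\Lambda}^{-1},
\]
and prove two things. First, each $\tilde\Theta_{\bar\alpha,\bar\beta}$ lies in $C$: it is the $\epsilon$-image of a coherence 3-morphism of $\oper{F}\ttil$ which redistributes a $\fotimes$-composite into a $\fstar$-composite purely through interchangers carrying a unit factor, and by lemma \ref{lem_interchanger_in_tbar} every such interchanger evaluates to a $(\hs,\hc)$-composite of local constraints, so closure under $\hs,\hc$ places $\tilde\Theta$ in $C$. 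Second, the conjugated whiskered term is a local coherence: inducting on the expression of $\bar\Lambda$ as a $(\bstar,\bcirc)$-composite of generators, the $\bcirc$-factors distribute by strict functoriality, while the $\bstar$-factors are split using interchangers $\bar\phi^\otimes$ one of whose slots is the unit $\bar 1_{\bar\gamma}$ --- again of the form covered by lemma \ref{lem_interchanger_in_tbar} --- leaving only whiskered local constraints, which by the weak-functor axioms for $\otimes$ and coherence of the local bicategory reduce to local constraints. Combining these two points with closure under $\bcirc$ gives $\bar 1_{\bar\gamma}\botimes\bar\Lambda\in C$.

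For the uniqueness condition --- parallel members of $C$ are equal --- I would work one local bicategory at a time. A member of $C$ is exactly a coherence morphism of a local bicategory $\tbar(a,b)$, whose objects are the formal $\bstar$-composites that constitute the 2-morphisms of $\tbar$. Mirroring the argument of lemma \ref{lem_unique_coherence_in_bhat}, two parallel such coherence morphisms lift, along the formal-composite structure of their common source and target, to parallel coherence morphisms in the free bicategory over the underlying cat-graph of $\tbar(a,b)$; these are equal by proposition \ref{prop_coherence_bicats_free_version}, so by corollary \ref{cor_coherence_formally_defined_version} the original morphisms coincide.

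The main obstacle is the second point of the $\botimes$-case: controlling the whiskering $\bar 1_{\bar\gamma}\hot\bar\Lambda$ inside the $\tilde\Theta$-conjugation and showing that, after inserting interchangers, nothing survives except local constraints. This is precisely where lemma \ref{lem_interchanger_in_tbar} --- interchangers with a unit slot are local coherences --- is indispensable, and it is the one place where genuine bookkeeping with the weak-functoriality constraints of $\otimes$ is required; the remaining conditions are formal.
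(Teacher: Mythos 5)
Your treatment of conditions (2) and (3) and of closure under \( \bstar \) and \( \bcirc \) agrees with the paper, and your uniqueness argument is a legitimate variant of it: the paper lifts a member of \( C \) to the unique coherence between the lifts \( \comseq{\fstar}{(\bar\alpha_i^{\sms{L}})}^{\sigma}_n \) in the free \emph{tricategory} \( \oper{F}\ttil \) and invokes tricategory coherence, whereas you lift into the free \emph{bicategory} over the cat-graph underlying the local bicategory \( \tbar(a,b) \) and invoke proposition \ref{prop_coherence_bicats_free_version}. Both work; yours needs only the 2-dimensional coherence theorem, but you must still construct the canonical lift explicitly (in particular the unit factors \( \bar 1_{\hat f} \) of a \( \hs \)-composite have to be sent to formal units, exactly as in lemma \ref{lem_unique_coherence_in_bhat}), since otherwise "parallel" is not guaranteed upstairs.

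The gap is in closure under \( \botimes \). You propose to show separately that \( \tilde\Theta_{\bar\alpha,\bar\beta}\in C \) and that the whiskered middle term is a local coherence, and then to compose inside \( C \). But in the only case where \( \tilde\Theta_{\bar\alpha,\bar\beta} \) is not an identity, its source \( \bar\alpha\hot\bar\beta \) is \emph{not} an element of \( \mmor{\tbar} \): by construction \( \mmor{\tbar} \) consists of \( \hs \)-composites of \( \tbar \)-basic cells and is not closed under \( \hot \), which is precisely why \( \tilde\Theta \) is needed at all. Hence \( \tilde\Theta_{\bar\alpha,\bar\beta}\notin\mmmor{\tbar} \), a fortiori it is not in \( C \), and the same applies to \( \bar 1_{\bar\gamma}\hot\bar\Lambda \). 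Neither of your two auxiliary claims is well-typed, so the final step "combine using closure under \( \bcirc \)" has nothing in \( C \) to compose. What has to be proved is a statement about the conjugate \( \tilde\Theta\hc(\bar\Gamma_1\hot\bar\Gamma_2)\hc\tilde\Theta^{-1} \) as a whole, whose source and target \emph{do} lie in \( \mmor{\tbar} \); the paper's device for this is essential: lift the entire conjugate to a coherence 3-morphism of \( \oper{F}\ttil \) between \( \comseq{\fstar}{(\bar\alpha_i^{\sms{L}}\fotimes 1^{\sms{F}})}^{\sigma}_n\fstar\comseq{\fstar}{(1^{\sms{F}}\fotimes\bar\beta_i^{\sms{L}})}^{\tau}_m \) and its primed analogue, and use coherence in the free tricategory to replace it by the alternative path through the \( U(-) \)-modified composites, whose outer legs evaluate under \( \epsilon \) to identities (lemma \ref{lem_interchanger_in_tbar}) and whose middle leg evaluates to a \( (\hs,\hc) \)-composite of local constraints. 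Your ingredients — lemma \ref{lem_interchanger_in_tbar}, unit-slot interchangers, lifting to \( \oper{F}\ttil \) — are the right ones, but they must be applied to the whole composite at once rather than to its ill-typed pieces; the preliminary reduction to whiskers via \( \bar\Pi\botimes\bar\Lambda=(\bar 1\botimes\bar\Lambda)\bcirc(\bar\Pi\botimes\bar 1) \) is valid but does not remove this difficulty.
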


\begin{proof}
For a \( \tbar \)-basic 2-morphism \( \bar \alpha, \) we define
\[
\bar \alpha ^{\sms{L}} :=
\begin{cases}
1 _{\hat f}^{\sms{F}} \text{ (as a unit in \( \oper{F} \ttil \)), }
	& \text{if } \bar \alpha = \bar 1_f \\
\bar \alpha \text{ (as a \( \oper{F} \)-basic morphism in \( \oper{F} \ttil \)), }
	& \text{else}.
\end{cases}
\]
For an arbitrary 2-morphism \( \bar \beta = \comseq{\hs}{(\bar \beta_i)}_n ^{\sigma} \) in \( \tbar \) we define
\[
\bar \beta ^{\sms{L}} = \comseq{\fstar}{(\bar \beta_i ^{\sms{L}})}_n ^{\sigma}.
\]

Now we show that \( C \) is a coherence-3-system.
\begin{itemize}[-]
\item
We show that parallel 3-morphisms in \( C \) are equal.
For that consider an arbitrary 3-morphism \( \bar \Gamma: \comseq{\hs}{(\bar \alpha_i)} ^{\sigma}_{n} \to \comseq{\hs}{(\bar \alpha'_j)} ^{\sigma}_{m} \) in \( C. \)
It suffices to show that \( \bar \Gamma \) lifts to the coherence 3-morphism
\(
\comseq{\fstar}{(\bar \alpha_i ^{\sms{L}})} ^{\sigma}_{n} \to \comseq{\fstar}{(\bar \alpha_j ^{\prime \sms{L}})} ^{\sigma}_{m}
\)
in \( \oper{F} \ttil. \)
To show that, suppose \( \bar \Gamma \) is a \( (\hs, \circ) \)-composite of \( k \) constraint cells and units.
Then the claim follows easily by induction over \( k \).
(see the proof of proposition \ref{prop_that_semifree} for a similar induction.)

\item
To show that \( C \) is closed under \( \botimes \) let
\[
\bar{\alpha} = \comseq{\hs}{(\bar{\alpha}_i)}^{\sigma}_n \xrightarrow{\bar \Gamma_1} \comseq{\hs}{(\bar{\alpha}'_i)}^{\sigma'}_{n'} = \bar{\alpha}'
\quad \text{and} \quad
\bar{\beta} = \comseq{\hs}{(\bar{\beta}_i)}^{\tau}_m \xrightarrow{\bar \Gamma_2} \comseq{\hs}{(\bar{\beta}'_i)}^{\tau'}_{m'} = \bar{\beta}'
\]
be 3-morphisms in \( C. \)
For a composite \( \nu_1 \fotimes \nu_2 \) of \( F \)-basic 2-cells \( \nu_1, \nu_2 \) we introduce the notation
\[
U (\nu_1 \fotimes \nu_2) :=
\begin{cases}
1 ^{\sms{F}} _{(s \nu_1 \fotimes s \nu_2)}
	& \text{if } \nu_1 = 1 ^{\sms{F}} _{s \nu_1} \text{ and } \nu_2 = 1 ^{\sms{F}} _{s \nu_2} \\
\nu_1 \fotimes \nu_2
	& \text{else}.
\end{cases}
\]
Now we consider the following diagram in \( \oper{F} \ttil, \) where for compactness we write \( \otimes \) and \( * \) instead of \( \fotimes \) and \( \fstar. \)
\[
\begin{tikzcd}[column sep = huge]
\comseq{*}{(\bar{\alpha}_i^{\sms{L}})}^{\sigma}_n \otimes \comseq{*}{(\bar{\beta}_i^{\sms{L}})}^{\tau}_m
\ar[r]
	& \comseq{*}{(\bar{\alpha}^{\prime \sms{L}}_i)}^{\sigma'}_{n'} \otimes \comseq{*}{(\bar{\beta}^{\prime \sms{L}}_i)}^{\tau'}_{m'}
	\ar[d] \\
\comseq{*}{(\bar{\alpha}_i ^{\sms{L}} \otimes 1^{\sms{F}} )} ^{\sigma}_n * \comseq{*}{(1^{\sms{F}}  \otimes \bar{\beta}_i^{\sms{L}})} ^{\tau}_m
\ar[d]
\ar[u]
	& \comseq{*}{(\bar{\alpha}^{\prime \sms{L}}_i \otimes 1^{\sms{F}} )} ^{\sigma'}_{n'} * \comseq{*}{(1^{\sms{F}}  \otimes \bar{\beta}^{\prime \sms{L}}_i)} ^{\tau'}_{m'} \\
\comseq{*}{\big( U(\bar{\alpha}_i ^{\sms{L}} \otimes 1^{\sms{F}} ) \big)} ^{\sigma}_n * \comseq{*}{\big( U(1^{\sms{F}} \otimes \bar{\beta}_i^{\sms{L}}) \big)} ^{\tau}_m
\ar[r]
	& \comseq{*}{\big( U(\bar{\alpha}^{\prime \sms{L}}_i \otimes 1^{\sms{F}} ) \big)} ^{\sigma'}_{n'} * \comseq{*}{\big( U(1^{\sms{F}}  \otimes \bar{\beta}^{\prime \sms{L}}_i) \big)} ^{\tau'}_{m'} 		\ar[u] \\
\end{tikzcd}
\]
The arrows in the diagram are (unique) coherence 3-morphisms.
The first and the third morphism of the upper path are lifts of \( (\tilde \Theta _{\bar \alpha \hot \bar \beta}) ^{-1} \) resp. \( \tilde \Theta _{\bar \alpha' \hot \bar \beta'}. \)
The second morphism in the upper path is a lift of \( \bar \Gamma_1 \hot \bar \Gamma_2. \)
Therefore the upper path is a lift of \( \bar \Gamma_1 \botimes \bar \Gamma_2. \)

The first and the third morphism of the lower path are identities under \( \epsilon: \oper{F} \ttil \to \ttil \) according to lemma \ref{lem_interchanger_in_tbar}.
The second morphism in the lower path is given as a \( (\fstar, \fcirc) \)-composite of local constraint cells.
Therefore the image of the lower path as a whole under \( \epsilon \) is a \( (\hs, \hc) \)-composite of local constraint cells of \( \tbar. \)
And since the diagram commutes by coherence, the same holds for \( \bar \Gamma_1 \botimes \bar \Gamma_2. \)

\item
The remaining requirements for \( C \) being a coherence-3-system are trivially satisfied, since \( C \) is closed under \( * \) and \( \circ \) and contains units and inverses.
\end{itemize}
\end{proof}

\begin{satz} \label{prop_def_tcu}
Let \( T \) be a tricategory and \( C \subseteq \mmmor{\tbar} \) the coherence 3-system from proposition \ref{prop_coh_3_system_tbar}.
Then \( \tcu := \tbar/\scriptstyle{C} \) is a cubical tricategory.
\end{satz}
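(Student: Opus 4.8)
The plan is to apply Proposition \ref{prop_coh_3_system} to the tricategory $\tbar$ and the coherence-3-system $C$ of Proposition \ref{prop_coh_3_system_tbar}: this already furnishes $\tcu = \tbar / C$ as a tricategory together with a virtually strict triequivalence $[-]: \tbar \to \tcu$, so the only thing left is to verify the two defining properties of a cubical tricategory from Definition \ref{def_cubical_tricat_Gray_cat}, namely local strictness and cubicality of $\otimes$ and $I$. The engine driving every step is Lemma \ref{lem_C_under_quotient}: any $\Gamma \in C$ has $[\Gamma] = 1$. Hence my whole strategy is to show that each cell which is required to be an identity in a cubical tricategory is the image under $[-]$ of a member of $C$.

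First I would establish local strictness. The local constraints $a^{loc}, l^{loc}, r^{loc}$ of $\tcu$ are by construction (Proposition \ref{prop_coh_3_system}) the classes $[\bar a^{loc}], [\bar l^{loc}], [\bar r^{loc}]$, whose representatives lie in $C$ by the very definition of $C$ in Proposition \ref{prop_coh_3_system_tbar}. By Lemma \ref{lem_C_under_quotient} these classes are identity 3-morphisms, so every local bicategory of $\tcu$ is a $2$-category.

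Next I would treat $\otimes$. The clause ``$\otimes$ strictly preserves 1-units'' reduces to $\phi^\otimes_{([\hat f],[\hat g])} = 1$: Lemma \ref{lem_phi_a_and_phi_fg_are_identities} makes $\tilde\phi_{(\hat f,\hat g)}$ an identity in $\ttil$, virtual strictness of $\barf$ (Proposition \ref{prop_tbar_is_a_tricat}) transports this to $\bar\phi_{(\hat f,\hat g)} = 1$ in $\tbar$, and virtual strictness of $[-]$ gives $\phi^\otimes = [\bar\phi] = 1$. The genuine cubical condition demands that whenever the slot $g_2$ or $f_1$ carries a unit, the interchanger be an identity; these are exactly the interchangers $\phi^\otimes_{(\bar\alpha,\bar\beta)(\bar\gamma,\bar 1)}$ and $\phi^\otimes_{(\bar 1,\bar\alpha)(\bar\beta,\bar\gamma)}$. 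By Lemma \ref{lem_interchanger_in_tbar} these are, in $\tbar$, $(\hs,\hc)$-composites of components of $\bar a^{loc}, \bar l^{loc}, \bar r^{loc}$, hence lie in $C$ (which contains the local constraints and is closed under $\hs$ and $\hc$). Choosing the representative $\bar 1_h$ for the unit slot is legitimate because $\tcu$ has the same $1$-morphisms and exact $1$-morphism types as $\tbar$, so representatives of the remaining classes can be matched up; the corresponding interchanger of $\tcu$ then equals $[\bar\phi^\otimes_{(\ldots)}] = 1$ by Lemma \ref{lem_C_under_quotient}.

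Finally, for $I$: as $\tcu$ is now locally strict, each $I_a : \cat{1} \to \tcu(a,a)$ is a functor of $2$-categories, so (the $n=1$ case) cubicality just means strictness. Here I would check $i_a = 1_{1_a}$ in $\tcu$ — which holds already in $\ttil$, where $\ftilde$ sends $\hat i_a$ to $\tilde 1_{(1_a)}$, and is carried down by $\barf$ and $[-]$ — together with $\phi_a = 1$ (again Lemma \ref{lem_phi_a_and_phi_fg_are_identities}); the derived composition constraint of $I_a$ is then forced to be an identity by the weak-functor axiom in Definition \ref{def_tricat_as_magmoid} combined with local strictness. Combining the three parts shows $\tcu$ is cubical. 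I expect the main obstacle to be the bookkeeping in the $\otimes$-step: precisely matching the abstract cubical condition (a unit in a designated slot, with otherwise arbitrary class representatives) to the two concrete interchanger shapes controlled by Lemma \ref{lem_interchanger_in_tbar}, and confirming that the chosen representatives genuinely produce an interchanger lying in $C$.
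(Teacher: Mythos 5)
Your proposal is correct and follows essentially the same route as the paper: apply Proposition \ref{prop_coh_3_system} to get the tricategory $\tcu$ and then verify local strictness via Lemma \ref{lem_C_under_quotient}, kill the unit comparison cells $\phi_{[a]}$ and $\phi^{\otimes}_{([\hat f],[\hat g])}$ via Lemma \ref{lem_phi_a_and_phi_fg_are_identities}, deduce strictness of $I_{[a]}$ from the weak-functor axiom together with local strictness, and handle the unit-slot interchangers via Lemma \ref{lem_interchanger_in_tbar} and membership in $C$. The only cosmetic difference is that you flag the representative-matching for the unit slot as a potential obstacle, which the paper absorbs into the already-established well-definedness of constraints in Proposition \ref{prop_coh_3_system}.
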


\begin{proof}~
\begin{itemize}[-]
\item
By definition of the constraint cells of \( \tcu \) (compare proposition \ref{prop_coh_3_system}) and by lemma \ref{lem_C_under_quotient} we have
\( l ^{loc} _{[\bar \alpha]} = [\bar l ^{loc} _{\bar \alpha}] = 1 _{[\bar \alpha]}. \)
Similarly all components of \( r ^{loc} \) and \( a ^{loc} \) are identities in \( \tcu. \)
Thus the local bicategories of \( \tcu \) are strict.

\item
Since \( \bar \phi_a = \barf \tilde \phi_a \) is an identity in \( \tbar \) by lemma \ref{lem_phi_a_and_phi_fg_are_identities}, \( \phi_{[a]} = [\bar \phi_a] \) is an identity in \( \cat{\hat T} ^{cu}. \)

The compatibility-with-composition constraint for the weak functor \( I_{[a]} \) consists of a single component \( i_{[a]} * i_{[a]} \to i_{[a]}. \)
It follows from the functor axioms \eqref{diag_functor_axiom_unit} in definition \ref{def_weak_functor} that this component is given by
\[
i_{[a]} * i_{[a]}
\xrightarrow{\phi_{[a]} ^{-1} * 1}
1 _{(1_{[a]})} * i_a
\xrightarrow{l ^{loc} _{i_{[a]}}}
i_{[a]} .
\]
The first morphism in this composite is an identity, since all components of \( l ^{loc} \) are identities.
The second one is an identity, because \( \phi_{[a]} \) is an identity.
Therefore the composite is an identity and \( I_{[a]} \) is cubical.

\item
That the comparison-cells for units of the weak functors \( \otimes \) are identities follows by unraveling definitions and applying lemma \( \ref{lem_phi_a_and_phi_fg_are_identities}: \)
\[
\phi ^{\otimes} _{([\hat f], [\hat g])} = [\bar \phi ^{\otimes} _{(\hat f, \hat g)}]
 = [\barf \tilde \phi ^{\otimes} _{(\hat f, \hat g)}] = [\barf \tilde 1 ^{\otimes} _{(\hat f, \hat g)}]
 = [\bar 1 ^{\otimes} _{(\hat f, \hat g)}] = 1 _{[\hat f, \hat g]}.
\]

Interchangers of the form
\(
\phi _{([\bar{1}], [\bar{\beta}])([\bar{\alpha}'],[\bar{\beta}'])}
\)
and
\(
\phi _{([\bar{\alpha}], [\bar{\beta}])([\bar{\alpha}'],[\bar{1}])}
\)
are identities, because
\(
\bar \phi _{(\bar{1}, \bar{\beta})(\bar{\alpha}',\bar{\beta}')}
\)
and
\(
\bar \phi _{(\bar{\alpha}, \bar{\beta})(\bar{\alpha}',\bar{1})}
\)
are contained in \( C \) by lemma \ref{lem_interchanger_in_tbar}.
\end{itemize}
\end{proof}

Before we can continue with our strictification, we need a more explicit description of the tricategory \( \tcu. \)
Since \( \tcu \) was obtained by quotienting out local constraints from \( \tbar, \)
the 2-morphisms of \( \tcu \) are obtained from the 2-morphisms of \( \tbar, \) by omitting 2-units and brackets in \( * \)-composites.
This is made precise in the following.

As \( [-] \) acts injectivly on \( \tbar \)-basic 2-morphisms we can consider the set of \( \tbar \)-basic 2-morphisms a subset of \( \mmor{\tcu}. \)
Therefore the following convention is reasonable.

\begin{teano}
We call the image of a \( \tbar \)-basic 2-morphism \( \bar \alpha \in \mmor{\tbar} \) under \( [-] \) a \( \tbar \)-basic 2-morphism of \( \tcu \)
and write simply \( \bar \alpha \in \mmor{\tcu} \) instead of \( [\bar \alpha]. \)
\end{teano}

Since any 2-morphism in \( \tbar \) is a \( \hs \)-composite of \( \tbar \)-basic 2-morphisms, any 2-morphism in \( \tcu \) is a \( * \)-composite of \( \tbar \)-basic 2-morphisms as well.
Because \( \tcu \) is locally strict, we can omit brackets in such a \( * \)-composite.
Thus if \( \alpha \) is a 2-morphism in \( \tbar, \) there are \( \tbar \)-basic 2-morphisms \( \bar \alpha_1, \dots, \bar \alpha_n \) such that
\( \alpha = \bar \alpha_1 * \dots * \bar \alpha_n = \comseq{*}{(\bar \alpha_i)_n}. \)
In the following lemma we clarify to what extent the sequence \( (\bar \alpha_i)_n \) is unique.

\begin{lem} [decomposing 2-morphisms in \( \tcu \)]~ \label{lem_tcu_2_mor_decomp}
\begin{enumerate}
\item
Let	 \( (\bar \alpha_i)_n \) and \( (\bar \beta_j)_m \) be two sequences of \( \tbar \)-basic 2-morphisms in \( \tcu. \)
Then \( \comseq{*}{(\bar \alpha_i)}_n = \comseq{*}{(\bar \beta_j)}_m \) if and only if the maximal subsequences of \( (\bar \alpha_i)_n \) and \( (\bar \beta_j)_m \) which contain only non-units are equal.
\item
For any 2-morphism \( \alpha \in \mmor{\tcu} \) there exists a unique \( n \in \mathbb{N} \) and a unique sequence of \( \tbar \)-basic 2-morphisms \( (\bar \alpha_i)_n \)
such that \( n \) is minimal and \( \alpha = \comseq{*}{(\bar \alpha_i)}_n. \)
\item \label{list_C2_partition}
For any 2-morphism \( \alpha \in \mmor{ \tcu} \) there exists a unique odd \( n \in \mathbb{N} \) and a sequence \( (\alpha_i)_{i \in \set{1 \dots n}} \) in \( \mmor{\tcu} \) such that
\begin{enumerate}[(i)]
\item
\( \alpha_i \in \mmor{\tcu} \) is a coherence 2-morphism if \( i \) is odd,
\item
\( \alpha_i \) is a \( \tbar \)-basic 2-morphism if \( i \) is even,
\item
\( \alpha = \comseq{*}{(\bar \alpha_i)}_n. \)
\end{enumerate}
\end{enumerate}
\end{lem}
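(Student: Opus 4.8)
The plan is to prove the three parts in order, deriving parts (2) and (3) from part (1), whose nontrivial direction carries all the content.

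For part (1) the \emph{if} direction is immediate: by Proposition~\ref{prop_def_tcu} the local bicategories of \( \tcu \) are strict, so each unit \( \tbar \)-basic 2-morphism \( \bar 1_{\hat f} \) is a strict \( * \)-unit and may be deleted from any \( * \)-composite without changing its value. Hence both \( \comseq{*}{(\bar \alpha_i)}_n \) and \( \comseq{*}{(\bar \beta_j)}_m \) collapse to the \( * \)-composite of their common non-unit subsequence. For the \emph{only if} direction I would first observe that, since \( \tcu = \tbar/\scriptstyle{C} \) with \( C \) the coherence-3-system of Proposition~\ref{prop_coh_3_system_tbar}, the equality \( \comseq{*}{(\bar \alpha_i)}_n = \comseq{*}{(\bar \beta_j)}_m \) in \( \tcu \) means exactly (by the definition of \( \sim_2 \) in Proposition~\ref{prop_coh_3_system}) that \( \comseq{\hs}{(\bar \alpha_i)}_n \sim_2 \comseq{\hs}{(\bar \beta_j)}_m \) in \( \tbar \), i.e.\ there is a local coherence 3-morphism \( \bar \Gamma \in C \) between them. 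I would then lift \( \bar \Gamma \) along the counit \( \epsilon \colon \oper{F}\ttil \to \ttil \) using the exact mechanism of the proof of Proposition~\ref{prop_coh_3_system_tbar}: via the operator \( (-)^{\sms{L}} \), the morphism \( \bar \Gamma \) lifts to the unique coherence 3-morphism \( \comseq{\fstar}{(\bar \alpha_i^{\sms{L}})}_n \to \comseq{\fstar}{(\bar \beta_j^{\sms{L}})}_m \) in the free tricategory \( \oper{F}\ttil \); in particular the two sides are parallel there and connected by a coherence 3-morphism.

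The heart of the argument is then a rigidity statement internal to the free tricategory: if two \( \fstar \)-composites of \( \oper{F} \)-basic 2-cells and \( \oper{F} \)-units are isomorphic via a coherence 3-morphism, their subsequences of non-unit \( \oper{F} \)-basic 2-cells coincide. I would establish this from the coherence theorem for tricategories (Proposition~\ref{prop_tricat_coherence_theorem}), the point being that a coherence 3-morphism is a \( (\fotimes,\fstar,\fcirc) \)-composite of constraint 3-cells and units, none of which can create, destroy, or reorder an \( \oper{F} \)-basic 2-cell within a single \( \fstar \)-string; making this precise amounts to exhibiting an invariant of a 2-morphism of \( \oper{F}\ttil \) — the word in its \( \oper{F} \)-basic 2-cells read off a \( \fstar \)-decomposition — that is constant on coherence-connected 2-morphisms. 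Since \( (-)^{\sms{L}} \) sends distinct non-unit \( \tbar \)-basic 2-morphisms to distinct \( \oper{F} \)-basic 2-cells and unit \( \tbar \)-basics to \( \oper{F} \)-units, equality of words upstairs pulls back to equality of non-unit subsequences downstairs, which is the claim. I expect this rigidity step to be the main obstacle, since it is precisely where one must argue that the constraint data cannot permute basic cells.

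Part (2) follows formally. Every 2-morphism of \( \tcu \) is a \( * \)-composite of \( \tbar \)-basic 2-morphisms (the image of an \( \hs \)-composite in \( \tbar \), with brackets dropped by local strictness); deleting its unit entries yields, by the \emph{if} direction, a non-unit sequence \( (\bar \alpha_i)_k \) of the same value. Any representing sequence has non-unit subsequence equal to \( (\bar \alpha_i)_k \) by part~(1), hence length \( \geq k \), with equality exactly when it contains no units; so \( (\bar \alpha_i)_k \) is the unique sequence of minimal length (with \( k=0 \) when \( \alpha \) is a unit). For part~(3) I would start from this unique minimal non-unit sequence and group it into maximal runs, turning each maximal run of \emph{coherence} \( \tbar \)-basic cells into a single coherence 2-morphism and keeping each non-coherence \( \tbar \)-basic cell separate, inserting identity coherence 2-morphisms where needed to realise the alternating pattern coherence--basic--\(\dots\)--basic--coherence with \( n = 2k+1 \) odd, \( k \) the number of non-coherence cells. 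Uniqueness reduces to part~(2): the even entries are forced to be the genuine cells in their unique order, which fixes \( n \), and each odd coherence filler is itself a \( * \)-composite of coherence \( \tbar \)-basics whose non-unit subsequence is pinned down, via part~(1), by the genuine cells bracketing it; hence the entire alternating sequence is determined.
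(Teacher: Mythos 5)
Your argument follows essentially the same route as the paper's (which is far terser): translate equality in \( \tcu \) into relatedness in \( \tbar \) via the coherence-3-system \( C \) of proposition \ref{prop_coh_3_system_tbar}, lift the connecting morphism to \( \oper{F}\ttil \) by the \( (-)^{\sms{L}} \) mechanism, and read off the invariance of the non-unit word; parts (2) and (3) are then the same formal bookkeeping the paper waves at with ``follow easily from (1).'' One caution on the step you rightly single out as the crux: your rigidity statement, as phrased for \emph{arbitrary} coherence 3-morphisms between \( \fstar \)-composites, is false --- an interchanger \( \phi^{\otimes} \) connects \( (\alpha\fotimes 1)\fstar(1\fotimes\beta) \) to \( (1\fotimes\beta)\fstar(\alpha\fotimes 1) \) and thus \emph{does} reorder the basic cells, contradicting ``none of which can \dots reorder.'' What saves you is that the connecting 3-morphism here lies in \( C \), hence its lift is a \( (\fstar,\fcirc) \)-composite of \emph{local} constraint cells and units only, and for that restricted class the non-unit word is indeed an invariant (this is just coherence for the free local bicategory); you should state the rigidity lemma with that restriction built in rather than for general coherence 3-morphisms.
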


\begin{proof}
For arbitrary bracketings \( \sigma,\tau \) the 2-morphism \( \comseq{\hs}{(\bar \alpha_i)}_n^{\sigma} \) and \( \comseq{\hs}{(\bar \beta_i)}_{m}^{\tau} \) in \( \tbar \)
are lifts of the 2-morphisms \( \comseq{*}{(\bar \alpha_i)}_n \) and \( \comseq{*}{(\bar \beta_j)}_m \) from (1).
Then \( \comseq{*}{(\bar \alpha_i)}_n = \comseq{*}{(\bar \beta_j)}_m \) if and only if
\( \comseq{\hs}{(\bar \alpha_i)}_n^{\sigma} \) is related to \( \comseq{\hs}{(\bar \beta_i)}_{m}^{\tau} \) in \( \tbar \) morphisms in \( C, \) i.e. via local constraints.
This is the case if and only if the maximal subsequences of \( (\bar \alpha_i)_n \) and \( (\bar \beta_j)_m \) which contain only non-units are equal.
Thus we have shown (1).
(2) and (3) follow easily from (1).
\end{proof}

Next we want to quotient out the coherence 3-morphisms in \( \tcu, \) which run between parallel coherence 2-morphisms.
Therefore one would like to define a coherence 3-system \( C \subseteq \mmmor{\tcu} \), which contains all \( (\otimes, *, \circ) \)-composites of such coherence 3-morphisms and units.
But for this definition of \( C \) it is quite hard to show that parallel 3-morphisms in \( C \) are equal.
Thus we restrict \( C \) further, by additionally requiring a certain lifting property for its morphisms.
This additional condition will make it easy to show uniqueness of parallel morphisms in \( C, \) for the price that it is no longer obvious that \( C \) is closed under composition.
In order to state this lifting property, we need to introduce some terminology, which is based on the previous lemma.

\begin{terminology}
We call the sequence \( (\alpha_i)_n \) from lemma \ref{lem_tcu_2_mor_decomp} \eqref{list_C2_partition} the \emph{ \( C_2 \)-block partition} of \( \alpha. \)
\end{terminology}

\begin{terminology} \label{term_standard_lift}
Let \( \alpha = \bar \alpha_1 * \dots * \bar \alpha_n \) be a general 2-morphism in \( \tcu, \) where \( \bar \alpha_i \) are \( \tbar \)-basic 2-morphisms and \( n \) is minimal.
We call the left bracketed morphism
\[
\big( \dots \big( (\bar \alpha_1 \hs \bar \alpha_2) \hs \bar \alpha_3 \big) \dots \hs \bar \alpha_n \big) \in \mmor{\tbar}
\]
the \emph{standard lift} of \( \alpha. \)
\end{terminology}

\begin{satz} \label{prop_coherence_3_system_tcu}
Let \( \cat{T} \) be a tricategory and \( \tcu \) the cubical tricategory from proposition \ref{prop_def_tcu}.
We consider a subset \( C \subseteq \mmmor{\tcu} \) defined as follows:

Let \( \Pi: \alpha \to \beta \) be a 3-morphism in \( \tcu \) and let \( (\alpha_i)_n \) resp. \( (\beta_i)_m \) be the \( C_2 \)-block-partition of \( \alpha \) resp. \( \beta. \)
Then \( \Pi \) is contained in \( C \) if and only if \( n=m \) and there exists a sequence \( (\Pi_i: \alpha_i \to \beta_i) _{i \in \set{1 \dots n}} \) such that
\begin{enumerate}[(i)]
\item \label{list_coh_type}
For \( i \) odd, \( \Pi_i: \alpha_i \to \beta_i \) lifts to the unique coherence in \( \that \) between the standard lifts of \( \alpha_i \) and \( \beta_i. \)
\item \label{list_id_type}
For \( i \) even \( \Pi_i: \alpha_i \to \beta_i \) is an identity.
\item
\( \Pi = \comseq{*}{(\Pi_i)}_n. \)
\end{enumerate}

Then \( C \) is a coherence 3-system.
\end{satz}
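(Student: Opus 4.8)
The plan is to verify the four defining conditions of a coherence 3-system directly, using the block description of $C$. Throughout, for $\Pi \in C$ I write $(\Pi_i)_{i \in \set{1 \dots n}}$ for a block decomposition as in the statement, taken relative to the (unique, by lemma \ref{lem_tcu_2_mor_decomp}) $C_2$-block partitions $(\alpha_i)_n$ of $s \Pi$ and $(\beta_i)_n$ of $t \Pi$. The conditions concerning units, inverses, and uniqueness of parallel morphisms follow almost immediately from the lifting requirement; the real content is closure under the three compositions $\otimes, *, \circ$, and among these the tensor is the hard case.

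For the easy conditions: given $\alpha$ with $C_2$-block partition $(\alpha_i)_n$, the identity $1_\alpha$ equals $\comseq{*}{(1_{\alpha_i})}_n$, where $1_{\alpha_i}$ is the (identity) coherence lift when $i$ is odd and an identity when $i$ is even, so $1_\alpha \in C$. If $\Pi \in C$ with decomposition $(\Pi_i)_n$, then each $\Pi_i$ is invertible (coherences are isomorphisms, identities are trivially invertible), so $\Pi$ is invertible with $\Pi ^{-1} = \comseq{*}{(\Pi_i ^{-1})}_n$; the inverse of a coherence lift is again the coherence lift in the opposite direction, whence $\Pi ^{-1} \in C$. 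For uniqueness, suppose $\Pi, \Pi' \in C$ are parallel. Since the $C_2$-block partitions of $s\Pi$ and $t\Pi$ are unique, both are indexed by the same $n$ and the same blocks; for $i$ odd the blocks $\Pi_i, \Pi'_i$ both lift to the \emph{unique} coherence in $\that$ between the standard lifts of $\alpha_i$ and $\beta_i$ (here we use that $\that$ is semi-free, proposition \ref{prop_that_semifree}), so $\Pi_i = \Pi'_i$, while for $i$ even both are identities; hence $\Pi = \comseq{*}{(\Pi_i)}_n = \comseq{*}{(\Pi'_i)}_n = \Pi'$.

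Closure under $\circ$ and $*$ I would obtain by tracking how $C_2$-block partitions combine. For $\circ$ (composition along $2$-morphisms) the basic blocks of the source, the shared middle, and the target $2$-morphism coincide, so all three block partitions have the same length $n$; the interchange law in the locally strict $\tcu$ then gives $\Pi \circ \Xi = \comseq{*}{(\Pi_i \circ \Xi_i)}_n$ for $\circ$-composable $\Pi, \Xi \in C$, and block-wise a $\circ$-composite of coherence lifts is again a coherence lift (composites of coherence $3$-morphisms are coherent, uniqueness coming from semi-freeness), while a $\circ$-composite of identities is an identity. For $*$ (composition along $1$-morphisms) the last block $\alpha_n$ of $s\Pi$ and the first block $\alpha'_1$ of $s\Pi'$ are both coherence $2$-morphisms, so they merge into a single coherence block $\alpha_n * \alpha'_1$, the other blocks being unchanged, and the induced decomposition of $\Pi * \Pi'$ has merged block $\Pi_n * \Pi'_1$. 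This merged block is again a coherence lift: the standard lift of $\alpha_n * \alpha'_1$ differs from $\text{stdlift}(\alpha_n) \hs \text{stdlift}(\alpha'_1)$ only by a unique re-bracketing coherence, so pre- and post-composing the $\hs$-composite lift of $\Pi_n$ and $\Pi'_1$ with the unique coherence $3$-morphisms supplied by semi-freeness yields the required unique coherence between the standard lifts of $\alpha_n * \alpha'_1$ and $\beta_n * \beta'_1$.

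The main obstacle will be closure under $\otimes$. Because $\tcu$ is cubical (proposition \ref{prop_def_tcu}), the tensor $\alpha \otimes \alpha'$ of two $*$-composites of $\tbar$-basic cells can be rewritten, using that the relevant interchangers $\phi ^{\otimes} _{(\,\cdot\,,\bar 1)(\,\cdot\,,\,\cdot\,)}$ and $\phi ^{\otimes} _{(\bar 1,\,\cdot\,)(\,\cdot\,,\,\cdot\,)}$ and the unit comparison cells are identities (lemma \ref{lem_interchanger_in_tbar}, lemma \ref{lem_phi_a_and_phi_fg_are_identities}), as a $*$-composite of cells of the form $(\text{basic} \otimes 1)$ and $(1 \otimes \text{basic})$. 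I would compute the resulting $C_2$-block partition of $\alpha \otimes \alpha'$ explicitly and exhibit the induced block decomposition of $\Pi \otimes \Pi'$, the delicate point being to confirm that each odd block produced by this rearrangement is \emph{exactly} the unique coherence lift between the corresponding standard lifts. As in the proof of proposition \ref{prop_coh_3_system_tbar} and lemma \ref{lem_interchanger_in_tbar}, I expect to carry out the rearrangement one level up in $\oper{F} \ttil$, where coherence is unique by the coherence theorem for tricategories (proposition \ref{prop_tricat_coherence_theorem}), and then push it down along the counit $\epsilon: \oper{F} \ttil \to \ttil$; the bookkeeping of which interchangers vanish and how the coherence and basic blocks interleave is where the argument is most likely to require care.
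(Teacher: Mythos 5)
Your plan follows the paper's proof essentially step for step: uniqueness of parallel morphisms from the uniqueness of the block decomposition, blockwise treatment of units, inverses and $\circ$, merging of the adjacent coherence blocks for $*$ with the re-bracketing discrepancy killed because local constraints become identities in \( \tcu \), and for $\otimes$ the reduction of \( \Pi \otimes \Lambda \) to a \( * \)-composite of factors \( \Pi_i \otimes 1 \) and \( 1 \otimes \Lambda_j \) whose odd-index cases are verified by lifting to a diagram of unique coherences and checking that the auxiliary arrows (local units, cubical interchangers, unit comparison cells) are sent to identities. The $\otimes$-closure is only sketched in your proposal, but the strategy you describe is exactly the one the paper executes, so there is no gap in the approach itself.
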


\begin{proof}
In the following we will refer to the sequence \( (\Pi_i)_n \) from the proposition as the \( C \)-partition of \( \Pi. \)
(By definition any 3-morphism in \( C \) has such a \( C \)-partition.)
\begin{itemize}[-]
\item
Parallel morphisms in \( C \) are equal, since the \( C \)-partition of a morphism in \( C \) is uniquely determined by its types.
\item
It is easy to show that \( C \) is closed under \( \circ, \) by using the interchange law for local bicategories
and observing that composites of 3-morphisms of type \ref{list_coh_type} resp. type \ref{list_id_type} (in the list of the proposition) are again of type \ref{list_coh_type} resp. type \ref{list_id_type}.
\item
It is straightforward to show that units and inverses are contained in \( C. \)
\item
Now we show that \( C \) is closed under \( *. \)
For this, let \( \Pi \) and \( \Lambda \) be \( * \)-composable 3-morphisms in \( C \) and let \( (\Pi_i: \alpha_i \to \alpha'_i)_n \) and \( (\Lambda_j: \beta_j \to \beta'_j)_m \) be their \( C \)-partitions.
We claim that
\[
(\Pi_1, \dots, \Pi _{n-1}, \Pi_n * \Lambda_1,  \Lambda_2, \dots, \Lambda_m)
\]
is a \( C \)-partition of \( \Pi * \Lambda. \)
The only non-trivial task in proving this claim is to show that the coherence 3-morphism \( \Pi_n * \Lambda_1 \) (whose types are coherence 2-morphisms)
lifts to the unique coherence 3-morphism in \( \that, \) whose types are the standard lifts of the types of \( \Pi_n * \Lambda_1. \)

Thus let \( \bar \gamma \) resp. \( \bar \gamma' \) be the standard lift of \( \alpha_n * \beta_1 \) resp. \( \alpha'_n * \beta'_1. \)
Since \( \bar \gamma \) and \( \bar \gamma' \) are parallel coherence 2-morphisms, there is a unique coherence 3-morphism \( \hat \Gamma: \bar \gamma \to \bar \gamma' \) in \( \that. \)
Similarly let \( \bar \alpha_n, \bar \alpha'_n, \bar \beta_1, \bar \beta'_1 \) be the standard lifts of \( \alpha_n, \alpha'_n, \beta_1, \beta'_1 \) and
\[
\hat \Gamma_n: \bar \alpha_n \to \bar \alpha'_n,
\quad
\hat \Gamma_1: \bar \beta_1 \to \beta'_1
\]
be the associated unique coherence 3-morphisms in \( \that. \)
Then the following diagram, in which the vertical arrows are coherence-morphisms built from local associator constraint cells, commutes because \( \that \) is semi-free
\[
\begin{tikzcd}[column sep = large]
\bar \alpha_n * \bar \beta_1
\ar[d, "\sim" {anchor=north, sloped}]
\ar[r, "\hat \Gamma_n * \hat \Gamma_1"]
	& \bar \alpha'_n * \bar \beta'_1
	\ar[d, "\sim" {anchor=south, sloped}] \\
\bar \gamma
\ar[r, "\hat \Gamma"]
	& \bar \gamma'.
\end{tikzcd}
\]
Now let us write \( F \) for the virtually strict triequivalence
\[
\that \xrightarrow{\ftilde} \ttil \xrightarrow{\barf} \tbar \xrightarrow{[-]} \tcu.
\]
We know that \( F (\hat \Gamma_n) = \Pi \) and \( F (\hat \Gamma_1) = \Lambda. \)
If we additionally take into account that the vertical arrows in the commutative diagram above are mapped to identities by \( F, \) we have
\[
\Pi_n * \Lambda_1 = F (\hat \Gamma_n \hs \hat \Gamma_1) = F(\hat \Gamma).
\]
Therefore we have shown that \( \Pi_n * \Lambda_1 \) lifts to the unique coherence 3-morphism between the standard lifts of its source and target.

\item
Now we show that \( C \) is closed under \( \otimes. \)
For this, let \( \Pi \) and \( \Lambda \) be \( \otimes \)-composable 3-morphisms in \( C \) with \( C \)-partitions \( (\Pi_i: \alpha_i \to \alpha'_i)_n \) and \( (\Lambda_j: \beta_j \to \beta'_j)_m. \)

Now we consider the following diagram in \( \tcu, \) where we write \( (1)_n^* \) for the n-fold product \( 1 * 1 * \dots * 1: \)
\[
\begin{tikzcd}[column sep=huge, row sep=large]
(\alpha_i)_n^* \otimes (\beta_j)_m^*
\ar[r, "(\Pi_i)_n^* \otimes (\Lambda_j)_m^*"]
\ar[d, "\sim" {anchor=north, sloped}]
	& (\alpha'_i)_n^* \otimes (\beta'_j)_m^*
	\ar[d, "\sim" {anchor=south, sloped}] \\
\big( (1)_n^* * (\alpha_i)_n^* \big) \otimes \big( (\beta_j)_m^* * (1)_m^* \big)
\ar[r, "\big( (1)_m ^{*} * (\Pi_i)_n^* \big) \otimes \big( (\Lambda_j)_m^* * (1)_n ^{*} \big)" {yshift=4.5mm, name=s1}, "" {name=t1}]
\ar[d, "\sim" {anchor=north, sloped}]
\ar[from=s1, to=t1, dash]
	& \big( (1)_n^* * (\alpha_i)_n^* \big) \otimes \big( (\beta_j)_m^* * (1)_m^* \big)
	\ar[d, "\sim" {anchor=south, sloped}] \\
( 1 \otimes \beta_j)_m ^{*} * (\alpha_i \otimes 1)_n ^{*}
\ar[r, "(1 \otimes \Lambda_j)_m ^{*} * (\Pi_i \otimes 1)_n ^{*}"]
	& ( 1 \otimes \beta'_j)_m ^{*} * (\alpha'_i \otimes 1)_n ^{*}
\end{tikzcd}
\]
The vertical arrows in the upper square are of the form \( \Gamma_1 \otimes \Gamma_2, \) where \( \Gamma_1 \) and \( \Gamma_2 \) are \( * \)-composites of local unit constraints.
Due to the naturality of these constraints the upper square commutes.
Because local units are identities in the cubical tricategory \( \tcu, \) the vertical arrows in the upper square are identities.

The vertical arrows in the lower square can be constructed as a \( * \)-composite of interchangers of the form \( \phi ^{\otimes} _{(\alpha,\beta)(\alpha',\beta')}, \)
where \( \alpha \) or \( \beta' \) is a unit.
Then the lower square commutes due to the naturality of the interchangers.
As \( \otimes \) is a cubical functor, these interchangers are units and therefore the vertical arrows in the lower square are identities.

Hence we have shown that \( \Pi \otimes \Lambda = (1 \otimes \Lambda_j)_m ^{*} * (\Pi_i \otimes 1)_n ^{*}. \)
As we already established that \( C \) is closed under \( *, \)
it suffices to show that the morphisms \( 1 \otimes \Lambda_j \) and \( \Pi_i \otimes 1 \) are contained in \( C. \)

For \( i \) even, \( \Pi_i \otimes 1 \) is an identity by assumption \ref{list_i_even_C_part} and therefore contained in \( C. \)
Now let \( i \) be odd.
Then \( \alpha_i \) resp. \( \alpha'_i \) are coherence 2-morphisms of the form
\( \bar \alpha_i = \comseq{*}{(\bar \alpha _{ij})} _{j \in \set{1 \dots k}} \) resp. \( \bar \alpha'_i = \comseq{*}{(\bar \alpha' _{ij})} _{j \in \set{1 \dots k'}} \),
where \( \bar \alpha _{ij} \) resp. \( \bar \alpha' _{ij} \) are \( \tbar \)-basic coherence 2-morphisms and \( k \) resp. \( k' \) is minimal.
We know that \( \Pi_i \) lifts to a unique coherence isomorphism
\[
\hat \Gamma: \comseq{*}{(\alpha _{ij})} _{j \in \set{1 \dots k}} ^{\sigma^l} \to \comseq{*}{(\alpha' _{ij})} _{j \in \set{1 \dots k'}} ^{\sigma^l} \in \mmmor{\that},
\]
where \( \sigma ^{l} \) denotes the left bracketing.
We then consider the following diagram in \( \that. \)
\[
\begin{tikzcd}
\comseq{*}{(\alpha _{ij})} _{j \in \set{1 \dots k}} ^{\sigma^l} \otimes \bar 1
\ar[r, "\hat \Gamma \otimes 1"]
\ar[d]
	& \comseq{*}{(\alpha _{ij})} _{j \in \set{1 \dots k'}} ^{\sigma^l} \otimes \bar 1
	\ar[d] \\
\comseq{*}{(\alpha _{ij})} _{j \in \set{1 \dots k}} ^{\sigma^l} \otimes \comseq{*}{(\bar 1)} ^{\sigma ^{l}} _k
\ar[d]
	& \comseq{*}{(\alpha _{ij})} _{j \in \set{1 \dots k'}} ^{\sigma^l} \otimes \comseq{*}{(\bar 1)} ^{\sigma ^{l}} _{k'}
	\ar[d] \\
\comseq{*}{(\alpha _{ij} \otimes \bar 1)} _{j \in \set{1 \dots k}} ^{\sigma^l}
\ar[d]
	& \comseq{*}{(\alpha _{ij} \otimes \bar 1)} _{j \in \set{1 \dots k'}} ^{\sigma^l}
	\ar[d] \\
\comseq{*}{(\alpha _{ij} \otimes \tilde 1)} _{j \in \set{1 \dots k}} ^{\sigma^l}
\ar[r]
	& \comseq{*}{(\alpha _{ij} \otimes \tilde 1)} _{j \in \set{1 \dots k'}} ^{\sigma^l} \\
\end{tikzcd}
\]
All vertices are coherence 2-morphisms and all arrows are given as unique coherence 3-morphisms (note that \( \that \) is semi-free).

We know that \( \Pi_i \otimes 1 \) lifts to the upper horizontal morphism and we have to show that \( \Pi_i \otimes 1 \) lifts to the bottom horizontal morphism, which is the unique coherence 3-morphism between the standard lifts of the types of \( \Pi_i \otimes 1. \)
For this, it is sufficient to show that all vertical arrows in the diagram are mapped to identities by the virtually strict triequivalence
\( \that \xrightarrow{\ftilde} \ttil \xrightarrow{\barf} \tbar \xrightarrow{[-]} \tcu. \)

First we observe that all vertical arrows in the diagram can be constructed from constraint cells which are units in a cubical tricategory.
The vertical arrows in the upper square are constructed from local units.
The vertical arrows in the middle square are constructed from interchangers of the form \( \hat \phi ^{\otimes} _{(\alpha,\beta)(\gamma,1)}. \)
The vertical arrows in the lower square are constructed from unit comparison cells of \( \otimes, \) i.e. cells of the form \( \hat \phi ^{\otimes} _{(\hat f, \hat g)}. \)

Now the preservation of constraint cells by virtually strict functors and the fact that \( \tcu \) is cubical implies that the vertical arrows in the diagram are mapped to identities by the virtually strict functor
\(
\that \xrightarrow{\ftilde} \ttil \xrightarrow{\barf} \tbar \xrightarrow{[-]} \tcu.
\)
\end{itemize}
\end{proof}

\begin{satz} \label{prop_tprime_def}
Let \( \cat{T} \) be a tricategory and \( C \subseteq \mmmor{\tcu} \) the coherence 3-system from proposition \ref{prop_coherence_3_system_tcu}.
Then the tricategory \( \tprime := \cat{ \tcu}/\scriptstyle{C} \) is a semi-free cubical tricategory, in which the unique coherence isomorphism between parallel coherence 2-morphism is an identity.
\end{satz}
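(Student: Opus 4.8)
The plan is to get $\tprime$ and the triequivalence $[-]:\tcu \to \tprime$ for free from Proposition \ref{prop_coh_3_system} applied to the coherence-3-system $C$ of Proposition \ref{prop_coherence_3_system_tcu}; everything then reduces to three verifications. First I would check that $\tprime$ is cubical. Since $\tcu$ is cubical (Proposition \ref{prop_def_tcu}) and $[-]$ is a virtually strict functor, it strictly preserves all constraint cells; the cells whose triviality defines cubicality — the local constraints $a^{loc},l^{loc},r^{loc}$, the unit comparison cells $\phi_{[a]}$ and $\phi^{\otimes}_{([f],[g])}$, and the interchangers of the form $\phi^{\otimes}_{(1,\beta)(\alpha',\beta')}$ and $\phi^{\otimes}_{(\alpha,\beta)(\alpha',1)}$ — are images under $[-]$ of the corresponding cells of $\tcu$, which are already identities there. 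Hence they remain identities in $\tprime$, and $\tprime$ is cubical.

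Next I would show that parallel coherence 2-morphisms in $\tprime$ are equal. Because $[-]$ is the identity on objects and $1$-morphisms and strictly preserves constraint cells, two parallel coherence $2$-morphisms of $\tprime$ are the images of parallel coherence $2$-morphisms $\gamma,\gamma'$ of $\tcu$. Their $C_2$-block partitions each consist of a single coherence block, so by the construction underlying Proposition \ref{prop_coherence_3_system_tcu} — using that $\that$ is semi-free (Proposition \ref{prop_that_semifree}) — there is a morphism $\Xi:\gamma\to\gamma'$ in $C$. Lemma \ref{lem_C_under_quotient} then gives $[\Xi]=1_{s[\Xi]}=1_{t[\Xi]}$, in particular $s[\Xi]=t[\Xi]$, i.e. $[\gamma]=[\gamma']$. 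Thus all parallel coherence $2$-morphisms of $\tprime$ coincide.

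Given this, both semi-freeness and the identity claim reduce to a single statement: \emph{every coherence $3$-morphism $\Pi:\gamma\to\gamma'$ of $\tcu$ whose source and target are coherence $2$-morphisms lies in $C$.} Granting it, any coherence $3$-morphism between coherence $2$-morphisms of $\tprime$ is the image of such a $\Pi$ and hence an identity by Lemma \ref{lem_C_under_quotient}; since parallel coherence $2$-morphisms of $\tprime$ already agree, the only coherence $3$-morphism between two such is $1$, which is an identity, so $\tprime$ is semi-free with the asserted property. To prove the statement I would exhibit $\Pi$ as the image under $\that\xrightarrow{\ftilde}\ttil\xrightarrow{\barf}\tbar\xrightarrow{[-]}\tcu$ of a coherence $3$-morphism $\hat\Pi$ of $\that$ built from the same constraint-cell recipe; its source and target are then parallel coherence $2$-morphisms of $\that$, so by semi-freeness of $\that$ it is \emph{the} unique coherence between them. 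Comparing these lifts with the standard lifts of $\gamma,\gamma'$ used to define $C$, via the unique connecting coherences in $\that$, and using that $\ftilde,\barf$ and $[-]$ send local constraints to units, shows that $\Pi$ meets the defining lifting condition of $C$.

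The hard part will be this last lemma. Two points require care. Coherence $3$-morphisms need not lift to coherence $3$-morphisms along $\that \to \tcu$ (Remark \ref{rem_non_lifting_coherence_3_morphisms_in_that}), but the counterexamples there crucially involve \emph{non}-coherence $2$-morphism types; so I must check that when the types are coherence the obstruction disappears — concretely, that a coherence $3$-morphism with coherence source has coherence target, and that $\hot$- and $\hs$-factors of a coherence $2$-morphism are again coherence, so that an arbitrary $(\hot,\hs,\hc)$-composite breaks into constraint-cell pieces all of whose intermediate types stay coherence. Secondly, the lift $\hat\Pi$ need not land on the \emph{standard} lifts $\bar\gamma,\bar\gamma'$ entering the definition of $C$, so I must absorb the discrepancy with the local-associator coherences exactly as in the closure-under-$\hs$ argument of Proposition \ref{prop_coherence_3_system_tcu}. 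Since establishing closure of $C$ under composition already carries out most of this bookkeeping, I expect the lemma to follow by the same structural induction, with the decomposition Lemma \ref{lem_tcu_2_mor_decomp} tracking the block structure.
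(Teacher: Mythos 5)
Your proposal is correct and follows essentially the same route as the paper: cubicality is inherited because the strict triequivalence $[-]$ preserves the (already trivial) constraint cells of $\tcu$, and both semi-freeness and the identity claim reduce to showing that coherence $3$-morphisms of $\tcu$ with coherence $2$-morphism types lie in $C$, whence Lemma \ref{lem_C_under_quotient} makes their images identities, while parallel coherence $2$-morphisms are handled by lifting to the semi-free $\that$. Your version is if anything slightly more careful than the paper's, which simply asserts the key membership-in-$C$ claim for constraint $3$-cells and the decomposition into such cells without addressing the intermediate-type issue you flag.
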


\begin{proof}
\( \tprime \) is cubical, because \( \tcu \) is cubical and because any constraint cell in \( \tprime \) is the image of a corresponding constraint cell in \( \tcu \)
under the virtually strict triequivalence \( [-]: \tcu \to \tprime = \cat{ \tcu}/\scriptstyle{C}. \)
Next, we observe that all constraint 3-cells in \( \tcu \) whose types are coherence 2-morphisms are contained in \( C. \)
Therefore all constraint 3-cells in \( \tprime \) whose types are coherence 2-morphisms are identities.
Any coherence 3-morphism whose types are coherence 2-morphisms is a \( (\otimes,*,\circ) \)-composite of such constraint 3-cells and hence an identity as well.

Parallel coherence 2-morphisms in \( \tprime \) lift to parallel coherence 2-morphisms in \( \that, \) where they are coherently isomorphic (=isomorphic via a coherence 3-morphism).
Thus they are coherently isomorphic in \( \tprime \) as well, since coherent isomorphy is preserved under virtually strict triequivalences.
\end{proof}

Before we continue with our strictification, we need a more explicit description of the tricategory \( \tprime. \)
The virtually strict triequivalence \( \that \to \tprime \) we have constructed so far acts trivially on 1-morphisms.
Therefore it makes sense to consider \( \that \)-basic 1-morphisms in \( \tprime. \)
Also the notions of \( \tbar \)- and \( \ttil \)-basic 2-morphisms make still sense in \( \tprime, \) if one restricts their meaning to non-coherent 2-morphisms
(i.e. 2-morphisms which are no coherence 2-morphisms).

\begin{terminology}~ \label{term_basic_morphisms_in_tprime}
\begin{enumerate}
\item
A 1-morphism in \( \tprime \) is called \( \that \)-basic, if it lifts to a (uniquely determined) \( \that \)-basic 1-morphism in \( \that. \)
\item
A 2-morphism in \( \tprime \) is called \( \tbar \)-basic, if it lifts to a (uniquely determined) non-coherent \( \tbar \)-basic 2-morphism in \( \tcu. \)
\item
A \( \tbar \)-basic 2-morphism in \( \tprime \) is called \( \ttil \)-basic, if it lifts to a (uniquely determined) 2-morphism in \( \ttil. \)
\end{enumerate}
\end{terminology}

\begin{lem}[characterization of 2-morphisms in \( \tprime \)] \label{lem_characterization_2_mor_tprime}
Let \( \alpha \in \mmor{\tprime} \) then there exists a unique odd \( n \in \mathbb{N} \) and a unique sequence \( (\alpha_i)_n \) in \( \mmor{\tprime} \), such that
\begin{enumerate}[(i)]
\item
For \( i \) odd \( \alpha_i \) is a coherence 2-morphism.
\item
For \( i \) even \( \alpha_i \) is a \( \tbar \)-basic 2-morphism.
\item
\( \alpha = \comseq{*}{(\alpha_i)}_n \).
\end{enumerate}
\end{lem}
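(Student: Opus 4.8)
The plan is to transport the decomposition we already have in \( \tcu \) (lemma \ref{lem_tcu_2_mor_decomp}) along the quotient \( [-]: \tcu \to \tprime \), exploiting the precise shape of the coherence 3-system \( C \) from proposition \ref{prop_coherence_3_system_tcu} together with the fact that parallel coherence 2-morphisms in \( \tprime \) are equal (proposition \ref{prop_tprime_def}). Recall also that since \( \tprime = \tcu/C \) is obtained by quotienting a coherence \emph{3}-system, its \( 0 \)- and \( 1 \)-cells coincide with those of \( \tcu \) (proposition \ref{prop_coh_3_system}), so the types of 2-morphisms lift identically.

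First I would treat existence. As \( [-]: \tcu \to \tprime \) is a virtually strict triequivalence it is \( 2 \)-locally surjective, so any \( \alpha \in \mmor{\tprime} \) lifts to some \( \tilde \alpha \in \mmor{\tcu} \). Applying lemma \ref{lem_tcu_2_mor_decomp}(3) gives the \( C_2 \)-block partition \( (\tilde \alpha_i)_n \) of \( \tilde \alpha \), whose even pieces are (by construction, after absorbing any coherent basic cells into the adjacent odd runs) non-coherent \( \tbar \)-basic. Since \( [-] \) preserves \( * \), we get \( \alpha = \comseq{*}{([\tilde \alpha_i])}_n \); here \( [-] \) carries coherence 2-morphisms to coherence 2-morphisms at odd positions, and non-coherent \( \tbar \)-basic cells to \( \tbar \)-basic 2-morphisms of \( \tprime \) in the sense of terminology \ref{term_basic_morphisms_in_tprime}(2), which is exactly the required alternating shape.

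The heart of the argument is uniqueness, and it rests on one invariance observation: \emph{for every \( \Gamma \in C \) the source and target 2-morphisms have the same sequence of non-coherent \( \tbar \)-basic pieces.} This is immediate from the definition of \( C \) in proposition \ref{prop_coherence_3_system_tcu}, where a morphism of \( C \) has a \( C \)-partition \( (\Gamma_i)_n \) with \( \Gamma_i \) an identity for \( i \) even, so \( s \Gamma \) and \( t \Gamma \) share their even constituents. Hence the even-position content of the \( C_2 \)-block partition is constant on \( \sim_2 \)-classes and therefore descends to a well-defined invariant of each \( \alpha \in \mmor{\tprime} \); in particular \( [-] \) is injective on non-coherent \( \tbar \)-basic 2-morphisms. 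Now given any alternating decomposition \( \alpha = \comseq{*}{(\alpha_i)}_n \), I would lift it piecewise to \( \tcu \) with matching types — the even pieces to their unique non-coherent \( \tbar \)-basic lifts, the odd coherence pieces to coherence 2-morphisms of the prescribed (identical) source and target — and assemble a lift \( \tilde \alpha := \comseq{*}{(\tilde \alpha_i)}_n \). The sequence \( (\tilde \alpha_i)_n \) satisfies conditions (i)--(ii) of lemma \ref{lem_tcu_2_mor_decomp}(3), so by the uniqueness there it \emph{is} the \( C_2 \)-block partition of \( \tilde \alpha \); thus the even pieces \( (\alpha_{2k}) \) are the images of the invariant non-coherent content of \( \alpha \), independent of the chosen decomposition, which also pins down \( n \). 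Finally the odd coherence pieces are forced: with the even pieces and global types fixed, each \( \alpha_i \) (\( i \) odd) has prescribed parallel types, and by proposition \ref{prop_tprime_def} parallel coherence 2-morphisms in \( \tprime \) are equal, so each is uniquely determined.

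The main obstacle I expect is the bookkeeping in the lift-with-matching-types step — ensuring the piecewise lifts are genuinely \( * \)-composable in \( \tcu \) (which follows because \( 1 \)-cells are unchanged under \( [-] \)) and that the even pieces of the \( \tcu \)-partition are genuinely non-coherent. The conceptual weight, however, is carried entirely by the two structural facts above: the even pieces of a \( C \)-morphism are joined by identities, and parallel coherence 2-cells collapse in \( \tprime \).
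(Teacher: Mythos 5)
The paper states this lemma without any proof, so there is nothing to compare against line by line; your argument correctly supplies the missing one, and it is the natural one: descend the \( C_2 \)-block partition of lemma \ref{lem_tcu_2_mor_decomp} along \( [-]:\tcu \to \tprime \). The two structural facts you isolate are exactly the right ones — every \( \Gamma \in C \) from proposition \ref{prop_coherence_3_system_tcu} has identity even constituents, so the non-coherent \( \tbar \)-basic content of a 2-morphism is a \( \sim_2 \)-invariant (giving injectivity of \( [-] \) on non-coherent \( \tbar \)-basic cells and pinning down the even pieces and \( n \)); and parallel coherence 2-morphisms in \( \tprime \) coincide by proposition \ref{prop_tprime_def} (forcing the odd pieces once their types are fixed). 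Your side remarks are also the correct readings of the surrounding text: lemma \ref{lem_tcu_2_mor_decomp}(3) must be read with non-coherent even pieces for its uniqueness claim to hold (absorbing coherent basic cells into adjacent odd runs keeps \( n \) odd), and the piecewise lifts are \( * \)-composable because proposition \ref{prop_coh_3_system} leaves \( 0 \)- and \( 1 \)-cells untouched, while odd coherence pieces of \( \tprime \) lift to coherence 2-morphisms of \( \tcu \) with the same types since \( [-] \) is a virtually strict functor surjective on constraint cells. I see no gap.
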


\begin{terminology} \label{term_tprime_partition}
Let \( \alpha \) be a 2-morphism in \( \tprime. \)
Then we call the sequence \( (\alpha_i)_n \) from lemma \ref{lem_characterization_2_mor_tprime} the \( \tprime \)-partition of \( \alpha. \)
\end{terminology}

In the last step of our strictification we will quotient out a coherence-(2,3)-system from the tricategory \( \tprime \) by means of proposition \ref{prop_coherence_23_system}.
The coherence-(2,3)-system \( C_3 \subseteq \mmmor{\tprime} \) we need to define for that purpose contains in particular all components of \( a, l \) and \( r. \)
Though, in order to be able to show uniqueness of parallel morphisms in \( C_3, \) it is beneficial to define \( C_3 \) in a rather indirect way.
Roughly speaking we say that \( C_3 \) contains such coherence 3-morphisms which manipulate the bracketing of \( \tbar \)-basic 2-morphisms in a certain "coherent way".
That means that the morphisms in \( C_3 \) need to lift in a certain way to 3-morphisms in \( \oper{F} \tprime. \)
In order to make that precise we first need to introduce some terminology.

\begin{mydef}~
\begin{enumerate}
\item
For a \( \that \)-basic 1-morphism \( f \in \tprime \) we define \( \sfl f \) as the following 1-morphism in \( \oper{F} \tprime: \)
\[
\sfl f :=
\begin{cases}
1_a ^{\sms{F}}\text{ (the unit at \( a \) in \( \oper{F} \tprime \)),}
	& \text{if } f = 1_a \\
f \text{ (as a morphism in \( \oper{F} \tprime \)),}
	& \text{if } f \text{ is not a unit}.
\end{cases}
\]
For a general 1-morphism \( g = \comseq{\otimes}{(f_i)} _{i \in 1 \dots n} ^{\sigma} \in \mor{ \tprime}, \) where \( f_i \) are \( \that \)-basic 1-morphisms, we define
\[
\sfl g := \comseq{\fotimes}{(\sfl f_i)} _{i \in 1 \dots n} ^{\sigma} \in \mor{\oper{F} \tprime}.
\]

\item
We consider an arbitrary \( \tbar \)-basic 2-morphism
\[
\alpha =
(1 _{f_1} \otimes \dots \otimes 1 _{f_k}
\otimes \beta \otimes
1 _{f_{k+1}} \otimes \dots \otimes 1 _{f_n}) ^{\sigma}
\]
in \( \tprime, \) where \( f_i \) are \( \that \)-basic 1-morphisms and \( \beta \) is a \( \ttil \)-basic 2-morphism in \( \tprime. \)
Then we define \( \sfl{\alpha} \) as the following 2-morphism in \( \oper{F} \tprime: \)
\[
(\sfl{\alpha} =
1 ^{\sms{F}} _{\sfl f_1} \fotimes \dots \fotimes 1 ^{\sms{F}} _{\sfl f_k}
\fotimes \beta \fotimes
1 ^{\sms{F}} _{\sfl f_{k+1}} \fotimes \dots \fotimes 1 ^{\sms{F}} _{\sfl f_n}) ^{\sigma}
\]

\item
For \( g = \comseq{\otimes}{(f_i)}_{n}^{\sigma} \in \mor{\tprime}, \) with \( f_i \) \( \that \)-basic, we define
\[
\sfl 1_g = \comseq{\fotimes}{(1 _{\sfl f_i}^{\sms{F}})}_{n}^{\sigma} \in \mmor{\oper{F} \tprime}
\]
\end{enumerate}
\end{mydef}

\begin{bem}
Note that (2) and (3) are different cases of the "dot notation" for 2-morphisms,
as \( 1_g \in \mmor{\tprime} \) is by terminology \ref{term_basic_morphisms_in_tprime} not a \( \tbar \) basic 2-morphism \emph{in \( \tprime \).}
For a \( \tbar \)-basic 2-morphism \( \alpha = \beta \otimes 1_g \in \mmor{\tprime} \) we have \( \sfl \alpha = \sfl \beta \otimes \sfl 1_g. \)
\end{bem}

\begin{terminology}
Let \( \alpha_1 \) and \( \alpha_2 \) be \( \tbar \)-basic 2-morphisms.
We say \( \alpha_1 \) and \( \alpha_2 \) are coherence related if there exist coherence 2-morphisms \( \gamma, \gamma' \in \mmor{F \tprime} \)
and a coherence 3-morphism \( \Gamma \in \mmmor{\operatorname{F} \tprime } \) with
\[
\sfl \alpha_1 \xrightarrow{\phantom{x} \Gamma \phantom{x}} (\gamma * \sfl \alpha_2) * \gamma'.
\]
We call \( \epsilon (\Gamma) \) a coherence relator of \( \alpha_1 \) and \( \alpha_2. \)
\end{terminology}

\begin{lem}
Any two coherence relators of coherence related 2-morphisms \( \alpha_1 \) and \( \alpha_2 \) are equal.
\end{lem}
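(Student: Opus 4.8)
The plan is to reduce the claim to the coherence theorem for tricategories applied to the free tricategory $\oper{F}\tprime$, and then to push the resulting equality down along the counit $\epsilon \colon \oper{F}\tprime \to \tprime$, using the property of $\tprime$ from proposition~\ref{prop_tprime_def} that every coherence 3-morphism whose source and target are coherence 2-morphisms is an identity. Concretely, suppose $\alpha_1$ and $\alpha_2$ are coherence related via two choices of data, giving coherence 2-morphisms $\gamma_1,\gamma_1'$ and $\gamma_2,\gamma_2'$ together with coherence 3-morphisms
\[
\Gamma_1 \colon \sfl{\alpha_1} \to (\gamma_1 * \sfl{\alpha_2}) * \gamma_1'
\quad\text{and}\quad
\Gamma_2 \colon \sfl{\alpha_1} \to (\gamma_2 * \sfl{\alpha_2}) * \gamma_2'
\]
in $\oper{F}\tprime$. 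The two relators are $\epsilon(\Gamma_1)$ and $\epsilon(\Gamma_2)$, and I must show they agree.

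First I would exploit that $\Gamma_1$ and $\Gamma_2$ share the source $\sfl{\alpha_1}$, so their targets are parallel 2-morphisms. Since $\sfl{\alpha_2}$ and the bracketing pattern $(- * \sfl{\alpha_2}) * {-}$ are common to both, comparing the boundary 1-morphisms of $(\gamma_i * \sfl{\alpha_2}) * \gamma_i'$ forces $\gamma_1$ and $\gamma_2$ to be parallel, and likewise $\gamma_1'$ and $\gamma_2'$ to be parallel. By coherence for the free tricategory (corollary~\ref{cor_FX_is_semi_free}) there are then unique coherence 3-morphisms $\delta \colon \gamma_1 \to \gamma_2$ and $\delta' \colon \gamma_1' \to \gamma_2'$ in $\oper{F}\tprime$, and
\[
\Theta := (\delta * 1_{\sfl{\alpha_2}}) * \delta'
\colon (\gamma_1 * \sfl{\alpha_2}) * \gamma_1' \to (\gamma_2 * \sfl{\alpha_2}) * \gamma_2'
\]
is again a coherence 3-morphism. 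Consequently $\Theta \circ \Gamma_1$ and $\Gamma_2$ are parallel coherence 3-morphisms in $\oper{F}\tprime$, hence equal by the coherence theorem for tricategories (proposition~\ref{prop_tricat_coherence_theorem}).

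Applying the counit $\epsilon$, which strictly preserves composition and sends coherence cells to coherence cells, then gives $\epsilon(\Gamma_2) = \epsilon(\Theta) \circ \epsilon(\Gamma_1)$, where $\epsilon(\Theta) = (\epsilon(\delta) * 1_{\alpha_2}) * \epsilon(\delta')$. To finish I would note that $\epsilon(\delta)$ and $\epsilon(\delta')$ are coherence 3-morphisms in $\tprime$ whose sources and targets are coherence 2-morphisms; by proposition~\ref{prop_tprime_def} they are therefore identities, so $\epsilon(\Theta)$ is an identity and $\epsilon(\Gamma_2) = \epsilon(\Gamma_1)$, as desired.

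The main obstacle I anticipate is the bookkeeping in the parallelism step: one must check carefully, from the composition conventions for $*$ on 2-morphisms, that parallelism of the two targets genuinely propagates to parallelism of the outer conjugators $\gamma_i, \gamma_i'$ (so that $\delta, \delta'$ exist and $\Theta$ is well-typed), and confirm that $\Theta$ is a coherence 3-morphism and that $\Theta \circ \Gamma_1$ is parallel to $\Gamma_2$. Once this typing is pinned down, the appeals to coherence in $\oper{F}\tprime$ and to the rigidity of coherence cells in $\tprime$ are immediate.
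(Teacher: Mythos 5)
Your proposal is correct and follows essentially the same route as the paper's proof: both compare the two conjugating pairs via the unique coherence 3-morphisms between parallel coherence 2-morphisms in \( \oper{F}\tprime \), use the coherence theorem to see that the resulting square commutes, and then apply \( \epsilon \), noting that the conjugating coherence 3-cells become identities in \( \tprime \) because coherence 3-morphisms between coherence 2-morphisms are identities there. The typing concern you flag is handled exactly as you anticipate (the targets of \( \Gamma_1,\Gamma_2 \) are parallel to the common source \( \sfl{\alpha_1} \), which pins down the types of the \( \gamma \)'s), so no gap remains.
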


\begin{proof}
Suppose we have coherence 2-morphisms
\( \gamma_1, \gamma_2, \gamma_3, \gamma_4 \in \mmor{F \tprime} \) and coherence 3-morphisms
\[
\sfl \alpha_1 \xrightarrow{\phantom{x} \Gamma_1 \phantom{x}} (\gamma_1 * \sfl \alpha_2) * \gamma_2
\quad \text{and} \quad
\sfl \alpha_1 \xrightarrow{\phantom{x} \Gamma_2 \phantom{x}} (\gamma_3 * \sfl \alpha_2) * \gamma_4.
\]
As \( \gamma_1 \) and \( \gamma_3 \) resp. \( \gamma_2 \) and \( \gamma_4 \) are parallel coherence 2-morphisms in \( \oper{F} \tprime, \)
there exist unique coherence 3-morphisms \( \Gamma_3: \gamma_1 \to \gamma_3 \) and \( \Gamma_4: \gamma_2 \to \gamma_4. \)
By the coherence theorem, the diagram
\[
\begin{tikzcd}
\sfl \alpha_1
\ar[r, "\Gamma_1"]
\ar[d, "1"]
	& (\gamma_1 * \sfl \alpha_2) * \gamma_2
	\ar[d, "(\Gamma_3 * 1) * \Gamma_4"] \\
\sfl \alpha_1
\ar[r, "\Gamma_2"]
	& (\gamma_3 * \sfl \alpha_2) * \gamma_4
\end{tikzcd}
\]
commutes, and since \( \Gamma_3 \) and \( \Gamma_4 \) are mapped to identities by \( \epsilon, \) we have shown that \( \epsilon(\Gamma_1) = \epsilon(\Gamma_2). \)
\end{proof}

\begin{satz} \label{prop_tprime_coh_23_system}
Let \( \cat{T} \) be a tricategory, and \( \tprime \) be the associated tricategory from proposition \ref{prop_tprime_def}.
We consider a subset \( C' \subseteq \mmmor{\tprime} \) defined as follows:

Let \( \Pi: \alpha \to \beta \) be a 3-morphism in \( \tprime \)
and let \( (\alpha_i)_n \) and \( (\beta_i)_m \) be the \( \tprime \)-partitions (see terminology \ref{term_tprime_partition}) for \( \alpha \) and \( \beta. \)
Then \( \Pi \) is contained in \( C_3 \) if and only if \( n=m \) and there exists a sequence \( (\Pi_i)_n \) such that
\begin{enumerate}[(i)]
\item \label{list_source_C_part}
\( \Pi_i \) has source \( \alpha_i. \)
\item \label{list_i_odd_C_part}
For \( i \) odd, \( \Pi_i \) is an identity.
\item \label{list_i_even_C_part}
For \( i \) even, \( \Pi_i \) is a coherence relator of \( \alpha_i \) and \( \beta_i. \)
\item \label{list_C_part_decomposes_Pi}
\( \Pi = \comseq{*}{(\Pi_i)}_n. \)
\end{enumerate}
Then \( C_3 \) is a coherence-(2,3)-system.
\end{satz}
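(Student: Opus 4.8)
The plan is to verify the clauses of definition \ref{def_coherence_23_system} in turn: first that $C_3$ is a coherence-3-system, and then the three extra requirements. Throughout, the argument runs parallel to the proof of proposition \ref{prop_coherence_3_system_tcu}, the one structural difference being that the odd ($C_2$) blocks of a $\tprime$-partition are now governed by \emph{identities}, whereas the even ($\tbar$-basic) blocks are governed by coherence relators. I would open with uniqueness of parallel morphisms, the cleanest clause: by lemma \ref{lem_characterization_2_mor_tprime} the $\tprime$-partitions $(\alpha_i)_n$ and $(\beta_i)_m$ of the source and target of any $\Pi \in C_3$ are uniquely determined, which forces $n=m$; for $i$ odd the factor $\Pi_i$ must equal $1_{\alpha_i}$, and for $i$ even it must be \emph{the} coherence relator of $\alpha_i$ and $\beta_i$, which is unique by the lemma immediately preceding this proposition. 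Hence $\Pi = \comseq{*}{(\Pi_i)}_n$ is completely determined by its type, and parallel elements of $C_3$ agree.

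Next I would dispatch units, inverses, and closure under $\circ$ and $*$. The identity $1_\alpha$ lies in $C_3$ by taking $\Pi_i = 1_{\alpha_i}$, an identity being trivially a coherence relator of $\alpha_i$ with itself; and $\Pi^{-1}$ lies in $C_3$ since the inverse of a coherence relator is again a coherence relator and the inverse of an identity is an identity. For closure under $\circ$, two $\circ$-composable elements of $C_3$ share the middle $2$-morphism and hence have partitions of equal length; the interchange law in the local bicategories yields $\Pi \circ \Lambda = \comseq{*}{(\Pi_i \circ \Lambda_i)}_n$, each block retaining its type. For closure under $*$, the final (odd) block of the source partition of $\Pi$ and the initial (odd) block of $\Lambda$ fuse into one coherence block; since both carry identities, the fused block carries the identity on the fused coherence $2$-morphism, while the even blocks survive unchanged, giving a valid $C_3$-partition of $\Pi * \Lambda$.

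The two straightforward extra clauses follow. All local constraints $a^{loc}, l^{loc}, r^{loc}$ are identities in $\tprime$, since $\tprime$ is cubical and hence locally strict, inherited from $\tcu$ via propositions \ref{prop_def_tcu} and \ref{prop_tprime_def}; being units they lie in $C_3$. Likewise, by proposition \ref{prop_tprime_def} the unique coherence $3$-morphism between parallel coherence $2$-morphisms of $\tprime$ is an identity, so it too lies in $C_3$.

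The remaining task --- closure under $\otimes$ together with membership of the interchangers of requirement \ref{list_interchanger_in_23_coherence_system} in definition \ref{def_coherence_23_system} --- is where I expect the main difficulty, and I would treat the two jointly. Following the $\otimes$-closure step of proposition \ref{prop_coherence_3_system_tcu}, I would use the cubical structure of $\tprime$ to rewrite $\Pi \otimes \Lambda$ as a $*$-composite of factors of the shape $1 \otimes \Lambda_j$ and $\Pi_i \otimes 1$, the rearranging interchangers carrying a unit entry and hence being identities; closure under $*$ then reduces matters to these factors. The genuinely new ingredient is to check that the interchangers $\phi_{(\gamma,\gamma')(\alpha,\alpha')}$ and $\phi_{(\alpha,\alpha')(\gamma,\gamma')}$ with $\gamma,\gamma' \in C_2$ are coherence relators. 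For this I would construct explicit lifts to $\oper{F}\tprime$ in the dot-notation $\sfl{(-)}$ realizing the defining shape $\sfl{\alpha_1} \to (\gamma * \sfl{\alpha_2}) * \gamma'$, absorbing the coherence factors into the flanking blocks and invoking uniqueness of parallel coherence $3$-morphisms in $\oper{F}\tprime$ to identify the evaluated lift with the interchanger. Fitting the interchanger's action to a free coherence $3$-morphism of exactly the prescribed form is the heaviest piece of bookkeeping and is the principal obstacle.
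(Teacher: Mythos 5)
Your overall strategy matches the paper's: verify the coherence-3-system axioms via the $C_3$-partition, derive uniqueness of parallel morphisms from uniqueness of $\tprime$-partitions and of coherence relators, and then check the extra clauses of definition \ref{def_coherence_23_system}, with the interchanger clause handled by explicit lifts to $\oper{F} \tprime$. But there is a genuine gap in your closure-under-$\circ$ argument. You claim the interchange law gives $\Pi' \circ \Pi = \comseq{*}{(\Pi'_i \circ \Pi_i)}_n$ with "each block retaining its type", but the blockwise composites are not defined: for $i$ even the block $\Pi_i$ is a coherence relator of $\alpha_i$ and $\beta_i$, so its target is $(\gamma'_i * \beta_i) * \gamma_i$ for coherence 2-morphisms $\gamma_i, \gamma'_i$ that are in general \emph{not} identities in $\tprime$ (they only become identities after the final quotient to $\tst$), whereas the source of $\Pi'_i$ is $\beta_i$ itself. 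The targets of the even blocks of $\Pi$ straddle the odd blocks of the $\tprime$-partition of $t\Pi = s\Pi'$, so the two partitions do not align block-by-block. The paper's proof has to take the new even block to be $(1_{\gamma'_i} * \Pi'_i * 1_{\gamma_i}) \circ \Pi_i$ and then prove, by exhibiting a commuting diagram of coherence 3-morphisms in $\oper{F} \tprime$ whose auxiliary legs are sent to identities by $\epsilon$, that this composite is again a coherence relator. That verification is the real content of $\circ$-closure and is absent from your argument.

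Two smaller points. Your one-line treatment of inverses ("the inverse of a coherence relator is again a coherence relator") does not typecheck for the same reason: the inverse has source $(\gamma * \sfl \alpha_2) * \gamma'$, which is not of the form $\sfl \alpha_1$ required by the definition of a coherence relator. The paper instead uses that coherence-relatedness is symmetric to produce \emph{some} $\Lambda: \beta \to \alpha$ in $C_3$, and then uniqueness of parallel $C_3$-morphisms together with $\circ$-closure and units forces $\Lambda = \Pi^{-1}$. Finally, you correctly identify the interchanger clause as the heaviest step but leave it as bookkeeping; in the paper it is a two-stage reduction --- first $\phi^{\otimes}_{(\alpha_1,\alpha_2)(\gamma_1,\gamma_2)}$ is reduced by cubicality and coherence to $\phi^{\otimes}_{(\alpha_1,1)(1,\gamma_2)}$, and then that interchanger is decomposed along the $\tprime$-partition of $\alpha_1$ into interchangers $\phi^{\otimes}_{(\beta_i,1)(1,\gamma)}$, each shown to be a coherence relator by an explicit lift to $\oper{F} \tprime$ --- so this is substantive work, and your proposal is incomplete without it.
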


\begin{proof}

In the following we call the (uniquely determined) decomposition \( (\Pi_i)_n \) of \( \Pi \) described in the proposition the \( C_3 \)-partition of \( \Pi. \)
\begin{itemize}[-]
\item
Parallel 3-morphism in \( C_3 \) are equal, since a morphism in \( C_3 \) is uniquely determined by its types.
This follows directly from the uniqueness of the \( \tprime \)-partition and from the uniqueness of the coherence relators \( \Gamma_i. \)
\item
That units are contained in \( C_3 \) follows from the observation that units on \( \tbar \)-basic 2-morphisms are coherence relators.

\item
We show that \( C_3 \) is closed under \( \circ. \)
For that let \( \Pi' \) and \( \Pi \) be \( \circ \)-composable 3-morphisms in \( C_3, \) with \( C_3 \)-partitions \( (\Pi'_i)_n \) resp. \( (\Pi_i)_n. \)
Furthermore let \( (\alpha_i)_n, (\beta_i)_n \) and\( (\beta'_i)_n \) be the \( \tprime \)-partitions of \( s \Pi, t \Pi = s \Pi' \) and \( t \Pi'. \)

Furthermore let for \( i \) even, \( \gamma_i \) and \( \gamma'_i \) be the unique coherence 2-morphisms \( t \alpha _{i} \to s \beta_i \) and \( s \beta_i \to s \alpha _{i} \) in \( \tprime. \)
In other words for \( i \) even, \( \gamma_i \) and \( \gamma'_i \) are the coherence 2-morphisms in the the target morphism of the coherence relator \( \Pi_i: \alpha_i \to \gamma'_i * \beta_i * \gamma_i. \)

Now we define \( \Lambda_i := 1 _{\alpha_i} \) for \( i \) odd and \( \Lambda_i := (1 _{\gamma'_i} * \Pi' * 1 _{\gamma_i}) \circ \Pi_i \) for \( i \) even.
Then we claim \( (\Lambda_i)_n \) is the \( C_3 \)-partition of \( \Pi' \circ \Pi. \)

Condition \ref{list_source_C_part} and \ref{list_i_odd_C_part} of the proposition are trivially satisfied
and condition \ref{list_C_part_decomposes_Pi} namely the identity \( \comseq{*}{(\Lambda_i)}_n = \Pi' \circ \Pi \) is easy to check.
It is left to show that for \( i \) even, \( \Lambda_i := (1 _{\gamma'_i} * \Pi' * 1 _{\gamma_i}) \circ \Pi_i \) is a coherence relator.
For that let \( \mu'_i, \mu_i, \nu'_i, \nu_i \) be coherence 2-morphisms in \( \oper{F} \tprime \), such that the unique coherences
\[
\sfl \alpha_i \rightarrow (\mu'_i * \sfl \beta_i) * \mu_i
\quad \text{and} \quad
\sfl \beta_i \rightarrow (\nu'_i * \sfl \beta_i) * \nu_i
\]
are lifts of the coherence relators \( \Pi_i \) and \( \Pi'_i. \)

Now we consider the following commutative diagram:
\[
\begin{tikzcd}
\sfl \alpha_i
\ar[r]
\ar[d]
	& (\mu'_i * \sfl \beta_i) * \mu_i
	\ar[r]
		& \big( \mu'_i * ((\nu'_i * \sfl \beta_i) * \nu_i)) \big) * \mu_i
		\ar[d] \\
\sfl \alpha_i
\ar[rr]
		& & \big( (\mu'_i * \nu'_i) * \sfl \beta_i \big) * (\nu_i  * \mu_i).
\end{tikzcd}
\]
By the definition of a coherence relator, \( \epsilon \) sends the bottom morphism in the diagram to the coherence relator of \( \alpha_i \) and \( \beta'_i. \)
As the horizontal arrows are sent to identities by \( \epsilon \) and the top morphism is by construction a lift of \( (1 _{\gamma'_i} * \Pi' * 1 _{\gamma_i}) \circ \Pi_i = \Lambda_i, \)
the 3-morphism \( \Lambda_i \) is a coherence relator of \( \alpha_i \) and \( \beta'_i. \)

\item
\( C_3 \) is closed under \( *: \)
If \( (\Pi_i)_n \) and \( (\Lambda_i)_m \) are the \( C_3 \)-partitions of \( * \) composable 2-morphisms \( \Pi \) and \( \Lambda, \)
the sequence \( (\Pi_1, \dots ,\Pi _{n-1}, \Pi_n * \Lambda_1, \Lambda_2, \dots, \Lambda_m \) is a \( C_3 \)-partition of \( \Pi * \Lambda. \)

\item
Now we show that \( C_3 \) is closed under \( \otimes. \)
For this let \( \Pi \) and \( \Lambda \) be \( \otimes \)-composable 3-morphisms in \( C_3 \) with \( C_3 \)-partitions \( (\Pi_i)_n \) and \( (\Lambda_j)_m. \)

An analogous argument as in the proof of proposition \ref{prop_coherence_3_system_tcu} shows that \( \Pi \otimes \Lambda = (1 \otimes \Lambda_j)_m ^{*} * (\Pi_i \otimes 1)_n ^{*}. \)
Since we already know that \( C_3 \) is closed under \( *, \) it suffices to show that the morphisms \( 1 \otimes \Lambda_i \) and \( \Pi_i \otimes 1 \) are contained in \( C_3. \)
This is trivially true for odd \( i. \)
For even \( i \) we show that \( \Pi_i \otimes 1 \) is a coherence relator (and therefore contained in \( C_3 \)).
As \( \Pi_i \) is a coherence relator of \( \tbar \)-basic 2-morphisms - say \( \alpha \) and \( \alpha' \) - there exist coherence 2-morphisms \( \mu, \mu' \) and a coherence 3-morphism
\( \sfl \alpha \rightarrow (\mu * \sfl \alpha') * \mu' \) in \( \oper{F} \tprime, \) which is a lift of \( \Pi_i. \)
Now we consider the following composite of coherence 3-morphisms in \( \oper{F} \tprime, \) where \( v := s ^{2} (\Lambda). \)
\begin{align*}
\sfl \alpha \fotimes \sfl 1_v
\rightarrow \sfl \alpha \fotimes 1 ^{\sms{F}} _{\sfl v}
& \rightarrow \big( (\mu * \sfl \alpha') * \mu' \big) \fotimes 1 ^{\sms{F}} _{\sfl v} \\
& \rightarrow \big( (\mu * \sfl \alpha') * \mu' \big) \fotimes \big( ( 1 ^{\sms{F}} _{\sfl v} * 1 ^{\sms{F}} _{\sfl v}) * 1 ^{\sms{F}} _{\sfl v} \big) \\
& \rightarrow \big( (\mu \fotimes 1 ^{\sms{F}} _{\sfl v}) * (\sfl \alpha' \fotimes 1 ^{\sms{F}} _{\sfl v}) \big) * (\mu' \fotimes 1 ^{\sms{F}} _{\sfl v}) \\
& \rightarrow \big( (\mu \fotimes \sfl 1_v) * (\sfl \alpha' \fotimes \sfl 1_v) \big) * (\mu' \fotimes \sfl 1_v)
\end{align*}

The second arrow is a lift of \( \Pi \otimes 1 \) and all other arrows are sent to identities by \( \epsilon. \)
Therefore the whole composite is a lift of \( \Pi \otimes 1, \) but it is also a lift of the coherence relator of \( \alpha \otimes 1_v \) and \( \alpha' \otimes 1_v. \)

\item
We show \( C_3 \) contains inverses.
As \( C_3 \) is closed under \( \circ, \) contains units and parallel morphisms in \( C_3 \) are equal,
it suffices to prove that for any \( \Pi: \alpha \to \beta \) contained in \( C_3 \) there exists a morphism \( \Lambda: \beta \to \alpha \) that is contained in \( C_3 \) as well.
This follows easily from the observation that coherence relatedness is a symmetric property.
\end{itemize}

By now we have shown that \( C_3 \) is a coherence 3-system.
\( C_3 \) satisfies condition \eqref{list_C_3_is_C_2_complete} and \eqref{list_local_constraints_in_C_3} in definition \ref{def_coherence_23_system},
since in \( \tprime \) local constraints and constraint 3-morphisms between parallel constraint 2-morphisms are identities and therefore contained in \( C_3. \)
To show that \( C_3 \) is a coherence-2/3-system it is only left to show that condition \eqref{list_interchanger_in_23_coherence_system} is satisfied,
i.e. that for coherence 2-morphisms \( \gamma_1, \gamma_2 \) in \( \tprime \) the interchangers \( \phi ^{\otimes} _{(\alpha_1,\alpha_2),(\gamma_1,\gamma_2)} \) and \( \phi ^{\otimes} _{(\gamma_1,\gamma_2),(\alpha_1,\alpha_2)} \) are contained in \( C_3. \)
For that we consider the following diagram in \( \tprime, \) where the arrows labeled with "\( loc \)" are given by constraint cells which are composites of local constraint cells.
\[
\begin{tikzcd}[font=\scriptsize, column sep=large]
(\alpha_1 \otimes \alpha_2) * (\gamma_1 \otimes \gamma_2)
\ar[r, "{\phi ^{\otimes} _{(\alpha_1,\alpha_2),(\gamma_1,\gamma_2)}}"]
\ar[d, "loc" swap]
	& (\alpha_1 * \gamma_1) \otimes (\alpha_2 * \gamma_2) \\
\big( (1 * \alpha_1) \otimes (\alpha_2 * 1) \big) * \big( (1 * \gamma_1) \otimes (\gamma_2 * 1) \big)
\ar[d, "(\phi ^{\otimes}) ^{-1} \otimes (\phi ^{\otimes}) ^{-1}" swap]
	& \big( (1 * (\alpha_1 * 1)) * \gamma_1 \big) \otimes \big( (\alpha_2 * (1 * \gamma_2)) * 1 \big)
	\ar[u, "loc" swap] \\
\big( (1 \otimes \alpha_2) * (\alpha_1 \otimes 1) \big) * \big( (1 \otimes \gamma_2) * (\gamma_1 \otimes 1) \big)
\ar[d, "loc" swap]
	& \big( (1 * (\alpha_1 * 1)) \otimes (\alpha_2 * (1 * \gamma_2)) \big) * (\gamma_1 \otimes 1)
	\ar[u, "\phi ^{\otimes}" swap] \\
\big( (1 \otimes \alpha_2) * \big( (\alpha_1 \otimes 1) * (1 \otimes \gamma_2) \big) \big) * (\gamma_1 \otimes 1)
\ar[r, "(1 * \phi ^{\otimes}) * 1" swap]
	& \big( (1 \otimes \alpha_2) * \big( (\alpha_1 *  1) \otimes (1 * \gamma_2) \big) \big) * (\gamma_1 \otimes 1)
	\ar[u, "\phi ^{\otimes} * 1" swap]
\end{tikzcd}
\]
This diagram commutes by coherence.
All vertical arrows are identities, since they are either built from local constraints, or from the sort of interchangers which are identities because \( \tprime \) is cubical.
This implies
\[
\phi ^{\otimes} _{(\alpha_1,\alpha_2),(\gamma_1,\gamma_2)} = \phi ^{\otimes} _{(\alpha_1,1)(1,\gamma_2)}.
\]
For notational convenience we now set \( \beta:= \alpha_1 \) and \( \gamma:= \gamma_2. \)
Let \( (\beta_i)_n \) be the \( \tprime \)-partition of \( \beta. \)

Now we construct \( \phi ^{\otimes} _{(\beta,1),(1,\gamma)} \) as a composite of coherence morphisms that are either units in \( \tprime \)
or constructed from the interchangers \( \phi ^{\otimes} _{(\beta_i,1),(1,\gamma)}: \)

\begin{align*}
 {}&{} \big( \comseq{*}{(\beta_i)}_n ^{\sigma ^{l}} \otimes 1 \big) * (1 \otimes \gamma) \\
\rightarrow {}&{}
\big( \comseq{*}{(\beta_i)}_n ^{\sigma ^{l}} \otimes \comseq{*}{(1)}_n ^{\sigma ^{l}} \big) * (1 \otimes \gamma) \\
\rightarrow {}&{}
\comseq{*}{(\beta_i \otimes 1)}_n ^{\sigma ^{l}} * (1 \otimes \gamma) \\
= {}&{}
\big( \comseq{*}{(\beta_i \otimes 1)}_n ^{\sigma ^{l}} * (\beta_n \otimes 1) \big) * (1 \otimes \gamma) \\
\rightarrow {}&{}
\comseq{*}{(\beta_i \otimes 1)}_{n-1} ^{\sigma ^{l}} * \big( (\beta_n \otimes 1) * (1 \otimes \gamma) \big) \\
\rightarrow {}&{}
\comseq{*}{(\beta_i \otimes 1)}_{n-1} ^{\sigma ^{l}} * \big( (\beta_n * 1) \otimes (1 * \gamma) \big) \\
\rightarrow {}&{}
\comseq{*}{(\beta_i \otimes 1)}_{n-1} ^{\sigma ^{l}} * \big( (1 * \beta_n) \otimes (\gamma * 1) \big) \\
\rightarrow {}&{}
\comseq{*}{(\beta_i \otimes 1)}_{n-1} ^{\sigma ^{l}} * \big( (1 \otimes \gamma) * (\beta_n \otimes 1) \big) \\
\rightarrow {}&{}
\big( \comseq{*}{(\beta_i \otimes 1)}_{n-1} ^{\sigma ^{l}} * (1 \otimes \gamma) \big) * (\beta_n \otimes 1) \\
\rightarrow \dots \rightarrow {}&{}
(1 \otimes \gamma) * \comseq{*}{(\beta_i \otimes 1)}_n ^{\sigma ^{l}} \\
\rightarrow {}&{}
(1 \otimes \gamma) * \big( \comseq{*}{(\beta_i)}_n ^{\sigma ^{l}} \otimes \comseq{*}{(1)}_n ^{\sigma ^{l}} \big) \\
\rightarrow {}&{}
(1 \otimes \gamma) * \big( \comseq{*}{(\beta_i)}_n ^{\sigma ^{l}} \otimes 1 \big) \\
\rightarrow {}&{}
\big( 1 * \comseq{*}{(\beta_i)}_n ^{\sigma ^{l}} \big) \otimes (\gamma * 1) \\
\rightarrow {}&{}
\big( \comseq{*}{(\beta_i)}_n ^{\sigma ^{l}} * 1 \big) \otimes (1 * \gamma).
\end{align*}
Since the composite as a whole equals \( \phi ^{\otimes} _{(\beta,1),(1,\gamma)} \) by coherence,
\( \phi ^{\otimes} _{(\beta,1),(1,\gamma)} \) is an identity if the interchangers \( \phi ^{\otimes} _{(\beta_i,1),(1,\gamma)} \) are identities.

If \( i \) is odd, \( \beta_i \) is a coherence 2-morphism and \( \phi ^{\otimes} _{(\beta_i,1),(1,\gamma)} \) is an identity by proposition \ref{prop_tprime_def}.

Now let \( i \) be even.
For notational convenience we set \( \alpha:= \beta_i. \)
There exists a coherence 2-morphism \( \mu \) in \( \oper{F} \tprime, \) which is a lift of \( \gamma \) and whose types are the standard F-lifts of the types of \( \gamma. \)
Now consider the following 3-morphism in \( \oper{F} \that: \)

\begin{align*}
{}& (\sfl \alpha \otimes \sfl 1 _{t \gamma}) * (1 _{s \sfl \alpha} ^{\sms{F}} \otimes \mu) \\
\rightarrow {}&
\big( ( (1 ^{\sms{F}} * \sfl \alpha) * 1 ^{\sms{F}}) \otimes ((\mu * \sfl 1) * \mu ^{\adsq}) \big) * (1 ^{\sms{F}} \otimes \mu) \\
\rightarrow {}&
\big( ((1 ^{\sms{F}} \otimes \mu) * (\sfl \alpha \otimes \sfl 1)) * (1 ^{\sms{F}} \otimes \mu ^{\adsq}) \big) * (1 ^{\sms{F}} \otimes \mu) \\
\rightarrow {}&
\big( (1 ^{\sms{F}} \otimes \mu) * (\sfl \alpha \otimes \sfl 1) \big) * \big( (1 ^{\sms{F}} \otimes \mu ^{\adsq}) * (1 ^{\sms{F}} \otimes \mu) \big) \\
\rightarrow {}&
\big( (1 ^{\sms{F}} * \sfl \alpha) \otimes (\mu * \sfl 1) \big) * 1 ^{\sms{F}} \\
\rightarrow {}&
(\sfl \alpha * 1 ^{\sms{F}}) \otimes (\sfl 1 * \mu) \\
\end{align*}

The composite of the first two arrows is a lift of a coherence-relator and therefore its image under \( \epsilon \) is contained in \( C_3. \)
The remaining arrows are sent to identities under \( \epsilon \) and therefore the image of the whole composite under \( \epsilon \) is contained in \( C_3. \)
Since by coherence the image of the whole composite under \( \epsilon \) is equal to \( \phi ^{\otimes} _{(\alpha,1),(1,\gamma)}, \) we have shown \( \phi ^{\otimes} _{(\alpha,1),(1,\gamma)} \) is contained in \( C_3. \)
\end{proof}

\begin{satz} \label{theorem_tst_is_gray}
Let \( \cat{T} \) be a tricategory and \( C_3 \) be the coherence 3-system from proposition \ref{prop_tprime_coh_23_system}.
Then \( \tst := \tprime/\scriptstyle{C_3} \) is a Gray category.
\end{satz}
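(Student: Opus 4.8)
The plan is to verify the three defining conditions of a Gray-category from definition \ref{def_cubical_tricat_Gray_cat}: that \( \tst \) is cubical, that \( a,l,r \) are identity adjoint equivalences, and that \( \pi,\mu,\lambda,\rho \) are identity modifications. The backbone is proposition \ref{prop_coherence_23_system}, which already gives that \( \tst = \tprime/\scriptstyle{C_3} \) is a tricategory and that \( [-]\colon \tprime \to \tst \) is a virtually strict triequivalence; in particular \( [-] \) preserves every constraint cell strictly, so each constraint cell of \( \tst \) is the image under \( [-] \) of the corresponding constraint cell of \( \tprime \). Since \( \tprime \) is semi-free (proposition \ref{prop_tprime_def}), proposition \ref{prop_T_mod_C23} applies: the image under \( [-] \) of any 2-morphism in the collection \( C_2 \) of all coherence 2-morphisms, and of any 3-morphism in \( C_3 \), is an identity. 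The whole argument then reduces to checking, cell by cell, that the constraints which must vanish in a Gray-category are images of cells lying in \( C_2 \) resp. \( C_3 \).

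First I would dispatch cubicality. Being cubical is a property asserting that certain constraint cells are identities: the local constraints \( a^{loc}, l^{loc}, r^{loc} \), the unit comparisons \( \phi_a \) and \( \phi^{\otimes}_{(\hat f,\hat g)} \), and the interchangers of the cubical form. All of these are already identities in \( \tprime \), which is cubical by proposition \ref{prop_tprime_def}. As \( [-] \) preserves constraints strictly, their images — which are exactly the corresponding cells of \( \tst \) — are identities as well, so \( \tst \) is cubical.

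Next come the components of \( a,l,r \) and of the modifications. The \( 1 \)-cell components \( a_{fgh}, l_f, r_f \) and their adjoint counterparts are composites of constraint cells, hence coherence 2-morphisms, so they lie in \( C_2 \); by proposition \ref{prop_T_mod_C23} their images are identities, so in particular \( \otimes \) becomes strictly associative and unital on objects and \( 1 \)-morphisms of \( \tst \). For the 3-cells, the components \( \pi_{fghk}, \mu_{fg}, \lambda_{fg}, \rho_{fg} \) and the unit/counit components \( \eta^a, \epsilon^a, \eta^l, \epsilon^l, \eta^r, \epsilon^r \) have source and target that are coherence 2-morphisms (composites of associator and unitor components), so they are the unique coherence 3-cells between parallel \( C_2 \)-morphisms and therefore lie in \( C_3 \) by requirement \eqref{list_C_3_is_C_2_complete} of definition \ref{def_coherence_23_system} and the remark following it; proposition \ref{prop_T_mod_C23} then trivializes them.

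The hard part will be the naturality \( 3 \)-cells \( a_{\alpha\beta\gamma}, l_\alpha, r_\alpha \) (and \( a^{\adsq}_{\alpha\beta\gamma} \), etc.), since their types involve arbitrary 2-cells \( \alpha,\beta,\gamma \) and so are not covered by requirement \eqref{list_C_3_is_C_2_complete}; yet an identity adjoint equivalence requires these naturality cells to be trivial, not merely the \( 1 \)-cell components. To place them in \( C_3 \), I would decompose each 2-cell into its \( \tprime \)-partition (terminology \ref{term_tprime_partition}) and observe that the naturality cell then splits, along \( * \), into identities on the coherence blocks and, on the \( \tbar \)-basic blocks, precisely the data of a coherence relator — the associator/unitor coherence 2-cell sliding past a single basic 2-morphism is exactly what a coherence relator records. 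Since \( C_3 \) is closed under \( * \) and contains all coherence relators by the very definition in proposition \ref{prop_tprime_coh_23_system}, the naturality cell lies in \( C_3 \), and proposition \ref{prop_T_mod_C23} makes its image an identity. Together with the already-trivialized \( 1 \)-cell components this shows \( a,l,r \) are identity adjoint equivalences and \( \pi,\mu,\lambda,\rho \) are identity modifications, so \( \tst \) is a Gray-category. I expect this sliding/coherence-relator verification to be the only genuinely delicate step; everything else is bookkeeping against propositions \ref{prop_coherence_23_system}, \ref{prop_T_mod_C23} and the explicit structure of \( C_3 \).
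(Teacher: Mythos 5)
Your proposal is correct and follows essentially the same route as the paper: cubicality is inherited from \( \tprime \) via strict preservation of constraints, the \(1\)-cell components of \( a,l,r \) are coherence \(2\)-morphisms killed by proposition \ref{prop_T_mod_C23}, the components of \( \pi,\mu,\lambda,\rho \) and of the units/counits lie in \( C_3 \) by the coherence-(2,3)-system requirements, and the naturality \(3\)-cells are handled exactly as you describe — by decomposing along the \( \tprime \)-partition into pieces that are either trivial or coherence relators (the paper phrases this as lifting to \( \oper{F}\tprime \) and reducing to associators of the form \( a^{\sms{F}}_{\alpha_i,1^{\sms{F}},1^{\sms{F}}} \), which are in \( C_3 \) because \( (\alpha_i\otimes 1)\otimes 1 \) is coherence related to \( \alpha_i\otimes(1\otimes 1) \)). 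You have also correctly identified that step as the only genuinely delicate one.
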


\begin{proof}
\( \tst \) is a cubical, because \( \tprime \) is cubical and because any constraint cell of \( \tst \) is the image under \( [-]: \tprime \to \tcu \) of a corresponding constraint cell of \( \tcu. \)

Components of the modifications \( \pi, \lambda, \rho, \mu \) are contained in \( C_3 \) (because \( C_3 \) is a coherence 2/3-system) and are therefore identities by proposition \ref{prop_T_mod_C23}.
For the same reason components of the units and counits of the adjoint equivalences \( a \dashv a ^{\adsq}, l \dashv l ^{\adsq}, r \dashv r ^{\adsq} \) are identities.
To show that these adjoint equivalences are identities it is therefore sufficient to show that the components of \( a,l,r \) are identities.
The components of \( a,l,r \) at 1-morphisms of \( \tprime \) are coherence 2-morphisms and therefore identities by proposition \ref{prop_T_mod_C23}.
Now we show that components of \( a \) at 2-morphisms of \( \tprime \) are contained in \( C_3. \)
For that let \( \alpha, \beta, \gamma \) be \( \otimes \)-composable 2-morphisms in \( \tprime, \) and let \( (\alpha_i)_n, (\beta_i)_m, (\gamma_i)_k \) be their respective \( \tprime \)-partitions.
Then we can decompose the morphism
\begin{align*}
a ^{\sms{F}} _{t \alpha, t \beta, t \gamma} \fstar
\big( (\comseq{\fstar}{(\alpha_i)}_{n}^{\sigma ^{\lambda}} \fotimes \comseq{\fstar}{(\beta_i)}_{m}^{\sigma ^{\lambda}}) \fotimes \comseq{\fstar}{(\gamma_i)}_{k}^{\sigma ^{\lambda}} \big) \\
\rightarrow
\big( \comseq{\fstar}{(\alpha_i)}_{n}^{\sigma ^{\lambda}} \fotimes (\comseq{\fstar}{(\beta_i)}_{m}^{\sigma ^{\lambda}} \fotimes \comseq{\fstar}{(\gamma_i)}_{k}^{\sigma ^{\lambda}}) \big)
\fstar a ^{\sms{F}} _{s \alpha, s \beta, s \gamma}
\end{align*}
in \( \oper{F} \tprime \) as a composite of coherence 3-morphisms which are either sent to identities by \( \epsilon, \) or built from the associators
\( a ^{\sms{F}} _{\alpha_i,1 ^{\sms{F}} ,1 ^{\sms{F}}}, \) \( a ^{\sms{F}} _{1 ^{\sms{F}} , \beta_i, 1 ^{\sms{F}}} \) or \( a ^{\sms{F}} _{1 ^{\sms{F}} ,1 ^{\sms{F}}, \gamma_i}. \)
Those are contained in \( C_3, \) since \( (\alpha_i \otimes 1) \otimes 1 \) is coherence related to \( \alpha_i \otimes (1 \otimes 1), \)
\( (1 \otimes \beta_i) \otimes 1 \) is coherence related to \( 1 \otimes (\beta_i \otimes 1) \) and
\( (1 \otimes 1) \otimes \gamma \) is coherence related to \( 1 \otimes (1 \otimes \gamma_i). \)
Therefore all components of \( a \) at 2-morphisms are in \( C_3 \) and hence identities in \( \tst. \)
The proof that the components of \( l \) an \( r \) at 2-morphisms are identities is analogous.
\end{proof}

Let us now summarize the overall result of the forgoing strictifications:

\begin{theorem}
For any tricategory \( \cat{T}, \) there is a virtually strict triequivalence
\[
[-] := \that \xrightarrow{\ftilde} \ttil \xrightarrow{\barf} \tbar \xrightarrow{[-]} \tcu \xrightarrow{[-]} \tprime \xrightarrow{[-]} \tst
\]
from the tricategory of formal composites \( \that \) to the Gray category \( \tst. \)
\end{theorem}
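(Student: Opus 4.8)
The plan is to assemble the five factors in the chain, each of which has already been shown to be a virtually strict triequivalence, and then to observe that virtually strict triequivalences are closed under composition and that the final target is a Gray category. First I would record the status of each arrow: $\ftilde\colon\that\to\ttil$ is a virtually strict triequivalence by Proposition \ref{prop_ttil_is_tricat_tdl_is_triequ}; $\barf\colon\ttil\to\tbar$ by Proposition \ref{prop_tbar_is_a_tricat}; the map $[-]\colon\tbar\to\tcu$ by Proposition \ref{prop_def_tcu} together with the coherence-$3$-system of Proposition \ref{prop_coh_3_system_tbar} (invoking the general Proposition \ref{prop_coh_3_system}); the map $[-]\colon\tcu\to\tprime$ by Proposition \ref{prop_tprime_def} together with the coherence-$3$-system of Proposition \ref{prop_coherence_3_system_tcu}; and the map $[-]\colon\tprime\to\tst$ by Theorem \ref{theorem_tst_is_gray} together with the coherence-$(2,3)$-system of Proposition \ref{prop_tprime_coh_23_system} (invoking Proposition \ref{prop_coherence_23_system}). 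The same Theorem \ref{theorem_tst_is_gray} asserts that the target $\tst$ is a Gray category, so that part of the claim is immediate.

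Second, I would note that the composite is at least a virtually strict functor: since $\cat{Tricat_1}$ is a $1$-category, the composite of the underlying constraint-preserving magmoid morphisms is again a constraint-preserving magmoid morphism. Hence $[-]\colon\that\to\tst$ is a well-defined virtually strict functor, and it remains only to verify that it is a triequivalence.

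Third, I would check the four conditions of Remark \ref{rem_characterization_virtually_strict_triequivalence} for a composite $G\circ F$ of two virtually strict triequivalences, from which the five-fold composite follows by iteration. The $3$-local condition composes immediately, because on each $3$-local set one has $(G\circ F)=G\circ F$ as a composite of maps and the composite of bijections is a bijection. The remaining three conditions -- $2$-local essential surjectivity, $1$-local biessential surjectivity, and triessential surjectivity -- I would reduce to the standard facts that essentially surjective functors, biessentially surjective weak functors, and triessentially surjective virtually strict functors are each stable under composition, using that the relevant local functors compose.

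The main obstacle is precisely these surjectivity-\emph{up-to-equivalence} conditions, since they are not plain set-theoretic surjectivity. For triessential surjectivity of $G\circ F$, given an object $a''$ of the target one first chooses an object $a'$ with an internal biequivalence $Ga'\to a''$, and then an object $a$ with an internal biequivalence $Fa\to a'$; to conclude one needs that $G$ sends the biequivalence $Fa\to a'$ to a biequivalence $GFa\to Ga'$, which then composes with $Ga'\to a''$ because internal biequivalences compose in a tricategory. This preservation holds because $G$ is locally a strict biequivalence and therefore carries internal equivalences and internal biequivalences to the same, and the analogous observation handles the $1$-local biessential surjectivity. Once this preservation is established, the chained choices assemble to the required (bi)equivalences, so $[-]$ is a virtually strict triequivalence onto the Gray category $\tst$.
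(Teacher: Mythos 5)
Your proposal is correct and takes essentially the same route as the paper, which states this theorem without proof as the immediate composite of the triequivalences established in Propositions \ref{prop_ttil_is_tricat_tdl_is_triequ}, \ref{prop_tbar_is_a_tricat}, \ref{prop_def_tcu}, \ref{prop_tprime_def} and Theorem \ref{theorem_tst_is_gray}. Your additional verification that virtually strict triequivalences are closed under composition (in particular that a locally strict biequivalence preserves internal biequivalences, so the essential-surjectivity conditions chain) is exactly the routine argument the paper leaves implicit.
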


Now we will explain how the results of this section can be used to reduce calculations in a tricategory \( \cat{T} \) to calculations in the Gray category \( \tst. \)
More precisely we describe how an equation of pasting diagram in a local bicategory of \( \cat{T} \) can be replaced with an equivalent equation of pasting diagram in a local bicategory of \( \tst. \)

First we note that any composite 1- or 2-morphism in \( \cat{T} \) lifts along \( \ev \) to the 1- or 2-morphism in \( \that, \) that constitutes precisely that composite.
Similarly a pasting diagram in a local bicategory of \( \cat{T} \) lifts along \( \ev \) to a corresponding pasting diagram in a local bicategory of \( \that. \)
Since \( \ev \) is a virtually strict triequivalence, two pasting diagrams in \( \cat{T} \) are equal if and only if their corresponding lifts in \( \that \) are equal.
Finally, the lifted pasting diagrams in \( \that \) are equal if and only if they are equal under the virtually strict triequivalence \( [-]: \that \to \tst. \)
Therefore, in total we have achieved our goal of replacing an equation of pasting diagram in \( \cat{T} \) with an equivalent equation of pasting diagrams in \( \tst. \)
(Here equivalent means that the equation in \( \cat{T} \) holds if and only if the equation in \( \tst \) holds.)

The following considerations will clarify the whole procedure and its benefits:

There are many concepts defined internally to tricategories (or monoidal bicategories), whose data consist of certain 1-,2- and 3-morphisms.
This data is then required to satisfy some axioms, which have the form of equations involving the 3-morphisms of the data.
As we will demonstrate with the example of the concept of a biadjunction in a tricategory \( \cat{T} \), these axioms are equivalent to corresponding -much easier- axioms in \( \tst. \)

\begin{mydef} [Definition 2.1. in \cite{Gurski2012}] \label{def_biadjunction}
A biadjunction in a tricategory \( \cat{T} \) consist of the following data
\begin{itemize}
\item
1-morphisms \( f: a \to b \) and \( g: b \to a. \)
\item
2-morphisms \( \alpha: f \otimes g \to 1_b \) and \( \beta: 1_a \to g \otimes f. \)
\item
3-isomorphisms \( \Phi \) and \( \Psi \) as in the following diagrams
\[
\begin{tikzcd}
f
\ar[r, "r ^{\adsq}"]
\ar[rrrdd, "1" {swap, name=t1, pos=.7}]
	& f \otimes 1
	\ar[r, "1 \otimes \beta"]
		& f \otimes (g \otimes f)
		\ar[r, "a ^{\adsq}"]
			& (f \otimes g) \otimes f
			\ar[d, "\alpha \otimes 1"]
			\ar[to=t1, "\Phi", short=7mm, Rightarrow, swap] \\
			& & & 1 \otimes f
			\ar[d, "l"] \\
			& & & f
\end{tikzcd}
\]
\[
\begin{tikzcd}
g
\ar[r, "l ^{\adsq}"]
\ar[rrrdd, "1" {swap, name=t1, pos=.7}]
	& 1 \otimes g
	\ar[r, "\beta \otimes 1"]
		& (g \otimes f) \otimes g
		\ar[r, "a"]
			& g \otimes (f \otimes g)
			\ar[d, "1 \otimes \alpha"]
			\ar[to=t1, "\Psi", short=7mm, Rightarrow, swap] \\
			& & & g \otimes 1
			\ar[d, "r"] \\
			& & & g
\end{tikzcd}
\]
\end{itemize}

The data are subject to the axioms that demand that the pasting diagrams in Figure \ref{fig_pasting_1} and Figure \ref{fig_pasting_2} are identities.

\begin{figure}
\caption{First pasting} \label{fig_pasting_1}
\includegraphics[width=\textwidth]{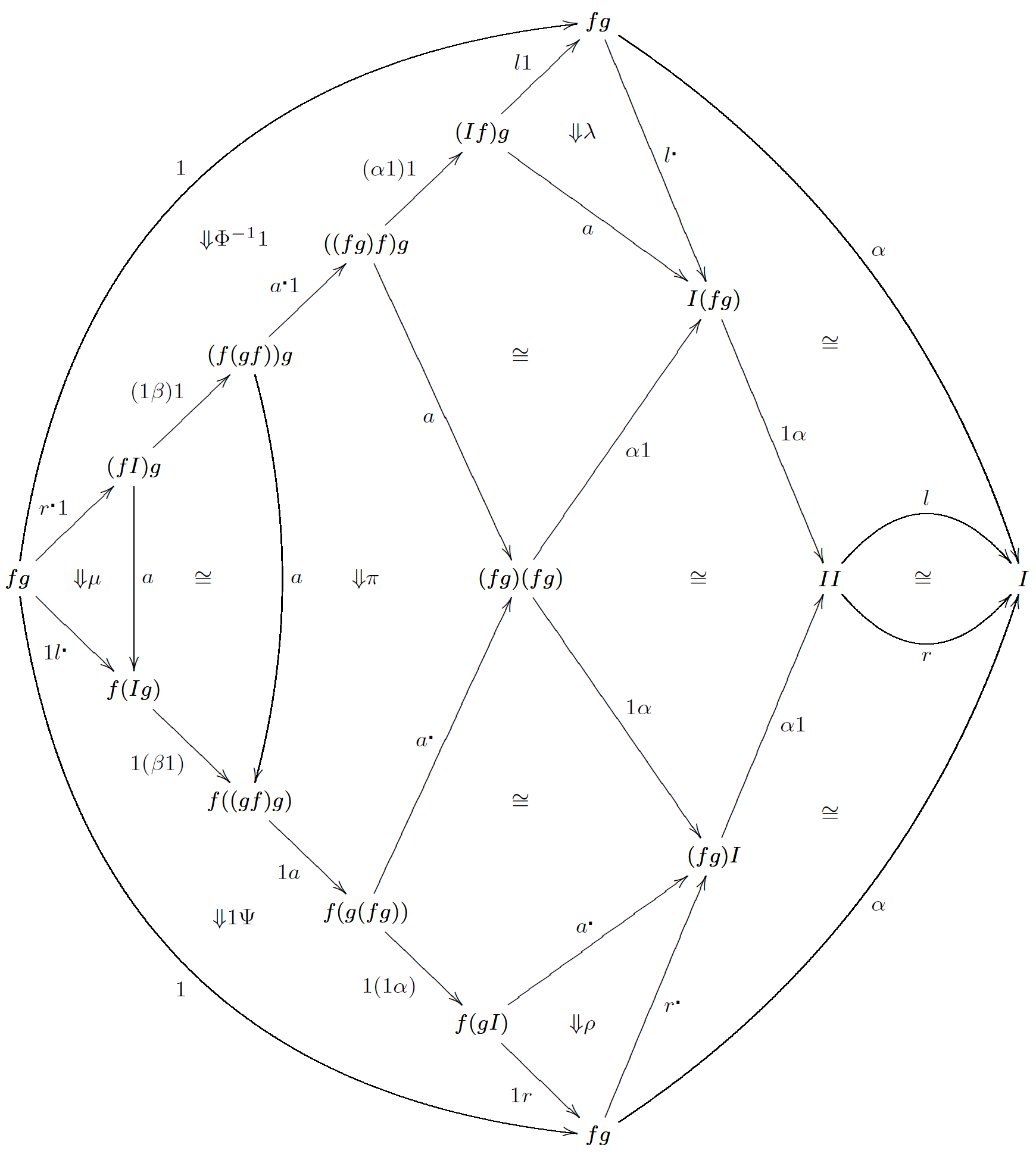}
\end{figure}

\begin{figure}
\caption{Second pasting} \label{fig_pasting_2}
\includegraphics[width=\textwidth]{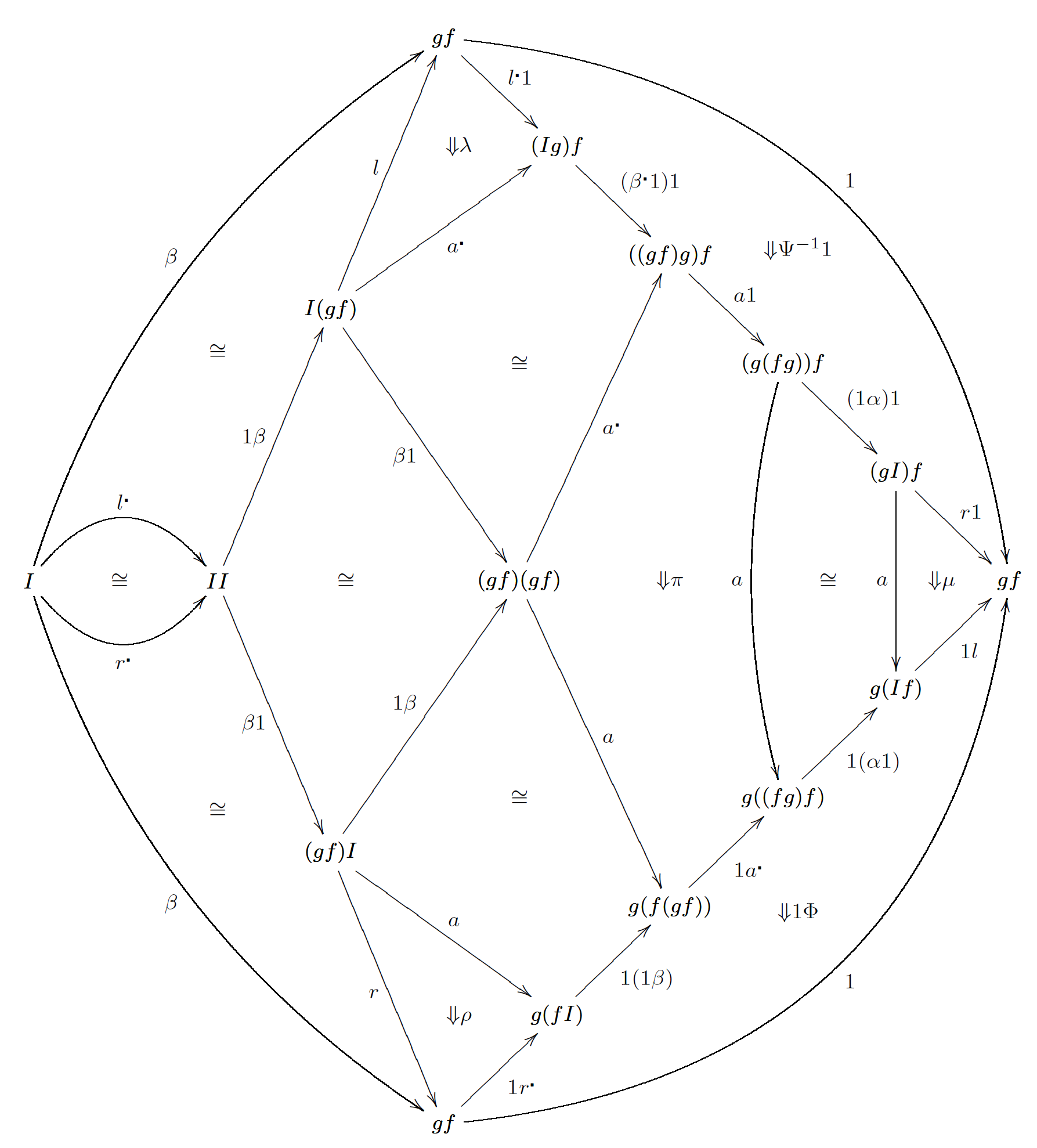}
\end{figure}
\end{mydef}

We now express the axioms for the adjunction as pasting diagrams in \( \tst. \)
For that purpose we need to start by lifting the data from definition \ref{def_biadjunction}:

We define the 2-morphism \( \hat \alpha:= (\alpha, f \hot g, 1_b), \; \hat \beta:= (\beta, 1_a, g \hot f) \) and the 3-morphism \( \hat \Phi, \hat \Psi \) by the diagrams
\[
\begin{tikzcd}
f
\ar[r, "\hat r ^{\adsq}"]
\ar[rrrdd, "\hat 1" {swap, name=t1, pos=.7}]
	& f \hot 1
	\ar[r, "\hat 1 \hot \hat \beta"]
		& f \hot (g \hot f)
		\ar[r, "\hat a ^{\adsq}"]
			& (f \hot g) \hot f
			\ar[d, "\hat \alpha \hot \hat 1"]
			\ar[to=t1, "\hat \Phi", short=7mm, Rightarrow, swap] \\
			& & & 1 \hot f
			\ar[d, "\hat l"] \\
			& & & f
\end{tikzcd}
\]
\[
\begin{tikzcd}
g
\ar[r, "\hat l ^{\adsq}"]
\ar[rrrdd, "\hat 1" {swap, name=t1, pos=.7}]
	& 1 \hot g
	\ar[r, "\hat \beta \hot \hat 1"]
		& (g \hot f) \hot g
		\ar[r, "\hat a"]
			& g \hot (f \hot g)
			\ar[d, "\hat 1 \hot \hat \alpha"]
			\ar[to=t1, "\hat \Psi", short=7mm, Rightarrow, swap] \\
			& & & g \hot 1
			\ar[d, "\hat r"] \\
			& & & g.
\end{tikzcd}
\]

The pasting diagrams in definition \ref{def_biadjunction} lift in a obvious way to respective pasting diagrams in \( \that. \)
Since \( \ev: \that \to \cat{T} \) is a virtually strict triequivalence, these lifted pasting diagrams are identities if and only if the pasting diagrams from definition \ref{def_biadjunction} are identities.
And since \( [-]: \that \to \tst \) is a virtually strict triequivalence, the lifted pasting diagrams in \( \that \) are identities if and only if they are mapped to identities under \( [-]. \)
Thanks to the semi-strictness of \( \tst, \) the image of the lifted diagrams under \( [-] \) simplifies tremendously to the following diagrams, where we replace \( \otimes \) by juxtaposition and label interchanger 3-cells with \( \sim. \)

\[
\begin{tikzcd}
 		& & {[f] [g]}
 		\ar[rd, "{[\hat \alpha]}"] \\
{[f] [g]}
\ar[rru, "1" {name=s1}, bend left=30]
\ar[rrd, "1" {name=s2, swap}, bend right=30]
\ar[r, "{1 [\hat \beta] 1}"]
	& {[f] [g] [f] [g]}
	\ar[ru, "{[\hat \alpha]11}" swap]
	\ar[rd, "{11[\hat \alpha]}"]
			& & 1
			\ar[ll, "\sim", phantom, description] \\
 		& & {[f] [g]}
 		\ar[ru, "{[\hat \alpha]}" swap]
 		\ar[from=s1, to=lu, Rightarrow, "{[\hat \Phi ^{-1}]1}", short=3mm, pos=.6]
 		\ar[from=lu, to=s2, Rightarrow, "{1[\hat \Psi]}", short=3mm, pos=.35]
\end{tikzcd}
\]
\[
\begin{tikzcd}
 		& & {[g] [f]}
 		\ar[rd, "{[\hat \beta]}", leftarrow] \\
{[g] [f]}
\ar[rru, "1" {name=s1}, bend left=30, leftarrow]
\ar[rrd, "1" {name=s2, swap}, bend right=30, leftarrow]
\ar[r, "{1 [\hat \alpha] 1}", leftarrow]
	& {[g] [f] [g] [f]}
	\ar[ru, "{[\hat \beta]11}" swap, leftarrow]
	\ar[rd, "{11[\hat \beta]}", leftarrow]
			& & 1
			\ar[ll, "\sim", phantom, description, leftarrow] \\
 		& & {[g] [f]}
 		\ar[ru, "{[\hat \beta]}" swap, leftarrow]
 		\ar[from=s1, to=lu, Rightarrow, "{[\hat \Psi ^{-1}]1}", short=3mm, pos=.6]
 		\ar[from=lu, to=s2, Rightarrow, "{1[\hat \Phi]}", short=3mm, pos=.35]
\end{tikzcd}
\]

These pasting diagrams in \( \tst \) can be expressed more intuitively by the corresponding rewriting diagrams in \( \tst. \)

\[
\begin{tikzpicture}[string={.7cm}{.7cm}, rounded corners=0, baseline={([yshift=-.5ex]current bounding box.center)}, math mode, label distance=-1.5mm, scale=.5, inner sep=.8mm]
\draw (-.2,3) to
	[d r] (.5,1) node (n1) [label=-90:\scriptstyle{[\hat \alpha]}] {} to
	[r r] (1.5,2) node (n2) [label=90:\scriptstyle{[\hat \beta]}] {} to
	[r r] (2.5,0) node (n3) [label=-90:\scriptstyle{[\hat \alpha]}] {} to
	[r u] (3.3,3);
\draw[green] (-.5,.4) rectangle (2.3,2.8);
\begin{scope}[xshift=7cm]
\draw (-.2,3) to
	[d r] (.5,0) node (n1) [label=-90:\scriptstyle{[\hat \alpha]}] {} to
	[r r] (1.5,2) node (n1) [label=90:\scriptstyle{[\hat \beta]}] {} to
	[r r] (2.5,1) node (n1) [label=-90:\scriptstyle{[\hat \alpha]}] {} to
	[r u] (3.3,3);
\draw[blue] (.8,2.8) rectangle (3.5,.4);
\end{scope}
\begin{scope}[xshift=3.45cm, yshift=-5.5cm]
\draw (0,3) to
	[d r] (1.5,0) node [label=-90:\scriptstyle{[\hat \alpha]}] {} to
	[r u] (3,3);
\draw[green] (-.4,1.3) rectangle (.5,2.5);
\draw[blue] (2.5,1.3) rectangle (3.4,2.5);
\end{scope}
\draw[->] (3.6,1.5) -- +(2.7cm,0) node [pos=.5, above, inner sep=1mm] {\sim};
\draw[->] (1.5,-.5) -- +(1.7cm,-2) node [pos=.25, below, inner sep=3mm] {\scriptstyle{[\hat \Phi]}};
\draw[->] (8.5,-.5) -- +(-1.7cm,-2) node [pos=.25, below, inner sep=3mm] {\scriptstyle{[\hat \Psi]}};
\end{tikzpicture}
\quad \quad \quad
\begin{tikzpicture}[string={.7cm}{.7cm}, rounded corners=0, baseline={([yshift=-.5ex]current bounding box.center)}, math mode, label distance=-1.5mm, scale=.5, inner sep=.8mm, yscale=-1]
\draw (-.2,3) to
	[d r] (.5,1) node (n1) [label=90:\scriptstyle{[\hat \beta]}] {} to
	[r r] (1.5,2) node (n2) [label=-90:\scriptstyle{[\hat \alpha]}] {} to
	[r r] (2.5,0) node (n3) [label=90:\scriptstyle{[\hat \beta]}] {} to
	[r u] (3.3,3);
\draw[green] (-.5,.4) rectangle (2.3,2.8);
\begin{scope}[xshift=7cm]
\draw (-.2,3) to
	[d r] (.5,0) node (n1) [label=90:\scriptstyle{[\hat \beta]}] {} to
	[r r] (1.5,2) node (n1) [label=-90:\scriptstyle{[\hat \alpha]}] {} to
	[r r] (2.5,1) node (n1) [label=90:\scriptstyle{[\hat \beta]}] {} to
	[r u] (3.3,3);
\draw[blue] (.8,2.8) rectangle (3.5,.4);
\end{scope}
\begin{scope}[xshift=3.45cm, yshift=-5.5cm]
\draw (0,3) to
	[d r] (1.5,0) node [label=90:\scriptstyle{[\hat \beta]}] {} to
	[r u] (3,3);
\draw[green] (-.4,1.3) rectangle (.5,2.5);
\draw[blue] (2.5,1.3) rectangle (3.4,2.5);
\end{scope}
\draw[->] (3.6,1.5) -- +(2.7cm,0) node [pos=.5, below, inner sep=1mm] {\sim};
\draw[->] (1.5,-.5) -- +(1.7cm,-2) node [pos=.25, above, inner sep=3mm] {\scriptstyle{[\hat \Psi]}};
\draw[->] (8.5,-.5) -- +(-1.7cm,-2) node [pos=.25, above, inner sep=3mm] {\scriptstyle{[\hat \Phi]}};
\end{tikzpicture}
\] \clearpage{}


\bibliography{bibliography.bib}
\bibliographystyle{ieeetr}

\end{document}